\crefname{section}{\S}{\S}% Prefix for cited section
\NewDocumentCommand\Crefnameitem { m m m O{\textup} O{(\roman*)}} {%
  \Crefname{#1enumi}{#2}{#3} % crefname for items in #1
  \AtBeginEnvironment{#1}{%
    \crefalias{enumi}{#1enumi}%
    \setlist[enumerate,1]{
        label={#4{#5}.},
        % ref={\csname the#1\endcsname.#5}
        ref={#5}
    }%
  }  
}
\patchcmd\NAT@citexnum{\let\NAT@last@num\NAT@num}{\MakeLinkTarget[cite]{}\Hy@backout{\@citeb\@extra@b@citeb}\let\NAT@last@num\NAT@num}{}{\fail}
\numberwithin{equation}{section}
\theoremstyle{plain}
\newtheorem{THM}{Theorem}
\newtheorem{Cor}[THM]{Corollary}
\newtheorem{theorem}[equation]{Theorem}
\newtheorem{proposition}[equation]{Proposition}
\newtheorem{corollary}[equation]{Corollary}
\newtheorem{lemma}[equation]{Lemma}
\renewcommand\swappedhead[3]{%
  \thmnumber{\textbf{\textup{#2}.}}%
  \thmname{\@ifnotempty{#2}{~}#1}%
  \thmnote{ {\the\thm@notefont(#3)}}}
\theoremstyle{remark}
\newtheorem{remark}[equation]{Remark}
\newtheorem{warn}[equation]{Warning}
\newtheorem{example}[equation]{Example}
\newtheorem{definition}[equation]{Definition}
\newtheorem{notation}[equation]{Notation}
\Crefname{construction}{Construction}{Constructions}
\NewDocumentCommand\placeholder{}{\:\cdot\:} % modefy as needed
\NewDocumentCommand\NewPairedDelimiterS{mmm}{%
  \DeclarePairedDelimiterX{#1}[1]{#2}{#3}%
    {\ifblank{##1}{\placeholder}{##1}}%
}
\NewDocumentCommand\NewPairedDelimiterSS{mmmO{,}}{%
  \DeclarePairedDelimiterX{#1}[2]{#2}{#3}%
    {\ifblank{##1}{\placeholder}{##1}%
    #4%
    \ifblank{##2}{\placeholder}{##2}}%
}
\NewPairedDelimiterS\normord{\mathopen{:}}{\mathclose{:}} % :...:
\langle\rangle[%
\DeclarePairedDelimiterX\bracket[3]\langle\rangle%
\NewDocumentMathCommand\dparen{m}{\lparen\!\lparen{#1}\rparen\!\rparen}% ((...))
\NewDocumentMathCommand\dbrack{m}{\lbrack\!\lbrack{#1}\rbrack\!\rbrack}% [[...]]
\providecommand\given{}% emp delimiter
\NewDocumentCommand \SetSymbol {o}
  { \nonscript\:#1\vert\allowbreak\nonscript\:\mathopen{} }% delimiter in set
\DeclarePairedDelimiterX\Set[1]\{\}
\renewcommand\given{\SetSymbol[\delimsize]}#1 }% Set
\DeclarePairedDelimiterX\GSet[1]\langle\rangle
\renewcommand\given{\SetSymbol[\delimsize]}#1 }% GeneratingSet
\RenewDocumentCommand \subset {} {\subseteq}
\NewDocumentCommand \concept {m} {\textbf{#1}}
\NewDocumentCommand \splitle {O{\ast}} {\[#1#1#1\]}
\NewDocumentCommand \red {m} {{\color{red}#1}}
\DeclareRobustCommand\longepimorphism
\NewDocumentCommand\epimorphism{}{\twoheadrightarrow}
\DeclareRobustCommand\longmonomorphism
\DeclareRobustCommand\monomorphism
\DeclareRobustCommand\dashrightarrow
\declaremathcommand\dashto{\dashrightarrow}
\NewDocumentCommand \txforall {O{\qquad}} {#1\text{for all}#1}%
\NewDocumentCommand \txand {O{\qquad}} {#1\text{and}#1}%
\NewDocumentCommand \txst {O{~}} {#1\text{s.t.}#1}%
\NewDocumentMathCommand \sequence { m O{1} O{n} }
  { \ensuremath{ {#1}_{#2}, \cdots, {#1}_{#3} } }
\NewDocumentMathCommand \supsequence { m O{1} O{n} }
  { \ensuremath{ {#1}^{#2}, \cdots, {#1}^{#3} } }
\NewDocumentCommand \fun { m e{^_} O{} }
  {%
    \operatorname{#1}%
    \IfValueT{#2}{\sp{#2}}%
    \IfValueT{#3}{\sb{#3}}%
    \ifblank{#4}{}{\mleft(#4\mright)}%
  }
\NewDocumentCommand\grp{m}{\fun{\mathsf{#1}}}
\NewDocumentMathCommand\Dfrac{mm}{%
  \dfrac{\displaystyle #1}{\displaystyle #2}%
}
\NewDocumentMathCommand\odv{m}{\frac{\d}{\d{#1}}}
\NewDocumentMathCommand\pdv{m}{\frac{\partial}{\partial{#1}}}
\declaremathcommand\iu{\mathbb{i}}
\declaremathcommand\L{\mathcal{L}}
\declaremathcommand\U{\mathscr{U}}
\declaremathcommand\H{\mathsf{H}}
\declaremathcommand\o{\otimes}
\declaremathcommand\spn{\fun{span}}
\declaremathcommand\Cc{{C}^{\circ}}
\declaremathcommand\oC{\overline{C}}
\declaremathcommand\O{\mathcal{O}}
\declaremathcommand\PP{\mathbb{P}}
\declaremathcommand\la{\lambda}
\declaremathcommand\ds{\dots}
\declaremathcommand\al{\alpha}
\declaremathcommand\ra{\longrightarrow}
\declaremathcommand\tilX{\widetilde{\X}}
\declaremathcommand\Xc{{\X}^{\circ}}
\declaremathcommand\gr{\mathsf{gr}}
\declaremathcommand\Pc{\mathring{\mathbb{P}}^1}
\declaremathcommand\Cfb{\fun{\mathscr{C}}}
\declaremathcommand\Image{\fun{Im}}
\declaremathcommand\dQ{\mathfrak{Q}}
\declaremathcommand\dv{\fun{div}}
\NewDocumentMathCommand\fusion{O{M^1}O{M^2}O{M^3}}{\textstyle\binom{#3}{#1\;#2}}
\NewDocumentMathCommand\rfusion{O{M^1}O{M^2(0)}O{M^3(0)}}{\textstyle\binom{#3}{#1\;#2}}
\declaremathcommand\Fusion{\mathfrak{I}\fusion}
\declaremathcommand\Nusion{N\fusion}
\newmathcommand\vac{\mathbb{1}}
\NewDocumentMathCommand\delfun{mmO{{#2}^{-1}}}{#3\delta\left(\dfrac{#1}{#2}\right)}
\newmathcommand\del{\fun{\delta}}
\NewDocumentMathCommand\pfrac{mm}{\left(\dfrac{#1}{#2}\right)}
\NewDocumentMathCommand\vo{mm}{#1_{(#2)}}%Resz^{#2}Y_M({#1},z)
\NewDocumentMathCommand\lo{mm}{#1_{[#2]}}
\NewDocumentMathCommand\loo{mO{}mm}{{#1^{\ifblank{#2}{}{(#2)}}}^{[#3]}_{[#4]}}%#1\otimes z^{#2}
\NewDocumentMathCommand\lqo{mmm}{#1_{#2[#3]}}
\NewDocumentMathCommand\sqo{mmm}{#1_{#2}^{(#3)}}
\declaremathcommand\ptseq{\sequence{\pp}}
\declaremathcommand\zseq{\sequence{z}}
\declaremathcommand\rseq{\sequence{r}}
\declaremathcommand\aseq{\supsequence{a}}
\NewDocumentMathCommand \cfbseq { O{1} O{n} }
  { \ensuremath{ (a^{#1},\pp_{#1})\cdots(a^{#2},\pp_{#2}) } }
\NewDocumentMathCommand \ainVseq { O{1} O{n} }
  { \ensuremath{ a^{#1}\in V^{r_{#1}}, \cdots, a^{#2}\in V^{r_{#2}} } }
\NewDocumentMathCommand \cfbseqb { O{1} O{p} }
  { \ensuremath{ (b^{#1},\pp_{+#1})\cdots(b^{#2},\pp_{+#2}) } }
\NewDocumentMathCommand \lobmseq { O{1} O{p} }
  { \ensuremath{ \lo{b^{#1}}{\frac{r_{#1}}{T}+m_{#1}}\cdots\lo{b^{#2}}{\frac{r_{#2}}{T}+m_{#2}} } }
\NewDocumentCommand \mybox {m} {\fbox{\begin{tabular}{@{}c@{}}#1\end{tabular}}}
\NewDocumentMathCommand\charge{m}{V_{L}^{#1}}
\NewDocumentMathCommand\charhalf{mm}{V_{#1+L}^{#2}}
\NewDocumentMathCommand\charlam{m}{V_{#1+L}}
\journal{ArXiv}
\begin{document}

%%% Frontmatter
\begin{frontmatter}

\title{Twisted restricted conformal blocks of vertex operator algebras II: twisted restricted conformal blocks on totally ramified orbicurves}

\author[1]{{Xu} {Gao}}
\ead{gausyu@tongji.edu.cn}
% \equalcont{These authors contributed equally to this work.}

\author[2]{{Jianqi} {Liu}}
\ead{jliu230@sas.upenn.edu}
% \equalcont{These authors contributed equally to this work.}

\author[3]{{Yiyi} {Zhu}}
\ead{yzhu51@scut.edu.cn}
% \equalcont{These authors contributed equally to this work.}

\affiliation[1]{
  organization={School of Mathematical Science, Tongji University}, 
  addressline={1239 Siping Road}, 
  city={Shanghai}, 
  postcode={200092}, 
  state={Shanghai}, 
  country={China},
}

\affiliation[2]{
  organization={Department of Mathematics, University of Pennsylvania}, 
  addressline={209 South 33rd Street}, 
  city={Philadelphia}, 
  postcode={19104}, 
  state={PA}, 
  country={USA},
}

\affiliation[3]{
  organization={Department of Mathematics, South China University of Technology}, 
  addressline={381 Wushan Road}, 
  city={Guangzhou}, 
  postcode={510641}, 
  state={Guangdong}, 
  country={China},
}

\begin{abstract}
%% Text of abstract
In this paper, we introduce a notion of twisted restricted conformal blocks on totally ramified orbicurves and establish an isomorphism between the space of twisted restricted conformal blocks and the space of twisted conformal blocks. The relationships among twisted (restricted) conformal blocks, $g$-twisted (restricted) correlation functions, and twisted intertwining operators are explored. Furthermore, by introducing a geometric generalization of Zhu's algebra and its modules, we obtain a description of the space of coinvariants by modules over associative algebras and show it is finite-dimensional under some conditions. 
In particular, a more conceptual proof of the $g$-twisted fusion rules theorem in vertex operator algebra theory is provided. 
\end{abstract}

\begin{keyword}
%% keywords here, in the form: keyword \sep keyword
vertex operator algebra \sep twisted restricted conformal block \sep correlation function \sep intertwining operator \sep orbifold  curve
%% PACS codes here, in the form: \PACS code \sep code

%% MSC codes here, in the form: \MSC code \sep code
%% or \MSC[2008] code \sep code (2000 is the default)
\MSC[2020] 17B69 \sep  81T40 \sep 14H30
\end{keyword}

\end{frontmatter}

\tableofcontents

\section{Introduction}
This paper is the second part of a series aimed at investigating the general theory of twisted (restricted) conformal blocks of vertex operator algebras (VOAs). 
Here, we focus on the twisted restricted VOA-conformal blocks on a totally ramified orbicurve and shift the viewpoint from \emph{correlation functions} to \emph{conformal blocks}. 
An overview of twisted modules of vertex operator algebras and fusion rules among twisted modules was provided in \cite{PartI}.

\subsection{History of VOA-conformal blocks}

A few years after Belavin, Polyakov, and Zamolodchikov introduced the two-dimensional conformal field theory with Virasoro symmetries \cite{BPZ84}, Tsuchiya, Kanie, Ueno, and Yamada formulated a mathematically rigorous construction of the holomorphic conformal field theory with gauge symmetries given by affine Kac-Moody algebras $\hat{\g}$ in \cite{TK, TUY89}, corresponding to the Wess-Zumino-Novikov-Witten model in physics. 
As the mathematical counterpart of the physical conformal blocks in \cite{BPZ84}, 
the space of conformal blocks (vacua) and the space of coinvariants (covacua) on an $n$-pointed stable curve were introduced in \cite{TUY89} and constructed explicitly using the integrable highest-weight representations in a fixed level $\ell$ over $\hat{\g}$. 
The algebro-geometric formulation in \cite{TUY89} facilitated extensive mathematical studies of the WZNW conformal field theory.

From the VOA point of view, the WZNW conformal field theory can be formulated using the affine VOAs $L_{\hat{\g}}(\ell,0)$ constructed by Frenkel and Zhu in \cite{FZ}. 
More generally, the study of conformal field theory can be understood as the study of the modular tensor category structure on the representations of a VOA (see \cite{H97}). 
In such a study, the notion of \emph{intertwining operators}, introduced in \cite{FHL}, plays a crucial role: the dimension of the space of intertwining operators is a mathematical formulation of the notion of \emph{fusion rules}. 
The finiteness of the fusion rules and a formula to compute them are thus fundamental in the theory. In \cite{FZ}, I. Frenkel and Zhu claimed such a formula using the bimodule theory over \emph{Zhu's algebra} $A(V)$. 
This formula, known as the \emph{fusion rules theorem}, was first proved by Li in \cite{Li, Li3} under the framework of $A(V)$-theory. 

The history up to this stage (i.e. the early 90s) can be summarized in the following diagram, where the general notions of conformal blocks and correlation functions were still shrouded in the mystery of physics.
\[
  \begin{tikzcd}[/tikz/execute at end picture={%
    \node (large) [rectangle, draw, dashed, fit=(A1) (A2),label=below:\textsf{p~h~y~s~i~c~s}] {};%
    }]
    |[alias=A1]|\mybox{Conformal\\ blocks}\arrow[r] &
    |[alias=A2]|\mybox{Correlation\\ functions}\arrow[r] & 
    \mybox{Intertwining\\ operators}\arrow[d,no head,"-\mybox{$A(V)$-theory}"] \\
    && \mybox{Fusion rules\\ theorem}
  \end{tikzcd}
\]

The mathematical notion of (genus-zero and genus-one) \emph{correlation functions} was introduced in \cite{FZ,Z}. These functions are constructed using intertwining operators and have been extensively studied by Huang in \cite{H05a,H05b}.
With such a robust notion, the second author gave another proof of the \emph{fusion rules theorem} using correlation functions in \cite{Liu}. 

Moving one step further, a rigorous mathematical formulation of \emph{conformal blocks} would unveil their mystery. 
However, this step took a long time. 
The initial effort is due to Zhu \cite{Z94}, who proposed a formulation generalizing the constructions of Tsuchiya-Ueno-Yamada \cite{TUY89} by introducing \emph{global vertex operators}. The finiteness of conformal blocks under this formulation was later established by Abe and Nagatomo in \cite{AN1, AN2}.  
Zhu's pioneering work, along with contributions from \cite{H97,MSV,MS99}, paved the way for the celebrated algebro-geometric formulation of \emph{conformal blocks on smooth curves} by Beilinson-Drinfeld \cite{BD04} and Frenkel-Ben-Zvi \cite{FBZ04}. 
Drawing insights from conformal field theory, it is anticipated that the conformal blocks should have the \emph{factorization property} or \emph{sewing property}. 
In the genus-zero case, Nagatomo and Tsuchiya proved these properties in \cite{NT}.
The general case, with an extended formulation on arbitrary stable curves, was established by Damiolini, Gibney, Krashen, and Tarasca in \cite{DGT19,DGK23}. 
Meanwhile, in the analytic realm, a similar long march toward the proof of the \emph{convergence of sewing} was initiated by Huang \cite{H05a,H05b} and completed recently by Gui in \cite{G}.

\splitle

The notion of \emph{twisted modules} of VOAs originated in the realization of irreducible representations of twisted affine Lie algebras \cite{Lepowsky}, the construction of the celebrated moonshine module \cite{FLM}, and the study of orbifold models in conformal field theory \cite{DHVW86,DVVV89}. 
The \emph{orbifold theory} of VOAs aims to systematically investigate these objects and has been continuously developed over the past few decades, see e.g. \cite{D94,DLM1,DLM00,BDM02,DX06,H10,DRX17}. 
However, despite this progress, many fundamental topics remain unclear, including a general version of \emph{twisted fusion rule theorem} and the construction of tensor product for twisted modules.

The difficulty of orbifold theory arises from its reliance on the intricate formal calculus of Puiseux series, making constructions and computations exceedingly challenging. 
Backing to its physical origin, the mathematical formulation of \emph{twisted correlation functions} makes it possible to overcome such a difficulty by using actual rational functions on orbicurves rather than formal Puiseux series. 
A special but important case of such a formulation has been developed in \cite{PartI}, where we applied it to produce a $g$-twisted version of the \emph{fusion rules theorem} following the approach in \cite{Liu}. 
However, the proof employed in \cite{PartI} still necessitates a substantial amount of computation. 
Given the theoretical nature of conformal blocks, they should offer a proof requiring less computation compared to correlation functions. 
The algebro-geometric formulation of \emph{twisted conformal blocks} by Frenkel and Szczesny in \cite{FBZ04,FS04} laid a foundation for this approach.

% \splitle
\subsection{Main results}
% \splitle
% \subsection*{Restriction of conformal blocks}

Following \cite{FBZ04, FS04}, associated to the datum $\Sigma=(\x\colon\tilX\to\X,\pp_{\bullet},\qq_{\diamond}, M^{\bullet}, N^{\diamond})$, where $\x\colon\tilX\to\X$ is a \emph{totally ramified orbicurve} with the \emph{Galois group} $G$ (see \cref{def:orbicurve}), $\pp_{\bullet}$ is an $m$-tuple of distinct points on the unramified locus $\Xc$, $\qq_{\diamond}$ are all the branch points, each provides a \emph{Deck generator} $g_{i}$ of $G$, $M^{\bullet}$ is an $m$-tuple of admissible \emph{untwisted} $V$-modules, with each $M^i$ attached to the point $\pp_{i}$, and $N^{\diamond}$ is a family of admissible \emph{$g_{\diamond}$-twisted} $V$-modules, with each $N^i$ attached to the branch point $\qq_{i}$, a \textbf{twisted conformal block} is a linear functional $\varphi$ on $\cM^{\bullet}_{\pp_{\bullet}}\otimes\cN^{\diamond}_{\qq_{\diamond}}$ that is invariant under the \emph{twisted chiral Lie algebra} $\L^{\circ}_{\Xc\setminus\pp_{\bullet}}(\cV^G)$ (see \cref{def:twistedchiralLie}). 

Mimicking the above definition and consider the following datum: 
\begin{equation}\label{eq:1.2}
\Sigma(0)=(\x\colon\tilX\to\X,\pp_{\bullet},\qq_{\diamond},M^{\bullet},U^{\diamond}),
\end{equation}
where we replace the family $N^{\diamond}$ of $g_{\diamond}$-twisted $V$-modules attached to the branch points $\qq_{\diamond}$ by a family $U^{\diamond}$ of modules over the \emph{$g_\diamond$-twisted Zhu's algebra $A_{g_{\diamond}}(V)$}.
We introduce the notion of a \textbf{twisted restricted conformal block associated to the datum $\Sigma(0)$} as a linear functional $\varphi$ on $\cM^{\bullet}_{\pp_{\bullet}}\otimes\cU^{\diamond}_{\qq_{\diamond}}$ that is invariant under the \emph{constrained twisted chiral Lie algebra} $  \L^{\circ}_{\X\setminus\pp_{\bullet}}(\cV^G)_{\le0}$ (see \cref{def:constraint}).

When $U^{\diamond}$ are the corresponding bottom levels of the $g_{\diamond}$-twisted $V$-modules $N^{\diamond}$, which are automatically $A_{g_{\diamond}}(V)$-modules, 
the inclusion $\cU^{\diamond}\monomorphism\cN^{\diamond}$ induces a canonical linear map $\uppi\colon\Cfb[\Sigma]\to\Cfb[\Sigma(0)]$
from the space $\Cfb[\Sigma]$ of twisted conformal blocks associated to the datum $\Sigma$ to the space $\Cfb[\Sigma(0)]$ of twisted restricted conformal blocks associated to the datum $\Sigma(0)$. 
One of our main results is the following theorem (see \cref{thm:pi-inj} and \cref{thm:pi-surj}), which demonstrates an explicit relation between restricted and unrestricted conformal blocks: 

\begin{THM}\label{thm:main}
If $N^{\diamond}$ are lowest-weight $g_\diamond$-twisted $V$-modules, then $\uppi$ is injective. 
If, furthermore, $N^{\diamond}$ are $g_\diamond$-twisted generalized Verma modules of their bottom levels, then $\uppi$ is an isomorphism. 
In particular, if $V$ is $g_\diamond$-rational, the canonical map $\uppi$ is an isomorphism for all irreducible $g_\diamond$-twisted $V$-modules $N^{\diamond}$.
\end{THM}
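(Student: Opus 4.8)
The plan is to prove the three assertions in turn; assertions~1 and~2 both rest on a single ``propagation'' mechanism that reconstructs a (restricted or unrestricted) conformal block from its values on the bottom levels $U^{i}=N^{i}(0)$ by climbing the filtration of each $N^{i}$ by lowest-weight level, and assertion~3 is then essentially formal. For injectivity, let $\varphi\in\Cfb[\Sigma]$ with $\uppi(\varphi)=0$, so $\varphi$ vanishes on $\cM^{\bullet}_{\pp_{\bullet}}\otimes\cU^{\diamond}_{\qq_{\diamond}}$. Since each $N^{i}$ is lowest-weight it is generated over its twisted modes by $N^{i}(0)$; writing $N^{i}_{\le d}=\bigoplus_{0\le n\le d}N^{i}(n)$, I would prove by induction on a multi-index $(d_{i})$ — and, within a fixed level, by a secondary induction on the number of twisted modes needed to reach a vector from strictly lower levels — that $\varphi$ annihilates $\cM^{\bullet}_{\pp_{\bullet}}\otimes\left(\bigotimes_{i}N^{i}_{\le d_{i}}\right)$. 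Given a homogeneous $v=\lo{a}{k}w\in N^{i}(d_{i})$ with $w$ of strictly lower level, one chooses a $G$-equivariant global section $\mu\in\L^{\circ}_{\Xc\setminus\pp_{\bullet}}(\cV^{G})$ whose local expansion at $\qq_{i}$ has leading coefficient $a$ at the pole order matching the mode $\lo{a}{k}$, and which vanishes to high order at every $\pp_{j}$ and every $\qq_{j}$ with $j\ne i$; such $\mu$ exists by a Riemann--Roch argument on $\X$ once the pole order allowed along $\pp_{\bullet}$ is taken large. The invariance identity, applied to $\mu$ acting on $(\cdots\otimes w\otimes\cdots)$, expands as a sum of local actions over all marked and branch points: at $\pp_{j}$ and $\qq_{j}$ $(j\ne i)$ these are expansions by very positive modes and hence kill the fixed vectors there because $M^{j},N^{j}$ are bounded below; at $\qq_{i}$ the action on $w$ equals $v$ plus terms lying in the lower levels $N^{i}_{<d_{i}}$, already annihilated by $\varphi$ through the inductive hypothesis. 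So $\varphi(\cdots\otimes v\otimes\cdots)=0$, and as such $v$ span $N^{i}(d_{i})$, the induction closes and $\varphi=0$.

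With injectivity in hand, for assertion~2 it remains to lift every $\psi\in\Cfb[\Sigma(0)]$ to a $\widetilde{\psi}\in\Cfb[\Sigma]$ with $\uppi(\widetilde{\psi})=\psi$. The universal property of the $g_{i}$-twisted generalized Verma module $\overline{M}_{g_{i}}(U^{i})$ is that it is spanned by iterated weight-raising twisted modes applied to $U^{i}$, with the only relations among such expressions being those forced by the twisted-module axioms. I would turn the identity from the injectivity step into a recursive \emph{definition} of $\widetilde{\psi}$ — its value on $\lo{a}{k}w$ being the one dictated by invariance under a section $\mu$ as above — with base case $\widetilde{\psi}|_{\cM^{\bullet}_{\pp_{\bullet}}\otimes\cU^{\diamond}_{\qq_{\diamond}}}=\psi$. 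Two checks remain. First, well-definedness: independence of the choice of $\mu$ and of the chosen presentation of a vector; here the generalized-Verma freeness matches the presentation ambiguity against the section ambiguity, and the residual discrepancy is a section constrained at $\qq_{i}$, i.e. lying in $\L^{\circ}_{\X\setminus\pp_{\bullet}}(\cV^{G})_{\le0}$, hence killed by the restricted invariance of $\psi$. Second, $\widetilde{\psi}$ must be invariant under all of $\L^{\circ}_{\Xc\setminus\pp_{\bullet}}(\cV^{G})$: along the weight-raising directions by construction, along the rest by propagation from $\psi$ via the same recursion. This yields $\uppi(\widetilde{\psi})=\psi$, so $\uppi$ is an isomorphism.

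For assertion~3, when $V$ is $g_{\diamond}$-rational each irreducible $g_{\diamond}$-twisted $V$-module $N^{i}$ is an ordinary lowest-weight module whose bottom $U^{i}$ is a finite-dimensional irreducible module over the semisimple algebra $A_{g_{i}}(V)$, so assertion~1 already gives injectivity of $\uppi$. For surjectivity, present $N^{i}$ as the irreducible quotient $\overline{M}_{g_{i}}(U^{i})\epimorphism N^{i}$ of the generalized Verma module of its bottom; this induces an injection $\Cfb[\Sigma_{N^{\diamond}}]\monomorphism\Cfb[\Sigma_{\overline{M}^{\diamond}}]$ whose composite with the isomorphism $\Cfb[\Sigma_{\overline{M}^{\diamond}}]\xrightarrow{\sim}\Cfb[\Sigma(0)]$ of assertion~2 is exactly $\uppi$ for $N^{\diamond}$, so it suffices to see that this inclusion is onto, i.e. that every conformal block of the Verma cover descends to the irreducible quotient. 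Equivalently, the maximal proper submodule $K^{i}\subsetneq\overline{M}_{g_{i}}(U^{i})$ must be annihilated in the pertinent space of coinvariants; since $K^{i}$ meets the bottom level trivially and $A_{g_{i}}(V)$ is semisimple, each generator of $K^{i}$ is reached from $U^{i}$ by weight-raising modes, and one further run of the propagation mechanism shows its image in coinvariants is forced by — and, because $U^{i}$ already carries the full irreducible $A_{g_{i}}(V)$-structure, vanishes against — bottom-level data alone. Hence $\uppi$ is an isomorphism for all irreducible $N^{\diamond}$.

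The main obstacle at every stage is the propagation lemma itself: realizing a prescribed twisted mode at a branch point $\qq_{i}$ as the leading term of a \emph{global} $G$-equivariant section of $\cV^{G}$ on the orbicurve that is simultaneously harmless at all other marked and branch points, and keeping careful track of the lower-order corrections it inevitably produces — this couples the Puiseux-series bookkeeping of twisted modes to Riemann--Roch on $\X$. The well-definedness check in the generalized-Verma step, matching presentation ambiguities against section ambiguities, is the subtlest single point.
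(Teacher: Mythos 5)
Your proposal is correct and follows essentially the same route as the paper: injectivity by propagating the vanishing of $\varphi$ from the bottom levels up each $N^{i}$ using global sections produced by Riemann--Roch with prescribed leading behavior at one branch point and controlled behavior elsewhere (the paper packages your level induction as the observation that the annihilator of $\varphi$ in $\cN^{i}_{\qq_i}$ is a submodule containing $\cU^{i}_{\qq_i}$), and surjectivity by turning that same invariance identity into a recursive definition of the lift and checking well-definedness against the twisted Jacobi relations modulo $\L^{\circ}_{\X\setminus\pp_{\bullet}}(\cV^G)_{<0}$. The only cosmetic difference is in the last assertion, where the paper simply notes that under $g_\diamond$-rationality every irreducible twisted module is already the generalized Verma module of its bottom level, making your detour through the Verma cover and its maximal submodule unnecessary.
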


\splitle
% \subsection*{Relation to part I}

The main objects investigated in \cite{PartI} are the \emph{$g$-twisted (restricted) correlation functions} on the twisted projective line  $\x\colon\oC\to\PP^1$.
These notions generalize their prototypes in the WZNW model \cite[Section 2.4]{TUY89} and the genus-zero (restricted) VOA-correlation functions in \cite{Liu,Z}. 
In the present paper, besides the introduction of the general twisted restricted conformal blocks, we also studied the relations between $g$-twisted (restricted) correlation functions and $3$-pointed twisted (restricted) conformal blocks on the \emph{twisted projective line}  $\x\colon\oC\to\PP^1$.
The following is our main result in \cref{sec:Cfb} (see \cref{thm:Cfb=Cor}): 

\begin{THM}\label{thm:main2}
  Let $M^1$ (resp. $M^2$ and $N^3$) be an admissible \emph{untwisted} (resp. \emph{$g$-twisted} and \emph{$g^{-1}$-twisted}) $V$-module of conformal weight $h_1$ (resp. $h_2$ and $h_3$). 
  Then the space of twisted conformal blocks $\Cfb[\Sigma_{1}(N^3, M^1, M^2)]$ associated to the datum
  \begin{equation}\label{eq:Sigma1}
    \Sigma_{1}(N^3, M^1, M^2):=(\x\colon\oC\to\PP^1, \infty, 1, 0, N^3, M^1, M^2)
  \end{equation}
  is isomorphic to the space $\Cor[\Sigma_{1}(N^3, M^1, M^2)]$ of $g$-twisted correlation functions associated to the same datum. 
\end{THM}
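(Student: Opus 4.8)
The plan is to construct explicit maps in both directions between the space $\Cfb[\Sigma_1(N^3,M^1,M^2)]$ of $3$-pointed twisted conformal blocks on $\x\colon\oC\to\PP^1$ with insertions at $\infty,1,0$ and the space $\Cor[\Sigma_1(N^3,M^1,M^2)]$ of $g$-twisted correlation functions, and then check they are mutually inverse. Since the twisted projective line has its three marked points placed at the standard coordinates $\infty,1,0$ (with $0$ and $\infty$ the branch points carrying the $g$- and $g^{-1}$-twisted modules $M^2$ and $N^3$, and $1$ on the unramified locus carrying the untwisted $M^1$), a conformal block $\varphi$ is a linear functional on $\cN^3_\infty\otimes\cM^1_1\otimes\cM^2_0$ annihilated by the twisted chiral Lie algebra $\L^\circ_{\Xc\setminus\{1\}}(\cV^G)$. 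From such a $\varphi$ one produces a multilinear assignment $(a^1,\dots,a^k;u^3,w^1,w^2)\mapsto \varphi(\,\cdots)$ by inserting global sections of $\cV^G$ with poles allowed only at the marked points; tracking how these global sections expand into local coordinates at $1$ (ordinary Laurent series, giving untwisted vertex operators) and at $0,\infty$ (Puiseux series adapted to the ramification, giving the $g^{\pm1}$-twisted vertex operators) yields a rational function on the configuration space. The gauge invariance under $\L^\circ$ becomes precisely the defining properties (rationality, the pole structure at $z=1$, $z=0$, $z=\infty$, and the relevant associativity/commutativity identities) of a $g$-twisted correlation function as defined in \cite{PartI}.

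Concretely, the first step is to recall from \cite{PartI} the list of axioms defining $\Cor[\Sigma_1(N^3,M^1,M^2)]$ and to recall the explicit description of global sections of $\cV^G$ on $\PP^1$ with poles bounded at $\{0,1,\infty\}$, together with their local expansions at each of the three points. Second, given $\varphi\in\Cfb$, I define $S(\varphi)$ by evaluating $\varphi$ on states obtained by acting with these global sections; the key computation is a residue/contour argument (Cauchy's theorem on $\PP^1$: the sum of residues of a global rational section vanishes) showing that the matrix coefficient, a priori a formal object, is the expansion of a genuine rational function with the prescribed poles — this is where the strong residue theorem for the twisted chiral Lie algebra does the work. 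Third, in the reverse direction, given a correlation function $f$, I reconstruct a linear functional on $\cN^3_\infty\otimes\cM^1_1\otimes\cM^2_0$ by taking appropriate coefficients of $f$ in its expansions; the chiral Lie algebra invariance is then verified by again invoking the residue theorem, now read in the opposite direction, to see that the gauge relations imposed by a global section are exactly the consistency conditions already satisfied by $f$. Finally I check $S$ and its inverse compose to the identity, which amounts to the statement that a correlation function is determined by, and recoverable from, these coefficient data — essentially a reconstruction lemma.

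The main obstacle I anticipate is the bookkeeping at the two ramified points $0$ and $\infty$: the local coordinate on $\oC$ there is a $T$-th root of the coordinate on $\PP^1$ (where $T=\abs{g}$), so the expansion of a global section of $\cV^G$ involves Puiseux series, and one must verify that the $G$-invariance built into $\cV^G$ makes these expansions land exactly in the twisted vertex operators $Y_{N^3}$ and $Y_{M^2}$ with the correct fractional powers and the correct $\delta$-function identities. Matching the twisted chiral Lie algebra action near a branch point with the mode action $\lo{b}{r/T+m}$ appearing in the twisted correlation function axioms requires care with the normalization of the invariant section and with the transformation of the residue pairing under $z\mapsto z^T$. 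A secondary, more routine difficulty is handling the point $\infty$: one passes to the coordinate $1/z$ and uses the conformal-weight grading (hence the appearance of $h_1,h_2,h_3$ in the statement) to control the expansions there; this is the standard "adjoint/contragredient at infinity" manipulation, but in the twisted setting it interacts with the Puiseux expansion and must be done explicitly. Once these local analyses are in place, the global argument is a clean application of the residue theorem, paralleling the untwisted genus-zero case treated in \cite{Liu} and its twisted refinement in \cite{PartI}.
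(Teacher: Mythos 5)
Your overall strategy coincides with the paper's: both directions are built from propagation of vacua (inserting states at auxiliary unramified points and extending the resulting matrix coefficients to rational sections via the Strong Residue Theorem, i.e.\ the residue sum formula on $\x\colon\oC\to\PP^1$ with the $\tfrac{1}{T}$-weighted residues at the branch points), with the Puiseux expansions at $0$ and $\infty$ matching the $g$- and $g^{-1}$-twisted modes. That part of your plan is sound and is essentially \cref{lem:propagation} plus the residue computations carried out in \cref{sec:Cor}.

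The gap is that you never address how a single conformal block, attached to the \emph{fixed} configuration $(\infty,1,0)$, produces an element of $\Cor[\Sigma_{1}(N^3,M^1,M^2)]$, whose members are rational in the position $\qq$ of the $M^1$-insertion and must satisfy the $\vo{L}{-1}$-derivative property in that variable. Your construction, as described, only yields the restriction of $S_\varphi$ to $w(\qq)=1$ (together with its dependence on the auxiliary vacuum points), so the forward map is not yet defined; and in the reverse direction you have no mechanism for showing that the family of functionals $\varphi_{\qq}:=S\bracket{\placeholder}{(\placeholder,\qq)}{\placeholder}$ collapses to a single element of $\Cfb[\Sigma_{1}(N^3,M^1,M^2)]$. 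The paper resolves both issues at once by conjugating with $w^{\vo{L}{0}}$ (formula \labelcref{eq:CfbFamily}) to produce a section $\qq\mapsto\varphi_{\qq}$ of the bundle $(\cN^3_{\infty}\otimes\cM^1_{\square}\otimes\cM^2_{0})^{\ast}$ over $\Cc$ and proving it is horizontal for $\nabla=w^{h}\d\circ w^{-h}-\vo{L}{-1}^{\cM^1}\d{w}$ (\cref{lem:horizontal}, itself a consequence of invariance under $\loo{\upomega}{1}{0}$); this is exactly where the conformal weights $h_1,h_2,h_3$ and the shift $h=h_1+h_2-h_3$ enter, and it is the precise counterpart of the $\vo{L}{-1}$-property of correlation functions. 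Without this step the two sides of your proposed bijection do not even have matching degrees of freedom, so you should add the horizontal-family construction as an explicit lemma before checking the remaining axioms.
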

This theorem generalizes the correspondence between the WZNW-conformal blocks and the WZNW-correlation functions (\cite[Theorem 2.4.1]{TUY89}) to the ($g$-twisted) general VOA-scenario. 
Using a similar method, we prove the same correspondence between the twisted restricted conformal blocks and the $g$-twisted restricted correlation functions in  \cref{sec:RestrictedCfb} (see \cref{thm:Cfb=Cor_res}):

\begin{THM}\label{thm:main3}
 Let $M^1$ be an admissible untwisted $V$-module of some conformal weight $h_1$, and let $U^2$ (resp. $U^3$) be a left (resp. right) $A_g(V)$-module on which $[\upomega]$ acts as a scale $h_2$ (resp. $h_3$). Then the space of twisted restricted conformal blocks $\Cfb[\Sigma_{1}(U^3, M^1, U^2)]$  associated to the datum
  \begin{equation}\label{eq:Sigma1(0)}
    \Sigma_{1}(U^3, M^1, U^2):=(\x\colon\oC\to\PP^1, \infty, 1, 0, U^3, M^1, U^2).
  \end{equation}
  is isomorphic to the space $\Cor[\Sigma_{1}(U^3, M^1, U^2)]$ of $g$-twisted restricted correlation functions associated to the same datum. 
\end{THM}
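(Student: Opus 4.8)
The plan is to run, \emph{mutatis mutandis}, the argument that proves \cref{thm:main2}, with the full $g$-twisted modules $N^2,N^3$ replaced by the $A_g(V)$-modules $U^2,U^3$, the genus-zero $g$-twisted correlation functions replaced by their restricted counterparts from \cite{PartI}, and the twisted chiral Lie algebra replaced by the constrained twisted chiral Lie algebra $\L^{\circ}_{\PP^1\setminus\{1\}}(\cV^G)_{\le0}$ of \cref{def:constraint}. Write $\Sigma_1(0):=\Sigma_{1}(U^3,M^1,U^2)$. I would first spell out both sides over $\PP^1$ with marked points $\infty,1,0$ and the totally ramified cover $\x\colon\oC\to\PP^1$: a $g$-twisted restricted correlation function for $\Sigma_1(0)$ is a family of rational functions (equivalently, their Puiseux expansions at the branch points $0$ and $\infty$), multilinear in $M^1\otimes U^2\otimes U^3$ in the remaining slots, with pole orders at $0,1,\infty$ bounded in terms of $h_1,h_2,h_3$ and satisfying the vertex-operator recursion; a twisted restricted conformal block for $\Sigma_1(0)$ is a functional on $\cM^1_{1}\otimes\cU^2_{0}\otimes\cU^3_{\infty}$ annihilated by $\L^{\circ}_{\PP^1\setminus\{1\}}(\cV^G)_{\le0}$, the left/right $A_g(V)$-module conventions on $U^2,U^3$ reflecting the two branch points.

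From a twisted restricted conformal block $\varphi$ I would produce a correlation function by the standard device of inserting the vacuum module at a variable unramified point $z$, which turns $\varphi$ into a family of functions of $z$; these are rational by the residue theorem on $\PP^1$, their pole orders at $0,1,\infty$ are controlled by the local structure of $\cV^G$ there, and the $\L^{\circ}_{\PP^1\setminus\{1\}}(\cV^G)_{\le0}$-invariance of $\varphi$ becomes the recursion satisfied by a restricted correlation function — this is word-for-word the construction used for \cref{thm:main2}. Conversely, from a $g$-twisted restricted correlation function $S$ I would rebuild $\varphi$ as the functional that reads off from $S$ the coefficient of the appropriate monomial, just as covacua are recovered from correlation functions in the proof of \cref{thm:main2}, the point being that the $\cU^{\diamond}$-slots now contribute only their single bottom-level datum; well-definedness follows from the recursion axioms of $S$, and $\L^{\circ}_{\PP^1\setminus\{1\}}(\cV^G)_{\le0}$-invariance from the pole bounds together with the $A_g(V)$-compatibility of $S$. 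That these two assignments are mutually inverse and natural in $M^1,U^2,U^3$ is then routine, as in the proof of \cref{thm:main2}, and gives the isomorphism $\Cfb[\Sigma_1(0)]\cong\Cor[\Sigma_1(0)]$.

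The step I expect to be the real obstacle, and essentially the only place where the argument departs from the proof of \cref{thm:main2}, is the local analysis at a ramification point. There one must show that the negative-degree part of the local chiral Lie algebra annihilates $\varphi$ automatically on the $\cU^{\diamond}$-slots — which forces the Puiseux expansion of $S$ near $0$ and $\infty$ to collapse to the leading terms governed by $h_2$ and $h_3$ — while the degree-zero part acts through the $g_\diamond$-twisted Zhu algebra $A_{g_\diamond}(V)$, matching, term by term in the Puiseux variable, the twisted Zhu product $\ast_{g_\diamond}$ with the prescribed $A_g(V)$-module structures on $U^2$ and $U^3$. This geometric incarnation of Zhu's algebra is invisible in the unrestricted setting of \cref{thm:main2}; although the computation parallels the restricted-correlation-function calculations of \cite{PartI}, it has to be redone in the conformal-block language. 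Once it is in place, the remaining verifications are the same formal manipulations as for \cref{thm:main2}.
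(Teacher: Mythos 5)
Your proposal follows the same route as the paper: propagation of vacua to turn a restricted conformal block into a system $S_{\varphi}$, verification of the restricted-correlation-function axioms (with the genuinely new work being exactly where you locate it, namely showing at the branch points $0,\infty$ that $\L^{\circ}(\cV^G)_{<0}$ kills the $\cU^{\diamond}$-slots while the degree-zero part acts through $o(a)$, which yields the recursive formulas via the Residue Sum Formula), and the inverse construction $S\mapsto\varphi_{\qq}$ with invariance checked by residues and horizontality supplying the $\vo{L}{-1}$-property. This matches the paper's proof in both structure and substance.
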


\splitle

A main application in \cite{PartI} is the \emph{$g$-twisted fusion rules theorem}. 
By definition, the \emph{fusion rule} associated to $V$-modules $M^1$, $M^2$, and $M^3$ is the dimension of the space $\Fusion$ of \emph{intertwining operators} among them. 
In \cite{PartI}, we have considered the case where $M^1$ is \emph{untwisted} while $M^2$ and $M^3$ are \emph{$g$-twisted}. 
The introduction of twisted conformal blocks on totally ramified orbicurves enables us to consider a more general case: where $M^1$ (resp. $M^2$ and $M^3$) is \emph{$g_1$-twisted} (resp. \emph{$g_2$-twisted} and \emph{$g_3$-twisted}) such that $g_1g_2=g_3$, and one of the following conditions holds:
\begin{enumerate}
    \item $g_1=\id$; 
    \item each of $g_1$, $g_2$, and $g_3$ generates the same cyclic group.
\end{enumerate}
The datum $\Sigma_{1}((M^3)', M^1, M^2)$ corresponds to the first situation. As for the second one, we consider the following datum: 
\begin{equation}\label{eq:Sigmaw}
  \Sigma_{w}\left((M^3)', M^1, M^2\right):=\left(\x\colon\oC_{w}\to\PP^1, \infty, w, 0, (M^3)', M^1, M^2\right),
\end{equation}
where $\x\colon\oC_w\to\PP^1$ is an obicurve with exactly three branch points $w$, $0$, and $\infty$ (see \cref{eg:3ptP1}) with Deck generators $g_1$, $g_2$, and $g_3$ respectively.
Then we have a theorem stating the relationship between twisted conformal blocks and twisted intertwining operators (see \cref{thm:IO}): 
\begin{THM}\label{thm:main5}
    Let $M^k$ be a $g_k$-twisted $V$-module of conformal weight $h_k$ for $k=1, 2, 3$. Set $h=h_1+h_2-h_3$. Then, as linear spaces,
    \[\Cfb\left(\Sigma\left((M^3)', M^1, M^2\right)\right)\cong \Fusion,\]
    here $\Sigma$ stands for $\Sigma_1$ or $\Sigma_w$.
\end{THM}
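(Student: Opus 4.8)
The plan is to factor the asserted isomorphism through the space of $g$-twisted correlation functions, using on one side the identification of twisted conformal blocks with correlation functions and on the other the identification of correlation functions with twisted intertwining operators. For the datum $\Sigma_{1}$ the first identification is \cref{thm:main2}, and the second is the correlation-function reformulation of intertwining operators established in \cite{PartI} (following the correlation-function approach of \cite{FHL, Liu}); for the datum $\Sigma_{w}$ both identifications have to be re-established over the three-branch-point orbicurve $\oC_{w}$. Since $\Fusion$ does not depend on the geometric presentation, it suffices in each case to exhibit a natural linear isomorphism between the relevant correlation-function space and $\Fusion$.

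Assume first condition (1), $g_1 = \id$. Then $M^1$ is admissible untwisted, $M^2$ is $g_2$-twisted, $g_3 = g_1 g_2 = g_2$, and $(M^3)'$ is an admissible $g_2^{-1}$-twisted $V$-module of conformal weight $h_3$; hence $\Sigma_{1}((M^3)', M^1, M^2)$ is an admissible datum of the form \eqref{eq:Sigma1} (with $N^3 = (M^3)'$), and \cref{thm:main2} gives $\Cfb[\Sigma_{1}((M^3)', M^1, M^2)] \cong \Cor[\Sigma_{1}((M^3)', M^1, M^2)]$. On the other hand, the correlation-function reformulation of intertwining operators in \cite{PartI} identifies $\Cor[\Sigma_{1}((M^3)', M^1, M^2)]$ with $\Fusion$: an intertwining operator $\cY$ of type $\binom{M^3}{M^1\;M^2}$ yields the system of matrix coefficients of $\cY$ against the module vertex operators of $M^3$, and is recovered from this system by the $L(-1)$-derivative property together with twisted commutativity and twisted associativity. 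Composing the two isomorphisms proves the theorem for $\Sigma = \Sigma_{1}$.

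Now assume condition (2): each $g_k$ generates the same cyclic group $\langle g\rangle$, say of order $T$, and $g_1 g_2 = g_3$. Take $G = \langle g\rangle$ as the Galois group of $\x\colon\oC_{w}\to\PP^1$, and recall (see \cref{eg:3ptP1}) that the ramification of $\oC_{w}$ over $\infty$, $w$, $0$ is arranged so that the local fractional monodromies are precisely those compatible with attaching $(M^3)'$, $M^1$, $M^2$ there, the relation $g_1 g_2 = g_3$ expressing that the product of the three local monodromies is trivial. The argument then proceeds in two steps. First, one establishes the $\Sigma_{w}$-analogue of \cref{thm:main2} by the same method: localizing a twisted conformal block $\varphi$ near each of the three branch points and expanding in the local Puiseux coordinate yields three formal series, invariance under the twisted chiral Lie algebra $\L^{\circ}_{\Xc}(\cV^{G})$ forces these to be the expansions of a single rational section over $\oC_{w}$, and $\varphi$ is reconstructed from that section. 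Second, one matches such a rational section with a twisted intertwining operator of type $\binom{M^3}{M^1\;M^2}$: its pullback to $\PP^1$ is a multivalued function of $z$ with the prescribed fractional monodromy, whose expansions at $w$, $0$, and $\infty$ are the analytic continuations of $\langle u', \cY(a^1, z)\, u\rangle$; the fractional exponents are produced by the ramification, the leading exponent is fixed by the weight defect $h = h_1 + h_2 - h_3$, and the twisted Jacobi identity for $\cY$ is exactly the cocycle condition making the three local expansions glue. Composing gives $\Cfb[\Sigma_{w}((M^3)', M^1, M^2)] \cong \Cor[\Sigma_{w}((M^3)', M^1, M^2)] \cong \Fusion$.

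The principal obstacle is the $\Sigma_{w}$ case, and within it the bookkeeping of the fully twisted Puiseux calculus in both steps. In Step~1 one must verify that the three expansions of $\varphi$, each living in a distinct chart of $\oC_{w}$ with its own fractional parameter, are mutually consistent and glue to a genuine global meromorphic section — this is precisely where the hypothesis that all three twists lie in a single cyclic group is used, since only then does the common cover $\oC_{w}$ exist. In Step~2 one must check that twisted commutativity and associativity for intertwining operators among modules over a common cyclic group amount \emph{exactly} to single-valuedness and meromorphy of the associated section on $\oC_{w}$, with leading behaviour controlled correctly by $h = h_1 + h_2 - h_3$; this is a twisted refinement of the classical bookkeeping behind \cite[Theorem~2.4.1]{TUY89}. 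Once these two identifications are in place — they occupy the bulk of \cref{sec:Cfb} — the theorem follows by composition in both cases.
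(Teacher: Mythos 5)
Your proposal is sound in outline, but it takes a genuinely different route from the paper for the $\Sigma_w$ case, and a partially different one for $\Sigma_1$. For $\Sigma_1$ you compose \cref{thm:main2} with the identification $\Cor[\Sigma_{1}((M^3)', M^1, M^2)]\cong\Fusion$ from \cite{PartI}; this is exactly the left-hand path in the paper's summary diagram and is a legitimate alternative to what the paper actually does, which is to run the direct $\Sigma_w$-style argument again in the $g_1=\id$ setting. For $\Sigma_w$, however, the paper never introduces a correlation-function space over $\oC_w$ at all: it constructs the bijection $\Cfb[\Sigma_w((M^3)',M^1,M^2)]\cong\Fusion$ directly, sending an intertwining operator $I$ to the functional $\braket*{\varphi_w}{v_3'\otimes v\otimes v_2}=\braket*{v_3'}{I(v,w)v_2}w^h$, checking $\L_{\PP^1\setminus\Set{0,w,\infty}}(\cV^G)$-invariance by computing the residues at $0$, $w$, $\infty$ of the pairing against $z^{\frac{r}{T}+m}(z-w)^{-\epsilon\frac{r}{T}-n}w^h\d{z}$ and invoking the residue sum formula of \cref{eg:RSF} together with the twisted Jacobi identity, and obtaining horizontality of the family $\Set{\varphi_w}$ from the $\vo{L}{-1}$-derivative property (and conversely). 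Your plan instead requires first defining a space of $n$-point correlation systems on $\oC_w$ and proving a $\Sigma_w$-analogue of \cref{thm:main2} before matching with intertwining operators; that is substantially more infrastructure than the theorem needs, and both of your two steps remain programmatic rather than executed. That said, you have correctly identified the one mechanism that makes either route work — the twisted Jacobi identity is precisely the condition that the three local expansions at the branch points glue to a single meromorphic section on $\oC_w$, with the leading exponent governed by $h=h_1+h_2-h_3$ — so your outline would go through, at the cost of duplicating the machinery of \cref{sec:Cfb} on a second curve.
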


\splitle

In the study of \emph{$g$-twisted correlation functions} and fusion rules in \cite{PartI}, we introduced a space $\Cfb[U^3, M^1, U^2]=((U^3\otimes M^1\otimes U^2)/J)^\ast$ termed as the space of \emph{$3$-pointed $g$-twisted restrict conformal blocks} associated to the datum \labelcref{eq:Sigma1(0)}. 
This datum is a special case of \labelcref{eq:1.2}: 
the \emph{twisted projective line} is a totally ramified obicurve, 
$M^1$ is an irreducible $V$-module attached to an unramified point $1\in\PP^1$, 
and $U^2$ (resp. $U^3$) is an irreducible left (resp. right) module over the \emph{$g$-twisted Zhu's algebra} $A_g(V)$ \cite{DLM1} attached to the branch point $0\in \PP^1$ (resp. $\infty \in \PP^1$). 
The relation subspace $J$ in the definition of $\Cfb[U^3, M^1, U^2]$ was found through the calculation of \emph{recursive formulas} of $g$-twisted restricted correlation functions in $\Cor(\Sigma_{1}(U^3, M^1, U^2))$, see \cite[Definition 4.1]{PartI}. 
Our main results in \cite{PartI} establish isomorphisms among the spaces $\Cfb[U^3, M^1, U^2]$, $\Cor (\Sigma_{1}(U^3, M^1, U^2))$, and $(U^3\otimes_{A_g(V)} A_g(M^1)\otimes_{A_g(V)} U^2)^\ast$, leading to the \emph{$g$-twisted fusion rules theorem}. 

In this paper, we prove the following theorem (see \cref{thm:Cfb=Cfb}):
\begin{THM}\label{thm:main4}
    The subpaces $\Cfb[\Sigma_{1}(U^3, M^1, U^2)]$ and $\Cfb[U^3, M^1, U^2]$ coincide in the space of linear functionals $(U^3\otimes M^1\otimes U^2)^{\ast}$.
\end{THM}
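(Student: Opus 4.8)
The plan is to show that the two defining conditions cut out the same subspace of $(U^3\otimes M^1\otimes U^2)^{\ast}$ by proving a mutual containment, where one direction is essentially formal and the other requires unwinding the recursive formulas from \cite{PartI}. Recall that $\Cfb[\Sigma_{1}(U^3, M^1, U^2)]$ consists of functionals annihilated by the action of the constrained twisted chiral Lie algebra $\L^{\circ}_{\PP^1\setminus\{1\}}(\cV^G)_{\le 0}$, while $\Cfb[U^3, M^1, U^2]=((U^3\otimes M^1\otimes U^2)/J)^{\ast}$ where $J$ is the explicit span of recursion-type relations. So the theorem amounts to the identity of subspaces $\mathrm{Ann}(\L^{\circ}_{\PP^1\setminus\{1\}}(\cV^G)_{\le 0}\cdot(U^3\otimes M^1\otimes U^2)) = \mathrm{Ann}(J)$, which in turn (since both sides are annihilators) is equivalent to the equality of the subspaces they annihilate:
\[
  \L^{\circ}_{\PP^1\setminus\{1\}}(\cV^G)_{\le 0}\cdot(U^3\otimes M^1\otimes U^2) \;=\; J.
\]
Thus I would reduce everything to this cleaner statement about submodules of $U^3\otimes M^1\otimes U^2$ rather than about functionals.

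First I would establish the inclusion $J\subseteq \L^{\circ}_{\PP^1\setminus\{1\}}(\cV^G)_{\le 0}\cdot(U^3\otimes M^1\otimes U^2)$. Here I would take each generator of $J$ as given in \cite[Definition 4.1]{PartI} — these are precisely the relations extracted from the recursive formulas for $g$-twisted restricted correlation functions — and exhibit it as coming from the action of a specific element of the constrained twisted chiral Lie algebra. The key point is that the recursive formulas in \cite{PartI} were themselves derived by applying chiral Lie algebra elements (global sections of $\cV^G$ on $\PP^1\setminus\{1\}$, with the appropriate vanishing/pole constraints at $0$, $1$, $\infty$ encoded by the subscript $\le 0$) to the tensor product and using the local expansions at the three points. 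So I would match each recursion relation with the corresponding global section: the sections with a prescribed order of pole at $0$ and $\infty$ and regularity (with zero) at $1$ produce exactly the "constrained" relations, and the $\le 0$ truncation matches the restriction to the bottom-level modules $U^2$, $U^3$.

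Conversely, for $\L^{\circ}_{\PP^1\setminus\{1\}}(\cV^G)_{\le 0}\cdot(U^3\otimes M^1\otimes U^2)\subseteq J$, I would argue that the constrained twisted chiral Lie algebra acting on the triple tensor product is spanned, modulo $J$, by the operations already accounted for in the recursion: one picks a basis of global sections of $\cV^G$ on $\PP^1\setminus\{1\}$ adapted to the pole filtration at $0$ and $\infty$ and the zero at $1$, expands each at the three marked points into modes acting on $U^3$, $M^1$, $U^2$ respectively, and checks that after imposing the $\le 0$ constraint the resulting relation lies in $J$ — this is again a consequence of the derivation of the recursive formulas, read in the other direction. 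A partial-fraction / residue argument on $\PP^1$ (as in the genus-zero analyses of \cite{NT} and \cite{TUY89}, and as used in \cite{PartI}) shows that every such global section decomposes into pieces each supported near a single marked point, so the action decomposes correspondingly and each piece is a recursion relation or a consequence of the $V$-module axioms for $M^1$ and the $A_g(V)$-module axioms for $U^2$, $U^3$.

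The main obstacle I expect is the bookkeeping in this second inclusion: one must verify that the $\le 0$-truncation (which forces compatibility with the fact that $U^2$ and $U^3$ carry only $A_g(V)$-module structures, not full twisted $V$-module structures) exactly kills the extra freedom and produces nothing outside $J$, i.e. that no "new" relations arise from the chiral Lie algebra action that were not already visible in the correlation-function recursion. This is where the precise form of the constraint in \cref{def:constraint} and the precise generators of $J$ in \cite[Definition 4.1]{PartI} must be carefully aligned — in particular handling the Puiseux/Laurent expansions at the branch points $0$ and $\infty$ versus the ordinary Laurent expansion at $1$, and confirming that the orbifold twist (the $G$-action and the passage to $\cV^G$) is compatibly encoded on both sides. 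Once the dictionary between global sections with prescribed local behavior and the recursion relations is set up cleanly, the equality of the two spanning sets modulo lower-order terms should follow by induction on the pole order, exactly mirroring the recursive structure of the correlation-function formulas in \cite{PartI}.
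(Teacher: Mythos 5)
Your overall architecture agrees with the paper's: reduce the statement to the equality of subspaces $\L^{\circ}_{\PP^1\setminus\Set{1}}(\cV^G)_{\le 0}.(U^3\otimes M^1\otimes U^2)=J$, obtain $J\subseteq\L^{\circ}_{\PP^1\setminus\Set{1}}(\cV^G)_{\le 0}.(U^3\otimes M^1\otimes U^2)$ by exhibiting each generator of $J$ as the action of an explicit element of the constrained Lie algebra, and then show the reverse inclusion. The first inclusion is exactly what the paper does: the four families of relations \labelcref{f-relation1,f-relation2,f-relation3,f-relation4} are realized as the actions of $\loo{\upomega}{1}{0}$, $\loo{a}{\wt a}{1}$, $\loo{a}{\wt a-1}{1}$ (for $a\in V_g^0$) and $\loo{a}[r]{\wt a-1}{1}$ (for $a\in V_g^r$, $r\neq 0$), all of which visibly lie in $\L^{\circ}_{\PP^1\setminus\Set{1}}(\cV^G)_{\le 0}$.

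The gap is in the converse inclusion, and it is the heart of the theorem. Your proposed mechanism — a partial-fraction/residue decomposition of a global section into pieces ``each supported near a single marked point'' — does not work as stated, because the defining condition of $\L^{\circ}_{\PP^1\setminus\Set{1}}(\cV^G)_{\le 0}$ (namely $\frac{r}{T}+m-n\le\wt a-1\le\frac{r}{T}+m$ for a class $\loo{a}[r]{m}{n}$) couples the exponents at $0$, $1$, and $\infty$: the individual summands of a partial-fraction decomposition of $z^{\frac{r}{T}+m}(z-1)^{-n}$ generally violate this constraint, so the decomposition leaves the constrained subalgebra and you cannot conclude that each piece acts within $J$. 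What actually closes the argument in the paper is \cref{lem:strOfLgV-0}: the constrained Lie algebra is \emph{spanned} (not merely spanned modulo $J$) by precisely the four families above. This is proved by a double induction on $(m,n-m)$ whose engine is the quotient by $\Image\nabla$ — the total-derivative relation expressing $\loo{a}{\wt a+m}{n+1}$ in terms of $\loo{(\vo{L}{-1}a)}{\wt{(\vo{L}{-1}a)}-1+m}{n}$ and $\loo{a}{\wt a-1+m}{n}$ — which lets one trade pole order at $1$ against the exponent at $0$ while staying inside $\L^{\circ}_{\le 0}$ at every step. Without identifying this tool (or an equivalent one), the claim that ``no new relations arise'' is an assertion rather than a proof; you should either prove the spanning lemma or supply a substitute argument that genuinely respects the $\le 0$ constraint.
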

Hence $\Cfb[U^3, M^1, U^2]$ is just a special case of our space of twisted restricted conformal blocks on totally ramified orbicurves. 

\splitle
% \subsection*{Space of coinvariants and finiteness of conformal blocks}

The space of (twisted restricted) \emph{coinvariants} associated to the datum \labelcref{eq:1.2} is the largest quotient of  $\cM^{\bullet}_{\pp_{\bullet}}\otimes\cU^{\diamond}_{\qq_{\diamond}}$ with respect to the action of constrained chiral Lie algebra. We denote this space by $(\cM^{\bullet}_{\pp_{\bullet}}\otimes\cU^{\diamond}_{\qq_{\diamond}})_{\L^{\circ}_{\X\setminus\pp_{\bullet}}(\cV^G)_{\le0}}$. Its dual space is the space of twisted restricted conformal blocks $\Cfb[\Sigma(0)]$.

After introducing a geometric generalization of the \emph{twisted Zhu's algebra over an affine open} (see \cref{def:Zhuoveraffine}), we give an 
 $A(V)$-theoretical description of the space of restricted coninvariants (see \cref{thm:coinvariants}):

\begin{THM}\label{thm:main'}
    We have the following isomorphism of vector spaces: 
    \[
    (\cM^{\bullet}_{\pp_{\bullet}}\otimes\cU^{\diamond}_{\qq_{\diamond}})_{\L^{\circ}_{\X\setminus\pp_{\bullet}}(\cV^G)_{\le0}} 
    \cong 
    \cA_{\X\setminus\pp_\bullet}(\cM^{\bullet}_{\pp_{\bullet}})\otimes_{\cA_{\X\setminus\pp_\bullet}(\cV^G)}\cU^{\diamond}_{\qq_{\diamond}}.
    \]  
\end{THM}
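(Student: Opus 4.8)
The plan is to realize the right-hand side as a presentation of the left-hand side by matching generators and relations in a two-stage reduction. Recall that $\cM^{\bullet}_{\pp_{\bullet}}$ and $\cU^{\diamond}_{\qq_{\diamond}}$ are the spaces attached at the marked and branch points, and the coinvariants are taken with respect to $\L^{\circ}_{\X\setminus\pp_{\bullet}}(\cV^G)_{\le0}$, the constrained twisted chiral Lie algebra acting by global sections of $\cV^G$ regular away from the $\pp_{\bullet}$ with the nonpositive-mode constraint at the branch points. The geometric twisted Zhu's algebra $\cA_{\X\setminus\pp_\bullet}(\cV^G)$ and its module $\cA_{\X\setminus\pp_\bullet}(\cM^{\bullet}_{\pp_{\bullet}})$ (from \cref{def:Zhuoveraffine}) are designed precisely so that their multiplication encodes the part of the chiral Lie action that survives the constraint. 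So the first step is to unwind both sides as explicit quotients: the left side is $\bigl(\cM^{\bullet}_{\pp_{\bullet}}\otimes\cU^{\diamond}_{\qq_{\diamond}}\bigr)$ modulo the span of all vectors of the form $\bigl(\text{action of a global section}\bigr)\cdot(m\otimes u)$; the right side is the tensor product over $\cA_{\X\setminus\pp_\bullet}(\cV^G)$, which is itself a quotient by the submodule generated by $a\cdot m\otimes u - m\otimes a\cdot u$ together with the defining relations of $\cA_{\X\setminus\pp_\bullet}(-)$.

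The key steps, in order, are as follows. First, I would split the action of $\L^{\circ}_{\X\setminus\pp_{\bullet}}(\cV^G)_{\le0}$ into the "negative-mode" part, which acts only through the $\cM^{\bullet}$ factor (since the constraint kills negative modes at the branch points), and the "zero-mode/diagonal" part. Quotienting first by the negative-mode part produces exactly $\cA_{\X\setminus\pp_\bullet}(\cM^{\bullet}_{\pp_{\bullet}})\otimes\cU^{\diamond}_{\qq_{\diamond}}$ on the nose — this is the geometric analogue of the classical fact that killing $L_{<0}$-type modes on a module yields its Zhu reduction, and should follow from the definition in \cref{def:Zhuoveraffine} together with a residue/expansion computation identifying the regular-away-from-$\pp_{\bullet}$ sections with the local data at each $\pp_i$. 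Second, the residual action on this quotient is by the algebra $\cA_{\X\setminus\pp_\bullet}(\cV^G)$ acting compatibly on the $\cA_{\X\setminus\pp_\bullet}(\cM^{\bullet}_{\pp_{\bullet}})$ and $\cU^{\diamond}_{\qq_{\diamond}}$ factors — left on one, right on the other, via the anti-involution built into the twisted Zhu formalism — so quotienting by it is precisely forming $\otimes_{\cA_{\X\setminus\pp_\bullet}(\cV^G)}$. Third, I would check that no relations are lost or double-counted: the filtration argument (by mode/pole order along the branch divisor) should show the two-stage quotient equals the one-stage quotient, using that the associated graded of the chiral Lie action is controlled by the Zhu algebra.

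The main obstacle I expect is the first reduction step — proving that quotienting by the negative-mode part of the constrained chiral Lie algebra yields exactly $\cA_{\X\setminus\pp_\bullet}(\cM^{\bullet}_{\pp_{\bullet}})$, with no extra relations and no deficit. Globally, the sections of $\cV^G$ regular away from $\pp_{\bullet}$ but with possibly high-order poles there do not decompose as a direct sum of local contributions, so one needs a careful Riemann–Roch or partial-fractions argument on $\X$ (using that $\x\colon\tilX\to\X$ is totally ramified, so the $G$-invariant sections are controlled) to show that the span of negative-mode actions is generated by those supported near a single $\pp_i$ at a time — equivalently, that the geometric Zhu algebra defined "all at once" over $\X\setminus\pp_\bullet$ agrees with the tensor product of the local Zhu-type modules at each $\pp_i$. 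A secondary subtlety is bookkeeping the grading shift at the branch points so that the $\cA_{\X\setminus\pp_\bullet}(\cV^G)$-bimodule structure matches on both sides; this is where the $[\upomega]$-action conventions from \cref{thm:main3} will need to be invoked to pin down left-versus-right module structures. Once the first reduction is in hand, the second step and the consistency check should be formal consequences of the definitions.
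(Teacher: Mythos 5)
Your proposal follows essentially the same route as the paper: the coinvariants are computed in two stages via the split short exact sequence $0\to\L^{\circ}_{\X\setminus\pp_{\bullet}}(\cV^G)_{<0}\to\L^{\circ}_{\X\setminus\pp_{\bullet}}(\cV^G)_{\le 0}\to\L^{\circ}_{\X\setminus\pp_{\bullet}}(\cV^G)_{0}\to0$, using that the negative part annihilates $\cU^{\diamond}_{\qq_{\diamond}}$ and that the degree-zero action factors through $\cA_{\X\setminus\pp_\bullet}(\cV^G)$ (\cref{lm:actiononM<0}). The only discrepancy is that the ``main obstacle'' you flag dissolves: $\cA_{\X\setminus\pp_\bullet}(\cM^{\bullet}_{\pp_{\bullet}})$ is \emph{defined} (\cref{def:A(M)}) as the coinvariants $(\cM^{\bullet}_{\pp_{\bullet}})_{\L^{\circ}_{\X\setminus\pp_\bullet}(\cV^G)_{<0}}$, so the first-stage quotient is that space on the nose and no Riemann--Roch decomposition into local contributions at the $\pp_i$ is required.
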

The $\cA_{\X\setminus\pp_\bullet}(\cV^G)$-module $\cA_{\X\setminus\pp_\bullet}(\cM^{\bullet}_{\pp_{\bullet}})$ is introduced in \cref{def:A(M)} and it is the space of coinvariants $(\cM^{\bullet}_{\pp_{\bullet}})_{\L^{\circ}_{\X\setminus\pp_{\bullet}}(\cV^G)_{<0}}$ with respect to the constrained chiral Lie algebra $\L^{\circ}_{\X\setminus\pp_{\bullet}}(\cV^G)_{<0}$. 
The upshot is that it is defined over an unramified locus and thus can be studied in the framework of untwisted conformal blocks.
 
We can view the associative algebra $\cA_{\X\setminus\pp_\bullet}(\cV^G)$ and its module $\cA_{\X\setminus\pp_\bullet}(\cM^{\bullet}_{\pp_{\bullet}})$ as geometric generalizations of the $g$-twisted Zhu's algebra $A_g(V)$ \cite{DLM1} and its bimodules $A_g(M)$ \cite{FZ,PartI,JJ}. As a corollary of \cref{thm:main} and \cref{thm:main'}, we have the following identification (see \cref{cor:FusionRule}), which can be viewed as a generalization of the {\em $g$-twisted fusion rules theorem} to $n$-points conformal blocks on arbitrary totally ramified orbicurves: 
\begin{Cor}\label{Coro}
  If $N^{\diamond}$ are lowest-weight $g_\diamond$-twisted generalized Verma modules, then we have isomorphisms of vector spaces
  \[
  \Cfb[\Sigma]\cong\Cfb[\Sigma(0)]
  \cong 
  \left(\cA_{\X\setminus\pp_\bullet}(\cM^{\bullet}_{\pp_{\bullet}})\otimes_{\cA_{\X\setminus\pp_\bullet}(\cV^G)}\cU^{\diamond}_{\qq_{\diamond}}\right)^{\ast}.
  \]
\end{Cor}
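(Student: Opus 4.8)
The plan is to deduce \cref{Coro} by chaining together the two main structural results already stated. Concretely, \cref{thm:main} gives, under the hypothesis that the $N^{\diamond}$ are lowest-weight $g_\diamond$-twisted generalized Verma modules of their bottom levels $U^{\diamond}$, an isomorphism $\uppi\colon\Cfb[\Sigma]\xrightarrow{\ \sim\ }\Cfb[\Sigma(0)]$; note the generalized Verma hypothesis is exactly what \cref{thm:main} requires for surjectivity, so both halves of that theorem apply. This disposes of the first isomorphism in the corollary for free. For the second isomorphism, I would invoke \cref{thm:main'}, which identifies the space of restricted coinvariants $(\cM^{\bullet}_{\pp_{\bullet}}\otimes\cU^{\diamond}_{\qq_{\diamond}})_{\L^{\circ}_{\X\setminus\pp_{\bullet}}(\cV^G)_{\le0}}$ with the relative tensor product $\cA_{\X\setminus\pp_\bullet}(\cM^{\bullet}_{\pp_{\bullet}})\otimes_{\cA_{\X\setminus\pp_\bullet}(\cV^G)}\cU^{\diamond}_{\qq_{\diamond}}$, and then dualize.

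The key steps, in order, are as follows. First, recall from the discussion preceding \cref{thm:main'} that $\Cfb[\Sigma(0)]$ is \emph{by definition} the dual of the space of restricted coinvariants $(\cM^{\bullet}_{\pp_{\bullet}}\otimes\cU^{\diamond}_{\qq_{\diamond}})_{\L^{\circ}_{\X\setminus\pp_{\bullet}}(\cV^G)_{\le0}}$. Second, apply $\fun{Hom}(\placeholder,\C)$ to the isomorphism of \cref{thm:main'}; since dualization is a contravariant functor taking isomorphisms to isomorphisms, this yields
\[
\Cfb[\Sigma(0)]
\;\cong\;
\left(\cA_{\X\setminus\pp_\bullet}(\cM^{\bullet}_{\pp_{\bullet}})\otimes_{\cA_{\X\setminus\pp_\bullet}(\cV^G)}\cU^{\diamond}_{\qq_{\diamond}}\right)^{\ast}.
\]
Third, compose with the isomorphism $\uppi$ from \cref{thm:main} to obtain
\[
\Cfb[\Sigma]
\;\cong\;
\Cfb[\Sigma(0)]
\;\cong\;
\left(\cA_{\X\setminus\pp_\bullet}(\cM^{\bullet}_{\pp_{\bullet}})\otimes_{\cA_{\X\setminus\pp_\bullet}(\cV^G)}\cU^{\diamond}_{\qq_{\diamond}}\right)^{\ast},
\]
which is the assertion of the corollary.

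The only genuine point requiring care — and the step I would flag as the main obstacle — is checking that the hypothesis of \cref{Coro} (the $N^{\diamond}$ are lowest-weight generalized Verma modules) is simultaneously strong enough for both inputs and that the $A_{g_\diamond}(V)$-module structure on the bottom levels $U^{\diamond}$ used in \cref{thm:main'} is the same one implicitly used in the statement of \cref{thm:main}. A generalized Verma module is in particular a lowest-weight module, so the injectivity half of \cref{thm:main} applies; and it is generated by its bottom level, so the surjectivity half applies as well — hence $\uppi$ is a genuine isomorphism under precisely these hypotheses. One should also remark that \cref{thm:main'} is stated unconditionally (it is a statement about any family $\cU^{\diamond}$ of $A_{g_\diamond}(V)$-modules), so no compatibility issue arises there; the identification $\cU^{\diamond}_{\qq_{\diamond}}$ with the bottom levels of the $N^{\diamond}$ is the bridge that makes the two theorems refer to the same data. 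Beyond this bookkeeping, the proof is a formal two-line diagram chase, so the write-up should be short.
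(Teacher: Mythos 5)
Your proposal is correct and takes exactly the route the paper does: the corollary appears immediately after \cref{thm:coinvariants} with no separate proof, precisely because it is the composite of \cref{thm:pi-inj,thm:pi-surj} (giving $\Cfb[\Sigma]\cong\Cfb[\Sigma(0)]$) with the dual of the coinvariants identification in \cref{thm:coinvariants}, together with the definitional fact that $\Cfb[\Sigma(0)]$ is the dual of the space of restricted coinvariants. One small caveat: your remark that a generalized Verma module is automatically a lowest-weight module is not true under \cref{def:lowest-weight}, which additionally requires the bottom level to be an irreducible $\L_g(V)_{0}$-module; this is harmless here only because the hypothesis of the corollary explicitly assumes both properties, so the injectivity half of \cref{thm:main} applies directly from the stated hypothesis rather than from that implication.
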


\cref{Coro} also leads to the finiteness of twisted conformal blocks and fusion rules among $g_1,g_2,$ and $g_3$-twisted $V$-modules $M^1,M^2$, and $M^3$, where $g_1,g_2,g_3$ generate a cyclic group of order $T$, for a $C_2$-cofinite VOA $V$ (see \cref{thm:finiteness}). 

\splitle

To summarize, for the datum $\Sigma_1(N^3,M^1,M^2)$ \labelcref{eq:Sigma1}, where $M^1$ is \emph{untwisted} and $M^2$ (resp. $N^3$) is a \emph{$g$-twisted} (resp. \emph{$g^{-1}$-twisted}) generalized Verma module with bottom level $U^2$ (resp. $U^3$), 
we have the following commutative diagrams of isomorphisms: 
\begin{equation*}
  \begin{tikzcd}[column sep=-0.1in,row sep=0.5in]
  &\Fusion
  \arrow[dl, leftrightarrow, "\text{\cite[Theorem 2.24]{PartI}}"',"{N^3=(M^3)'}"]
  \arrow[dr, leftrightarrow, "\text{\cref{thm:main5}}", "{N^3=(M^3)'}"']&\\
  \Cor[\Sigma_{1}(N^3, M^1, M^2)]
  \arrow[rr,leftrightarrow,"\text{\cref{thm:main2}}"] 
  \arrow[d,leftrightarrow, "\text{\cite[Theorem 3.16]{PartI}}"']&& 
  \Cfb[\Sigma_{1}(N^3, M^1, M^2)]
  \arrow[d,leftrightarrow, "\text{\cref{thm:main}}"] \\
  \Cor[\Sigma_{1}(U^3, M^1, U^2)]
  \arrow[rr,leftrightarrow,"\text{\cref{thm:main3}}"]
  \arrow[dr, leftrightarrow, "\text{\cite[Theorem 4.5]{PartI}}"']&& 
  \Cfb[\Sigma_{1}(U^3, M^1, U^2)]
  \arrow[dl, leftrightarrow, "\text{\cref{thm:main4}}"]
  \arrow[dd, leftrightarrow, "\text{\cref{Coro}}"]\\
  &\Cfb[U^3, M^1, U^2]
  \arrow[dl,leftrightarrow,"\text{\cite[Theorem 6.5]{PartI}}"']&\\
  {\scriptstyle \left(U^3\otimes_{A_g(V)} A_g(M^1)\otimes_{A_g(V)} U^2\right)^\ast}
  \arrow[rr, leftrightarrow, "\text{\cref{eg:Ag(M)}}"]&&
  {\scriptstyle \left(\cA_{\PP^1\setminus\Set{1}}(\cM^1_{1})\otimes_{\cA_{\PP^1\setminus\Set{1}}(\cV^G)}(U^3\otimes U^2)\right)^\ast}
  \end{tikzcd}
\end{equation*}
The left half of the diagram gives a proof of \emph{$g$-twisted fusion rules theorem} using correlation functions, which was accomplished in \cite{PartI} and almost computational. The right half gives a more conceptual proof using twisted conformal blocks. 
Furthermore, the right half generalizes to the datum $\Sigma_{w}(N^3,M^1,M^2)$ \labelcref{eq:Sigmaw}, where $M^2$ (resp. $M^1$ and $N^3$) is \emph{$g$-twisted} (resp. \emph{$g^\epsilon$-twisted} and \emph{$g^{-\epsilon-1}$-twisted}) generalized Verma module with bottom level $U^2$ (resp. $U^1$ and $U^3$) and yields (see \cref{eg:CfbSn+1})
\[
  \Fusion \cong \left(\cA_{\PP^1\setminus\pp}(\cV_\pp)\otimes_{\cA_{\PP^1\setminus\pp}(\cV^G)}(U^3\otimes U^1\otimes U^2)\right)^\ast,
\]
where $(M^3)'=N^3$.

\splitle
% \subsection*{Furture topics}

Our main result \cref{thm:main} is a twisted and VOA generalization of a similar lifting theorem (see \cite[Theorem 3.18]{Ueno}) between the restricted WZNW-conformal blocks defined by $\g$-modules and WZNW-conformal blocks defined by $\hat{\g}$-modules. 
The action of the finite-dimensional Lie algebra $\g$ on the bottom level $V_{\la_i}$ of an integrable highest-weight $\hat{\g}$-module $\mathcal{H}_{\lambda_{i}}$ in \cite{TUY89,Ueno} is now replaced by the action of the $g$-twisted Zhu's algebra $A_g(V)$ on the bottom level of a $g$-twisted VOA-module. 
We believe the consequences of \cite[Theorem 3.18]{Ueno}, such as the factorization property, in the theory of WZNW-conformal blocks would still be valid for the general twisted VOA-conformal blocks.

In a recent work \cite{DG23}, Damiolini and Gibney proved that the vector bundle of coinvariants $\mathbb{V}(V; W^\bullet)$ on the moduli space $\overline{\mathcal{M}}_{g,n}$ is globally generated when the VOA $V$ is strongly generated in degree $1$. The proof depends on the existence of a surjective map $W^1(0)\otimes \cdots \otimes W^N(0)\ra \mathbb{V}(V; W^\bullet)$, where $W^i(0)$ is the bottom level of $W^i$, which is difficult to verify when $V$ is not strongly generated by $V_1$. Since the space of coinvariants is the dual space of the space of conformal blocks, we believe the restricted conformal blocks would provide a similar surjective map onto $\mathbb{V}(V; W^\bullet)$, which might lead to the global generation property of this vector bundle for general VOAs.

% \splitle
\subsection{Outline of this paper}

This paper is organized as follows: 

In \cref{sec:preliminaries}, we introduce the geometric conventions adopted in this paper, review the theory of totally ramified orbicurves, vertex operator algebras, their twisted modules, twisted intertwining operators, twisted universal enveloping algebras, and twisted Zhu's algebras. 

In \cref{sec:tcL},  we review the vertex algebra bundle and the twisted chiral Lie algebra introduced in \cite{FBZ04, FS04} and study the \emph{constraints} of the chiral Lie algebra. In particular, we give an explicit description of these Lie algebras.

In \cref{sec:Cfb}, we review the notion of \emph{twisted conformal blocks} and study its relation with the \emph{$g$-twisted correlation functions} introduced in \cite{PartI} and with the \emph{twisted intertwining operators}. 

In \cref{sec:RestrictedCfb}, we introduce the notion of \emph{twisted restricted conformal blocks} and study its relation with the notions of \emph{$g$-twisted restricted correlation functions} and \emph{$3$-pointed $g$-twisted restrict conformal blocks} introduced in \cite{PartI}. 
We also study the restriction map from the space of \emph{twisted conformal blocks} to the space of \emph{twisted restricted conformal blocks}. 

In \cref{sec:coinvariants}, we introduce a geometric generalization of \emph{twisted Zhu's algebra} and describe the space of coinvariants by its modules. As corollaries, we give a general version of the \emph{twisted fusion rules theorem} and show that the space of coinvariants is finite-dimensional under some conditions.

\section{Preliminaries}\label{sec:preliminaries}
In this section, we introduce the conventions on algebraic geometry adopted in this paper and review the basic theory of \emph{totally ramified orbicurves} and \emph{twisted modules} of \emph{vertex operator algebras}.

\subsection{Conventions on algebraic geometry}
Throughout this paper, we will freely use facts about the geometry of smooth algebraic curves. 
For the general theory, we refer to various algebraic geometry textbooks such as \cite[Ch.~7]{liu2002algebraic} and \cite[Ch.~IV]{Hartshorne}, or consult \cite{fulton} for a traditional treatment and \cite{Schlag} for an analytic approach to Riemann surfaces.

\subsubsection*{Terminology and notations}
%  used in this paper are listed below, organized like a dictionary.
\begin{description}
  \item[curve] an integral scheme $(\X,\O_{\X})$ that is proper and is of dimension $1$ over $\C$. In this paper, we focus on \emph{smooth} curves. 
  \item[$\cK_{\X}$] the constant sheaf of the \textbf{field of rational functions} on $\X$. 
  \item[$\Omega_{\X}$] the sheaf of \textbf{differentials}. The \textbf{de Rham complex} of $\X$ is denoted by \[0\to\O_{\X}\overset{\d}{\longrightarrow}\Omega_{\X}\to0.\]
  \item[bundle] a locally free sheaf, \emph{but not necessarily of finite rank}.
  % \item[$\deg(\cF)$] the \textbf{degree of a coherent sheaf $\cF$}.
  \item[point] a closed point, unless otherwise specified.
  \item[$\cI_\pp$] the \textbf{ideal sheaf} of the point $\pp$.
  \item[$\kappa_{\pp}$] the \textbf{residue field} $\O_{\X}/\cI_\pp$. 
  \item[$\O_{\pp}$] the \textbf{complete local ring at $\pp$}. It is the \emph{$\cI_\pp$-adic completion} of $\O_{\X}$, namely, the limit $\varprojlim\O_{\X}/\cI_\pp^n$. It is also the completion of the stalk of $\O_\X$ at $\pp$. 
  \item[$D_\pp$] $\Spec\O_{\pp}$, the \textbf{formal disk at $\pp$}, with a canonical morphism $\iota_{\pp}\colon D_\pp\to \X$. 
  \item[$\cK_{\pp}$] the fraction field of $\O_{\pp}$.
  \item[$D_{\pp}^{\times}$] $\Spec\cK_{\pp}$, the \textbf{punctured formal disk at $\pp$}, equipped with a canonical morphism $\iota_{\pp}\colon D_{\pp}^{\times}\monomorphism D_\pp\to \X$, where $D_{\pp}^{\times}$ is viewed as a subscheme of $D_{\pp}$.
  \item[local coordinate at $\pp$] a topological generator $z$ of $\O_{\pp}$ such that $\O_{\pp}\cong\C\dbrack{z}$.
  \item[$D_{z}$] the \textbf{abstract formal disk} $\Spec\C\dbrack{z}$.
  \item[$D_{z}^{\times}$] the \textbf{punctured abstract formal disk} $\Spec\C\dparen{z}$.
  \item[local chart] a pair $(\pp,z)$ of a point and a local coordinate at it. A local chart facilitates a morphism $\iota_z$ from $D_{z}$ to $\X$ through $D_{\pp}$.
  \item[$v_\pp$] the \emph{normalized} (i.e. $v_{\pp}(z)=1$) \textbf{valuation} on the DVR $\O_{\pp}$. This valuation extends to the function field $\cK_\X$ via the canonical map $\cK_\X\to\cK_{\pp}$.
  \item[$V\dbrack{z^{\pm1}}$] the vector space of formal power series $\sum\limits_{n\in\Z}c_nz^n$ with coefficients $c_n$ in a vector space $V$.
  \item[$\Res_{z}f(z)$] the \textbf{residue} of a series $f(z)\in V\dbrack{z^{\pm1}}$, i.e. the coefficient of $z^{-1}$. 
  For any rational function $f\in\cK_{\X}$ and any local chart $(\pp,z)$, the \textbf{residue} of the differential $f\d{z}$ at $\pp$ can be computed by $\Res_{\pp}f\d{z}=\Res_{z}\iota_{z}f$. 
  \item[$\cF|_{U}$] the \textbf{inverse image} $j^{\ast}\cF$ of a sheaf $\cF$ along the implied $j\colon U\to\X$.
  \item[$\Gamma(U,\cF)$] the space of \textbf{gloabl sections} of $\cF|_{U}$.
  \item[$\cF$ is trivialized on $U$ as $W$] $\cF|_U$ is a trivial bundle on $U$ with the space of global section (isomorphic via the trivialization to) $W$.
  \item[$\cF_{\pp}$] the \textbf{fiber of $\cF$ at $\pp$} (NOT the \textbf{stalk}). 
  It is the skyscraper sheaf $\cF|_{\pp}$ and also the $\kappa_{\pp}$-vector space $\Gamma(\pp,\cF)$. 
  \item[value at $\pp$] the image of a section $s$ under the canonical map $\cF\to\cF_{\pp}$.
  \item[local sheaf of $\cF$ at $\pp$] the {inverse image} $\cF|_{D_{\pp}}$ or $\cF|_{D_{\pp}^{\times}}$.
  \item[(regular) local sections of $\cF$ at $\pp$] elements of $\Gamma(D_{\pp},\cF)$.
  \item[rational local sections of $\cF$ at $\pp$] elements of $\Gamma(D_{\pp}^{\times},\cF)$.
  \item[$\iota_{\pp}$] the canonical map $\cF\to\iota_{\pp\ast}\iota_{\pp}^{\ast}\cF=\cF|_{D_{\pp}^{\times}}$.
  \item[local expansion of $s$ at $\pp$] the image of a section $s$ of $\cF$ under $\iota_{\pp}$.
  \item[$\iota_{z}$] the canonical map $\cF\to\iota_{z\ast}\iota_{z}^{\ast}\cF=\cF|_{D_{z}^{\times}}$.
  \item[formal expansion of $s$ at $(\pp,z)$] the image of a section $s$ of $\cF$ under $\iota_{z}$.
  \item[divisor] a \emph{Weil divisor} on $\X$, i.e. a $\Z$-linear combination of points on $\X$.
  \item[$\dv(f)$] the \textbf{divisor of a rational function} $f\in\cK_\X$. 
  It is $\sum\limits_{\pp\in\X}v_{\pp}(f)\pp$.
  \item[$\supp\Delta$] the \textbf{support of a divisor $\Delta$}. It is the set of points involved (i.e. has nonzero coefficient) in the linear combination $\Delta$. 
  \item[$\deg(\Delta)$] the \textbf{degree of a divisor $\Delta$}. It is the sum of coefficients in $\Delta$. 
  \item[$\O_\X(\Delta)$] the sheaf of rational functions $f$ \textbf{constrained by $\Delta$} in the sense that the divisor $\dv(f)+\Delta$ is \emph{effective}: its coefficients are non-negative.
  \item[$\cF(\Delta)$] $\cF\otimes\O_\X(\Delta)$.
  \item[$\cF(\infty\Delta)$] $\varinjlim_n\cF(n\Delta)$. It is the sheaf given by $U\mapsto\Gamma(U\setminus\supp\Delta,\cF)$.
  \item[rational sections of $\cF$ with possible poles along $\Delta$] sections of $\cF(\infty\Delta)$.
\end{description}

\subsubsection*{Conventions}
% Throughout this paper, we keep the following conventions. 
\begin{itemize}
  \item \emph{Calligraphic letters}, like $\cF,\cM,\cN,\dots$, denote sheaves. All tensor products of \emph{$\O_\X$-modules} are over $\O_\X$ otherwise specified.
  \item \emph{Fraktur letters}, like $\pp$ and $\qq$, denote points on $\X$.
  \item We do not distinguish a \emph{skyscraper sheaf} supported at a point $\pp$ on $ \X$ with its \emph{stalk} at $\pp$.
  \item We will consistently view a sheaf $\cF$ on $U$, where $U$ is equipped with a implied $j\colon U\to\X$, as a sheaf on $\X$ via the \emph{direct image} along $j$.
\end{itemize}

\subsection{Geometry of a totally ramified orbicurve}
The following is our main geometric object under consideration.
\begin{definition}\label{def:orbicurve}
  An \textbf{orbicurve} is a \emph{Deligne-Mumford stack} that is obtained as the quotient of a smooth algebraic curve by a finite automorphism group $G$. 
  We represent this stack as a \emph{ramified covering} $\x\colon\tilX\to\X$ with the \emph{Galois group} $G$ and the dense \emph{unramified locus} $\Xc$. 
  Sheaves on this orbicurve can be understood as $G$-equivariant sheaves on $\tilX$. 
  An orbicurve $\X$ is called \textbf{totally ramified} if each \emph{branch point} $\pp\in \X$ is \emph{totally ramified} in the sense that $\pp$ has a unique lifting in $\tilX$. It follows that the group $G$ has to be cyclic. In the rest of this paper, we assume that $G$ has order $T$.
\end{definition}

\begin{example}[\textbf{twisted projective line}]\label{eg:twistedP1}
  In \cite[\S 2.2]{PartI}, we have introduced an \emph{ad-hoc} construction of the \emph{twisted projective line} as a ramified covering $\x\colon\oC\to\PP^1$. Roughly speaking, it corresponds to the inclusion $\C[z]\subset\C[z^{\sfrac{1}{T}}]$, where the symbol $z^{\sfrac{1}{T}}$ satisfies $(z^{\sfrac{1}{T}})^T=z$. 
  The branch points on this covering are $0$ and $\infty$ on $\PP^1$, and the unramified locus is $\PP^1\setminus\Set{0,\infty}$. 
\end{example}
\begin{example}[\textbf{cyclic covering of the 3-pointed projective line}]\label{eg:3ptP1}
Consider the equation (where $w\in\C^\times$)
  \[
    y^T = x^a(x-w)^b,
  \]
  where $a$, $b$, and $a+b$ are all coprime to $T$. Then there is a connected smooth projective curve $\oC_w$ irrational to the projective curve obtained as the compactification of the affine curve defined by the above equation. 
  The projection $(x,y)\mapsto x$ induces a totally ramified covering $\x\colon\oC_w\to\PP^1$ with Galois group of order $T$ and branch points $0,w,\infty$. 
\end{example}
\begin{remark}\label{rem:3ptP1}
  The obicurves in \cref{eg:twistedP1,eg:3ptP1} forms a family $\mathfrak{C}\to\PP^1$, whose fiber at $w\neq0,\infty$ is $\oC_w$ while the fiber at $0$ and $\infty$ are $\oC$ and its reverse.
\end{remark}

\begin{example}[\textbf{trivially ramified orbicurve}]\label{eg:trivial}
  Our definition of an orbicurve allows the Galois group to be trivial. In this case, an orbicurve is the same as a smooth curve $\X$ equipped with a specified dense open $\Xc$. 
  We call points in $\X\setminus\Xc$ \textbf{branch points} even if they are actually unramified. 
  Such orbicurves are totally ramified.
\end{example}

\begin{example}[\textbf{cyclic covering of prime order}]\label{eg:primeorder}
  When $T$ is a prime number, the Galois group $G$ is simple, and thus the orbicurve $\x\colon\tilX\to\X$ is automatically totally ramified.
\end{example}

\begin{definition}\label{def:Deck}
  Let $\qq$ be a branch point. 
  Its stabilizer $G_\qq$ is generated by the \emph{Deck transformation} corresponding to the simple monodromy around $\qq$. We call this Deck transformation the \textbf{Deck generator at $\qq$} and denote it by $g_{\qq}$.
\end{definition}

\begin{definition}
  Let $\x\colon\tilX\to\X$ be a totally ramified orbicurve, $\qq\in\X$ a branch point, and $\tilde\qq$ the unique lifting of $\qq$.  
  We say that a local coordinate $t$ at $\tilde\qq$ is \textbf{special} if $t^T$ gives rise to a local coordinate $z$ at $\qq$. 
  If this is the case, we denote the coordinate $t$ by $z^{\sfrac{1}{T}}$ and call it a \textbf{special local coordinate above $\qq$}. 
\end{definition}

Then we have a commutative diagram of formal expansions:
\[
  \begin{tikzcd}
    {\Spec\C\dparen{z^{\sfrac{1}{T}}}} \ar[r, hook]\ar[d] & {\Spec\C\dbrack{z^{\sfrac{1}{T}}}} \ar[r, "\sim"', "\iota_{z^{\sfrac{1}{T}}}"]\ar[d] & {D_{\tilde\qq}} \ar[r,"\iota_{\tilde\qq}"]\ar[d, "\x"] & {\tilX} \ar[d, "\x"] \\
    {\Spec\C\dparen{z}} \ar[r, hook] & {\Spec\C\dbrack{z}} \ar[r, "\sim"', "\iota_{z}"] & {D_{\qq}} \ar[r,"\iota_{\qq}"] & {\X}  
  \end{tikzcd}
\]
By abuse of notations, we denote $\iota_{\tilde\qq}$ (resp. $\iota_{z^{\sfrac{1}{T}}}$) by $\iota_{\qq}$ (resp. $\iota_{z}$) when there is no ambiguities. We also extend the conventions on formal series in $z$ to formal series in $z^{\sfrac{1}{T}}$. In particular, the \textbf{residue} $\Res_zf(z)$ of a series $f(z)$ in $z^{\sfrac{1}{T}}$ is also the coefficient of $z^{-1}$. 
Then for any rational function $f\in\cK_{\tilX}$ and a branch point $\qq\in \X$, we have $\Res_{\qq}f\d{z}=\tfrac{1}{T}\Res_{\tilde\qq}f\d{z}=\Res_{z}\iota_{z}f$.

\subsubsection*{Equivariant bundles on a totally ramified orbicurve}
Let $\cF$ be a \emph{$G$-equivariant bundle} on $\tilX$. 
Since $G$ acts freely on $\Xc$, the bundle $\cF|_{\x^{-1}\Xc}$ descends to a bundle $\cF^G$ on $\Xc$. 
At each point $\pp\in\Xc$, we have 
\begin{equation*}%\label{eq:equiv_fiber}
  \cF^{G}|_{D_{\pp}}=(\prod_{\tilde\pp\in\x^{-1}\pp}\cF|_{D_{\tilde\pp}})^{G}\cong\cF|_{D_{\tilde\pp}}.
\end{equation*}
By abuse of notation, \emph{we denote all these local sheaves by $\cF|_{D_{\pp}}$.}
Consequently, fibers of $\cF$ at all liftings $\tilde{\pp}$ of $\pp\in\Xc$ are isomorphic to the fiber of $\cF^G$ at $\pp$. 
By abuse of notation, \emph{we denote all these fibers $\cF_{\tilde\pp}$ and $\cF^G_{\pp}$ by $\cF_{\pp}$.}

Since $G$ acts freely on the orbit $\x^{-1}\pp$, any local coordinate $z$ at $\pp$ lifts to a local coordinate at any lifting $\tilde{\pp}$ of $\pp$. Then the aforementioned local sheaves share a common formal expansion map $\iota_{z}\colon\cF|_{D_{\pp}^\times}\to\cF|_{\Spec\C\dparen{z}}$.

The bundle $\cF^G$ further extends to a sheaf on the entire curve $\X$ via the pushforward functor along the inclusion $\Xc\monomorphism\X$. 
In particular, at a branch point $\qq$, we have 
$\cF^{G}|_{D_{\qq}^\times}=(\cF|_{D_{\tilde\qq}^\times})^{G}$, where $\tilde\qq$ is the (unique) lifting of $\qq$.

\begin{example}
  Consider the \emph{twisted projective line} $\x\colon\oC\to\PP^1$ in \cref{eg:twistedP1}.
  A bundle $\cV$ on $\oC$ is equivalent to the following data (where $V$ is a vector space):
  \begin{itemize}
    \item a trivial bundle $\cV_{C}=V\otimes_{\C}\O_{C}$ on the affine open $C:=\oC\setminus\Set{\infty}$,
    \item a trivial bundle $\cV_{C^\dagger}=V\otimes_{\C}\O_{C^\dagger}$ on the other affine open $C^\dagger:=\oC\setminus\Set{0}$,
    \item a \emph{transition map} $\vartheta\colon\cV_{C}|_{\Cc}\cong\cV_{C^\dagger}|_{\Cc}$ on the intersection $\Cc:=C\cap C^\dagger$.
  \end{itemize}
  Given a group representation $\rho\colon G\to\grp{GL}(V)$, we have an action of $G$ on the trivial bundle $\cV_{C}$:
  \begin{equation}\label{g-bun}
    g.(a\otimes f) = (\rho(g)a)\otimes(g^{-1})^{\ast}f,
  \end{equation}
  for all $g\in G$, $a\in V$, and sections $f$ of $\O_{C}$. 
  A similar action of $G$ exists on the other trivial bundle $\cV_{C^\dagger}$. 
  Then the representation $\rho$ gives rise to a $G$-equivalent structure on the bundle $\cV$ if and only if the restricted actions of $G$ on $\cV_{C}|_{\Cc}$ and $\cV_{C^\dagger}|_{\Cc}$ given by \labelcref{g-bun} commute with $\vartheta$. 

  Let $\cV$ be a $G$-equivalent bundle on $\oC$. 
  The coordinate $z^{\sfrac{1}{T}}$ (resp. $(z^\dagger)^{\sfrac{1}{T}}$) on the affine open $C$ (resp. $C^\dagger$) trivializes $\cV_C$ (resp. $\cV_{C^\dagger}$) as $V[z^{\sfrac{1}{T}}]$ (resp. $V[(z^\dagger)^{\sfrac{1}{T}}]$). 
  Then the bundle $\cV^G$ on the unramified locus $\x(\Cc)$ is trivialized as $V[z^{\pm1}]$ and $V[(z^\dagger)^{\pm1}]$ via these two trivializations respectively. 
  The two trivializations are related via the transition map $\vartheta$, extending the transition map $z^{\sfrac{1}{T}}\mapsto (z^\dagger)^{-\sfrac{1}{T}}$ of the structure sheaves. 
  
  At an unramified point $w\neq 0,\infty$, 
  the local sheaf $\cV|_{D_{w}^{\times}}$ is trivialized as $V\dparen{z-w}$ (or $V\dparen{z^\dagger-w^{-1}}$). 
  At the branch point $0$ (resp. $\infty$), the local sheaf $\cV^G|_{D_{0}^{\times}}$ (resp. $\cV^G|_{D_{\infty}^{\times}}$) is trivialized as $(V\dparen{z^{\sfrac{1}{T}}})^G$ (resp. $(V\dparen{(z^\dagger)^{\sfrac{1}{T}}})^G$)
\end{example}

\begin{example}
  The similar argument applies to \cref{eg:3ptP1}, where we have one extra branch point $w$ at which the local sheaf $\cV^G|_{D_{w}^{\times}}$ is trivialized as $(V\dparen{(z-w)}^{\sfrac{1}{T}})^G$ (or $(V\dparen{(z^\dagger-w^{-1})}^{\sfrac{1}{T}})^G$). 
\end{example}

\subsubsection*{Equivariant $k$-differentials}
The sheaf $\Omega_\X$ of differentials is also the \emph{dualizing sheaf} on a smooth curve $\X$. 
We write $\Omega_\X^{\otimes k}$ for the $k$-th tensor power of $\Omega_\X$. When $k$ is negative, $\Omega_\X^{\otimes k}$ is defined as the inverse of $\Omega_\X^{\otimes -k}$ under the tensor product.
Sections of $\Omega_\X^{\otimes k}$ are called \textbf{$k$-differentials} on $\X$. 

As a line bundle, $\Omega_\X^{\otimes k}$ inherits the \emph{normalized valuation} at each point $\pp$. 
Let $(\pp,z)$ be a local chart. Through the isomorphism: 
\[
  \iota_{z}\colon\Gamma\big(D_{\pp}^{\times},\Omega_\X^{\otimes k}\big)\cong\C\dparen{z}(\d{z})^k,
\]
we can identify the \textbf{valuation} $v_{\pp}(\mu)$ of a $k$-differential $\mu$ (at $\pp$) with the largest integer $m$ such that $\iota_{z}(\mu)\in z^m\C\dbrack{z}(\d{z})^k$. 
For a divisor $\Delta=\sum n_{\pp}\pp$ on $\X$, the line bundle $\Omega_\X^{\otimes k}(\Delta)$ can be characterized by
\[
  \Gamma\big(U,\Omega_\X^{\otimes k}(\Delta)\big)=
  \Set*{\mu\in\Gamma\big(U\setminus\supp\Delta,\Omega_\X^{\otimes k}\big)\given v_{\pp}(\mu)+n_{\pp}\ge0\txforall[\ ]\pp\in U}.
\]

For an orbicurve $\x\colon\tilX\to\X$, the line bundle $\Omega_{\tilX}^{\otimes k}$ is $G$-equivariant. We thus obtain a line bundle $(\Omega_{\tilX}^{\otimes k})^G$ on $\Xc$, which is precisely $\Omega_\X^{\otimes k}|_{\Xc}$. 
For an unramified point $\pp\in\Xc$, all the local sheaves $\Omega_{\tilX}^{\otimes k}|_{D_{\tilde\pp}^\times}$ (where $\tilde\pp$ is a lifting of $\pp$) and $\Omega_\X^{\otimes k}|_{D_{\pp}^\times}$ share a common formal expansion map $\iota_{z}$ for all local coordinate $z$ at $\pp$. In particular, $v_{\pp}=v_{\tilde\pp}$.  
At a branch point $\qq\in\X\setminus\Xc$, by choosing a special local coordinate $z^{\sfrac{1}{T}}$ above $\qq$, we have 
\[
  \iota_{z^{\sfrac{1}{T}}}\Gamma\big(D_{\qq}^{\times},(\Omega_{\tilX}^{\otimes k})^G\big)\cong\iota_{z}\Gamma\big(D_{\qq}^{\times},\Omega_{\X}^{\otimes k}\big)\cong
  \C\dparen{z}(\d{z})^k\subset\C\dparen{z^{\sfrac{1}{T}}}(\d{z})^k.
\] 
Note that the last space admits a $g^{\ast}$-eigenspace decomposition 
\[\C\dparen{z^{\sfrac{1}{T}}}(\d{z})^k=\bigoplus_{r=0}^{T-1}z^{\sfrac{r}{T}}\C\dparen{z}(\d{z})^k.\]
The valuation $v_{\qq}$ of $k$-differentials in $\Omega_{\X}^{\otimes k}$ extends to a valuation on $\Omega_{\tilX}^{\otimes k}$ with codomain $\frac{1}{T}\Z$. 
Under the special local coordinate $z^{\sfrac{1}{T}}$, $v_{\qq}(\mu)$ is the largest element $m\in\frac{1}{T}\Z$ such that $\iota_{z^{\sfrac{1}{T}}}(\mu)\in z^m\C\dbrack{z^{\sfrac{1}{T}}}(\d{z})^k$. 
Then we have $v_{\qq}=\frac{1}{T}v_{\tilde{\qq}}$, where $\tilde\qq$ is the lifting of $\qq$.
  
In the rest of this paper, we omit the subscript and denote the sheaf of differentials by $\Omega$ when there are no ambiguities.

\subsubsection*{Key lemmas}
The following lemmas will be used later. 

The first lemma is a consequence of the \emph{Riemann-Roch Theorem}, see, for instance, \cite{Z94,DGT19}.
\begin{lemma}\label{lem:RRcoro}
  Let $U$ be an affine open subset\footnote{For instance, let $\pp_\bullet$ be a finite family of points such that each irreducible component of $\X$ contains at least one of them, then $\X\setminus\pp_\bullet$ is affine.} of a curve $\X$ and let $\qq_{\diamond}$ be an $n$-tuple of points in $U$. 
  For each $1\le i\le n$, let $z_i$ be a local coordinate at $\qq_{i}$.
  Let $k$ be an integer. 
  Then for all integers $d$ and $m$, there exists a $k$-differential $\mu\in\Gamma\big(U\setminus\qq_{\diamond},\Omega^{\otimes k}\big)$ such that 
  \[
    \begin{array}{ll}
      \iota_{z_i}(\mu) \equiv z_i^d(\d{z_i})^k &\mod z_i^m\C\dbrack{z_i}(\d{z_i})^k, \text{ for a fixed $i$, and} \\
      \iota_{z_j}(\mu) \equiv 0 &\mod z_j^m\C\dbrack{z_j}(\d{z_j})^k, \text{ for all }j\neq i. \\
    \end{array}
  \]
\end{lemma}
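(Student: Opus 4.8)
The plan is to realize $\mu$ as a global rational section of $\Omega^{\otimes k}$ on $\X$ whose poles are confined to the $\qq_{\diamond}$ and to $D:=\X\setminus U$ (a nonempty finite set of closed points, since $\X$ is an integral projective curve and $U$ is affine, hence $U\subsetneq\X$), and whose truncated Laurent expansions at the $\qq_{\diamond}$ are prescribed; then the desired $k$-differential lies in $\Gamma(U\setminus\qq_{\diamond},\Omega^{\otimes k})$ because this space is exactly the union over $N$ of the $\Gamma(\X,\Omega^{\otimes k}(ND+\ell\sum_j\qq_j))$. It is in fact harmless to prove the more flexible assertion that, for arbitrary Laurent polynomials $P_j(z)\in\C[z^{\pm1}]$ ($1\le j\le n$), there is $\mu\in\Gamma(U\setminus\qq_{\diamond},\Omega^{\otimes k})$ with $\iota_{z_j}(\mu)\equiv P_j(z_j)(\d{z_j})^k \mod z_j^m\C\dbrack{z_j}(\d{z_j})^k$ for every $j$; the lemma is the case $P_i(z)=z^d$ and $P_j=0$ for $j\ne i$. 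First I would fix an integer $\ell\ge0$ large enough that $z_j^{\ell}P_j(z_j)\in\C[z_j]$ for all $j$ and that $\ell+m\ge0$.

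Next I would introduce, for an integer $N$ to be chosen large, the short exact sequence of $\O_{\X}$-modules
\[
0\longrightarrow\Omega^{\otimes k}\bigl(ND-m\textstyle\sum_{j}\qq_j\bigr)\longrightarrow\Omega^{\otimes k}\bigl(ND+\ell\textstyle\sum_{j}\qq_j\bigr)\longrightarrow\mathcal{Q}\longrightarrow0,
\]
where $\mathcal{Q}$ is the skyscraper quotient, supported on $\{\qq_1,\dots,\qq_n\}$. Since each $\qq_j$ lies in $U$, hence away from $D$, the twist by $ND$ is locally trivial near $\qq_j$, so $\iota_{z_j}$ identifies the stalk $\mathcal{Q}_{\qq_j}$ with $z_j^{-\ell}\C\dbrack{z_j}(\d{z_j})^k\big/z_j^{m}\C\dbrack{z_j}(\d{z_j})^k$, i.e.\ with truncated Laurent expansions having exponents in $[-\ell,m-1]$. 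The tuple $\bigl(P_j(z_j)(\d{z_j})^k\bigr)_j$ then defines a global section of $\mathcal{Q}$. Running the long exact cohomology sequence, the obstruction to lifting it to $\Gamma\bigl(\X,\Omega^{\otimes k}(ND+\ell\sum_j\qq_j)\bigr)$ lies in $H^1\bigl(\X,\Omega^{\otimes k}(ND-m\sum_j\qq_j)\bigr)$. This line bundle has degree $k(2g-2)+N\deg D-mn$, which exceeds $2g-2$ once $N$ is large because $\deg D\ge1$; hence the $H^1$ vanishes by Riemann--Roch (Serre duality), and the restriction map $\Gamma\bigl(\X,\Omega^{\otimes k}(ND+\ell\sum_j\qq_j)\bigr)\to\Gamma(\X,\mathcal{Q})$ is surjective.

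Choosing $\mu$ in the preimage of $\bigl(P_j(z_j)(\d{z_j})^k\bigr)_j$ then finishes the argument: $\mu$ is a $k$-differential regular on $\X\setminus(D\cup\{\qq_{\diamond}\})=U\setminus\qq_{\diamond}$, and by construction $\iota_{z_j}(\mu)-P_j(z_j)(\d{z_j})^k\in z_j^{m}\C\dbrack{z_j}(\d{z_j})^k$ for every $j$; specializing $P_i(z)=z^d$ and $P_j=0$ for $j\ne i$ gives exactly the two displayed congruences. The computation is routine, resting only on the standard vanishing of $H^1$ for sufficiently positive line bundles on a curve. The few points that warrant a little care — and where I expect whatever mild difficulty there is — are the identification of $\mathcal{Q}$ with the space of truncated expansions (which uses that the $\qq_{\diamond}$ avoid $D$, so that the sheaf is locally just $\Omega^{\otimes k}$ near each $\qq_j$), the choice of $\ell$ large enough to allow a pole of order $-d$ at $\qq_i$ when $d<0$ while keeping $\ell+m\ge0$, and checking that a single $N$ can be chosen to meet the degree bound uniformly in $k$, $m$, and $\ell$.
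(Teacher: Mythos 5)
Your argument is correct and is exactly the standard Riemann--Roch/Serre-duality argument that the paper has in mind: it gives no proof of this lemma, merely noting it is "a consequence of the Riemann--Roch Theorem" and citing \cite{Z94,DGT19}. The key points (that $D=\X\setminus U$ is nonempty because a proper integral curve is not affine, that the skyscraper quotient at each $\qq_j$ is the space of truncated Laurent tails since $\qq_j\notin D$, and that $H^1$ of a line bundle of degree greater than $2g-2$ vanishes) are all handled properly, and the choice of $N$ poses no uniformity issue since $k$, $m$, $\ell$, and $n$ are fixed data.
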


The second lemma can be found in, for instance, \cite[Remark 9.2.10]{FBZ04} and was originally proved by Tate \cite{Tate}.
\begin{lemma}[Strong Residue Theorem]\label{lem:SRT}
  Let $\cF$ be a bundle on a curve $\X$, $\pp_{\bullet}$ an $n$-tuple of distinct points, and $\tau\in\bigoplus_{i=1}^{n}\Gamma(D_{\pp_i}^{\times},\cF^{\ast})$ a local section. Then
  \[ \sum_{i=1}^{n}\Res_{\pp_i}\braket*{\tau}{\alpha} = 0,\txforall\alpha\in\Gamma(\X\setminus\pp_{\bullet},\cF\otimes\Omega),
  \]
  if and only if $\tau$ can be extended to a regular section of $\cF^{\ast}$ on $\X\setminus\pp_{\bullet}$.
\end{lemma}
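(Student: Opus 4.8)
The plan is to prove the two implications separately. The ``only if'' direction is a direct application of the classical Residue Theorem on a complete curve, and I will dispatch it first; the ``if'' direction is the substance, and I will obtain it by recasting the assertion as a statement about mutual annihilators for the local residue pairing, then verifying that statement one pole-order at a time via Serre duality and a Riemann--Roch estimate. For the easy direction, suppose $\tau$ extends to a regular section $s\in\Gamma(\X\setminus\pp_{\bullet},\cF^{\ast})$. For any $\alpha\in\Gamma(\X\setminus\pp_{\bullet},\cF\otimes\Omega)$ the contraction $\braket*{s}{\alpha}$ is a rational $1$-form on $\X$ whose only poles lie in $\pp_{\bullet}$, so the Residue Theorem on the complete curve $\X$ gives $\sum_{i=1}^{n}\Res_{\pp_i}\braket*{s}{\alpha}=0$; since the local expansion of $s$ at $\pp_i$ is $\tau_i$, the $i$-th summand equals $\Res_{\pp_i}\braket*{\tau}{\alpha}$, which is the desired identity.

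For the converse I first reformulate. Write $\rho\colon\Gamma(\X\setminus\pp_{\bullet},\cF^{\ast})\to\bigoplus_i\Gamma(D_{\pp_i}^{\times},\cF^{\ast})$, $s\mapsto(\iota_{\pp_i}s)_i$, for the total local-expansion map, and let $\rho^{\vee}\colon\Gamma(\X\setminus\pp_{\bullet},\cF\otimes\Omega)\to\bigoplus_i\Gamma(D_{\pp_i}^{\times},\cF\otimes\Omega)$ be its analogue for $\cF\otimes\Omega$; note $\rho$ is injective since $\X$ is irreducible. The two target spaces are put in duality by the \emph{local residue pairing} $(\beta_{\bullet},\nu_{\bullet})\mapsto\sum_i\Res_{\pp_i}\braket*{\beta_i}{\nu_i}$, a perfect topological pairing under which $\bigoplus_i\Gamma(D_{\pp_i},\cF^{\ast})$ and $\bigoplus_i\Gamma(D_{\pp_i},\cF\otimes\Omega)$ are mutual annihilators. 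In this language the lemma says precisely that $\Image\rho=(\Image\rho^{\vee})^{\perp}$; the inclusion ``$\subseteq$'' is exactly the easy direction just proved, so only ``$\supseteq$'' remains.

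To prove ``$\supseteq$'' I filter by pole order. For $k\ge0$ set $U_k=\bigoplus_i z_i^k\,\Gamma(D_{\pp_i},\cF^{\ast})$. Because $\X\setminus\pp_{\bullet}$ is affine one has $H^1(\X,\cF^{\ast}(\infty\pp_{\bullet}))=0$, so the long exact cohomology sequence of
\[
  0\longrightarrow\cF^{\ast}(-k\pp_{\bullet})\longrightarrow\cF^{\ast}(\infty\pp_{\bullet})\longrightarrow\cF^{\ast}(\infty\pp_{\bullet})/\cF^{\ast}(-k\pp_{\bullet})\longrightarrow0
\]
identifies the cokernel of $\rho$ modulo $U_k$ with $H^1(\X,\cF^{\ast}(-k\pp_{\bullet}))$. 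By Serre duality this group is dual to $\Gamma(\X,\cF\otimes\Omega(k\pp_{\bullet}))$, and --- this is the single external input, Tate's identification of the Serre pairing with residues --- the duality pairing is induced by the local residue pairing above. Hence, inside $\bigoplus_i\Gamma(D_{\pp_i}^{\times},\cF^{\ast})$, the annihilator of $\Gamma(\X,\cF\otimes\Omega(k\pp_{\bullet}))$ is exactly $\Image\rho+U_k$. Since $\Gamma(\X\setminus\pp_{\bullet},\cF\otimes\Omega)=\bigcup_k\Gamma(\X,\cF\otimes\Omega(k\pp_{\bullet}))$ and annihilators turn unions into intersections, $(\Image\rho^{\vee})^{\perp}=\bigcap_{k\ge0}(\Image\rho+U_k)$. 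It remains to see this intersection is just $\Image\rho$: an element $\beta$ of it has some finite pole order $m$ at each $\pp_i$, so writing $\beta=\rho(s_k)+r_k$ with $r_k\in U_k$ for $k>m$ shows that $\rho(s_k)=\beta-r_k$ has the same principal parts as $\beta$, whence $s_k\in\Gamma(\X,\cF^{\ast}(m\pp_{\bullet}))$; this space is finite-dimensional by Riemann--Roch and $\rho$ is injective on it, so from $\rho(s_k)\to\beta$ we get $\beta\in\rho\big(\Gamma(\X,\cF^{\ast}(m\pp_{\bullet}))\big)\subseteq\Image\rho$, as required.

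The step I expect to be the crux is this last one: the finite-level identifications live naturally over the \emph{complete} local spaces $\cF^{\ast}\otimes\cK_{\pp_i}$, and they must be descended to the non-complete global space $\Gamma(\X\setminus\pp_{\bullet},\cF^{\ast})$ --- it is precisely Riemann--Roch finiteness of $\Gamma(\X,\cF^{\ast}(m\pp_{\bullet}))$ that makes the limiting argument legitimate. A secondary technicality is that $\cF$ is allowed here to be of infinite rank (e.g.\ a vertex algebra bundle), so $\Gamma(\X,\cF^{\ast}(m\pp_{\bullet}))$ need not be finite-dimensional; one then writes $\cF$ as a filtered union of coherent subsheaves and passes to the limit, as in \cite[Remark 9.2.10]{FBZ04} and Tate \cite{Tate}. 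Everything else is routine bookkeeping with the dictionary between local expansions, principal parts, and \v{C}ech cohomology.
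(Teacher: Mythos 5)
The paper offers no proof of this lemma to compare against: it is imported verbatim from \cite[Remark 9.2.10]{FBZ04} and attributed to Tate \cite{Tate}. Your argument is, in outline, exactly the standard proof from those sources, and it is correct: the forward implication is the residue theorem on the complete curve applied to the rational $1$-form $\braket*{s}{\alpha}$; for the converse, the identification of the cokernel of $\rho$ modulo $U_k$ with $H^1(\X,\cF^{\ast}(-k\pp_{\bullet}))$ via the affine vanishing $H^1(\X\setminus\pp_{\bullet},\cF^{\ast})=0$, Serre duality in Tate's residue form, the passage from the union $\Gamma(\X\setminus\pp_{\bullet},\cF\otimes\Omega)=\bigcup_k\Gamma(\X,\cF\otimes\Omega(k\pp_{\bullet}))$ to the intersection $\bigcap_k(\Image\rho+U_k)$, and the closedness of the finite-dimensional subspace $\rho\big(\Gamma(\X,\cF^{\ast}(m\pp_{\bullet}))\big)$ in the linearly topologized space of local sections are all sound. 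The two places that genuinely require care are the ones you yourself flag: (i) the identification of the sections of the skyscraper quotient $\cF^{\ast}(\infty\pp_{\bullet})/\cF^{\ast}(-k\pp_{\bullet})$ with the \emph{completed} local quotients $\Gamma(D_{\pp_i}^{\times},\cF^{\ast})/U_k$ (harmless, since completion does not alter the finite quotients $\O_{\X}/\cI_{\pp_i}^k$); and (ii) the infinite-rank case, which is the one the paper actually uses (for $\cV$ and $\cM$), where Serre duality and coherence are unavailable directly and one must run the finite-rank argument on the terms $\cV_{\le k}$ of the weight filtration and glue the resulting extensions using the injectivity of local expansion on an integral curve. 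Your proposal states (ii) only as a remark; writing out that reduction is the one step still owed before the proof is complete at the level of generality the lemma is used in.
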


\begin{example}\label{eg:RSF}
  As a special case of the \nameref{lem:SRT}, we have a \emph{Residue Sum Formula}  for the \emph{twisted projective line} $\x\colon\oC\to\PP^1$ (see \cite[Lemma 2.18]{PartI}): 
  for any meromorphic $1$-form $\alpha$ on $\PP^1$, we have 
  \[
    \tfrac{1}{T}\Res_{0}\x^{\ast}\alpha+\tfrac{1}{T}\Res_{\infty}\x^{\ast}\alpha+\sum_{i=1}^{n}\Res_{\pp_i}\x^{\ast}\alpha=0,
  \] 
  where the family of points $\pp_{\bullet}$ on $\oC$  consists of liftings of the poles of $\alpha$, one lifting for each pole.

  A similar formula\footnote{Here, $\Res_w$ means the residue at the point $w\in\PP^1$, we will never treat $w$ as a formal variable when taking residue.} holds for \cref{eg:3ptP1}:
  \[
    \tfrac{1}{T}\Res_{0}\x^{\ast}\alpha+\tfrac{1}{T}\Res_{w}\x^{\ast}\alpha+\tfrac{1}{T}\Res_{\infty}\x^{\ast}\alpha+\sum_{i=1}^{n}\Res_{\pp_i}\x^{\ast}\alpha=0,
  \]
  where the family of points $\pp_{\bullet}$ on $\oC$  consists of liftings of the poles of $\alpha$, one lifting for each pole.
\end{example}

\subsubsection*{Flat connections}
We finish this subsection with a definition.
\begin{definition}
  Let $\cV$ be a bundle on a curve $\X$. A \textbf{(flat) connection} on $\cV$ is a $\C$-linear map
  $\nabla\colon\cV\to\cV\otimes\Omega$ satisfying the \emph{Leibniz rule}:
  \[
    \nabla(f\tau) = f\nabla(\tau) + \tau\otimes\d{f},
  \]
  for any regular function $f$ on $\X$ and any section $\tau$ of $\cV$. 
  The pair $(\cV,\nabla)$ gives rise to an example of \emph{left $\cD$-modules} on $\X$. 
  Its \textbf{de Rham complex} is 
  \[
    0\longrightarrow\cV\overset{\nabla}{\longrightarrow}
    \cV\otimes\Omega\longrightarrow0,
  \]
  where $\cV\otimes\Omega$ (which is a \emph{right $\cD$-module}) is placed in the cohomology degree $0$, following the convention in \cite{FBZ04}.
  Sections of the $-1$-th {de Rham cohomology}, namely $\Ker\nabla$, are called \textbf{horizontal sections} of $\cV$.  
\end{definition}

\subsection{Vertex operator algebras and their twisted modules}
In this paper, we concentrate on \emph{non-negatively graded} VOAs.
\begin{definition}[\cite{B,FBZ04,FHL,FLM,LL,Z,Z94}]\label{def:VOA}
  A (non-negatively graded) \textbf{vertex operator algebra} (\emph{VOA} for short) is a quadruple $(V, Y(\cdot,z), \vac, \upomega)$, throughout simply denoted by $V$, where 
  \begin{itemize}
    \item $V=\bigoplus_{k\in\N}V_k$ is a $\N$-graded vector space with $\dim{V_k}<\infty$; 
    \item $Y(\placeholder,z)\colon V\to \End{V}\dbrack{z^{\pm1}}$ is a linear map assigning to each $a\in V$ the \textbf{vertex operator} \[Y(a,z)=\sum\limits_{n\in\Z}\vo{a}{n}z^{-n-1};\]
    \item $\vac\in V_0$ and $\upomega\in V_2$ are two distinguished vectors, called the \textbf{vacuum vector} and the \textbf{conformal vector} respectively.
  \end{itemize}
  These data must satisfy the following axioms:
  \begin{enumerate}[label={\textbf{V}\arabic*}]
    \item \textbf{\small(Truncation property)} $\vo{a}{n}b=0$ for all $a,b \in V$ and sufficiently large $n$;
    \item \textbf{\small(Vacuum property)} $Y(\vac, z)=\id_V$;
    \item \textbf{\small(Creation property)} $Y(a, z){\vac}\in V\dbrack{z}$ and $\lim\limits_{z\to 0}Y(a, z){\vac}=a$ for $a\in V$;
    \item \textbf{\small(Jacobi identity)} for all $a,b \in V$,
    \begin{multline*}
      \Res_{z}(Y(a,z)Y(b,w)\iota_{z^{-1}}F(z,w))
      -\Res_{z}(Y(b,w)Y(a,z)\iota_{z}F(z,w))\\
      =\Res_{z-w}(Y(Y(a,z-w)b,w)\iota_{z-w}F(z,w))
    \end{multline*}
    holds for every rational function $F(z,w)\in\C[z^{\pm1},(z-w)^{\pm1}]$;
    \item \textbf{\small(Virasoro relations)} defining $\vo{L}{n}$ ($n\in\Z$) as the coefficients of $Y(\upomega, z)$: 
    \[
      Y(\upomega, z)=\sum_{n\in \mathbb{Z}}\vo{\upomega}{n}z^{-n-1}=\sum_{n\in \mathbb{Z}}\vo{L}{n}z^{-n-2},
    \]
    then there is a complex number $c$, called the \textbf{central charge}, such that
    \[
      \Liebracket*{\vo{L}{m}}{\vo{L}{n}} = (m-n)\vo{L}{m+n} + \frac{c}{12}\delta_{m+n,0}(m^3-m)\id_V;
    \]
    \item \textbf{\small($\vo{L}{0}$-eigenspace decomposition)} $\vo{L}{0}a=na$ for all homogeneous $a\in V_n$. That is to say, the \textbf{weight} of $a$, denoted by $\wt a$, is the corresponding eigenvalue of $\vo{L}{0}$; 
    \item \textbf{\small($\vo{L}{-1}$-derivative property)} $Y(\vo{L}{-1}a, z)=\partial_{z} Y(a, z)$ for all $a\in V$.
  \end{enumerate}
\end{definition}
\begin{remark}\label{rem:VirasoroAction}
  The Lie algebra $\grp{Der}_0\O$ of the algebraic group of \emph{changes of local coordinates} $\grp{Aut}\O\colon R\leadsto \Aut(R\dbrack{z})$ reads $R\leadsto R\dbrack{z}z\partial_{z}$. It is a subalgebra of the \emph{Virasoro Lie algebra} $\mathsf{Vir}$. 
  Then the axioms encodes an action of $\grp{Der}_0\O$ on $V$ via $z^{p+1}\partial_{z}\mapsto\vo{L}{p}$ ($p\ge 0$). 
  Furthermore, this action is integrable, and we thus have an action of $\grp{Aut}\O$ on $V$.
\end{remark}
\begin{definition}\label{def:automorphism-of-VOA}
  An automorphism of a vertex operator algebra $V$ is a linear transformation $g\in\grp{GL}(V)$ preserving $\vac$ and $\upomega$ and satisfying
  \[
    gY(a, z)g^{-1}=Y(g.a, z)
    \txforall a\in V.
  \]
\end{definition}

Let $V$ be a VOA and $g$ an automorphism of $V$ satisfying $g^T=\id_V$. 
Then the underlying vector space $V$ is decomposed into $g$-eigenspaces
\[
  V_g^r=\Set*{ a\in V \given g.a=e^{2\pi\iu\frac{r}{T}}a },
\]  
where \emph{$r$ is throughout assumed to be an integer between $0$ and $T-1$.}
\begin{definition}[\cite{DLM1}]\label{def:twistedmodule}
  A \textbf{weak $g$-twisted $V$-module} is a vector space $M$ equipped with a linear map 
  $Y_{M}(\placeholder,z)\colon V\to \End{M}\dbrack{z^{\pm\sfrac{1}{T}}}$ assigning to each $a\in V$ a \textbf{twisted vertex operator}
  \[
    Y_{M}(a,z) = \sum_{n\in\frac{1}{T}\Z} \vo{a}{n} z^{-n-1},
  \]
  satisfying the following axioms: %for all $a\in V_g^r$, $b\in V$, and $u\in M$:
 \begin{enumerate}[label={\textbf{M}\arabic*}]
    \item \textbf{\small(Truncation property)} $\vo{a}{n}u=0$ for all $a\in V$, $u\in M$, and sufficiently large $n\in\frac{1}{T}\Z$;
    \item \textbf{\small(Vacuum property)} $Y_{M}(\vac,z)=\id_{M}$.
    \item \textbf{\small(Index property)} $Y_M(a,z)z^{\sfrac{r}{T}}\in\End{M}\dbrack{z^{\pm1}}$ for all $a\in V_g^r$. 
    \item \textbf{\small(Twisted Jacobi identity)} for all $a,b\in V$,
    \begin{multline*}
      \Res_{z}(Y_M(a,z)Y_M(b,w)\iota_{z^{-1}}F(z,w))
      -\Res_{z}(Y_M(b,w)Y_M(a,z)\iota_{z}F(z,w))\\
      =\Res_{z-w}(Y_M(Y(a,z-w)b,w)\iota_{z-w}F(z,w))
    \end{multline*}
    holds for every rational function\footnote{They are the rational functions on the curve $\oC$ in \cref{eg:twistedP1} with possible poles at $0$, $\infty$, and $w$.} $F(z,w)\in\C[z^{\pm\sfrac{1}{T}},(z-w)^{-1}]$.
  \end{enumerate}  
\end{definition}
\begin{definition}\label{def:admissible}
  A weak $g$-twisted $V$-module $M$ is called an \textbf{admissible $g$-twisted $V$-module} if it admits a subspace decomposition $M=\bigoplus_{n\in \frac{1}{T}\N}M(n)$ such that 
  \begin{equation*}%\label{eq:admissible}
    \vo{a}{m}M(n)\subset M(\wt a-m-1+n)
  \end{equation*}
  for any homogeneous $a\in V$, any $m\in \Z$, and any $n\in \frac{1}{T}\N$.
\end{definition}
\begin{definition}
  An admissible $g$-twisted module $M$ is said to be \textbf{of conformal weight $h\in\C$}, if  $\vo{L}{0}$ acts semi-simply on $M$ and each $M(n)$ is the eigenspace $M_{h+n}$ of eigenvalue $h+n$.
\end{definition}

Let $g_1,g_2,g_3$ be automorphisms of a VOA $V$ satisfying\footnote{We make this assumption since if there is a nonzero intertwining operator among $g_1$-twisted module $M^1$, $g_2$-twisted module $M^2$, and $g_3$-twisted module $M^3$, then essentially $g_3=g_1g_2$, see \cite{X95, H18}.}  $g_1g_2=g_3$. Suppose one of the following conditions holds\footnote{These conditions come from the requirement for the involved obicurve being totally ramified. Note that, when $T$ is a prime, one of the two conditions has to be held.}:
\begin{enumerate}
    \item $g_1=\id$; 
    \item each of $g_1$, $g_2$, and $g_3$ generates the same cyclic group (of order $T$).
\end{enumerate}
Then $g_1$, $g_2$, and $g_3$ share a common eigenspace decomposition 
\[
  V_{g_2}^r=\Set*{ a\in V \given g_2.a=e^{2\pi\iu\frac{r}{T}}a }.
\]  
Indeed, let $g_1=g_2^\epsilon$ for some integer $\epsilon$, then we have $V_{g_2}^r=V_{g_1}^{r\epsilon}=V_{g_3}^{r(\epsilon+1)}$. 

\begin{definition}[\cite{X95, H18}]
  Let $M^i$ be a $g_i$-twisted admissible $V$-module of conformal weight $h_i$ for $i=1, 2, 3$. A \concept{twisted intertwining operator of type $\fusion$} is a linear map $I(\cdot, w)\colon M^1 \to w^{-h}\Hom(M^2, M^3)\dbrack{w^{\pm\sfrac{1}{T}}}$ assigning to each $v\in M^1$ an formal series
  \[
    I(v,w)=\sum_{n\in \frac{1}{T}\Z} \vo{v}{n} w^{-n-h-1},
  \] 
  where $h=h_1+h_2-h_3$, satisfying the following axioms:
  \begin{enumerate}[label={\textbf{I}\arabic*}]
    \item \textbf{\small(Truncation property)}\label{axiom:IO-trunc} 
    $\vo{v}{m}=0$ for all $v\in M^1$, $ v_2\in M^2$, and sufficiently large $n\in\frac{1}{T}\Z$;
    \item \textbf{\small(Twisted Jacobi identity)}\label{axiom:IO-Jac} for all $a\in V$, $v\in M^1$,
    \begin{multline*}
      \Res_{z}(Y_{M^3}(a,z)I(v,w)\iota_{z^{-1}}F(z,w))
      -\Res_{z}(I(v,w)Y_{M^2}(a,z)\iota_{z}F(z,w))\\
      =\Res_{z-w}(I(Y_{M^1}(a,z-w)v,w)\iota_{z-w}F(z,w))
    \end{multline*}
    holds for every rational function\footnote{They are the rational functions on either the curve $\oC$ in \cref{eg:twistedP1} or the curve $\oC_w$ in \cref{rem:3ptP1} with possible poles at $0$, $w$, and $\infty$.}  $F(z,w)\in\C[z^{\pm\sfrac{1}{T}},(z-w)^{\pm\sfrac{1}{T}}]$.
    \item \textbf{\small($\vo{L}{-1}$-derivative property)}\label{axiom:IO-der} $Y(\vo{L}{-1}v, w)=\partial_{w} I(v, w)$ for all $v\in M^1$.
  \end{enumerate}
  The space of all intertwining operators of type $\fusion$ is denoted as $\Fusion$.
\end{definition}

\subsection{Algebras associated to a vertex operator algebra}\label{sec:assoalg}

\subsubsection*{Ancillary Lie algebras}
Following \cite{DLM1,B}, there is a Lie algebra $\L_g(V)$ associated to a VOA $V$ and an automorphism $g\in \Aut(V)$ with order $T$. The definition of $\L_g(V)$ is given as follows: 

First, we extend $g$ to an automorphism of  $V\otimes\C[z^{\pm\sfrac{1}{T}}]$ by letting 
\begin{equation*}%\label{2.7'}
  g(a\otimes z^\frac{m}{T}):=e^{-2\pi\iu\frac{m}{T}}(ga\otimes z^\frac{m}{T}).
\end{equation*}
Then the fixed-point subspace has a decomposition $(V\otimes\C[z^{\pm\sfrac{1}{T}}])^g=\bigoplus_{r=0}^{T-1}V_g^r\otimes z^{\frac{r}{T}}\C[z^{\pm1}]$. Let $\nabla$ be the operator $\vo{L}{-1}\otimes\id + \id\otimes\partial_{z}$. The underlying vector space of $\L_g(V)$ is defined by the quotient 
\[
  \L_g(V) := (V\otimes\C[z^{\pm\sfrac{1}{T}}])^g/\Image\nabla = 
  \bigoplus_{r=0}^{T-1}V_g^r\otimes z^{\frac{r}{T}}\C[z^{\pm1}]\d{z}/\Image\nabla.
\]
For any $m\in \Z$ and $a\in V$, we denote the equivalent class of $a\otimes z^\frac{m}{T}$ in $\L_g(V)$ by $\lo{a}{\frac{m}{T}}$. 
The Lie algebra structure on $\L_g(V)$ is given by the following formula: 
\[
  \Liebracket*{\lo{a}{m+\frac{r}{T}}}{\lo{b}{n+\frac{s}{T}}}
  =\sum_{j\ge 0}\binom{m+\frac{r}{T}}{j} \lo{(\vo{a}{j}b)}{m+n+\frac{r+s}{T}-j}.
\]

Moreover, $\L_g(V)$ has a natural gradation given by $\deg (\lo{a}{\frac{m}{T}}):=\wt a-\frac{m}{T}-1$, 
where $m\in\Z$ and $a$ is a homogeneous element of $V$. 
Let $\L_g(V)_n$ be the subspace of $\L_g(V)$ spanned by elements of degree $n\in \frac{1}{T}\Z$. 
Then we have a triangular decomposition:
\begin{equation*}
  \L_g(V)=\L_g(V)_{<0}\oplus \L_g(V)_0\oplus \L_g(V)_{>0}.
\end{equation*}

The above constructions extend to their $z^{\sfrac{1}{T}}$-adic completions (resp. $z^{-\sfrac{1}{T}}$-adic completions), resulting a topologically complete Lie algebra denoted by $\L_g(V)^{\sfL}$ (resp. $\L_g(V)^{\sfR}$). 
% Our notation here adopts a similar convention in \cite{DGK23}. 
The following definition generalizes the corresponding one in \cite{DGK23}: 

\begin{definition}
  The Lie algebras $\L_g(V)^{\sfL}$, $\L_g(V)$, and $\L_g(V)^{\sfR}$ are called the \textbf{left}, \textbf{finite}, and \textbf{right} \textbf{$g$-twisted ancillary Lie algebras} of $V$ respectively. 
  When $g=\id$, they recover the notions of \textbf{ancillary Lie algebras} in \cite[\S 2.3]{DGK23}.
\end{definition}

\subsubsection*{Universal enveloping algebras}

Following the untwisted constructions in \cite[Section 2.4, Appendix A]{DGK23}, we briefly discuss the $g$-twisted version of the \emph{left}, \emph{finite}, and \emph{right} universal enveloping algebras of the VOA $V$. First, the universal enveloping algebras $\sfU_g^{\sfL}=\sfU(\L_g(V)^{\sfL})$, $\sfU_g=\sfU(\L_g(V))$, and $\sfU_g^{\sfR}=\sfU(\L_g(V)^{\sfR})$ of the Lie algebras $\L_g(V)^{\sfL}$, $\L_g(V)$, and $\L_g(V)^{\sfR}$, respectively, give rise to a \emph{good triple $(\sfU_g^{\sfL},\sfU_g,\sfU_g^{\sfR})$ of associative algebras} in the sense of \cite[Definition A.9.1]{DGK23}. 
Moreover, this triple of associative algebras admits \emph{good seminorms} induced by the gradation on $\L_g(V)$ and the natural filtrations on $\L_g(V)^{\sfL}$ and $\L_g(V)^{\sfR}$. 

Then by \cite[Corollary A.9.9]{DGK23}, the completion $(\widehat{\sfU}_g^{\sfL},\widehat{\sfU}_g,\widehat{\sfU}_g^{\sfR})$ of $(\sfU_g^{\sfL},\sfU_g,\sfU_g^{\sfR})$ with respect to the good seminorms is a good triple of associative algebras with \emph{complete} good seminorms. 
Consider the \textbf{$g$-twisted Jacobi relations}:
\begin{equation}\label{eq:JacobiRel}
  \begin{multlined}
    \sum_{i\ge 0} 
    \binom{l}{i}(-1)^i
    \left(
      \lo{a}{\frac{r}{T}+m+l-i}\lo{b}{\frac{s}{T}+n+i}-
      (-1)^{l}\lo{b}{\frac{s}{T}+n+l-i}\lo{a}{\frac{r}{T}+m+i}
    \right)\\
    =\sum_{j\ge 0}
    \binom{m+\frac{r}{T}}{j}
    \lo{(\vo{a}{l+j}b)}{\frac{r+s}{T}+m+n-j},
  \end{multlined}
\end{equation}
for all $a\in V_g^r$, $b\in V_g^s$, and $l,m,n\in\Z$. 
Let $\sfJ_g$ be the ideal of $\widehat{\sfU}_g$ generated by the twisted Jacobi relations \labelcref{eq:JacobiRel} (we need to take the completion first since the $g$-twisted Jacobi relations involve infinite many terms), and let $\sfJ_g^{\sfL}$ and $\sfJ_g^{\sfR}$ be the ideals of $\widehat{\sfU}_g^{\sfL}$ and $\widehat{\sfU}_g^{\sfR}$ generated by $\sfJ_g$. 
Let $\bar{\sfJ_g}^{\sfL}$, $\bar{\sfJ_g}$, and $\bar{\sfJ_g}^{\sfR}$ be the closures with respect to the seminorm of the corresponding ideals in $\widehat{\sfU}_g^{\sfL}$, $\widehat{\sfU}$, and $\widehat{\sfU}_g^{\sfR}$, respectively. Then $(\bar{\sfJ_g}^{\sfL},\bar{\sfJ_g}, \bar{\sfJ_g}^{\sfR})$ is a good triple of ideals by \cite[Lemmas A.9.5 and A.9.6]{DGK23}. 
Finally, the quotients $\U_g(V)^{\sfL}=\widehat{\sfU}^{\sfL}/\bar{\sfJ_g}^{\sfL}$, $\U_g(V)=\widehat{\sfU}/\bar{\sfJ_g}$, and $\U_g(V)^{\sfR}=\widehat{\sfU}^{\sfR}/\bar{\sfJ_g}^{\sfR}$ form a good triple of associative algebras with complete good seminorms by \cite[Corollary A.9.9]{DGK23}. 
\begin{definition}\label{def:UgV}
  The associative algebras $\U_g(V)^{\sfL}$, $\U_g(V)$, and $\U_g(V)^{\sfR}$ are called the \textbf{left}, \textbf{finite}, and \textbf{right} \textbf{$g$-twisted universal enveloping algebras of $V$} respectively. When $g=1$, we recover the notions of \textbf{universal enveloping algebras} in \cite[Definition 2.4.2]{DGK23}.
\end{definition}
\begin{remark}
  The construction of each of the $g$-twisted universal enveloping algebras $\U_g(V)^{\sfL}$, $\U_g(V)$, and $\U_g(V)^{\sfR}$ is independent of the others. 
  The purpose of the notion of \emph{good triples} is to conduct these constructions simultaneously. 
\end{remark}

\subsubsection*{Zhu's algebra}
Let $\U_g(V)_{0}$ be the (coincided) degree-zero subalgebra of the good triple of algebras $(\U_g(V)^{\sfL},\U_g(V),\U_g(V)^{\sfR})$. 
By quotienting out the first neighborhood of the identity in $\U_g(V)_{0}$, we obtain an associative algebra $\cA_g(V)$ \cite{He17,HY12,Han20,DGK23}. 
\begin{definition}
  The associative algebra $\cA_g(V)$ is called the \textbf{$g$-twisted Zhu's algebra} of $V$. When $g=1$, we recover the notion of \textbf{Zhu's algebra} in \cite[Definition 2.5.1]{DGK23}. 
\end{definition}

In \cite{DLM1}, the $g$-twisted Zhu's algebra $A_g(V)$ is defined as the quotient of $V$ modulo the subspace $O_g(V)$, where  
\[
  O_g(V)=\spn\Set*{
    a\circ_g b:=\Res_z \frac{(1+z)^{\wt a-1+\delta(r)+\frac{r}{T}}}{z^{1+\delta(r)}} Y(a,z)b
    \given
    a\in V_g^r,b\in V
  },
\] 
and 
$
  \delta(r)=
  \begin{cases*}
    1 & if $r=0$,\\
    0 & otherwise.
  \end{cases*}
$

By \cite[Lemma 2.1]{DLM1}, $V_g^r\subseteq O_g(V)$ for $r\neq 0$. Hence $A_g(V)$ is a quotient space of $V_g^0$. 
Let $[a]$ denote the image of $a\in V$ in $A_g(V)$. The multiplication on $A_g(V)$ is defined by the formula
\begin{equation}\label{def:g-star-product}
  [a]\ast_g [b]:=
  \begin{cases*}
    [\Res_z Y(a,z)b\frac{(1+z)^{\wt a}}{z}]& 
    if $a\in V_g^0$,\\
    0& otherwise.
  \end{cases*}
\end{equation}
Moreover, $[\vac]$ is the identity of $A_g(V)$, and $[\upomega]$ lies in its center. 

It was proved in \cite{He17,HY12,Han20} that the two associative algebras $\cA_g(V)$ and $A_g(V)$ are isomorphic by identifying $[a]\in A_g(V)$ with $\lo{a}{\wt{a}-1}\in\cA_g(V)$ for every homogeneous $a\in V$. 
In particular, $A_g(V)_{\Lie}$ is a Lie algebra quotient of the degree-zero subalgebra $\L_g(V)_{0}$ of the ancillary Lie algebra of $V$.

\subsection{Actions on a $g$-twisted $V$-module}

Let $M$ be a weak $g$-twisted $V$-module. Both the Lie algebra $\L_g(V)$ and the associative algebra $\U_g(V)$ act on $M$ via the Lie algebra homomorphism 
\[\L_g(V)\to \End(M),\ \lo{a}{\frac{m}{T}}\mapsto \vo{a}{\frac{m}{T}}:=\Res_{z}Y_{M}(a,z)z^{\frac{m}{T}},\quad a\in V, m\in \Z.\]
Moreover, following a similar argument as in \cite[\S 5.1.6]{FBZ04}, we can show that the category of weak $g$-twisted $V$-modules is equivalent to the category of \emph{smooth} $\U_g(V)$-modules. In particular, admissible $g$-twisted $V$-modules correspond to graded smooth $\U_g(V)$-modules. Furthermore, we have
\begin{lemma}[{\cite[Proposition 5.4]{DLM1}}]
Let $M$ be an irreducible admissible $g$-twisted $V$-module. Then the bottom level $M(0)$ is an irreducible $\cA_g(V)$-module.
\end{lemma}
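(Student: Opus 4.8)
The plan is to exploit the triangular decomposition $\L_g(V)=\L_g(V)_{<0}\oplus\L_g(V)_0\oplus\L_g(V)_{>0}$ of the finite $g$-twisted ancillary Lie algebra, together with two facts from \cref{sec:assoalg}: that the bottom level $M(0)$ carries a natural module structure over $\cA_g(V)\cong A_g(V)$, with $[a]$ acting as $\vo{a}{\wt{a}-1}$ for homogeneous $a\in V_g^0$; and that the action of $\L_g(V)_0$ on $M(0)$ factors through $\cA_g(V)$, which is encoded in the assertion that $A_g(V)_{\Lie}$ is a Lie-algebra quotient of $\L_g(V)_0$. Granting these, what remains is a grading argument.

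First I would fix the bookkeeping. As recalled above, $\lo{a}{\frac{m}{T}}$ acts on $M=\bigoplus_{n\in\frac{1}{T}\N}M(n)$ through the operator $\vo{a}{\frac{m}{T}}=\Res_{z}Y_M(a,z)z^{\frac{m}{T}}$, which by admissibility has degree $\wt{a}-\tfrac{m}{T}-1=\deg\lo{a}{\frac{m}{T}}$; thus the action map is compatible with gradings, $\L_g(V)_{d}\cdot M(n)\subseteq M(n+d)$. Since $M(n)=0$ for $n<0$, it follows that $\L_g(V)_{<0}$ annihilates $M(0)$, that $\L_g(V)_0$ preserves $M(0)$, and that $\L_g(V)_{>0}$ carries $M(0)$ into $\bigoplus_{n>0}M(n)$. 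Because $\sfU(\L_g(V)_{<0})=\C\cdot1\oplus\sfU(\L_g(V)_{<0})_{<0}$ and likewise for the positive part, we obtain $\sfU(\L_g(V)_{<0})\cdot w\subseteq\C w$ for every $w\in M(0)$, and $\sfU(\L_g(V)_{>0})$ maps $M(0)$ into $M(0)\oplus\bigoplus_{n>0}M(n)$.

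Next, let $W\subseteq M(0)$ be a nonzero $\cA_g(V)$-submodule and let $N\subseteq M$ be the $V$-submodule it generates. Since a $V$-submodule is exactly a subspace closed under all modes, $N=\sfU(\L_g(V))\cdot W$; by the Poincar\'e--Birkhoff--Witt theorem, using an ordered basis adapted to the triangular decomposition in which the positive-degree generators precede the degree-zero ones, which precede the negative-degree ones, every element of $\sfU(\L_g(V))$ is a finite sum of products $x_{+}x_{0}x_{-}$ with $x_{\pm}\in\sfU(\L_g(V)_{\gtrless 0})$ and $x_{0}\in\sfU(\L_g(V)_0)$. For $w\in W$ we then have $x_{-}w\in\C w\subseteq W$; next $x_{0}(x_{-}w)\in W$ because $\sfU(\L_g(V)_0)$ acts on $M(0)$ through $\cA_g(V)$ and $W$ is $\cA_g(V)$-stable; and finally $x_{+}x_{0}x_{-}w\in W\oplus\bigoplus_{n>0}M(n)$. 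Summing over PBW monomials, $N\subseteq W\oplus\bigoplus_{n>0}M(n)$, hence $N\cap M(0)=W$. As $W\neq0$, the graded $V$-submodule $N$ is nonzero, so $N=M$ by irreducibility of $M$; intersecting with the bottom level yields $W=M(0)$. Therefore $M(0)$ has no nonzero proper $\cA_g(V)$-submodule.

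The only substantive ingredient is the assertion that the $\L_g(V)_0$-action on $M(0)$ descends to $\cA_g(V)$ — the twisted counterpart of Zhu's theorem, already built into the material of \cref{sec:assoalg}; everything else is formal manipulation of gradings and PBW monomials. I would also point out that it is harmless to work with the enveloping algebra $\sfU(\L_g(V))$ of the Lie algebra rather than with its completion $\U_g(V)$: the $V$-submodule generated by $W$ is spanned by finite products of modes applied to $W$, so the completion plays no role in identifying its bottom level.
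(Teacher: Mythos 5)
The paper does not prove this lemma; it is imported verbatim from \cite[Proposition 5.4]{DLM1}, so there is no in-paper argument to compare against. Your proof is correct and is essentially the standard triangular-decomposition argument (it is, in substance, how the result is proved in \cite{DLM1}): the grading bookkeeping, the PBW factorization $x_{+}x_{0}x_{-}$, and the conclusion $N\cap M(0)=W$ are all sound, and you correctly isolate the one non-formal input — that the $\L_g(V)_0$-action on $M(0)$ descends through the quotient $\L_g(V)_0\twoheadrightarrow \cA_g(V)_{\Lie}\cong A_g(V)_{\Lie}$, so that an $\cA_g(V)$-submodule is automatically $\sfU(\L_g(V)_0)$-stable. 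That input is exactly the twisted Zhu theorem recalled in \cref{sec:assoalg} and is legitimately citable here. Two cosmetic points: the statement tacitly assumes the standard normalization $M(0)\neq 0$ for irreducible admissible modules (otherwise ``irreducible bottom level'' is vacuous), and if one takes irreducibility of $M$ to mean no proper nonzero \emph{graded} submodules, one should observe that $N=\sfU(\L_g(V))\cdot W$ is graded because $W$ is homogeneous and $\sfU(\L_g(V))$ is graded — which your degree computation already gives.
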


Conversely, given any $\cA_g(V)$-module $U$, one can define an associated graded smooth $\U_g(V)$-module (and hence an admissible $g$-twisted $V$-module) $\sfM(U)$, called the \textbf{$g$-twisted generalized Verma module}, by the induction 
\begin{equation}\label{def:verma}
  \sfM(U):=\U_g(V)\otimes_{\U_g(V)_{\le 0}}U,
\end{equation}
where the $\U_g(V)_{\le 0}$-module action on $U$ is given by the trivial action of $\L_g(V)_{<0}$, and the action through the homomorphism $\L_g(V)_{0}\epimorphism\cA_g(V)_{\Lie}$.

\begin{remark}
Our definition \labelcref{def:verma} of the $g$-twisted generalized Verma module $\sfM(U)$ is the twisted analogy of the \emph{left generalized Verma module} $\Phi^{\sfL}(U)$ in \cite[Definition 3.1.1]{DGK23}. 
Furthermore, $\sfM(U)$ is also isomorphic to the $g$-twisted generalized Verma module $\bar{M}(U)$ constructed in \cite{DLM1} since they satisfy the same universal property. 
 \end{remark}

\begin{lemma}[{\cite[Theorem 6.3 and Lemma 7.1]{DLM1}}]
  Given an $\cA_g(V)$-module $U$, there is a unique maximal graded $\U_g(V)$-submodule $\sfM_+(U)$ of $\sfM(U)$ such that $\sfM_+(U)\cap U=0$. Furthermore, if $U$ is an irreducible $\cA_g(V)$-module, then $\sfM(U)/\sfM_+(U)$ is an irreducible $V$-module.
\end{lemma}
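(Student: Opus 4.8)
The plan is to construct $\sfM_+(U)$ as the sum of all graded submodules that miss the bottom level, and then to deduce irreducibility of the quotient from the irreducibility of $U$ over $\cA_g(V)$. The first step is to identify $\sfM(U)(0)$. Writing $\U_g(V)\cong\U(\L_g(V)_{>0})\otimes\U(\L_g(V)_{\le0})$ by the triangular decomposition $\L_g(V)=\L_g(V)_{<0}\oplus\L_g(V)_0\oplus\L_g(V)_{>0}$ and PBW, the induced module $\sfM(U)=\U_g(V)\otimes_{\U_g(V)_{\le0}}U$ becomes, as a graded vector space, $\U(\L_g(V)_{>0})\otimes_{\C}U$. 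Since $\U(\L_g(V)_{>0})$ is nonnegatively graded with one-dimensional degree-zero component, this identifies $\sfM(U)(0)$ with $U$, shows $\sfM(U)$ is generated over $\U_g(V)$ by $\sfM(U)(0)$, and exhibits $U$ inside $\sfM(U)(0)$ as an $\cA_g(V)$-module through $\L_g(V)_0\epimorphism\cA_g(V)_{\Lie}$. One should run this argument degree by degree, since $\U_g(V)$ is a completed and quotiented algebra; this causes no trouble because $\L_g(V)_{<0}$ annihilates $U$, so only finitely much of the completion enters in each fixed degree.

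Next, let $\sfM_+(U)$ be the sum of all graded $\U_g(V)$-submodules $W\subseteq\sfM(U)$ with $W\cap U=0$. Because $\sfM(U)(0)=U$, every such graded $W$ satisfies $W(0)=W\cap U=0$ and is therefore concentrated in strictly positive degrees; hence so is $\sfM_+(U)$, and in particular $\sfM_+(U)\cap U=\sfM_+(U)(0)=0$. By construction $\sfM_+(U)$ is the unique maximal graded submodule with this property, and the quotient $L(U):=\sfM(U)/\sfM_+(U)$ again has bottom level $L(U)(0)=U$.

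Now assume $U$ is an irreducible $\cA_g(V)$-module and let $N$ be a nonzero graded submodule of $L(U)$; I claim $N=L(U)$. It suffices to show $N(0)\neq0$: then $N(0)$ is a nonzero $\cA_g(V)$-submodule of $U=L(U)(0)$ --- the degree-zero part of a graded $\U_g(V)$-submodule is stable under $\U_g(V)_0$, which acts through $\cA_g(V)$ --- so $N(0)=U$ by irreducibility, whence $N\supseteq\U_g(V)\cdot L(U)(0)=L(U)$ since $L(U)$ is generated by its bottom level. To see $N(0)\neq0$, suppose not and let $\widetilde N\subseteq\sfM(U)$ be the preimage of $N$, a graded submodule containing $\sfM_+(U)$. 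The map $\sfM(U)(0)\to L(U)(0)$ is an isomorphism and $\sfM_+(U)(0)=0$, so $\widetilde N(0)=0$, i.e.\ $\widetilde N\cap U=0$; maximality of $\sfM_+(U)$ then forces $\widetilde N\subseteq\sfM_+(U)$, so $N=0$, a contradiction.

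The main obstacle I anticipate is the passage from ``no proper nonzero graded submodule'' to genuine irreducibility of $L(U)$ as a $g$-twisted $V$-module, that is, showing every nonzero $\U_g(V)$-submodule $N$ already meets $L(U)(0)$. The natural route is a grading argument: given $0\neq v\in N$ whose lowest nonzero homogeneous component $v_{n_0}$ has degree $n_0>0$, one seeks a homogeneous element of $\U_g(V)$ of degree $-n_0$ that does not annihilate $v_{n_0}$, so that applying it to $v$ produces a nonzero element of $N$ in degree $0$. This rests on the fact that $L(U)$ is generated over $\U_g(V)$ by its bottom level together with a nondegeneracy of that generation, and is the one place where the fine structure of $\L_g(V)$, rather than formal properties of induced modules, is genuinely needed. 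The remaining verifications --- that $\sfM(U)$ is an admissible $g$-twisted $V$-module with grading compatible with the twisted vertex operators, and that $L(U)$ inherits this --- are routine consequences of the equivalence between graded smooth $\U_g(V)$-modules and admissible $g$-twisted $V$-modules recorded above.
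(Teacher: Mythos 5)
The paper does not prove this lemma at all --- it is imported verbatim from \cite{DLM1} (Theorem 6.3 and Lemma 7.1) --- so there is no internal proof to compare against; your proposal reconstructs the classical argument. The construction of $\sfM_+(U)$ as the sum of all graded submodules meeting $U$ trivially, the observation that any such submodule lives in strictly positive degrees, and the reduction of graded-irreducibility of $L(U):=\sfM(U)/\sfM_+(U)$ to ``every nonzero graded submodule has nonzero bottom level'' are all correct and are exactly how \cite{DLM1} proceeds. One caveat on your first step: the identification $\sfM(U)(0)\cong U$ is not a formal PBW consequence for the completed quotient algebra $\U_g(V)$, since the closure of the Jacobi ideal could a priori collapse part of the degree-zero fiber of the induced module; that it does not is precisely the nontrivial content of \cite[Theorems 6.2--6.3]{DLM1} (cf.\ \cite[\S 3.1]{DGK23}), and your ``degree by degree'' remark gestures at this without supplying it.

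The genuine gap is the one you flag and leave open: passing from ``no proper nonzero \emph{graded} submodule'' to irreducibility as a weak $g$-twisted $V$-module. The route you sketch does not close it: if $v=\sum_{n\ge n_0}v_n\in N$ and $x\in\U_g(V)$ is homogeneous of degree $-n_0$ with $x v_{n_0}\neq0$, then $xv\in N$ has nonzero degree-zero \emph{component} $xv_{n_0}$, but $xv$ is not homogeneous, and since $N$ is not yet known to be graded you cannot extract that component inside $N$. The standard repair is different and uses no fine structure of $\L_g(V)$: because $[\upomega]$ is central in $\cA_g(V)$ and $U$ is irreducible (hence cyclic, hence of at most countable dimension since $\cA_g(V)$ is a quotient of $V$), Schur's lemma forces $[\upomega]$, and therefore $\vo{L}{0}$, to act on $U$ as a scalar $h$. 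The commutation relation $[\vo{L}{0},\vo{a}{m}]=(\wt a-m-1)\vo{a}{m}$ then gives that $\vo{L}{0}$ acts on $L(U)(n)$ as the scalar $h+n$, and since the values $h+n$ for $n\in\frac{1}{T}\N$ are pairwise distinct, the grading of $L(U)$ coincides with its $\vo{L}{0}$-eigenspace decomposition; every submodule is $\vo{L}{0}$-stable, hence automatically graded, and your graded argument finishes the proof. Without this (or an explicit convention that ``irreducible'' means irreducible in the category of graded/admissible modules), the final step is missing.
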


%The following definition is essentially from \cite{Li}: 
\begin{definition} \label{def:lowest-weight}
  A weak $g$-twisted module $M$ is called a \textbf{lowest-weight module} if there exists an eigenvalue $h\in\C$ of $\vo{L}{0}$ such that its eigenspace $M_h$ is an irreducible $\L_g(V)_{0}$-module, and $M$ is a quotient of the generalized Verma module $\sfM(M_h)$. 
\end{definition}

\subsubsection*{Contragredient modules}
There is an anti-isomorphism between the Lie algebras $\L_g(V)$ and $\L_{g^{-1}}(V)$ given by the formula
\begin{equation}\label{eq:def:theta}
  \theta(\lo{a}{\frac{m}{T}}):=\sum_{j\ge 0} \frac{(-1)^{\wt a}}{j!}
  \lo{(\vo{L}{1}^ja)}{2\wt a-\frac{m}{T}-j-2}.
\end{equation}
This anti-isomorphism extends to the good triple $(\L_g(V),\L_g(V)^{\sfL}, \L_g(V)^{\sfR})$ of ancillary Lie algebras and the good triple $(\U_g(V)^{\sfL},\U_g(V),\U_g(V)^{\sfR})$ of universal enveloping algebras. 
In particular, we have an anti-isomorphism between $\U_g(V)$ and $\U_{g^{-1}}(V)$, and isomorphisms 
\[
  \U_g(V)^{\sfL}\cong\U_{g^{-1}}(V)^{\sfR}
  \txand
  \U_g(V)^{\sfR}\cong\U_{g^{-1}}(V)^{\sfL}.
\] 

Let $M$ be an admissible $g$-twisted $V$-module. 
As a smooth \emph{left} $\U_{g}(V)$-module, its \emph{graded dual space} $M'=\bigoplus_{n\in \frac{1}{T}\N}M(n)^{\ast}$ naturally carries a smooth \emph{right} $\U_{g}(V)$-module structure, hence a smooth \emph{left} $\U_{g^{-1}}(V)$-module structure via the anti-isomorphism $\theta$. Thus it is a $g^{-1}$-twisted admissible $V$-module. More precisely, for any $a\in V$, $u'\in M'$, and $u\in M$, we have
\begin{equation}\label{eq:theta=dual}
  \braket*{\lo{a}{\frac{m}{T}}.u'}{u} = 
  \braket*{u'}{\theta(\lo{a}{\frac{m}{T}}).u}.
\end{equation}

\begin{definition}[\cite{FHL,X95}]
  The admissible $g^{-1}$-twisted $V$-module $M'$ is called the \textbf{contragredient module} of $M$.
\end{definition}

\begin{warn}\label{warn}
In general, an admissible $g$-twisted $V$-module $M$ needs NOT to have finite-dimensional components. 
  Consequently, its \emph{double} contragredient module $M''$ is not necessarily equal to $M$ itself.
  Thus, in general, an admissible $g^{-1}$-twisted $V$-module is not guaranteed to be the contragredient module of an admissible $g$-twisted $V$-module.
\end{warn}

\section{The twisted chiral Lie algebra}\label{sec:tcL}
In this section, we introduce the twisted chiral Lie algebras $\L_U(\cV^G)$ and their constrained variants.

\subsection{The twisted vertex algebra bundle and its connection}
Following \cite[\S 6.5]{FBZ04}, a VOA $V$ gives rise to an
\emph{bundle} $\cV$ on a curve $\X$, which is locally constructed from $V$ twisted by the choices of local coordinates\footnote{More precisely, assigning $\rho\in\grp{Aut}\O$ to the composition $\iota_{\rho(z)}\circ\iota_{z}^{-1}\colon V\to\cV_{\pp}\to V$ gives rise to an action of $\grp{Aut}\O$ on $V$, and this action coincides with the one in \cref{rem:VirasoroAction}.}. 
In particular, a choice of a local chart $(\pp,z)$ gives rise to a local trivialization of the bundle as $\Gamma(D_{\pp},\cV)\cong V\dbrack{z}$. 
\begin{example}
  On the \emph{twisted projective line} $\x\colon\oC\to\PP^1$,
  the bundle $\cV$ is trivialized on the affine charts $(C,z^{\sfrac{1}{T}})$ and $(C^\dagger,(z^\dagger)^{\sfrac{1}{T}})$ as
  \[
    \Gamma(C,\cV)\cong V[z^{\sfrac{1}{T}}],\txand 
    \Gamma(C^\dagger,\cV)\cong V[(z^\dagger)^{\sfrac{1}{T}}],
  \]
  with the transition map $\vartheta\colon V[z^{\sfrac{1}{T}}]\to V[(z^\dagger)^{\sfrac{1}{T}}]$ given by
  \begin{equation}\label{eq:def:vartheta}
    \begin{aligned}
      \vartheta(a\otimes z^{\frac{m}{T}})
      &:=\left.e^{z\vo{L}{1}}(-z^{-2})^{\vo{L}{0}}(a\otimes z^{\frac{m}{T}})\right|_{z^{\sfrac{1}{T}}\mapsto(z^\dagger)^{-\sfrac{1}{T}}}\\
      &=\sum_{j=0}^{\wt a} \frac{(-1)^{\wt a}}{j!}
      (\vo{L}{1}^ja)\otimes (z^\dagger)^{2\wt a-\frac{m}{T}-j}.
    \end{aligned}
  \end{equation}
  % Note that the right-hand side is a finite sum.   
\end{example}

By \cite[6.5.9]{FBZ04}, following \cite[\S 2.6]{DGT19}, we may degree-wise trivialize $\cV$.
\begin{lemma}\label{lem:GammaUV}
  The bundle $\cV$ is filtered by the sheaves $\cV_{\le k}$ whose associated graded sheaf is isomorphic to the graded sheaf $\bigoplus_{k\in\N}V_k\otimes_{\C}\Omega^{\otimes -k}$.
  In particular, on an affine open subset $U$ of $\X$, we have
  \[
    \Gamma(U,\cV) \cong 
    \bigoplus_{k=0}^{\infty}
    V_k\otimes\Gamma\big(U,\Omega^{\otimes -k}\big).
  \]
\end{lemma}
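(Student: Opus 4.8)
The plan is to derive both claims from the $\grp{Aut}\O$-equivariant description of $\cV$, in the spirit of \cite[\S 6.5]{FBZ04} and \cite[\S 2.6]{DGT19}. First I would recall that $\cV$ is the bundle associated to the $\grp{Aut}\O$-torsor of formal coordinates on $\X$ and to the $\grp{Aut}\O$-module $V$ of \cref{rem:VirasoroAction}, and observe that the increasing filtration $V_{\le k}:=\bigoplus_{j\le k}V_j$ is stable under this action: since $\grp{Der}_0\O$ acts through $z^{p+1}\partial_z\mapsto\vo{L}{p}$ with $p\ge0$ and $\vo{L}{p}V_j\subseteq V_{j-p}$, the filtration is preserved, and this integrates to the full group $\grp{Aut}\O$. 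Consequently the subspaces $V_{\le k}$ induce subbundles $\cV_{\le k}\subseteq\cV$, each of finite rank $\dim V_{\le k}$, with $\bigcup_k\cV_{\le k}=\cV$.

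Next I would identify the successive quotients. On the associated graded $\bigoplus_k V_k$ the unipotent radical $\grp{Aut}_+\O=\ker(\grp{Aut}\O\twoheadrightarrow\mathbb{G}_m)$, which is generated by the operators $\vo{L}{p}$ with $p\ge1$ and these strictly lower the weight, acts trivially; hence the $\grp{Aut}\O$-action on $\bigoplus_k V_k$ factors through $\grp{Aut}\O\twoheadrightarrow\mathbb{G}_m$, with $\mathbb{G}_m$ scaling $V_k$ via $\vo{L}{0}$ by weight $k$. Therefore $\gr_k\cV=\cV_{\le k}/\cV_{\le k-1}$ is the twist of $V_k$ by the corresponding character of the induced $\mathbb{G}_m$-torsor, which in the normalization of \cite{FBZ04} is precisely $V_k\otimes_\C\Omega^{\otimes-k}$; summing over $k$ yields the asserted associated graded sheaf $\bigoplus_{k\in\N}V_k\otimes_\C\Omega^{\otimes-k}$.

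For the statement about global sections, I would restrict to the affine open $U$ and use that $\Gamma(U,-)$ is exact on quasi-coherent sheaves: applying it to the short exact sequence $0\to\cV_{\le k-1}\to\cV_{\le k}\to V_k\otimes\Omega^{\otimes-k}\to0$ gives a short exact sequence of $\Gamma(U,\O)$-modules. Since $\X$ is smooth, $\Omega$ is a line bundle, so $\Gamma(U,\Omega^{\otimes-k})$ is an invertible, in particular projective, $\Gamma(U,\O)$-module, and the sequence splits; inducting on $k$ then gives $\Gamma(U,\cV_{\le k})\cong\bigoplus_{j=0}^k V_j\otimes\Gamma(U,\Omega^{\otimes-j})$. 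Finally I would pass to the filtered colimit over $k$, which commutes with $\Gamma(U,-)$ since $U$ is Noetherian, to obtain $\Gamma(U,\cV)\cong\bigoplus_{k\in\N}V_k\otimes\Gamma(U,\Omega^{\otimes-k})$.

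The bookkeeping steps here — exactness of $\Gamma(U,-)$, projectivity of the global sections of a line bundle over an affine curve, and the interchange of $\Gamma$ with the colimit — are routine. The one point I expect to require care is pinning down that the graded piece attached to $V_k$ is $\Omega^{\otimes-k}$ rather than $\Omega^{\otimes k}$: this is a matter of the $\mathbb{G}_m$-weight conventions in the associated-bundle construction, and I would handle it by directly invoking \cite[\S 6.5]{FBZ04} rather than recomputing.
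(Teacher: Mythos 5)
Your argument is correct and is essentially the paper's own route: the paper proves nothing here beyond citing \cite[6.5.9]{FBZ04} and \cite[\S 2.6]{DGT19}, and your write-up (the $\grp{Aut}\O$-stability of $V_{\le\star}$, triviality of the unipotent part on the associated graded so that $\gr_k\cV\cong V_k\otimes\Omega^{\otimes-k}$, then splitting over an affine open) is exactly the standard argument those references supply. The sign convention you flag is settled by the residue pairing used later in the paper (weight-$k$ states pair with $(1-k)$-differentials), confirming $\Omega^{\otimes-k}$ rather than $\Omega^{\otimes k}$.
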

\begin{definition}\label{def:wt}
  The filtration $\cV_{\le\star}$ is called the \textbf{weight filtration}. 
  For any section $\alpha$ of the bundle $\cV$, we write $\wt\alpha\le k$ for $\alpha\in\cV_{\le k}$. 
  Passing to the associated graded sheaf $\gr_{\star}\cV$, we use $\wt\alpha$ to denote the \textbf{weight} of $\alpha$. 
\end{definition}

The bundle $\cV$ is equipped with a \emph{connection} $\nabla\colon\cV\to\cV\otimes\Omega$, which can be expressed locally as 
\begin{equation}\label{eq:nabla}
  \iota_{z}\nabla=\vo{L}{-1}\otimes\id + \id\otimes\partial_{z}
\end{equation}
on a local chart $(\pp,z)$. 
At any point $\pp$, the vertex operator $Y(-,z)$ on $V$ gives rise to an $\End(\cV_{\pp})$-valued meromorphic section $\cY_{\pp}$ of $\cV^{\ast}$, \emph{horizontal} with respect to the dual connection $\nabla^{\ast}$. 
The section $\cY_{\pp}$ can be expressed by the formula 
\[
  \braket*{\varphi}{\cY_{\pp}(\spa)\nu} = 
  \braket*{\iota_{z}(\varphi)}{Y(\iota_{z}(\spa),z)\iota_{z}(\nu)},
\]
for all $\varphi\in\cV_{\pp}^{\ast}$, $\nu\in\cV_{\pp}$ and all sections $\spa$ of $\cV$. 

Now, we consider a totally ramified orbicurve $\x\colon\tilX\to\X$ and \emph{suppose its Galois group $G$ acts on $V$ as VOA-automorphisms.}
In particular, the action of $G$ commutes with the Virasoro action in \cref{rem:VirasoroAction}. 
Consequently, the bundle $\cV$ is $G$-equivariant, and we thus obtain a bundle $\cV^G$ on the unramified locus $\Xc$. 
The connection $\nabla$ in \labelcref{eq:nabla} descends to one on the bundle $\cV^G$, for which we will adopt the notation $\nabla$ again. 

\begin{definition}
  The $\cD$-module provided by the pair $(\cV^G,\nabla)$ (or its direct image along the inclusion $\Xc\monomorphism\X$) is called the \textbf{twisted vertex algebra bundle} generated by $V$ on the orbicurve $\x\colon\tilX\to\X$.
\end{definition}

\subsection{The twisted chiral Lie algebra}
Let $\pi_i$ ($i=1,2$) be the projections $\X^2\to\X$ and $\Delta\colon\X\to\X^2$ the diagonal morphism. 
These morphisms can be pullback to obtain morphisms between orbicurves, for which we will adopt the same notation.
The following formula defines a $\C$-bilinear Lie bracket on the \emph{de Rham cohomology} $\H^0(-,\cV^G\otimes\Omega/\Image\nabla)$
\begin{equation}\label{eq:LieBracket}
  \alpha\otimes\beta\longmapsto
  \Delta^{\ast}\Res\braket*{\cY}{\pi_1^{\ast}\alpha}\pi_2^{\ast}\beta  
\end{equation}

\begin{definition}\label{def:twistedchiralLie}
  For an open subset $U$ of $\Xc$ (resp. a point $\pp$ of $\X$), we define the \textbf{twisted chiral Lie algebra} $\L_{U}(\cV^G)$ (resp. $\L_{\pp}(\cV^G)$) \textbf{on} $U$ (resp. \textbf{at} $\pp$) as the de Rham cohomology $\H^0(U,\cV^G\otimes\Omega/\Image\nabla)$ (resp. $\H^0(D_{\pp}^{\times},\cV^G\otimes\Omega/\Image\nabla)$) equipped with the Lie bracket defined by the formula \labelcref{eq:LieBracket}. 
\end{definition}

The restrictions of twisted chiral Lie algebras are always injective since our underlying schemes are integral. In particular, we have the following embedding of Lie algebras:
\[
  \L_{U}(\cV^G) \longmonomorphism
  \prod_{\pp\in\X\setminus U}\L_{\pp}(\cV^G).
\]

For a point $\pp\in\Xc$, a local coordinate $z$ at $\pp$ identifies the (twisted) chiral Lie algebra with the \emph{untwisted} left ancillary Lie algebra:
\[
  \L_{\pp}(\cV^G)\cong
  \L_{\tilde\pp}(\cV)\overset{\iota_{z}}{\cong}   
  \L(V)^{\sfL},
\]
where $\tilde\pp$ is any lifting of $\pp$. 

For a branch point $\qq$, a special local coordinate $z^{\sfrac{1}{T}}$ above $\pp$ identifies the twisted chiral Lie algebra with the \emph{$g_\qq$-twisted} left ancillary Lie algebra
\begin{equation*}%\label{eq:LgV_branch}
  \L_{\qq}(\cV^G)\overset{\iota_{z}}{\cong}
  \L_{g_{\qq}}(V)^{\sfL}.
\end{equation*}

\begin{example}\label{eg:LgV}
  On the \emph{twisted projective line} $\x\colon\oC\to\PP^1$ in \cref{eg:twistedP1}, we have the following explicit descriptions of the twisted chiral Lie algebra $\L_{U}(\cV^G)$.
  \begin{itemize}[wide]
    \item On $\PP^1\setminus\Set{0,\infty}$, the coordinates $z^{\sfrac{1}{T}}$ and $(z^\dagger)^{\sfrac{1}{T}}$ provide the identifications
    \[
      \L_{\PP^1\setminus\Set{0,\infty}}(\cV^G)\overset{\iota_z}{\cong}\L_g(V)\txand 
      \L_{\PP^1\setminus\Set{0,\infty}}(\cV^G)\overset{\iota_{z^\dagger}}{\cong}\L_{g^{-1}}(V),
    \]
    where $g=g_0$, and we have $g^{-1}=g_{\infty}$.
    The two Lie algebras are related via the transition map $\vartheta$ in \labelcref{eq:def:vartheta}. 
    One can see that this isomorphism is precisely $-\theta$, where $\theta$ is the anti-isomorphism in \labelcref{eq:def:theta}.
    \item The restriction from $\PP^1\setminus\Set{0,\infty}$ to $\PP^1\setminus\Set{0,w,\infty}$, where $w\neq 0,\infty$, provides the following localization of $\L_g(V)$:
    \[
      \L_{\PP^1\setminus\Set{0,w,\infty}}(\cV^G)\overset{\iota_z}{\cong}(V\otimes\C[z^{\pm\sfrac{1}{T}},(z-w)^{-1}]\d{z})^{g}/\Image\nabla.
    \]
    The latter vector space is spanned by classes of the form 
    \[\loo{a}[r]{m}{n}:=a\otimes\frac{z^{\frac{r}{T}+m}}{(z-w)^{n}}\d{z}+\Image\nabla,\] 
    where $a\in V^{r}$ is homogeneous and $m\in\Z$, $n\in\N$. 
    The Lie bracket is as follows:
    \begin{equation*}%\label{eq:Lie_LgV-0}
      \Liebracket*{\loo{a}[r]{m}{n}}{\loo{b}[r']{m'}{n'}} = 
      \sum_{i,j\ge 0}
      \binom{\frac{r}{T}+m}{i}\binom{-n}{j}
      \loo{(a_{i+j}b)}[r+r']{m+m'-i}{n+n'+j}.
    \end{equation*}
  \end{itemize}
\end{example}
\begin{example}\label{eg:3ptsLV}
  For the obicurve $\oC_w\to\PP^1$ in \cref{eg:3ptP1}, we have the following explicit description of the twisted chiral Lie algebras:
    \[
      \L_{\PP^1\setminus\Set{0,w,\infty}}(\cV^G)
      \overset{\iota_z}{\cong}
      \left(V\otimes\C[z^{\pm\sfrac{1}{T}},(z-w)^{\pm\sfrac{1}{T}}]\d{z}\right)^{g}/\Image\nabla,
    \]
    where $g=g_0$, and we have $g_w=g^\epsilon$.  The latter vector space is spanned by classes of the form (where \emph{the superscript $(r)$ is omitted if $r=0$}) 
    \[
      \loo{a}[r]{m}{n}:=a\otimes\frac{z^{\frac{r}{T}+m}}{(z-w)^{-\epsilon\frac{r}{T}+n}}\d{z}+\Image\nabla,
    \] 
    where $a\in V^{r}$ is homogeneous and $m\in\Z$, $n\in\N$. 
    The Lie bracket is as follows:
    \begin{equation*}%\label{eq:Lie_LgV-0}
      \Liebracket*{\loo{a}[r]{m}{n}}{\loo{b}[r']{m'}{n'}} = 
      \sum_{i,j\ge 0}
      \binom{\frac{r}{T}+m}{i}\binom{\epsilon\frac{r}{T}-n}{j}
      \loo{(a_{i+j}b)}[r+r']{m+m'-i}{n+n'+j}.
    \end{equation*}
\end{example}

\subsubsection*{Actions of twisted chiral Lie algebras on twisted modules}
Following \cite[\S 7.3]{FBZ04}, an admissible \emph{untwisted} module $M$ gives rise to a bundle $\cM$, locally obtained from $M$ twisted by the choices of local coordinates\footnote{More precisely, we should further twist it by an action of $\G_m$. But when $M$ is of conformal weight $h$, such an action is obtained from the change of coordinates plus a degree shifting.}. 
Then the action of $G$ further gives rise to a bundle $\cM^G$ on $\Xc$, whose sections are $G$-equivariant sections of $\cM$. In particular, we have a $\kappa_\pp$-space $\cM_{\pp}$ at each unramified point $\pp\in\Xc$.

Following \cite[\S 5]{FS04}, we can attach an admissible \emph{$g_{\pp}$-twisted} $V$-module $M$ to a branch point $\pp$ (and hence its unique lifting) and obtain a $\kappa_{\pp}$-space $\cM_{\pp}$ by twisting the choices of \emph{special} local coordinates. 

In any case, the vertex operator $Y_{M}$ gives rise to a $G$-equivariant $\End\cM_{\pp}$-valued local section $\cY_{M,\pp}$ of $\cV^{\ast}$ for all points $\pp\in\X$ and their liftings. 
Then  there is a natural continuous action of the {twisted chiral Lie algebra} on the fiber $\cM_{\pp}$ given by the formula:
\begin{equation}\label{eq:actionOfLV}
  \alpha\in \L_{\pp}(\cV^{G})\longmapsto\Res_{\pp}\braket*{\cY_{M,\pp}}{\alpha}\in\End\cM_{\pp}.  
\end{equation}
\begin{example}\label{eg:chiralactiononP1}
  More explicitly, the following table shows the expansion of $\Res_{\pp}\braket*{\cY_{M,\pp}}{\alpha}$ for the given typical $\alpha$ on the \emph{twisted projective line} $\x\colon\oC\to\PP^1$.
  \begin{table}[!h]
    \centering
      \renewcommand{\arraystretch}{1.5}
      \begin{tabular}{c|c|c}
      Local chart & Typical $\alpha$ & Expansion of $\Res_{\pp}\braket*{\cY_{M,\pp}}{\alpha}$   \\
      \hline
      $(0,z^{\sfrac{1}{T}})$ & 
      $\alpha=a\otimes z^{\frac{n}{T}}\d{z}$ & 
      $\displaystyle \iota_{z^{\sfrac{1}{T}}}\Res_{0}\braket*{\cY_{M,0}}{\alpha}=T\vo{a}{\frac{n}{T}}$ \\[.3em]
      \hline
      $(\infty,z^{-\sfrac{1}{T}})$ & 
      $\alpha=a\otimes z^{\frac{n}{T}}\d{z}$ & 
      $\displaystyle \iota_{z^{-\sfrac{1}{T}}}\Res_{\infty}\braket*{\cY_{M,\infty}}{\alpha}=-T\theta(\lo{a}{\frac{n}{T}})$ \\[.3em]
      \hline
      $(\pp,z-z(\pp))$ & 
      $\displaystyle \alpha=a\otimes(z-z(\pp))^n\d{z}$ & 
      $\displaystyle \iota_{z-z(\pp)}\Res_{\pp}\braket*{\cY_{M,\pp}}{\alpha}=\vo{a}{n}$ \\[.3em]
      \hline
      \end{tabular}
      \caption{The local actions of $\L_{-}(\cV^G)$ at $\pp$.}\label{tab:EXPY}
  \end{table}  
\end{example}

\subsubsection*{A consequence of Riemann-Roch Theorem}
The following lemma will be used later to guarantee the existence of certain elements of the twisted Lie algebra with desired local expressions.
\begin{lemma}\label{lem:RRcoro++}
  Let $U$ be an affine open subset of $\X$ and let $\qq_{\diamond}$ be all the branch points inside $U$. 
  Let $z_i^{\sfrac{1}{T}}$ be a special local coordinate above $\qq_{i}$ and $g_{i}$ the Deck generator at $\qq_{i}$, for each $i$. 
  Then  for any $a\in V_{g_{i}}^r$ and all $d,m\in\Z$, there exists an element $\alpha\in\L_{U\setminus\qq_{\diamond}}(\cV^G)$ such that 
  \[
    \begin{array}{ll}
      \iota_{z_i}\alpha \equiv a\otimes z_i^{\frac{r}{T}+d}\d{z_i} &\mod z_i^m(V\dbrack{z_i}\d{z_i})^{g_i}, \text{ for a fixed $i$, and} \\
      \iota_{z_j}\alpha \equiv 0 &\mod z_j^m(V\dbrack{z_j}\d{z_j})^{g_j}, \text{ for all }j\neq i. \\
    \end{array}
  \]
\end{lemma}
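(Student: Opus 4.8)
The plan is to reduce, via the weight filtration of \cref{lem:GammaUV}, to the Riemann--Roch statement for ordinary $k$-differentials in \cref{lem:RRcoro}, and to dispose of the resulting lower-order discrepancies by an induction on $\wt a$. Since $V_{g_i}^r=\bigoplus_k(V_{g_i}^r\cap V_k)$ and the assertion is linear in $a$, it suffices to take $a$ homogeneous of weight $k$, and I induct on $k$. It is convenient to pass to the cover: as each $\qq_j$ is totally ramified it has a unique lift $\tilde\qq_j$, so $\widetilde{U}':=\x^{-1}(U\setminus\qq_{\diamond})=\x^{-1}(U)\setminus\Set{\tilde\qq_{\diamond}}$ is an affine open of $\tilX$ contained in the free locus of $G$, and since $\nabla$ is $G$-equivariant and $|G|=T$ is invertible, taking $G$-invariants is exact, whence
\[
  \L_{U\setminus\qq_{\diamond}}(\cV^G)\;=\;\Big(\Gamma\big(\widetilde{U}',\cV\otimes\Omega_{\tilX}\big)\big/\Image\nabla\Big)^{G}.
\]
Writing $t_j:=z_j^{\sfrac{1}{T}}$ for the special coordinate at $\tilde\qq_j$ and using $t_j^{T}=z_j$, the target $a\otimes z_i^{\frac{r}{T}+d}\d{z_i}$ becomes $Ta\otimes t_i^{N_i}\d{t_i}$ with $N_i=r+(d+1)T-1$, and the congruence modulo $z_i^m(V\dbrack{z_i}\d{z_i})^{g_i}$ becomes one modulo $t_i^{Tm}(V\dbrack{t_i}\d{t_i})^{g_i}$. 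Thus it is enough to exhibit a $G$-invariant section of $\cV\otimes\Omega_{\tilX}$ on $\widetilde{U}'$ with the prescribed $\iota_{t_j}$-expansions.

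For the inductive step, apply \cref{lem:GammaUV} on the affine $\widetilde{U}'$: a section of $\cV\otimes\Omega_{\tilX}$ of weight $\le k$ is, modulo sections of weight $<k$, a $V_k$-linear combination of $(1-k)$-differentials, and the difference between the weight-graded trivialization and a formal expansion map $\iota_{t_i}$ is, by the $\grp{Aut}\O$-equivariant construction of \cite[\S6.5]{FBZ04}, concentrated in strictly lower weight and does not decrease the order of vanishing at a point by more than a bounded amount. Invoking \cref{lem:RRcoro} on $\tilX$ (affine open $\x^{-1}(U)$, points $\tilde\qq_{\diamond}$, coordinates $t_j$, differentials of order $1-k$, and vanishing order a sufficiently large multiple of $Tm$) yields a $(1-k)$-differential $\mu$ on $\widetilde{U}'$ whose expansion at $\tilde\qq_i$ agrees with $t_i^{N_i}(\d{t_i})^{1-k}$ to high order and which vanishes to high order at every $\tilde\qq_j$, $j\ne i$; then $a\otimes\mu\in\Gamma(\widetilde{U}',\cV\otimes\Omega_{\tilX})$ has $\iota_{t_i}$-expansion $Ta\otimes t_i^{N_i}\d{t_i}$ up to strictly-lower-weight terms and up to $t_i^{Tm}$, and vanishes to high order at the remaining branch points. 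Now average: $\tilde\beta:=\tfrac{1}{T}\sum_{g\in G}g\cdot(a\otimes\mu)$ is $G$-invariant. The leading term $a\otimes z_i^{\frac{r}{T}+d}\d{z_i}$ is, by construction, an element of the twisted ancillary Lie algebra $\L_{g_i}(V)^{\sfL}\cong\L_{\qq_i}(\cV^G)$, hence $g_i$-invariant and therefore $G$-invariant since $g_i$ generates $G$; as $G$ fixes each $\tilde\qq_j$ and preserves the order filtration there, $\tilde\beta$ still has the correct leading term modulo $t_i^{Tm}$ (plus a lower-weight error) and still vanishes to high order at $\tilde\qq_j$ for $j\ne i$. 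Finally, the weight-$<k$ error of $\tilde\beta$ at $\tilde\qq_i$ is a finite sum of terms of the form $a'\otimes z_i^{\frac{r'}{T}+d'}\d{z_i}$ with $\wt a'<k$ and $a'\in V_{g_i}^{r'}$ (each homogeneous component of a $g_i$-invariant local section having this shape, only finitely many surviving modulo $t_i^{Tm}$); applying the induction hypothesis to each and subtracting off the resulting elements of $\L_{U\setminus\qq_{\diamond}}(\cV^G)$ produces the desired $\alpha$.

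The base case $k=0$ is immediate: $V_0=\C\vac\subseteq V_{g_i}^0$ forces $r=0$, no lower-weight correction can occur, and the statement reduces directly to \cref{lem:RRcoro} with $k=1$. The only delicate point is the bookkeeping in the inductive step: one must check that the discrepancy between the weight-graded trivialization of $\cV$ and the expansion map $\iota_{t_i}$ is strictly lower weight, controls the order of vanishing at the other $\tilde\qq_j$ (up to a bounded shift absorbed into the vanishing order supplied by \cref{lem:RRcoro}), and is $G$-equivariant so that averaging does not disturb the already-correct leading term. All of this follows from the explicit $\grp{Aut}\O$-action of \cite[\S6.5]{FBZ04} together with the eigenspace decomposition $\C\dparen{z^{\sfrac{1}{T}}}(\d{z})^{k}=\bigoplus_{r=0}^{T-1}z^{\sfrac{r}{T}}\C\dparen{z}(\d{z})^{k}$ recalled above.
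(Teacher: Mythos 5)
Your proof is correct and follows essentially the same route as the paper's: both reduce to homogeneous $a$, invoke \cref{lem:RRcoro} on the cover to produce a $(1-\wt a)$-differential $\mu$ with the prescribed local expansions, and use the weight-filtration splitting of \cref{lem:GammaUV} to realize $a\otimes\mu$ as an element of $\L_{U\setminus\qq_{\diamond}}(\cV^G)$. The $G$-averaging and the induction on $\wt a$ that you add merely make explicit two points the paper treats as evident, namely the $G$-invariance of $a\otimes\mu$ and the harmlessness of the lower-weight discrepancy between the graded trivialization and the expansion maps $\iota_{z_i}$.
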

\begin{proof}
  It suffices to prove the statement when $a$ is homogeneous. 
  By \cref{lem:RRcoro}, there exists $\mu\in\Gamma\left(\x^{-1}(U\setminus\qq_{\diamond}),\Omega_{\tilX}^{\otimes 1-\wt a}\right)$ such that 
  \[
    \begin{array}{ll}
      \iota_{z_i}\mu \equiv z_i^{\frac{r}{T}+d}(\d{z_i})^{1-\wt a} &\mod z_i^m\C\dbrack{z_i^{\sfrac{1}{T}}}(\d{z_i})^{1-\wt a}, \text{ for a fixed $i$, and} \\
      \iota_{z_j}\mu \equiv 0 &\mod z_j^m\C\dbrack{z_j^{\sfrac{1}{T}}}(\d{z_j})^{1-\wt a}, \text{ for all }j\neq i. \\
    \end{array}
  \]
  Note that $a\otimes\mu$ is $G$-invariant. 

  On the other hand, by \cref{lem:GammaUV}, for any affine open subset $U$ of $\X$, 
  \[
    \Gamma\big(\x^{-1}(U),\cV\big) \cong \bigoplus_{k=0}^{\infty}V_k\otimes\Gamma\big(\x^{-1}(U),\Omega_{\tilX}^{\otimes -k}\big).
  \]
  Then  the following is evident.
  \[
    \L_{U}(\cV^G) \cong 
    \bigoplus_{k=0}^{\infty}\big(V_k\otimes\Gamma\big(\x^{-1}(U),\Omega_{\tilX}^{\otimes 1-k}\big)\big)^G.
  \]
  Applying this to the affine open $U$ in the lemma, we see that there exists an element $\alpha\in\L_{U\setminus\qq_{\diamond}}(\cV^G)$ corresponding to $a\otimes\mu$. It is then evident that $\alpha$ satisfies the desired properties.
\end{proof}

\subsection{Constraints of the twisted chiral Lie algebra}\label{sec:Constraints}
Let $\dQ$ be the \emph{ramification divisor}, namely $T$ (the ramification index) times the sum of all (the lifting of) the branch points. 
We want to consider a \emph{filtration} (we refer to \cite[Appendix A]{DGK23} for the generalities on split filtrations) on the twisted chiral Lie algebra provided by constraining the valuations at $\dQ$. 
\begin{notation}\label{nt:defL0}
  Let $U$ be any open subset of $\X$. 
  Denote the twisted chiral Lie algebra $\L_{U\setminus\qq_\diamond}(\cV^G)$ by $\L^{\circ}_{U}(\cV^G)$ and consider the following subspaces and subquotients of it:
  \begin{align*}
    \L^{\circ}_{U}(\cV^G)_{\le m} &:= \H^0\left(U,\left(\cV\otimes\Omega^1(-(\vo{L}{0}-m-1)\dQ)\right)^G/\Image\nabla\right), \\ 
    \L^{\circ}_{U}(\cV^G)_{<m} &:= \bigcup_{n<m}\L^{\circ}_{U}(\cV^G)_{\le n}, \\
    \L^{\circ}_{U}(\cV^G)_{m} &:= \L^{\circ}_{U}(\cV^G)_{\le m}/\L^{\circ}_{U}(\cV^G)_{<m}.
  \end{align*}
  Then $\L^{\circ}_{U}(\cV^G)_{\le \star}$ forms a \emph{left} filtration on $\L^{\circ}_{U}(\cV^G)$.
\end{notation}
The filtration $\L_{U}(\cV^G)_{\le \star}$ is both \emph{exhaustive} and \emph{separated}, and is \emph{split} since we are working with bundles. 
Furthermore, by comparing the valuations of both sides of \labelcref{eq:LieBracket}, we see that $\L^{\circ}_{U}(\cV^G)$ is a split-filtered Lie algebra with the associated graded Lie algebra $\L^{\circ}_{U}(\cV^G)_{\star}$.
In particular, $\L^{\circ}_{U}(\cV^G)_{0}$ is a subquotient Lie algebra of $\L^{\circ}_{U}(\cV^G)$.

\begin{definition}\label{def:constraint}
  The subquotient algebra $\L^{\circ}_{U}(\cV^G)_{0}$ is called the \textbf{constrained twisted chiral Lie algebra}.
\end{definition}

\begin{definition}
  Adopting the weight filtration in \cref{def:wt}, the constraining condition for an element $\alpha$ to be contained in $L^{\circ}_{U}(\cV^G)_{\prec m}$ (where $\prec$ stands for either $\le$ or $<$) can be expressed as 
  \begin{equation}\label{eq:constrain}
    \wt{\alpha}-1-v_{\qq}(\alpha)\prec m\txforall\qq\in U\setminus\Xc.
  \end{equation} 
  We call the number $\wt{\alpha}-1-v_{\qq}(\alpha)$ the \textbf{$\qq$-degree} of $\alpha$ and denote it by $\deg_{\qq}(\alpha)$.
\end{definition}

\begin{example}\label{eg:LgV-0}
  Following \cref{eg:LgV}, we have the following explicit descriptions on the \emph{twisted projective line} $\x\colon\oC\to\PP^1$ (where $w\neq 0,\infty$ and $\prec$ stands for either $\le$ or $<$)
  \begin{align*}
    \L_{\PP^1\setminus\Set{w}}(\cV^G)_{\prec 0}&=\spn\Set*{ \loo{a}[r]{m}{n} \given 
    \begin{array}{c}
      a\in V_g^r\text{ is homogeneous}, m\in\Z, n\in\N,  \\
      \frac{r}{T}+m-n \prec \wt{a}-1 \prec \frac{r}{T}+m 
    \end{array}},\\
    \L_{\PP^1\setminus\Set{\infty,w}}(\cV^G)_{\prec 0}&=\spn\Set*{ \loo{a}[r]{m}{n} \given 
    \begin{array}{c}
      a\in V_g^r\text{ is homogeneous}, m\in\Z, n\in\N,  \\
      \wt{a}-1 \prec \frac{r}{T}+m 
    \end{array}},\\
    \L_{\PP^1\setminus\Set{\infty}}(\cV^G)_{\prec 0}&=\spn\Set*{ \lo{a}{\frac{r}{T}+m} \given 
    \begin{array}{c}
      a\in V_g^r\text{ is homogeneous},  \\
       m\in\Z, \wt{a}-1 \prec \frac{r}{T}+m 
    \end{array}}=\L_g(V)_{\prec 0}.
  \end{align*}
\end{example}

\subsubsection*{The local constraints}
For $\qq$ a branch point, we adopt similar constructions. 
\begin{notation}
  Consider the following subspaces and subquotients of $\L_{\qq}(\cV^G)$:
  \begin{align*}
    \L_{\qq}(\cV^G)_{\le m} &:= \H^0\left(D_{\qq},\left(\cV\otimes\Omega^1(-(\vo{L}{0}-m-1)\dQ)\right)^G/\Image\nabla\right), \\ 
    \L_{\qq}(\cV^G)_{<m} &:= \bigcup_{n<m}\L_{\qq}(\cV^G)_{\le n}, \\ 
    \L_{\qq}(\cV^G)_{m} &:= \L_{\qq}(\cV^G)_{\le m}/\L_{\qq}(\cV^G)_{<m}.
  \end{align*}
  Then $\L_{\qq}(\cV^G)_{\le \star}$ forms a \emph{left} filtration on $\L_{\qq}(\cV^G)$.
\end{notation}
Note that, by choosing a special local coordinate $z^{\sfrac{1}{T}}$, we have 
\[
  \L_{\qq}(\cV^G)_{\le m}\overset{\iota_z}{\cong}\L_{g}(V)^{\sfL}_{\le m}\txand\L_{\qq}(\cV^G)_{< m}\overset{\iota_z}{\cong}\L_{g}(V)^{\sfL}_{< m},
\]
where $g$ is the Deck generator at $\qq$. In particular, $\iota_{z}$ induces an isomorphism of graded Lie algebras $\L_{\qq}(\cV^G)_{\star}\cong\L_{g}(V)$.

\subsubsection*{Restriction of opens and formal expansions}
\begin{lemma}\label{lem:flasque}
  The sheaf $U\mapsto\L^{\circ}_{U}(\cV^G)_{m}$ is flasque.
\end{lemma}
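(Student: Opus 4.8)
The plan is to check the defining property of a flasque sheaf directly: for opens $V\subseteq U$ of $\X$, the restriction $\L^{\circ}_{U}(\cV^G)_{m}\to\L^{\circ}_{V}(\cV^G)_{m}$ must be surjective, and it is enough to take $U=\X$ (a section over $V\subseteq U$ lifted to $\X$ can then be restricted to $U$). Observe first that $\X$ is integral, so every $V\subsetneq\X$ is affine and $\x^{-1}(V\setminus\qq_{\diamond})$ is affine whenever $\qq_{\diamond}\neq\emptyset$ (the case of no branch points is the untwisted situation of \cite{FBZ04,DGK23}); on such opens the de Rham $\H^{0}$ computing $\L^{\circ}_{V}(\cV^G)_{\le\star}$ is simply the cokernel of $\nabla$ on global sections. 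One could also note that over $\Xc$ the sheaf $\cV^G$ is the descent of $\cV$, so $\L^{\circ}_{V}(\cV^G)_{m}$ is the $G$-invariant part of the corresponding \emph{untwisted} sheaf on $\tilX$ (with the constraint rescaled by $T$ along the ramification locus); since $G$ is finite and we are in characteristic $0$, taking $G$-invariants is exact and preserves flasqueness, so the statement reduces to the untwisted case on $\tilX$ — to which the argument below applies verbatim.

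I would next reduce to the graded pieces of the weight filtration. The filtration of \cref{lem:GammaUV} induces on each $\L^{\circ}_{W}(\cV^G)_{m}$ an exhaustive, separated, split (we work with bundles; cf.\ \cref{lem:GammaUV} and the remark after \cref{def:constraint}) filtration that is compatible with restriction; hence the sheaf $\mathcal{G}_{m}:=\bigl(W\mapsto\L^{\circ}_{W}(\cV^G)_{m}\bigr)$ is the direct sum of its weight-graded summands, and it suffices to prove each is flasque. By \cref{lem:GammaUV} again, the weight-$k$ summand is, over an affine $W$, a subquotient of $\bigl(V_{k}\otimes\Gamma\bigl(\x^{-1}(W\setminus\qq_{\diamond}),\Omega_{\tilX}^{\otimes1-k}\bigr)\bigr)^{G}$ cut out by the $\qq$-degree condition \labelcref{eq:constrain} at every branch point, modulo the image under $\nabla$ of the weight-$(k-1)$ part; concretely it is the degree-$m$ component (for the $\qq$-grading) of the de Rham quotient of the line bundle $\Omega_{\tilX}^{\otimes1-k}$ with poles permitted only along $\dQ$ and bounded there by the constraint. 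The problem has thus become an extension problem for rational $(1-k)$-differentials modulo $\nabla$.

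So, given a class $[\alpha]\in\mathcal{G}_{m}(V)$ with weight-$k$ representative a rational $(1-k)$-differential $\alpha$ that is regular on $V\cap\Xc$ and of $\qq$-degree $\le m$ at the branch points in $V$, I would produce a representative over $\X$ as follows. Let $p_{1},\dots,p_{s}$ be the points of $\X\setminus V$ at which $\alpha$ is irregular or of $\qq$-degree $>m$. By \cref{lem:RRcoro} (Riemann--Roch) there is a $(1-k)$-differential $\mu$, regular on $\x^{-1}\bigl(\X\setminus\{p_{1},\dots,p_{s}\}\setminus\qq_{\diamond}\bigr)$, whose expansion at each $p_{i}$ matches the bad part of $\alpha$ (its principal part if $p_{i}$ is unramified, or its part of $\qq$-degree $>m$ if $p_{i}$ is a branch point) to sufficiently high order and vanishes to high order at the remaining branch points; that $\mu$ can be realized as a $G$-invariant element of the chiral Lie algebra is precisely \cref{lem:RRcoro++}. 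Then $\alpha-\mu$ is regular on $\Xc$ with $\qq$-degree $\le m$ everywhere, so it defines a class in $\L^{\circ}_{\X}(\cV^G)_{m}$; and one must check it restricts to $[\alpha]$ over $V$, i.e.\ that $\mu|_{V\cap\Xc}$ equals the induced graded connection $\gr\nabla$ applied to some weight-$(k-1)$ section over $V$, modulo a term of $\qq$-degree $<m$. Since $\mu$ is regular on $V\cap\Xc$ away from branch points of $\qq$-degree $<m$, subtracting $\gr\nabla$ of a suitable local primitive reduces this to the vanishing of the residues of $\mu$ along $V$, which holds by the Strong Residue Theorem \cref{lem:SRT} since $\mu$ extends regularly over $V\setminus\qq_{\diamond}$.

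The main obstacle is the simultaneous control demanded of $\mu$: it must cancel the excess poles and excess $\qq$-degree of $\alpha$ outside $V$, introduce no poles inside $V$ beyond those of $\alpha$, and restrict over $V$ to something $\nabla$-exact up to lower $\qq$-degree. Reconciling these three requirements is where the residue bookkeeping of \cref{lem:SRT} and \cref{eg:RSF} must be combined with the equivariant Riemann--Roch bound of \cref{lem:RRcoro++}, and it is exactly here that passing to the graded quotient $\L^{\circ}_{\bullet}(\cV^G)_{\le m}/\L^{\circ}_{\bullet}(\cV^G)_{<m}$, rather than working with $\L^{\circ}_{\bullet}(\cV^G)_{\le m}$ itself (which is \emph{not} flasque), becomes essential: only modulo the lower-degree part do the leftover residue contributions cancel.
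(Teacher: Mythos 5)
Your route is genuinely different from the paper's, which disposes of the lemma in two lines: for $U'\subset U$ it factors the restriction as
\[
  \L^{\circ}_{U}(\cV^G)_{m}\longepi
  \L^{\circ}_{U}(\cV^G)_{\le m}\big/\bigl(\L^{\circ}_{U}(\cV^G)_{\le m}\cap\L^{\circ}_{U'}(\cV^G)_{< m}\bigr)
  \cong\L^{\circ}_{U'}(\cV^G)_{m},
\]
using only that the constraint \labelcref{eq:constrain} is purely local at the branch points. Your constructive Mittag--Leffler argument could in principle prove the same thing, but as written it has a gap at its crux, and the preliminary reductions are also shakier than they look (the claim that $\mathcal{G}_m$ \emph{is the direct sum} of its weight-graded summands is not literally true, since $\nabla$ does not preserve the weight grading; one must instead run an induction along the weight filtration).

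The real gap is the existence of $\mu$. Neither \cref{lem:RRcoro} nor \cref{lem:RRcoro++} produces what you ask for: both require an ambient \emph{affine} open $U$ and only guarantee regularity on $U\setminus\qq_\diamond$, leaving uncontrolled poles on $\X\setminus U$. You instead demand $\mu$ regular on all of $\X\setminus(\{p_1,\dots,p_s\}\cup\qq_\diamond)$, with prescribed tails at the $p_i$ and high-order vanishing at the branch points of $V$. When every point of $\X\setminus V$ is a $p_i$ or a branch point, the auxiliary deleted point of any affine $U$ must lie in $V$, and then $\alpha-\mu$ acquires a pole inside $V$; and on the complete curve the prescription is genuinely obstructed by residues. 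Concretely, in the weight-$0$ piece $\mu$ is an honest $1$-form, and no meromorphic $1$-form on a complete curve has a single simple pole, so your recipe already fails for a representative such as $\vac\otimes\frac{\d z}{z-p}$ with $p\notin V$. Your closing paragraph correctly identifies this as the main obstacle, but the resolution — that the leftover residue contributions cancel once one works modulo $\L^{\circ}_{\bullet}(\cV^G)_{<m}$ — is precisely the nontrivial claim and is asserted, not proved: one has to show the finitely many Serre-duality obstructions can be absorbed into the sub-leading ($\qq$-degree $<m$) tails at the branch points of $V$ and into $\Image\nabla$. The appeal to \cref{lem:SRT} at the end is also circular as written, since $\mu$ is regular over $V\setminus\qq_\diamond$ by construction and there is nothing left for the Strong Residue Theorem to certify.
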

\begin{proof}
  Let $U'\subset U$ be two opens of $\X$. Then  the following epimorphisms and isomorphisms are evident from \labelcref{eq:constrain}
  \begin{align*}
    \L^{\circ}_{U}(\cV^G)_{m}=\frac{\L^{\circ}_{U}(\cV^G)_{\le m}}{\L^{\circ}_{U}(\cV^G)_{< m}}\longepimorphism&
    \frac{\L^{\circ}_{U}(\cV^G)_{\le m}}{\L^{\circ}_{U}(\cV^G)_{\le m}\cap\L^{\circ}_{U'}(\cV^G)_{< m}}\\
    &\cong\frac{\L^{\circ}_{U'}(\cV^G)_{\le m}}{\L^{\circ}_{U'}(\cV^G)_{< m}}=\L^{\circ}_{U'}(\cV^G)_{m}.\qedhere
  \end{align*}
\end{proof}
In particular, we have epimorphisms $\L^{\circ}_{U}(\cV^G)_{m}\epimorphism\L_{\qq}(\cV^G)_{m}$ for any branch point $\qq$ that is contained in $U$.  
Since the image of $\L^{\circ}_U(\cV^G)_{< m}$ in $\L_{\qq}(\cV^G)$ is contained in $\L_{\qq}(\cV^G)_{< m}$, the above epimorphism factors through the expansion $\iota_{\qq}|_{\L^{\circ}_{U}(\cV^G)_{\le m}}$.
We thus also adopt the notation $\iota_{\qq}$ for this epimorphism.

\subsubsection*{Explicit description of $\L^{\circ}_{U}(\cV^G)_{m}$}
\begin{notation}  
  By \cref{lem:RRcoro++}, for any branch point $\qq$ that is contained in $U$, any homogeneous $a\in V_{g_\qq}^{r}$, and any $m\in-\frac{r}{T}+\Z$, there is a unique class $\alpha\in\L^{\circ}_{U}(\cV^G)_{m}$ such that 
  \[
    \iota_{\qq}(\alpha)=\lo{a}{\wt a-m-1}\txand
    \iota_{\qq'}(\alpha)=0\quad\text{if}\quad\qq'\neq\qq.
  \]
  We denote this section by $\sqo{a}{\qq}{m}$. When $m=0$, we omit the superscript $(0)$.
\end{notation}
Then  we have the following explicit description of twisted chiral Lie algebras:
\begin{theorem}\label{thm:DesLUVCG0}
  For any open $U$, the vector space $\L^{\circ}_{U}(\cV^G)_{m}$ is spanned by the elements $\sqo{a}{\qq}{m}$ for all homogeneous $a\in V$ and all branch points $\qq\in U$.
\end{theorem}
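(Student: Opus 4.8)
The plan is to show that the assembled local-expansion map identifies $\L^{\circ}_{U}(\cV^G)_{m}$ with a direct sum of the local subquotients $\L_{\qq}(\cV^G)_{m}$ indexed by the branch points $\qq\in U$, in such a way that the distinguished sections $\sqo{a}{\qq}{m}$ become the obvious ``delta'' generators. Concretely, a totally ramified orbicurve has only finitely many branch points, so the set $U\setminus\Xc$ of branch points lying in $U$ is finite, and the epimorphisms $\iota_{\qq}\colon\L^{\circ}_{U}(\cV^G)_{m}\epimorphism\L_{\qq}(\cV^G)_{m}$ furnished by \cref{lem:flasque} and the discussion following it combine into a single linear map
\[
  \iota:=\bigoplus_{\qq\in U\setminus\Xc}\iota_{\qq}\colon\L^{\circ}_{U}(\cV^G)_{m}\longrightarrow\bigoplus_{\qq\in U\setminus\Xc}\L_{\qq}(\cV^G)_{m}.
\]

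\textbf{Step 1: $\iota$ is injective.} Let $\alpha\in\L^{\circ}_{U}(\cV^G)_{m}$ satisfy $\iota_{\qq}(\alpha)=0$ for every branch point $\qq$ in $U$, and choose a representative $\tilde\alpha\in\L^{\circ}_{U}(\cV^G)_{\le m}$. For each such $\qq$, the vanishing of $\iota_{\qq}(\alpha)$ says precisely that the expansion of $\tilde\alpha$ at $\qq$ lies in $\L_{\qq}(\cV^G)_{<m}$, that is, $\deg_{\qq}(\tilde\alpha)<m$ by the constraint \labelcref{eq:constrain}. Since this inequality holds at each of the finitely many branch points of $U$, the same constraint \labelcref{eq:constrain} places $\tilde\alpha$ in $\L^{\circ}_{U}(\cV^G)_{<m}$, whence $\alpha=0$. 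This is the only place where the branch-point-by-branch-point description of $\L^{\circ}_{U}(\cV^G)_{<m}$ is used, and it is the heart of the argument.

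\textbf{Step 2: the generators cover the target.} By the very definition of $\sqo{a}{\qq}{m}$ (recalled in the Notation immediately above the theorem), $\iota(\sqo{a}{\qq}{m})$ is the tuple whose $\qq$-component is $\lo{a}{\wt a-m-1}\in\L_{\qq}(\cV^G)_{m}$ and whose other components vanish. Via the isomorphism $\L_{\qq}(\cV^G)_{\star}\cong\L_{g_{\qq}}(V)$ given by a special local coordinate above $\qq$, the space $\L_{\qq}(\cV^G)_{m}$ is spanned by the classes $\lo{a}{\wt a-m-1}$ as $a$ runs through homogeneous vectors of $V$ (only $a\in V_{g_{\qq}}^{r}$ with $r\equiv -Tm\pmod T$ contribute, the other choices giving $\sqo{a}{\qq}{m}=0$); indeed this is immediate from the presentation $\L_{g}(V)=\bigoplus_{r}V_{g}^{r}\otimes z^{\frac{r}{T}}\C[z^{\pm1}]\d{z}/\Image\nabla$ together with the grading $\deg(\lo{b}{\frac{k}{T}})=\wt b-\frac{k}{T}-1$. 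Consequently, as $a$ ranges over all homogeneous vectors and $\qq$ over all branch points in $U$, the images $\iota(\sqo{a}{\qq}{m})$ span the entire target $\bigoplus_{\qq}\L_{\qq}(\cV^G)_{m}$.

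\textbf{Conclusion.} Given any $\beta\in\L^{\circ}_{U}(\cV^G)_{m}$, write $\iota(\beta)$ as a finite $\C$-linear combination of the elements $\iota(\sqo{a}{\qq}{m})$; the corresponding linear combination of the $\sqo{a}{\qq}{m}$ then has the same image under $\iota$, so it differs from $\beta$ by an element of $\Ker\iota$, which is $0$ by Step 1. Hence $\beta$ equals that combination, which proves that the $\sqo{a}{\qq}{m}$ span $\L^{\circ}_{U}(\cV^G)_{m}$. I expect the injectivity in Step 1 to be the main obstacle; once the filtration is arranged so that membership in $\L^{\circ}_{U}(\cV^G)_{<m}$ is detected locally at the branch points, the remaining steps are purely formal, relying on \cref{lem:flasque}, on \cref{lem:RRcoro++} (which already underlies the existence and uniqueness of the $\sqo{a}{\qq}{m}$), and on the local identification $\L_{\qq}(\cV^G)_{\star}\cong\L_{g_{\qq}}(V)$.
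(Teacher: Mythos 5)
Your proof is correct and follows essentially the same route as the paper's: both reduce the statement to the local fact that $\L_{\qq}(\cV^G)_{m}$ is spanned by the classes $\lo{a}{\wt a-m-1}$, using that membership in the filtration step $\L^{\circ}_{U}(\cV^G)_{<m}$ is detected by the valuations at the finitely many branch points, i.e.\ by the constraint \labelcref{eq:constrain}. The only difference is organizational: the paper first splits $\L^{\circ}_{U}(\cV^G)_{m}$ as a product over branch points via the flasqueness of the sheaf and the affine covering $U_i=U\cap(\Xc\sqcup\qq_i)$ and then identifies each factor with $\L_{\qq_i}(\cV^G)_{m}$, whereas you prove injectivity of the assembled expansion map $\bigoplus_{\qq}\iota_{\qq}$ directly and combine it with the spanning of the target.
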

\begin{proof}
  First note that the statement is locally true: the $\kappa_{\qq}$-space $\L_{\qq}(\cV^G)_{m}$ is spanned by $\lo{a}{\wt{a}-m-1}$ for all homogeneous $a\in V$. 

  To prove the statement, we consider the \emph{affine covering} $U_{\diamond}=\Set{U_i}_{i\in\diamond}$ of $U$, where each $U_i=U\cap(\Xc\sqcup\qq_i)$. 
  Since the sheaf $\L^{\circ}_{-}(\cV^G)_{m}$ is \emph{flasque}, we have
  \[
    \L^{\circ}_{U}(\cV^G)_{m} = 
    \prod_{i\in\diamond}\L^{\circ}_{U_i}(\cV^G)_{m}.
  \]
  Now, for any open neighborhood\footnote{if $\qq_i\notin U$, then $N$ is empty.} $N$ of $\qq_i$ inside $U_i$, we have 
  \[
    \L^{\circ}_{U_i}(\cV^G)_{< m} = 
    \L^{\circ}_{U_i}(\cV^G)_{\le m}\cap\L^{\circ}_{N}(\cV^G)_{< m}
  \]
  by \labelcref{eq:constrain}, since there is no branch point on the open $U_i\setminus N$. Mimicing the proof of \cref{lem:flasque} and passing to the limit, we have 
  \[
    \L^{\circ}_{U_i}(\cV^G)_{m}\overset{\iota_{\qq_i}}{\cong}\L_{\qq_{i}}(\cV^G)_{m}.
  \]
  Then the statement follows from its local version.
\end{proof}
\begin{corollary}\label{coro:DesLUVCG0}
  By abuse of notation, let $\sqo{a}{\qq}{m}$ also denote a representative of it in $\L^{\circ}_{U}(\cV^G)_{\le m}$. Then each $\L^{\circ}_{U}(\cV^G)_{\le m}$ is topologically spanned by $\sqo{a}{\qq}{n}$ for all homogeneous $a\in V$,  all branch points $\qq\in U$, and all $n\le m$.
\end{corollary}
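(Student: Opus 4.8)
The plan is to obtain the corollary from \cref{thm:DesLUVCG0} by a dévissage along the left filtration $\L^{\circ}_{U}(\cV^G)_{\le\star}$, exploiting that this filtration is exhaustive and separated (and split, so that a representative $\sqo{a}{\qq}{m}\in\L^{\circ}_{U}(\cV^G)_{\le m}$ of each class produced by \cref{thm:DesLUVCG0} exists, as the corollary's statement already takes for granted). Fix $m$ and $\alpha\in\L^{\circ}_{U}(\cV^G)_{\le m}$; the goal is to exhibit a series in the $\sqo{a}{\qq}{n}$ (homogeneous $a$, branch points $\qq\in U$, $n\le m$) converging to $\alpha$.

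First I would pin down the leading degree of a nonzero $\alpha$. If every $\qq$-degree $\deg_{\qq}(\alpha)$ were $-\infty$ then $\alpha$ would lie in $\L^{\circ}_{U}(\cV^G)_{\le n}$ for all $n$, hence be $0$ by separatedness; so $m_0:=\max_{\qq\in U\setminus\Xc}\deg_{\qq}(\alpha)$ is a well-defined element of $\tfrac1T\Z$ (using $\wt\alpha\in\Z$, $v_{\qq}\in\tfrac1T\Z$, and the finiteness of the branch locus inside $U$), it satisfies $m_0\le m$ by \labelcref{eq:constrain}, and $\alpha\in\L^{\circ}_{U}(\cV^G)_{\le m_0}\setminus\L^{\circ}_{U}(\cV^G)_{<m_0}$. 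Consequently the image of $\alpha$ in the graded piece $\L^{\circ}_{U}(\cV^G)_{m_0}$ is nonzero, and by \cref{thm:DesLUVCG0} it is a \emph{finite} linear combination of the classes $\sqo{a}{\qq}{m_0}$. Subtracting a lift of this combination from $\alpha$ produces $\alpha^{(1)}\in\L^{\circ}_{U}(\cV^G)_{\le m_0}$ whose image in $\L^{\circ}_{U}(\cV^G)_{m_0}$ vanishes, i.e. $\alpha^{(1)}\in\L^{\circ}_{U}(\cV^G)_{<m_0}=\bigcup_{n<m_0}\L^{\circ}_{U}(\cV^G)_{\le n}$; if $\alpha^{(1)}\neq0$ its leading degree $m_1$ then obeys $m_1\le m_0-\tfrac1T$.

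Iterating yields $\alpha=\alpha^{(0)},\alpha^{(1)},\alpha^{(2)},\dots$ with $\alpha^{(j)}\in\L^{\circ}_{U}(\cV^G)_{\le m_j}$, strictly decreasing $m_j$ satisfying $m_{j+1}\le m_j-\tfrac1T$, and each difference $\alpha^{(j)}-\alpha^{(j+1)}$ a finite combination of generators $\sqo{a}{\qq}{m_j}$ with $m_j\le m$ (the process either terminates, in which case $\alpha$ is already such a finite combination, or runs forever, in which case $m_j\to-\infty$). The partial sums $\alpha-\alpha^{(j)}=\sum_{i<j}(\alpha^{(i)}-\alpha^{(i+1)})$ lie in the algebraic span of the $\sqo{a}{\qq}{n}$, and since $\alpha-(\alpha-\alpha^{(j)})=\alpha^{(j)}\in\L^{\circ}_{U}(\cV^G)_{\le m_j}\to0$ in the filtration topology (separatedness making the limit unambiguous), they converge to $\alpha$ inside $\L^{\circ}_{U}(\cV^G)_{\le m}$. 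Hence $\alpha$ lies in the closure of that algebraic span, which is the meaning of "topologically spanned".

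The one step that needs genuine care — the main (mild) obstacle — is that each correction term must be a \emph{finite} combination, i.e. that the graded piece $\L^{\circ}_{U}(\cV^G)_{m_j}$ is algebraically, not merely topologically, spanned by the $\sqo{a}{\qq}{m_j}$; this is exactly \cref{thm:DesLUVCG0} together with the finiteness of the branch locus inside $U$, which makes the product over branch points in its proof finite. Everything else is routine manipulation of the exhaustive, separated, split filtration.
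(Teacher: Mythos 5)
Your proposal is correct and is exactly the standard successive-approximation argument along the exhaustive, separated, split filtration that the paper implicitly relies on: the paper states the corollary without proof as an immediate consequence of \cref{thm:DesLUVCG0}, and your d\'evissage (peel off the leading graded component, which is a finite combination by the theorem, and iterate so the remainders tend to $0$ in the filtration topology) is the argument it takes for granted. The one point you rightly flag---that each graded piece is \emph{algebraically} spanned, using the finiteness of the branch locus in $U$---is indeed the only substantive input beyond formal filtration bookkeeping.
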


In particular, the sheaf $\L^{\circ}_{-}(\cV^G)$ is a split-filtered sheaf of Lie algebras with the associated graded sheaf of Lie algebras $\L^{\circ}_{-}(\cV^G)_{\star}$. To mimic the argument in \cref{sec:assoalg}, we may complete the pair $(\L^{\circ}_{-}(\cV^G),\L^{\circ}_{-}(\cV^G)_{\star})$ to a sheaf of good triples, or merely maintain a sheaf of \emph{good pairs}.

\section{Twisted conformal blocks, correlation functions, and twisted intertwining operators}\label{sec:Cfb}

In this section, we introduce the notion of \emph{twisted conformal blocks}, and by working with \cref{eg:twistedP1} and \cref{eg:3ptP1}, we explain their relations to \emph{$g$-twisted correlation functions} introduced in \cite[\S 2.3]{PartI} and \emph{twisted intertwining operators}, respectively.

\subsection{Twisted conformal blocks}
We adopt the following definition from \cite{FBZ04,FS04}.
\begin{definition}\label{def:Cfb}
  Consider a datum $\Sigma=(\x\colon\tilX\to\X,\pp_{\bullet},\qq_{\diamond},M^{\bullet},N^{\diamond})$ of 
  \begin{itemize}
    \item a totally ramified orbicurve $\x\colon\tilX\to\X$, 
    \item distinct points $\pp_{\bullet}=(\pp_{1},\cdots,\pp_{m})$ on the unramified locus $\Xc$, 
    \item all the branch points $\qq_{\diamond}=(\qq_{1},\cdots,\qq_{n})$, each provides a \emph{Deck generator} $g_{i}$,
    \item admissible \emph{untwisted} modules $M^{\bullet}=(M^1,\cdots,M^m)$, one for each $\pp_{i}$, and
    \item admissible \emph{$g_{\diamond}$-twisted} modules $N^{\diamond}=(N^1,\cdots,N^n)$, one for each $\qq_{i}$.
  \end{itemize}
  Then the \textbf{space of (twisted) coninvariants associated to the datum $\Sigma$} is the quotient space 
  \begin{equation}\label{eq:defconinv}
    (\cM^{\bullet}_{\pp_{\bullet}}\otimes\cN^{\diamond}_{\qq_{\diamond}})_{\L_{\X\setminus\pp_{\bullet}}(\cV^G)}:= 
    (\cM^{\bullet}_{\pp_{\bullet}}\otimes\cN^{\diamond}_{\qq_{\diamond}})/ 
    \L_{\X\setminus\pp_{\bullet}}(\cV^G).(\cM^{\bullet}_{\pp_{\bullet}}\otimes\cN^{\diamond}_{\qq_{\diamond}}).
  \end{equation}
  Its linear dual is called the \textbf{space of (twisted) conformal blocks associated to the datum $\Sigma$} and is denoted by $\Cfb[\Sigma]$. 
  Equivalently, a \emph{(twisted) conformal block associated to the datum $\Sigma$} is a linear functional on $\cM^{\bullet}_{\pp_{\bullet}}\otimes\cN^{\diamond}_{\qq_{\diamond}}$ that is $\L_{\Xc\setminus\pp_{\bullet}}(\cV^G)$-invariant.
\end{definition}

\begin{lemma}[Propagation of vacua]\label{lem:propagation}
  There is a canonical isomorphism
  \[
    \xi_{\pp}\colon\Cfb[\x\colon\tilX\to\X,\pp,\pp_{\bullet},\qq_{\diamond},V,M^{\bullet},N^{\diamond}]\cong 
    \Cfb[\x\colon\tilX\to\X,\pp_{\bullet},\qq_{\diamond},M^{\bullet},N^{\diamond}],
  \]
  given by restricting to $\vac\otimes\cM^{\bullet}_{\pp_{\bullet}}\otimes\cN^{\diamond}_{\qq_{\diamond}}$. 
\end{lemma}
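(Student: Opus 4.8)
The plan is to show that $\xi_{\pp}$ is a linear isomorphism by checking, in turn, that restriction of a conformal block is again a conformal block, that $\xi_{\pp}$ is injective, and that every conformal block for the smaller datum extends. Throughout, abbreviate $\L':=\L_{\X\setminus(\{\pp\}\cup\pp_{\bullet})}(\cV^G)$ and $\L:=\L_{\X\setminus\pp_{\bullet}}(\cV^G)$, so that $\L\subseteq\L'$ via restriction. First I record the elementary point behind well-definedness: since $\pp\notin\pp_{\bullet}$ lies on $\Xc$, every $\alpha\in\L$ is regular at $\pp$, so by the creation property its expansion $\iota_{\pp}(\alpha)$ acts on $\cV_{\pp}$ only through non-negative modes and hence annihilates $\vac$; thus $\alpha.(\vac\otimes w)=\vac\otimes(\alpha.w)$, where on the right $\alpha.w$ denotes the action of $\alpha$ through its expansions at the module points $\pp_{\bullet}\cup\qq_{\diamond}$. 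Consequently, if $\varphi$ is $\L'$-invariant then $\xi_{\pp}(\varphi)$ is $\L$-invariant, so $\xi_{\pp}$ does land in $\Cfb[\x\colon\tilX\to\X,\pp_{\bullet},\qq_{\diamond},M^{\bullet},N^{\diamond}]$, and it is plainly linear.

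For injectivity, the idea is that $\vac\otimes(\cM^{\bullet}_{\pp_{\bullet}}\otimes\cN^{\diamond}_{\qq_{\diamond}})$ already spans the coinvariants for the larger datum. Given a homogeneous $v\in\cV_{\pp}$, \cref{lem:RRcoro} (applied on $\tilX$ and $G$-equivariantly, exactly as in the proof of \cref{lem:RRcoro++}, but now at the unramified point $\pp$) produces $\alpha_{v}\in\L'$ that is regular at every branch point $\qq_{j}$ and has polar part $v\otimes z^{-1}\,\d z$ at $\pp$, so that $\iota_{\pp}(\alpha_{v})$ acts on $\vac$ as $\vo{v}{-1}\vac=v$. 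Then $\alpha_{v}.(\vac\otimes w)=v\otimes w+\vac\otimes(\beta_{v}.w)$, where $\beta_{v}$ is the (well-defined, locally finite) operator on $\cM^{\bullet}_{\pp_{\bullet}}\otimes\cN^{\diamond}_{\qq_{\diamond}}$ induced by the expansions of $\alpha_{v}$ at $\pp_{\bullet}\cup\qq_{\diamond}$. Hence $v\otimes w\equiv-\vac\otimes(\beta_{v}.w)$ modulo the $\L'$-action; this gives the spanning, whence $\xi_{\pp}$ is injective and a conformal block for the larger datum is determined by its restriction to $\vac\otimes(\cdots)$.

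To construct the inverse, fix once and for all one such $\alpha_{v}$ for each homogeneous $v$ and, for $\psi\in\Cfb[\x\colon\tilX\to\X,\pp_{\bullet},\qq_{\diamond},M^{\bullet},N^{\diamond}]$, define $\widetilde{\psi}$ on $\cV_{\pp}\otimes\cM^{\bullet}_{\pp_{\bullet}}\otimes\cN^{\diamond}_{\qq_{\diamond}}$ by $\widetilde{\psi}(v\otimes w):=-\psi(\beta_{v}.w)$, extended bilinearly; since no choices enter, $\widetilde{\psi}$ is unambiguously defined. Taking $\alpha_{\vac}=\vac\otimes\omega$ with $\omega$ a $1$-form on $\X$ having simple poles of residue $1$ at $\pp$ and $-1$ at some $\pp_{1}$ and regular elsewhere — so that $\vo{\vac}{n}=\delta_{n,-1}\id$ forces $\beta_{\vac}.w=-w$ — one gets $\widetilde{\psi}(\vac\otimes w)=\psi(w)$, i.e.\ $\xi_{\pp}(\widetilde{\psi})=\psi$. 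The remaining point, and the one I expect to be the genuine obstacle, is to verify that $\widetilde{\psi}$ is itself $\L'$-invariant: for $\alpha\in\L'$ one must show $\widetilde{\psi}\bigl((\iota_{\pp}(\alpha).v)\otimes w+v\otimes(\beta_{\alpha}.w)\bigr)=0$, where $\beta_{\alpha}$ is the module-point action of $\alpha$.

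I would approach this by bracketing in $\L'$: the element $[\alpha,\alpha_{v}]$ propagates at $\pp$ the vector $\iota_{\pp}(\alpha).v-\iota_{\pp}(\alpha_{v}).\bigl(\iota_{\pp}(\alpha).\vac\bigr)$, and, repeatedly using the key local fact that an element of $\L_{\pp}(\cV^G)$ annihilates $\vac$ exactly when it admits a regular representative (a consequence of the creation property together with the shape $\iota_{z}\nabla=\vo{L}{-1}\otimes\id+\id\otimes\partial_{z}$ of the connection, which lets one trade differences of propagators for honest elements of $\L$), one reduces the required identity to a coherence relation in the coinvariants for the smaller datum among the operators $\beta_{(-)}$, schematically of the form $\bigl[\beta_{v_{\alpha}}\beta_{v}\,w\bigr]+\bigl[\beta_{\Theta_{v}v_{\alpha}}\,w\bigr]=0$ with $v_{\alpha}:=\iota_{\pp}(\alpha).\vac$ and $\Theta_{v}:=\iota_{\pp}(\alpha_{v})$. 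This coherence relation is precisely the place where the vertex operator Jacobi identity — the operator product expansion of $\cY$ — is used, and I would prove it by a residue computation, applying the \nameref{lem:SRT} to the $\cV^{\ast}$-valued rational section on $\tilX$ assembled from $\cY$ and $\psi$, in the same spirit as the untwisted arguments of \cite{FBZ04,FS04}; keeping track of the ramification at the $\qq_{j}$ while manipulating residues of the resulting Puiseux expansions is the delicate part. Once $\widetilde{\psi}$ is shown to be $\L'$-invariant, the uniqueness from the injectivity step shows that $\psi\mapsto\widetilde{\psi}$ is a two-sided inverse of $\xi_{\pp}$, completing the proof.
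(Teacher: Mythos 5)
Your proposal takes a genuinely different route from the paper's, and while its first half is sound, the second half has a real gap. For context: the paper does not construct the inverse by choosing propagating elements $\alpha_v$. It first applies the \nameref{lem:SRT} to recharacterize a conformal block $\varphi$ for the smaller datum as a functional whose one-point insertions $\braket*{\varphi}{\cdots\otimes\cY_{M^i,\pp_i}\mu_{i}\otimes\cdots}$ and $\braket*{\varphi}{\cdots\otimes\cY_{N^j,\qq_{j}}\nu_{j}\otimes\cdots}$ all glue to a single meromorphic section $\varphi_{\mu_{\bullet}\otimes\nu_{\diamond}}$ of $(\cV^G)^{\ast}$ with poles only at $\pp_{\bullet}$ and $\qq_{\diamond}$; the inverse $\xi_{\pp}^{-1}\varphi$ is then defined by evaluating this global section on the fiber at $\pp$, and invariance for the enlarged datum is checked against the same residue-theoretic criterion using the twisted and untwisted Jacobi identities. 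Your well-definedness and injectivity steps (in particular the spanning of the coinvariants by $\vac\otimes\cM^{\bullet}_{\pp_{\bullet}}\otimes\cN^{\diamond}_{\qq_{\diamond}}$ via elements $\alpha_v$ with polar part $v\otimes z^{-1}\d{z}$ at $\pp$) are correct and are a legitimate substitute for one direction of the argument.

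The gap is in the surjectivity step. Having set $\widetilde{\psi}(v\otimes w):=-\psi(\beta_{v}.w)$, you explicitly defer the verification that $\widetilde{\psi}$ is invariant under the chiral Lie algebra of the enlarged datum — in particular under elements with poles at $\pp$ — offering only a schematic plan. This verification is not a routine afterthought: it is the substantive content of the lemma and the unique place where the (twisted) Jacobi identity enters, and your sketch does not pin down the identity actually to be checked; the displayed relation $\bigl[\beta_{v_{\alpha}}\beta_{v}\,w\bigr]+\bigl[\beta_{\Theta_{v}v_{\alpha}}\,w\bigr]=0$ is not visibly the correct coherence statement, and the reduction to it by ``trading differences of propagators for elements of $\L_{\X\setminus\pp_{\bullet}}(\cV^G)$'' is asserted rather than carried out. (The paper is also brief here, but its SRT framework reduces the check to a standard gluing computation; your framework does not yet reach a statement one could verify.) A secondary, fixable point: well-definedness of $\widetilde{\psi}$ is not ``because no choices enter'' — the choice of $\alpha_{v}$ does enter, and independence holds only because two admissible choices differ by an element of $\L_{\X\setminus\pp_{\bullet}}(\cV^G)$ regular at $\pp$, which $\psi$ annihilates; you implicitly use exactly this freedom when you switch to the special $\alpha_{\vac}$ to compute $\xi_{\pp}(\widetilde{\psi})=\psi$, so it should be stated and proved.
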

\begin{proof}
  The proof is essentially the same as in \cite[Theorem 10.3.1]{FBZ04}, except we need the \emph{twisted Jacobi identity} in addition to the untwisted one. 
  
  First, by applying the \emph{\nameref{lem:SRT}} to the definition above, we see that a linear functional $\varphi$ on $\cM^{\bullet}_{\pp_{\bullet}}\otimes\cN^{\diamond}_{\qq_{\diamond}}$ is a conformal block if and only if for any $\mu_{\bullet}\in\cM^{\bullet}_{\pp_{\bullet}}$ and $\nu_{\diamond}\in\cN^{\diamond}_{\qq_{\diamond}}$, the followings: 
  \[
    \braket*{\varphi}{\cdots\otimes\cY_{M^i,\pp_i}\mu_{i}\otimes\cdots\otimes\nu_{\diamond}}\txand
    \braket*{\varphi}{\mu_{\bullet}\otimes\cdots\otimes\cY_{N^j,\qq_{j}}\nu_{j}\otimes\cdots},
  \]
  can be extended to the \emph{same} meromorphic section of $(\cV^G)^{\ast}$ on $\X$ with possible poles at $\pp_{\bullet}$ and $\qq_{\diamond}$. 
  Denote this section by $\varphi_{\mu_{\bullet}\otimes\nu_{\diamond}}$. 
  Then  the assignment 
  \[
    \spa\otimes\mu_{\bullet}\otimes\nu_{\diamond}\in \cV_{\pp}\otimes\cM^{\bullet}_{\pp_{\bullet}}\otimes\cN^{\diamond}_{\qq_{\diamond}}
    \longmapsto
    \varphi_{\mu_{\bullet}\otimes\nu_{\diamond}}(\spa)
  \]
  gives a conformal block $\xi_{\pp}^{-1}\varphi$ (one can verify this using the twisted and untwisted Jacobi identities).   
  This gives the inverse of $\xi_{\pp}$.
\end{proof}
\begin{remark}\label{rem:xi_p}
  Through the (non-canonical) identification $\cV^G_{\pp}\cong\cV_{\tilde\pp}$ for any lifting $\tilde\pp$ of $\pp$, we obtain a linear functional on $\cV_{\tilde\pp}\otimes\cM^{\bullet}_{\pp_{\bullet}}\otimes\cN^{\diamond}_{\qq_{\diamond}}$ from $\xi_{\pp}^{-1}\varphi$. 
  We denote this linear functional by $\xi_{\tilde\pp}^{-1}\varphi$. 
  Then  the \emph{$G$-equivaraint} lifting of the meromorphic section $\varphi_{\mu_{\bullet}\otimes\nu_{\diamond}}\in(\cV^G)^{\ast}$ to $\cV^{\ast}$  can be expressed as $\braket*{\xi_{\tilde\pp}^{-1}\varphi}{-\otimes\mu_{\bullet}\otimes\nu_{\diamond}}$. 
\end{remark}

\subsection{Relation to $g$-twisted correlation functions}\label{sec:Cor}

Now, we establish the relationship between \cref{def:Cfb} and the notion of \emph{$g$-twisted correlation functions} in \cite[Definition 2.11]{PartI}. 
The datum we are interested in is
\begin{equation}\label{eq:def:datum}
  \Sigma_{\qq}(N^3, M^1, M^2):=(\x\colon\oC\to\PP^1, \infty, \x(\qq), 0, N^3, M^1, M^2),
\end{equation}
where $M^1$ (resp. $M^2$ and $N^3$) is an admissible \emph{untwisted} (resp. \emph{$g$-twisted} and \emph{$g^{-1}$-twisted}) module of conformal weight $h_1$ (resp. $h_2$ and $h_3$), and $\x\colon\oC\to\PP^1$ is the twisted projective line in \cref{eg:twistedP1}. 
Put $h=h_1+h_2-h_3$. 
\emph{In the rest of this subsection, we keep the following identification via the local charts given in \cite[\S 2.2]{PartI}: }
\[
  \cV^{G}_{\x(\pp)}\cong\cV_{\pp}\cong V, \quad
  (\cM^1)^{G}_{\x(\qq)}\cong\cM^1_{\qq}\cong M^1,\quad 
  \cM^2_{0}\cong M^2,\quad
  \cN^3_{\infty}\cong N^3.
\]
The spaces $(\cN^3_{\infty}\otimes\cM^1_{\qq}\otimes\cM^2_{0})^{\ast}$ for various $\qq\in\Cc$ form a bundle $(\cN^3_{\infty}\otimes\cM^1_{\square}\otimes\cM^2_{0})^{\ast}$ over the affine open subset $\Cc\subset\oC$. 
Taking a coordinate $w^{\sfrac{1}{T}}$ of $\Cc$, this bundle is trivialized as $(N^3\otimes M^1\otimes M^2)^{\ast}[w^{\pm\sfrac{1}{T}}]$.
\emph{Let $1$ denote any point on $\Cc$ such that $w(1)=1$}. Take any $\varphi_{1}=\varphi\in\Cfb[\Sigma_{1}(N^3, M^1, M^2)]$ and, at each point $\qq\in\Cc$, define $\varphi_{\qq}$ by the formula:
\begin{equation}\label{eq:CfbFamily}
  \braket*{\varphi_{\qq}}{v_3\otimes v\otimes v_2} := 
  \braket*{\varphi}{w^{\vo{L}{0}-h_3}v_3\otimes w^{-\vo{L}{0}+h_1}v\otimes w^{-\vo{L}{0}+h_2}v_2}.  
\end{equation}
We have:
\begin{enumerate}
  \item the assignment $\qq\to\varphi_{\qq}$ defines a rational section $\varphi_{\bullet}$ of $(\cN^3_{\infty}\otimes\cM^1_{\square}\otimes\cM^2_{0})^{\ast}$;
  \item the value of $\varphi_{\bullet}$ at each point $\qq\in\Cc$ gives a conformal block associated to the datum $\Sigma_{\x(\qq)}$.
\end{enumerate}

\begin{lemma}\label{lem:horizontal}
  The section $\varphi_{\bullet}$ is horizontal in the sense that it is annihilated by the following linear map\footnote{When $h\in\frac{1}{T}\Z$, the linear map $\nabla$ defines a connection on the bundle $(\cN^3_{\infty}\otimes\cM^1_{\square}\otimes\cM^2_{0})^{\ast}$. In general, this is NOT the case since $w^h$ is not a rational function on $\Cc$.} $\nabla\colon(\cN^3_{\infty}\otimes\cM^1_{\square}\otimes\cM^2_{0})^{\ast}\to(\cN^3_{\infty}\otimes\cM^1_{\square}\otimes\cM^2_{0})^{\ast}\otimes\Omega$
  \[
    \nabla = w^{h}\d\circ w^{-h} - \vo{L}{-1}^{\cM^1}\d{w},
  \]
  where $\vo{L}{-1}^{\cM^1}$ is the $\vo{L}{-1}$-operator on the bundle $\cM^1_{\square}$.
\end{lemma}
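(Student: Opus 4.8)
The plan is to unwind horizontality into one scalar identity per point of $\Cc$, and then to recognize that identity as the infinitesimal scaling Ward identity for the conformal vector. A section of the dual bundle $(\cN^3_\infty\otimes\cM^1_\square\otimes\cM^2_0)^{\ast}$ is determined by its pairings against vectors $v_3\otimes v\otimes v_2$ with $v_3\in N^3$, $v\in M^1$, $v_2\in M^2$, so horizontality of $\varphi_\bullet$ amounts to $\braket*{\nabla\varphi_\bullet}{v_3\otimes v\otimes v_2}=0$ for all such vectors.

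Set $\psi:=\braket*{\varphi_\bullet}{v_3\otimes v\otimes v_2}$, a rational function in $w^{\sfrac1T}$. Differentiating the defining formula \labelcref{eq:CfbFamily} in the parameter $w$ and using that $\vo{L}{0}$ commutes with $w^{\pm\vo{L}{0}}$, one gets
\begin{multline*}
  w\,\partial_w\psi = \braket*{\varphi_\qq}{(\vo{L}{0}-h_3)v_3\otimes v\otimes v_2}\\
  + \braket*{\varphi_\qq}{v_3\otimes(h_1-\vo{L}{0})v\otimes v_2} + \braket*{\varphi_\qq}{v_3\otimes v\otimes(h_2-\vo{L}{0})v_2},
\end{multline*}
whose scalar part cancels against $-h\psi$ because $h=h_1+h_2-h_3$. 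Combining this with the elementary identity $w\cdot\bigl(w^{h}\d(w^{-h}\psi)\bigr)=(w\,\partial_w\psi-h\psi)\,\d{w}$ and with the fact that the other term of $\nabla\varphi_\bullet$ pairs, after multiplying through by $w$, to $w\,\braket*{\varphi_\qq}{v_3\otimes\vo{L}{-1}v\otimes v_2}\,\d{w}$, the condition $\braket*{\nabla\varphi_\bullet}{v_3\otimes v\otimes v_2}=0$ becomes equivalent to
\begin{multline*}
  \braket*{\varphi_\qq}{\vo{L}{0}v_3\otimes v\otimes v_2} - \braket*{\varphi_\qq}{v_3\otimes\vo{L}{0}v\otimes v_2} - \braket*{\varphi_\qq}{v_3\otimes v\otimes\vo{L}{0}v_2}\\
  = w\,\braket*{\varphi_\qq}{v_3\otimes\vo{L}{-1}v\otimes v_2},\qquad w=w(\qq),
\end{multline*}
to be checked for each $\qq\in\Cc$.

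This last identity is the infinitesimal scaling Ward identity for $\varphi_\qq$, and I would derive it from the invariance of $\varphi_\qq$ under $\L_{\PP^1\setminus\Set{\infty,\x(\qq),0}}(\cV^G)$ --- which holds because $\varphi_\qq$ is a conformal block for $\Sigma_{\x(\qq)}(N^3,M^1,M^2)$, as established just before the lemma --- by pairing $\varphi_\qq$ with elements of that Lie algebra built from the conformal vector $\upomega$, namely suitable combinations of the classes $\upomega\otimes\tfrac{z^m}{(z-w)^n}\,\d{z}$ (in particular the Euler class $\upomega\otimes z\,\d{z}$) furnished by \cref{eg:LgV} and \cref{lem:RRcoro++}. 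Reading off their local expansions from \cref{tab:EXPY} --- near $\x(\qq)$ these are combinations of the $\vo{L}{k}$ with explicit $w$-dependent coefficients, while near the branch points $0,\infty$ they are multiples of $\vo{L}{k}$ carrying the extra factor $T$ that \cref{tab:EXPY} attaches to residues there --- and feeding the right combination into the \nameref{lem:SRT} (exactly as in \cref{def:Cfb} and in the proof of \cref{lem:propagation}) yields precisely the displayed identity. The step that demands genuine care is this last one: isolating the coefficient of $\vo{L}{-1}$ against those of the $\vo{L}{0}$'s at the three points and carrying the factors $T$ at the two branch points correctly through the residue bookkeeping; this is a computation of the same flavor as the recursive formulas of \cite{PartI}. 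Everything preceding it, including the reduction above, is a direct calculation, uniform in $\qq$.
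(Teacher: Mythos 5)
Your proposal is correct and follows essentially the same route as the paper: both reduce horizontality to the Ward identity $\braket{\varphi_\qq}{\vo{L}{0}v_3\otimes v\otimes v_2}-\braket{\varphi_\qq}{v_3\otimes \vo{L}{0}v\otimes v_2}-\braket{\varphi_\qq}{v_3\otimes v\otimes \vo{L}{0}v_2}=w\braket{\varphi_\qq}{v_3\otimes \vo{L}{-1}v\otimes v_2}$ and obtain it by pairing the conformal block with the Euler element $\loo{\upomega}{1}{0}=\upomega\otimes z\,\d{z}$ of the twisted chiral Lie algebra and reading off the residues at $0$, $\infty$, and the unramified point from \cref{tab:EXPY}. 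The only (cosmetic) difference is that the paper derives the identity once at $w=1$ and transports it via the $w^{\pm\vo{L}{0}}$-conjugation in \labelcref{eq:CfbFamily}, whereas you invoke it pointwise for each $\varphi_\qq$.
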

\begin{proof}
  Since $\varphi$ is a conformal block, for $\loo{\upomega}{1}{0}\in\L_{\PP^1\setminus\Set{0,\infty,1}}(\cV^{G})$, we have, 
  \begin{align*}
    0 &=\braket*{\varphi}{\loo{\upomega}{1}{0}.(v_3\otimes v\otimes v_2)}\\
    &= 
    \Res_{\infty}\braket*{\varphi}{\cY_{N^3,\infty}(\loo{\upomega}{1}{0})v_3\otimes v\otimes v_2} +
    \Res_{0}\braket*{\varphi}{v_3\otimes v\otimes \cY_{M^2,0}(\loo{\upomega}{1}{0})v_2} \\
    &\phantom{=}  + 
    T\Res_{1}\braket*{\varphi}{v_3\otimes \cY_{M^1,1}(\loo{\upomega}{1}{0})v\otimes v_2} \\
    &=T\left(-\vo{L}{0}v_3\otimes v\otimes v_2 + v_3\otimes v\otimes \vo{L}{0}v_2 + v_3\otimes (\vo{L}{0}+\vo{L}{-1})v\otimes v_2\right).
  \end{align*}
  Hence, $\vo{L}{0}v_3\otimes v\otimes v_2 - v_3\otimes v\otimes \vo{L}{0}v_2 - v_3\otimes \vo{L}{0}v\otimes v_2 - v_3\otimes\vo{L}{-1}v\otimes v_2$ is annihilated by $\varphi$. 
  Now, we have
  \begin{align*}
    \braket*{\nabla \varphi_{\qq}}{v_3\otimes v\otimes v_2} &= 
    w^{h}\d w^{-h} \braket*{\varphi}{w^{\vo{L}{0}-h_3}v_3\otimes w^{-\vo{L}{0}+h_1}v\otimes w^{-\vo{L}{0}+h_2}v_2} \\
    &\phantom{=}  - \braket*{\varphi}{w^{\vo{L}{0}-h_3}v_3\otimes \vo{L}{-1}w^{-\vo{L}{0}+h_1}v\otimes w^{-\vo{L}{0}+h_2}v_2}\d{w}\\
    &=0.\qedhere
  \end{align*}
\end{proof}

Applying \cref{lem:propagation}, we obtain a system of meromorphic functions on $\oC$:
\[
  S_{\varphi}\bracket{v_3}{(a^{\bullet},\pp_{\bullet})(v,\qq)}{v_2}:=\braket*{\xi_{\pp_{\bullet}}^{-1}\varphi_{\qq}}{v_3\otimes a^{\bullet}\otimes v\otimes v_2}.
\]
Recall that we have defined the vector space $\Cor[\Sigma_{1}(N^3, M^1, M^2)]$ of \emph{$g$-twisted correlation functions} in \cite[Definition 2.21]{PartI}.  
\begin{proposition}
  The system $S_{\varphi}$ belongs to $\Cor[\Sigma_{1}(N^3, M^1, M^2)]$. 
\end{proposition}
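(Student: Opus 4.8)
The plan is to verify that $S_{\varphi}$ meets each of the conditions defining a $g$-twisted correlation function in \cite[Definitions 2.11 and 2.21]{PartI}: (a) every $S_{\varphi}\bracket{v_3}{(a^{1},\pp_{1})\cdots(a^{n},\pp_{n})(v,\qq)}{v_2}$ is a rational function on the relevant power of $\oC$ with the prescribed pole orders along the diagonals and the prescribed Puiseux prefactors at $0$ and $\infty$; (b) it is symmetric under permuting the insertions $(a^{i},\pp_{i})$; (c) it obeys the recursion (operator-product) formulas that lower the number of $V$-insertions; and (d) it satisfies the $\vo{L}{-1}$-derivative and $\vo{L}{0}$-grading identities in the arguments $v$, $v_2$, and $v_3$.

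For (a), I would first fix $\qq\in\Cc$ and iterate the \emph{propagation of vacua} isomorphism (\cref{lem:propagation}): this realizes $\braket*{\xi_{\pp_{\bullet}}^{-1}\varphi_{\qq}}{v_3\otimes a^{\bullet}\otimes v\otimes v_2}$ as the value at $(\pp_{1},\dots,\pp_{n})$ of a meromorphic section of $\bigl((\cV^{G})^{\ast}\bigr)^{\boxtimes n}$ on $\oC^{n}$, holomorphic away from the divisors $\pp_{i}=\pp_{j}$, $\pp_{i}=\qq$, and $\pp_{i}\in\Set{0,\infty}$, with pole orders along these divisors controlled by the weights of the $a^{i}$ and with Puiseux exponents at $0,\infty$ dictated by the $g$-eigenvalues of the $a^{i}$; the horizontality of this section with respect to the dual of the connection \labelcref{eq:nabla} moreover gives the $\vo{L}{-1}$-derivative in each slot $a^{i}$. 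The dependence on $\qq$ is then controlled by \cref{lem:horizontal}: since $\varphi_{\bullet}$ is a rational horizontal section of $(\cN^3_{\infty}\otimes\cM^1_{\square}\otimes\cM^2_0)^{\ast}$ over $\Cc$, after the conjugations $w^{\pm\vo{L}{0}}$ of \labelcref{eq:CfbFamily} and the propagation $S_{\varphi}$ becomes rational in $\qq$ carrying exactly the $w^{h}$-twist at $0$ and $\infty$ encoded by the operator $w^{h}\d\circ w^{-h}-\vo{L}{-1}^{\cM^1}\d{w}$, and the same horizontality yields the $\vo{L}{-1}$-derivative in $v$. Condition (b) is immediate: it is the commutativity already built into propagation of vacua, equivalently the untwisted Jacobi identity for two $\cV^{G}$-insertions at unramified points.

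The substantive step is the recursion formulas (c), which I would extract from the \nameref{lem:SRT} exactly as in the proof of \cref{lem:propagation}. By the argument of \cref{lem:RRcoro++} (which lifts \cref{lem:RRcoro} to the chiral level via \cref{lem:GammaUV}), for any homogeneous $a\in V_g^{r}$ and any $d\in\Z$ there is a twisted chiral Lie algebra element, regular on $\PP^1$ away from the insertion points, whose formal expansion at one chosen marked point is $a\otimes z^{\frac rT+d}\d{z}$ and which vanishes to arbitrarily high order at every other marked point; pairing $\varphi_{\bullet}$, suitably propagated, against such an element and invoking the \nameref{lem:SRT} — that is, setting the sum of its residues at all the marked points equal to zero and expanding each residue via the local formulas of \cref{tab:EXPY} — produces a linear relation between $S_{\varphi}$ carrying an extra insertion $(a,\pp)$ and values of $S_{\varphi}$ with one fewer insertion. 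Concretely, the residue at another $\pp_{j}$ contributes an ordinary $V$-operator product $\vo{a}{n}a^{j}$ in that slot, the residue at $\qq$ contributes the action of $\vo{a}{n}$ on $v$ together with the $Y(a,\pp-\qq)v$ terms from the diagonal, and the residues at $0$ and $\infty$ contribute the actions of $\vo{a}{n}$ on $v_2$ and of $\theta(\lo{a}{n})$ on $v_3$, carrying the normalization factors involving $T$ that appear at the totally ramified points (cf. the Residue Sum Formula in \cref{eg:RSF}). Matching these with the recursion formulas of \cite[Definition 2.21]{PartI} establishes (c), and the $\vo{L}{0}$-grading identities in (d) then follow, as in the proof of \cref{lem:horizontal}, from the invariance of $\varphi_{\qq}$ under chiral Lie algebra elements built from the conformal vector $\upomega$.

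The main obstacle I anticipate is not conceptual but a matter of careful bookkeeping: one must consistently track the Puiseux exponents $\tfrac rT$ attached to elements $a\in V_g^{r}$, the choice of expansion region (such as $\abs{z}<\abs{w}$ versus $\abs{z}>\abs{w}$), and the normalization factors involving $T$ that arise from residues at the totally ramified points $0$ and $\infty$, so that the identities produced by the \nameref{lem:SRT} coincide \emph{on the nose} with the recursion formulas of \cite[Definition 2.21]{PartI} rather than merely up to normalization, and so that the claimed pole orders and Puiseux prefactors match the defining data of $\Cor[\Sigma_{1}(N^3, M^1, M^2)]$ precisely. Everything else is a direct transcription of the arguments already carried out for \cref{lem:propagation,lem:horizontal}.
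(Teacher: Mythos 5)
Your proposal is correct and follows essentially the same route as the paper: truncation from the definition, the $\vo{L}{-1}$-property from \cref{lem:horizontal}, locality and the vacuum property from the construction of $\xi^{-1}$ in \cref{lem:propagation}, and the generating/associativity/homogeneity properties by computing the iterated local expansions of $S_{\varphi}$ at $\infty$, $\qq$, and $0$ — your residue-theorem derivation of the expansion identities is just an unpacking of what the construction of $\xi^{-1}$ (itself built on the \nameref{lem:SRT}) already provides. The only cosmetic difference is that you phrase the expansion data as ``recursion formulas,'' which in the paper's terminology belong to the \emph{restricted} correlation functions; for $\Cor[\Sigma_{1}(N^3,M^1,M^2)]$ the corresponding axioms are the generating properties, and your verification covers them.
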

\begin{proof}
  The \emph{truncation property} is evident by the definition. 
  The \emph{$\vo{L}{-1}$-property} follows from \cref{lem:horizontal}. 
  The \emph{locality} and the \emph{vacuum property} follow from the construction of $\xi^{-1}$.
  Using the local coordinates near $\infty$, $\qq$, and $0$, we have
  \begin{align*}
    \MoveEqLeft
    \iota_{\pp_1=\infty}\cdots\iota_{\pp_n=\infty}S_{\varphi}\bracket{v_3}{\cfbseq(v,\qq)}{v_2}\\
    &=\braket*{\varphi}{Y_{N^3}(\vartheta(a^1),z_1^{-\frac{1}{T}})\cdots Y_{N^3}(\vartheta(a^n),z_n^{-\frac{1}{T}})v_3\otimes v\otimes v_2},\\
    \MoveEqLeft
    \iota_{\pp_1=\qq}\cdots\iota_{\pp_n=\qq}S_{\varphi}\bracket{v_3}{\cfbseq(v,\qq)}{v_2}\\
    &=\braket*{\varphi}{v_3\otimes Y_{M^1}(a^1,z_1-w)\cdots Y_{M^1}(a^n,z_n-w)v\otimes v_2},\\
    \MoveEqLeft
    \iota_{\pp_1=0}\cdots\iota_{\pp_n=0}S_{\varphi}\bracket{v_3}{\cfbseq(v,\qq)}{v_2}\\
    &=\braket*{\varphi}{v_3\otimes v\otimes Y_{M^2}(a^1,z_1)\cdots Y_{M^2}(a^n,z_n)v_2}.
  \end{align*}
  Then  the \emph{homogenus property}, the \emph{associativity}, and the \emph{generating properties} are evident. 
\end{proof}

Now, we prove the following theorem:
\begin{theorem}\label{thm:Cfb=Cor}
  Let $M^1$ (resp. $M^2$ and $N^3$) be an admissible \emph{untwisted} (resp. \emph{$g$-twisted} and \emph{$g^{-1}$-twisted}) module of conformal weight $h_1$ (resp. $h_2$ and $h_3$). 
  Put $h=h_1+h_2-h_3$. 
  Then  we have the following isomorphism of vector spaces:
  \[
    \Cfb[\Sigma_{1}(N^3, M^1, M^2)]\longrightarrow\Cor[\Sigma_{1}(N^3, M^1, M^2)],\qquad
    \varphi\longmapsto S_{\varphi}.
  \]
\end{theorem}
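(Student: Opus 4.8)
The plan is to construct an explicit inverse to the map $\varphi\mapsto S_{\varphi}$ and check that both compositions are the identity. Given a $g$-twisted correlation function $S\in\Cor[\Sigma_1(N^3,M^1,M^2)]$, I would first use the \emph{vacuum property} and the \emph{generating property} of correlation functions to recover a linear functional on $\cN^3_{\infty}\otimes\cM^1_1\otimes\cM^2_0\cong N^3\otimes M^1\otimes M^2$: concretely, set
\[
  \braket*{\psi_S}{v_3\otimes v\otimes v_2}:=S\bracket{v_3}{(v,1)}{v_2},
\]
the value of the system with a single insertion at the point $1$. The first task is to verify that $\psi_S$ is a twisted conformal block, i.e. that it is annihilated by $\L_{\PP^1\setminus\Set{1}}(\cV^G)$ acting on $\cN^3_{\infty}\otimes\cM^1_1\otimes\cM^2_0$. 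Using the explicit description of this chiral Lie algebra from \cref{eg:LgV} (elements $\loo{a}{m}{n}$ and, after restricting further, $\lo{a}{\frac{r}{T}+m}$), the invariance statement unwinds, via the local-action table in \cref{eg:chiralactiononP1} and \nameref{lem:SRT}, precisely into the recursion/associativity relations that $S$ satisfies as a correlation function. This is where most of the bookkeeping lives, but it is parallel to — and in fact dual to — the computations already carried out in the two displayed propositions preceding the theorem, so it should go through by the same residue manipulations.

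Next I would check the two compositions. Starting from $\varphi$, forming $S_{\varphi}$, and then evaluating $\psi_{S_{\varphi}}$ at a single insertion at $1$ returns $\braket*{\xi_{1}^{-1}\varphi_1}{\vac\otimes v_3\otimes v\otimes v_2}$; by \cref{lem:propagation} (propagation of vacua, the map $\xi_{\pp}$ is exactly restriction to $\vac\otimes(\cdots)$) and the normalization $\varphi_1=\varphi$ in \labelcref{eq:CfbFamily}, this is just $\braket*{\varphi}{v_3\otimes v\otimes v_2}$, so $\psi_{S_{\varphi}}=\varphi$. For the other direction, starting from $S$, the functional $\psi_S$ is a conformal block by the previous paragraph; one then forms the horizontal family $(\psi_S)_{\qq}$ via \labelcref{eq:CfbFamily} and applies $\xi_{\pp_\bullet}^{-1}$ to produce $S_{\psi_S}$. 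To see $S_{\psi_S}=S$ it suffices, by the \emph{generating property} of correlation functions (every element of $\Cor$ is determined by its values with all insertions specialized via the local-expansion formulas in the proof of the preceding proposition), to match the two systems after formal expansion at $\infty$, at $\qq$, and at $0$; each of these matches is immediate once one observes that $\xi^{-1}$ was built precisely to reproduce $Y_{N^3}$, $Y_{M^1}$, $Y_{M^2}$ in those three expansions and that the conjugation factors $w^{\pm\vo{L}{0}}$ in \labelcref{eq:CfbFamily} are the same ones relating the point $1$ to a general point $\qq$.

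It remains to note that the assignment $\varphi\mapsto S_{\varphi}$ is linear (clear from \labelcref{eq:CfbFamily} and the linearity of $\xi^{-1}$) and lands in $\Cor$ (the preceding proposition), so together with the two-sided inverse $S\mapsto\psi_S$ we get the claimed isomorphism of vector spaces.

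\textbf{Main obstacle.} The crux is the first step: proving that $\psi_S$ — defined from a correlation function by a \emph{single} insertion at the point $1$ — is genuinely invariant under the \emph{whole} twisted chiral Lie algebra $\L_{\PP^1\setminus\Set{1}}(\cV^G)$, not merely under the obvious finite-dimensional pieces. One must show that the abstract residue/Jacobi identities packaged in \nameref{lem:SRT} and \nameref{lem:propagation}, when combined with the associativity, locality, and $\vo{L}{-1}$-properties of $S$, force cancellation of the three residue contributions at $0$, $1$, $\infty$ for an arbitrary $G$-invariant rational $1$-form valued section. Controlling the twisting (the $z^{\sfrac{r}{T}}$-factors and the transition map $\vartheta$ from \labelcref{eq:def:vartheta}, which is $-\theta$) in these residue computations — rather than the computation being long — is the genuinely delicate point, and it is the place where the hypothesis that $N^3$ is $g^{-1}$-twisted (so that the contragredient pairing at $\infty$ is the correct one) gets used.
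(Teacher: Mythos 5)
Your proposal is correct and follows essentially the same route as the paper: the inverse is defined by evaluating $S$ at a single insertion $\varphi_{\qq}(v_3\otimes v\otimes v_2):=S\bracket*{v_3}{(v,\qq)}{v_2}$, invariance under $\L_{\PP^1\setminus\Set{\qq,0,\infty}}(\cV^G)$ is checked by matching the three residue contributions at $\infty$, $0$, $\qq$ against a single global $1$-form via the generating properties and associativity of $S$ together with the Residue Sum Formula of \cref{eg:RSF}, and horizontality of the family reduces to the $\vo{L}{-1}$-property. Your explicit verification of the two compositions via \cref{lem:propagation} is left implicit in the paper but is the same bookkeeping.
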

\begin{proof}
  Given a system of correlation functions $S\in\Cor[\Sigma_{1}(N^3, M^1, M^2)]$, we can obtain a conformal block $\varphi_{\qq}(v_3\otimes v\otimes v_2):=S\bracket*{v_3}{(v,\qq)}{v_2}$ associated to the datum $\Sigma_{\qq}(N^3, M^1, M^2)$ for each $\qq\in\Cc$. 
  Indeed, by \cref{eg:LgV}, it suffices to show $\braket*{\varphi_{\qq}}{\loo{a}[r]{m}{n}.(v_3\otimes v\otimes v_2)}=0$ for all homogeneous $a\in V^{r}$, $m\in\Z$, and $n\in\N$.
  By \cref{tab:EXPY}, we have
  \begin{align*}
    \MoveEqLeft
    \Res_{\infty}\braket*{\varphi_{\qq}}{\cY_{N^3,\infty}(\loo{a}[r]{m}{n})v_3\otimes v\otimes v_2} \\
    &= 
    -T\sum_{j\ge0}\binom{-n}{j}(-w)^jS\bracket*{\theta(\lo{a}{\frac{r}{T}+m-n-j})v_3}{(v,\qq)}{v_2} \\
    \overset{\ast}&{=}
    \Res_{\pp=\infty}S\bracket*{v_3}{(a,\pp)(v,\qq)}{v_2}z^{\frac{r}{T}+m}(z-w)^{-n}\d{z},\\
    \MoveEqLeft
    \Res_{0}\braket*{\varphi_{\qq}}{v_3\otimes v\otimes \cY_{M^2,0}(\loo{a}[r]{m}{n})v_2} \\
    &= 
    T\sum_{j\ge0}\binom{-n}{j}(-w)^{-n-j}S\bracket*{v_3}{(v,\qq)}{\vo{a}{\frac{r}{T}+m+j}v_2} \\
    \overset{\ast}&{=}
    \Res_{\pp=0}S\bracket*{v_3}{(a,\pp)(v,\qq)}{v_2}z^{\frac{r}{T}+m}(z-w)^{-n}\d{z},\\
    \MoveEqLeft
    \Res_{\qq}\braket*{\varphi_{\qq}}{v_3\otimes \cY_{M^1,\qq}(\loo{a}[r]{m}{n})v\otimes v_2} \\
    &= 
    \sum_{j\ge0}\binom{\frac{r}{T}+m}{j}w^{\frac{r}{T}+m-j}S\bracket*{v_3}{(\vo{a}{n+j}v,\qq)}{v_2} \\
    \overset{\ast\ast}&{=}
    \Res_{\pp=\qq}S\bracket*{v_3}{(a,\pp)(v,\qq)}{v_2}z^{\frac{r}{T}+m}(z-w)^{-n}\d{z},
  \end{align*}
  where $\ast$'s follow from the \emph{generating properties} of $S$, while $\ast\ast$ follows from the \emph{associativity}. 
  Then the desired statement follows from \cref{eg:RSF}.

  It remains to show that the conformal blocks $\varphi_{\qq}$ ($\qq\in\Cc$) form a \emph{horizontal} section and hence corresponding to a unique $\varphi\in\Cfb[\Sigma_{1}(N^3, M^1, M^2)]$. This boils down to show 
  \[
    w^{h}\d\left(w^{-h}S\bracket*{v_3}{(v,\qq)}{v_2}\right) = S\bracket*{v_3}{(\vo{L}{-1}v,\qq)}{v_2}\d{w},
  \]
  which is precisely the \emph{$\vo{L}{-1}$-property} of $S$.
\end{proof}

\subsection{Relation to twisted intertwining operators}
In the rest of this section, we relate \emph{3-pointed conformal blocks} with \emph{twisted intertwining operators}. 
The datum we are interested in is
\begin{equation}\label{eq:def:datum1}
  \Sigma_{w}\left((M^3)', M^1, M^2\right):=\left(\x\colon\oC_{w}\to\PP^1, \infty, w, 0, (M^3)', M^1, M^2\right),
\end{equation}
where $M^1$ (resp. $M^2$ and $M^3$) is an admissible \emph{$g_1$-twisted} (resp. \emph{$g_2$-twisted} and \emph{$g_3$-twisted}) module of conformal weight $h_1$ (resp. $h_2$ and $h_3$), and $\x\colon\oC_w\to\PP^1$ is the obicurve in \cref{eg:3ptP1} with Deck generators $g_1$, $g_2$, and $g_3^{-1}$ at the branch points $w$, $0$, and $\infty$ respectively. 
This setup corresponds to the case where $g_1,g_2,g_3$ have the same order $T$ and we adopt the convention that $g_1=g_2^\epsilon$.
The situation where $g_1=\id$ follows by a similar argument with the setup in \cref{sec:Cor}.

Recall that these obicurves $\x\colon\oC_w\to\PP^1$ for various $w\in\C^\times$ can be organized into a family $\mathfrak{C}\to\PP^1$. 
Then the spaces $((\cM^3)'_{\infty}\otimes\cM^1_{w}\otimes\cM^2_{0})^{\ast}$ for various $w\in\C^\times$ form a bundle $((\cM^3)'_{\infty}\otimes\cM^1_{\square}\otimes\cM^2_{0})^{\ast}$ on the affine open subset $\C^\times\subset\PP^1$, which can be trivialized as $((M^3)'\otimes M^1\otimes M^2)^{\ast}[w^{\pm1}]$. 
Hence we can organize conformal blocks for various $w\in\C^\times$ into a section over $\C^\times$ via the formula:  
\begin{equation}\label{eq:CfbFamily2}
  \braket*{\varphi_{w}}{v_3\otimes v\otimes v_2} := 
  \braket*{\varphi_{1}}{w^{\vo{L}{0}-h_3}v_3\otimes w^{-\vo{L}{0}+h_1}v\otimes w^{-\vo{L}{0}+h_2}v_2}.  
\end{equation}
\begin{lemma}\label{lem:horizontal2}
  The section $\varphi_{\bullet}$ is horizontal in the sense that it is annihilated by the following linear map $\nabla\colon((\cM^3)'_{\infty}\otimes\cM^1_{\square}\otimes\cM^2_{0})^{\ast}\to((\cM^3)'_{\infty}\otimes\cM^1_{\square}\otimes\cM^2_{0})^{\ast}\otimes\Omega$
  \[
    \nabla = w^{h}\d\circ w^{-h} - \vo{L}{-1}^{\cM^1}\d{w},
  \]
  where $\vo{L}{-1}^{\cM^1}$ is the $\vo{L}{-1}$-operator on the bundle $\cM^1_{\square}$.
\end{lemma}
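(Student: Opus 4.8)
The plan is to mimic the proof of \cref{lem:horizontal} verbatim, since the only change is that the underlying family is now $\mathfrak{C}\to\PP^1$ restricted to $\C^\times$ (i.e. the obicurves $\oC_w$ of \cref{eg:3ptP1}) rather than the constant family coming from the twisted projective line. First I would unwind the definition: by \cref{eq:CfbFamily2}, for $v_3\in(M^3)'$, $v\in M^1$, $v_2\in M^2$ we have $\braket*{\varphi_w}{v_3\otimes v\otimes v_2}=\braket*{\varphi_1}{w^{\vo{L}{0}-h_3}v_3\otimes w^{-\vo{L}{0}+h_1}v\otimes w^{-\vo{L}{0}+h_2}v_2}$, and the operator $\nabla=w^h\d\circ w^{-h}-\vo{L}{-1}^{\cM^1}\d{w}$ applied to this section produces, after differentiating the scalar powers of $w$, a term of the shape $\braket*{\varphi_1}{(\vo{L}{0}-h_3)w^{\bullet}v_3\otimes\cdots}\d w/w$ plus its two partners with the opposite sign, minus the $\vo{L}{-1}$-term. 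So the lemma reduces to the single identity in $((M^3)'\otimes M^1\otimes M^2)^\ast$:
\[
  \vo{L}{0}v_3\otimes v\otimes v_2 - v_3\otimes v\otimes\vo{L}{0}v_2 - v_3\otimes\vo{L}{0}v\otimes v_2 - v_3\otimes\vo{L}{-1}v\otimes v_2
\]
is annihilated by $\varphi_1$ (the constant shifts $h_1,h_2,h_3$ cancel because $h=h_1+h_2-h_3$).

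The key step is to produce this identity from the conformal-block condition for $\varphi_1$, i.e. from invariance under $\L_{\PP^1\setminus\Set{0,w,\infty}}(\cV^G)$ for the obicurve $\oC_w$. Here I would use the description in \cref{eg:3ptsLV}: take the element $\loo{\upomega}{1}{0}$ (the class of $\upomega\otimes\frac{z}{z-w}\d z$ modulo $\Image\nabla$), which lies in $\L_{\PP^1\setminus\Set{0,w,\infty}}(\cV^G)$ because $\upomega\in V^0_{g}$ so no fractional powers intervene — this is exactly the $r=0$ case and behaves identically to \cref{eg:LgV}. Applying the residue sum formula (\cref{eg:RSF}, second version, for \cref{eg:3ptP1}) together with the local action table \cref{tab:EXPY} — which applies at $\infty$, at $w$, and at $0$ since $\upomega$ is $G$-invariant and $\frac{z}{z-w}\d z$ expands as $\d z$ at $\infty$ up to higher order, as $w\,\d z/(z-w)+\d z$ near $w$, and as $(z/(z-w))\d z$ near $0$ — gives
\[
  0 = T\left(-\vo{L}{0}v_3\otimes v\otimes v_2 + v_3\otimes v\otimes\vo{L}{0}v_2 + v_3\otimes(\vo{L}{0}+\vo{L}{-1})v\otimes v_2\right),
\]
which is precisely the needed identity. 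Then plugging back into the expansion of $\nabla\varphi_w$ shows $\braket*{\nabla\varphi_w}{v_3\otimes v\otimes v_2}=0$, completing the proof.

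I do not expect a serious obstacle: the one point requiring a line of care is checking that $\loo{\upomega}{1}{0}$ genuinely defines an element of the twisted chiral Lie algebra for the curve $\oC_w$ and that its images under $\iota_0,\iota_w,\iota_\infty$ are computed correctly — in particular that the superscript-$(r)$ bookkeeping of \cref{eg:3ptsLV} is trivial here since $r=0$, so the Deck generators $g_1,g_2,g_3$ do not enter and the computation is formally identical to the twisted-projective-line case in \cref{lem:horizontal}. Everything else is a routine rearrangement of the scalar $w$-powers, which I would not write out in full.
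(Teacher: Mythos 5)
Your overall strategy is the paper's own: the paper proves this lemma simply by saying ``similar to \cref{lem:horizontal}'', and your reduction to the statement that $\vo{L}{0}v_3\otimes v\otimes v_2 - v_3\otimes v\otimes\vo{L}{0}v_2 - v_3\otimes\vo{L}{0}v\otimes v_2 - v_3\otimes\vo{L}{-1}v\otimes v_2$ is annihilated by $\varphi_1$, obtained by pairing $\varphi_1$ against the action of $\loo{\upomega}{1}{0}$ and invoking the residue sum formula of \cref{eg:RSF}, together with the remark that $\upomega\in V_g^0$ makes the branch-point bookkeeping at $w$ harmless, is exactly the intended adaptation.

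There is, however, a concrete error in the middle of your computation. By the definition in \cref{eg:3ptsLV} (with $r=0$, $m=1$, $n=0$), the element $\loo{\upomega}{1}{0}$ is the class of $\upomega\otimes z\,\d{z}$, \emph{not} of $\upomega\otimes\frac{z}{z-w}\,\d{z}$; the latter would be $\loo{\upomega}{1}{1}$. The local expansions you list are those of $\frac{z}{z-w}\,\d{z}$, and they do not produce the residues you claim: near $w$ one has $\frac{z}{z-w}\,\d{z}=\bigl(1+\frac{w}{z-w}\bigr)\d{z}$, which pairs with $\cY_{M^1,w}$ to give $\vo{L}{-1}+w\vo{L}{-2}$ rather than $\vo{L}{0}+\vo{L}{-1}$, and near $0$ the expansion of $\frac{z}{z-w}$ begins with $-\frac{z}{w}$, producing $-\frac{1}{w}\vo{L}{0}+\cdots$ rather than $\vo{L}{0}$; so the displayed identity does not follow from the expansions as written. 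With the correct representative $\upomega\otimes z\,\d{z}$ everything works: at $0$ it gives $\vo{L}{0}$ acting on $v_2$, at $\infty$ it gives $-\theta(\lo{\upomega}{1})=-\vo{L}{0}$ acting on $v_3$, and at the branch point $w$ one writes $z\,\d{z}=((z-w)+w)\,\d{z}$ to get $\vo{L}{0}+w\vo{L}{-1}$ acting on $v$ (the integer-mode computation is unaffected by the ramification since $\upomega$ is $G$-invariant, and all three branch points now enter the residue sum with equal weight). Applying this at $w=1$, where the invariance of $\varphi_1$ is used, recovers precisely the identity you state, and the rest of your argument — cancellation of the constants via $h=h_1+h_2-h_3$ and reassembly into $\nabla\varphi_w=0$ — then goes through. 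So the slip is local and repairable, but as written the key residue computation is incorrect.
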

\begin{proof}
  Similar to \cref{lem:horizontal}.  
\end{proof}

\begin{proposition}
  The following formula defines an intertwining operator of type $\Fusion$.
\[
  \braket*{v_3'}{I(v,w)v_2}:=w^{-h}\braket*{\varphi_w}{v_3'\otimes v\otimes v_2}.
\]
\end{proposition}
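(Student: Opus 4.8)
The plan is to verify the three axioms defining a twisted intertwining operator of type $\Fusion$ for the assignment $v\mapsto I(v,w)=\sum_{n\in\frac{1}{T}\Z}\vo{v}{n}w^{-n-h-1}$, where the mode $\vo{v}{n}\in\Hom(M^2,M^3)$ is specified by declaring $\braket*{v_3'}{\vo{v}{n}v_2}$ to be the coefficient of $w^{-n-1}$ in $\braket*{\varphi_w}{v_3'\otimes v\otimes v_2}$ for all $v_2\in M^2$ and $v_3'\in(M^3)'$. Two of the axioms are quick, so I would dispatch them first. For the \emph{truncation property}, I would observe that by \eqref{eq:CfbFamily2}, for homogeneous $v_3'$, $v$, $v_2$ of $\vo{L}{0}$-weights $h_3+\ell_3$, $h_1+\ell_1$, $h_2+\ell_2$ with $\ell_i\in\frac{1}{T}\N$, the pairing $\braket*{\varphi_w}{v_3'\otimes v\otimes v_2}$ is the single monomial $w^{\ell_3-\ell_1-\ell_2}\braket*{\varphi_1}{v_3'\otimes v\otimes v_2}$; hence $\braket*{v_3'}{I(v,w)v_2}\in w^{-h}\C[w^{\pm\sfrac{1}{T}}]$ (so $I(v,w)$ is linear in $v$ and indeed lands in $w^{-h}\Hom(M^2,M^3)\dbrack{w^{\pm\sfrac{1}{T}}}$), and $\vo{v}{n}v_2$ is forced into the graded piece $M^3(\ell_1+\ell_2-n-1)$, which vanishes once $n>\ell_1+\ell_2-1$; taking the maximum over the finitely many homogeneous components of a general $v_2$ then yields $\vo{v}{n}v_2=0$ for $n\gg0$. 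For the \emph{$\vo{L}{-1}$-derivative property}, I would rewrite the defining formula as $\braket*{\varphi_w}{v_3'\otimes v\otimes v_2}=w^{h}\braket*{v_3'}{I(v,w)v_2}$ and feed it into \cref{lem:horizontal2}: the horizontality $\nabla\varphi_\bullet=0$ with $\nabla=w^{h}\d\circ w^{-h}-\vo{L}{-1}^{\cM^1}\d{w}$ becomes, after this substitution, $w^{h}\partial_w\braket*{v_3'}{I(v,w)v_2}=w^{h}\braket*{v_3'}{I(\vo{L}{-1}v,w)v_2}$, i.e. $I(\vo{L}{-1}v,w)=\partial_w I(v,w)$.

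The substantive point is the \emph{twisted Jacobi identity}. I would fix $a\in V$, $v\in M^1$, $v_2\in M^2$, $v_3'\in(M^3)'$ and a rational function $F(z,w)\in\C[z^{\pm\sfrac{1}{T}},(z-w)^{\pm\sfrac{1}{T}}]$; by the footnote to that axiom these are precisely the $G$-equivariant rational functions on $\oC_w$ whose poles are confined to the branch points $0$, $w$, $\infty$, so $a\otimes F(z,w)\,\d{z}$ represents a class $\alpha\in\L_{\PP^1\setminus\Set{0,w,\infty}}(\cV^G)$ of the type made explicit in \cref{eg:3ptsLV}. Since $\varphi_w$ is a twisted conformal block for the datum \eqref{eq:def:datum1}, it is annihilated by $\L_{\PP^1\setminus\Set{0,w,\infty}}(\cV^G)$; applying this to $\alpha$ and expanding the action \eqref{eq:actionOfLV} at the three branch points — this is where the Residue Sum Formula of \cref{eg:RSF} (the version attached to \cref{eg:3ptP1}, with no auxiliary unramified points) and the local expansions of \cref{tab:EXPY} enter — gives an identity of the shape
\begin{multline*}
  \Res_{\infty}\braket*{\varphi_w}{\cY_{(M^3)',\infty}(\alpha)\,v_3'\otimes v\otimes v_2}
  +\Res_{w}\braket*{\varphi_w}{v_3'\otimes\cY_{M^1,w}(\alpha)\,v\otimes v_2}\\
  +\Res_{0}\braket*{\varphi_w}{v_3'\otimes v\otimes\cY_{M^2,0}(\alpha)\,v_2}=0.
\end{multline*}
The remaining task is to recognize each residue as a term of the twisted Jacobi identity. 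Expanding at $0$ in the special local coordinate $z$ and using $\braket*{\varphi_w}{v_3'\otimes v\otimes u_2}=w^{h}\braket*{v_3'}{I(v,w)u_2}$ for $u_2\in M^2$ turns the $0$-term into $w^{h}\Res_{z}\braket*{v_3'}{I(v,w)Y_{M^2}(a,z)v_2}\,\iota_{z}F(z,w)$; expanding at $w$ in the coordinate $z-w$ and using $\braket*{\varphi_w}{v_3'\otimes u\otimes v_2}=w^{h}\braket*{v_3'}{I(u,w)v_2}$ for every $u\in M^1$ (legitimate by linearity of the defining formula in its $M^1$-slot) turns the $w$-term into $w^{h}\Res_{z-w}\braket*{v_3'}{I(Y_{M^1}(a,z-w)v,w)v_2}\,\iota_{z-w}F(z,w)$; and at $\infty$, where the local trivialization differs from those at $0$ and $w$ by (a twisted-module version of) the transition map $\vartheta$ and the action on $(M^3)'$ is governed by the anti-isomorphism $\theta$ through the contragredient pairing \eqref{eq:theta=dual}, carrying the expansion through should convert the $\infty$-term into $w^{h}\Res_{z}\braket*{v_3'}{Y_{M^3}(a,z)I(v,w)v_2}\,\iota_{z^{-1}}F(z,w)$, the $\iota_{z^{-1}}$-expansion appearing because $\infty$ lies outside the formal disks at $0$ and $w$. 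Dividing by $w^{h}$, tracking the bookkeeping signs against \cref{tab:EXPY}, and noting that each coefficient of $z^{-1}$ produced is a finite sum by the truncation property, gives the twisted Jacobi identity at the fixed $w\in\C^\times$; since every quantity involved is rational in $w$, this then yields the required identity of formal series in $w^{\sfrac{1}{T}}$.

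The hard part will be exactly this last identification at $\infty$: one must follow the combined effect of $\vartheta$ and $\theta$ on both the inserted element $a$ and the coefficient $F(z,w)$ so as to reproduce $Y_{M^3}(a,z)$ with precisely the $\iota_{z^{-1}}$-expansion — the same computation that makes the transition map equal $-\theta$ on the twisted projective line (see \cref{eg:LgV}); everything else is routine residue calculus. Finally, the case $g_1=\id$ (the datum $\Sigma_1$ of \eqref{eq:def:datum}) is handled by the same argument, with \cref{lem:horizontal} and the setup of \cref{sec:Cor} replacing \cref{lem:horizontal2} and \cref{eg:3ptP1}.
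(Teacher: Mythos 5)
Your proposal is correct and follows essentially the same route as the paper: truncation from the monomial form of \labelcref{eq:CfbFamily2}, the $\vo{L}{-1}$-derivative property from \cref{lem:horizontal2}, and the twisted Jacobi identity from the invariance of $\varphi_w$ under the twisted chiral Lie algebra of $\oC_w$ together with residue computations at $0$, $w$, $\infty$ (the $\infty$-term handled via $\vartheta$, $\theta$, and the contragredient pairing, exactly as in the paper's displayed calculation). The only cosmetic difference is that the paper phrases the Jacobi step as the three local expansions gluing to a single meromorphic function with poles at the branch points, whereas you invoke the annihilation by $\L_{\PP^1\setminus\Set{0,w,\infty}}(\cV^G)$ and the residue sum formula directly; by the \nameref{lem:SRT} these are the same argument.
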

\begin{proof}
  The \emph{truncation property} \labelcref{axiom:IO-trunc} is evident. 
  The \emph{$\vo{L}{-1}$-derivative property} \labelcref{axiom:IO-der} follows from \cref{lem:horizontal2}.
  
  For any homogeneous $a\in V^r$, $m\in\Z$, and $n\in\N$, we have 
  \begin{align*}
      \MoveEqLeft
      \braket*{v_3'}{Y_{M^3}(a,z)I(v,w)v_2}
      \iota_{z^{-1}}\frac{z^{\frac{r}{T}+m}w^h}{(z-w)^{\epsilon\frac{r}{T}+n}} \\
      &= 
      \braket*{Y_{(M^3)'}(\vartheta(a),z^{-1})v_3'}{I(v,w)v_2}
      \iota_{z^{-1}}\frac{z^{\frac{r}{T}+m}w^h}{(z-w)^{\epsilon\frac{r}{T}+n}}\\
      &=\iota_{z^{-1}}\braket*{\varphi_w}{\cY_{(M^3)',\infty}(\loo{a}[r]{m}{n})v_3'\otimes v\otimes v_2},\\
      \MoveEqLeft
      \braket*{v_3'}{I(Y_{M^1}(a,z-w)v,w)v_2}
      \iota_{z-w}\frac{z^{\frac{r}{T}+m}w^h}{(z-w)^{\epsilon\frac{r}{T}+n}}  \\
      &=\iota_{z-w}\braket*{\varphi_w}{v_3'\otimes \cY_{M^1,w}(\loo{a}[r]{m}{n})v\otimes v_2},\\
      \MoveEqLeft
      \braket*{v_3'}{I(v,w)Y_{M^2}(a,z)v_2}
      \iota_{z}\frac{z^{\frac{r}{T}+m}w^h}{(z-w)^{\epsilon\frac{r}{T}+n}}  \\
      &=\iota_{z}\braket*{\varphi_w}{v_3'\otimes v\otimes \cY_{M^2,0}(\loo{a}[r]{m}{n})v_2}.
  \end{align*}
  Since $\varphi_w\in\Cfb[\Sigma_w]$, the local functions $\braket*{\varphi_w}{\cY_{(M^3)',\infty}(\loo{a}[r]{m}{n})v_3'\otimes v\otimes v_2}$, $\braket*{\varphi_w}{v_3'\otimes \cY_{M^1,w}(\loo{a}[r]{m}{n})v\otimes v_2}$, and $\braket*{\varphi_w}{v_3'\otimes v\otimes \cY_{M^2,0}(\loo{a}[r]{m}{n})v_2}$ extends to the same meromorphic function with possible poles at $\infty,w,0$. 
  Then the \emph{Twisted Jacobi identity} \labelcref{axiom:IO-Jac} follows. 
\end{proof}

Now, we prove the following theorem:
\begin{theorem}\label{thm:IO}
    Let $M^k$ be a $g_k$-twisted $V$-module of conformal weight $h_k$ for $k=1, 2, 3$. Set $h=h_1+h_2-h_3$. Then, as linear spaces,
    \[\Cfb\left(\Sigma_w\left((M^3)', M^1, M^2\right)\right)\cong \Fusion.\]
\end{theorem}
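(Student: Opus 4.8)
The plan is to exhibit explicit, mutually inverse linear maps between $\Cfb[\Sigma_w((M^3)', M^1, M^2)]$ and $\Fusion$, one direction of which is already at hand. First I would observe that for each $w\in\C^\times$ the rescaling \labelcref{eq:CfbFamily2} identifies $\Cfb[\Sigma_w((M^3)', M^1, M^2)]$ with $\Cfb[\Sigma_1((M^3)', M^1, M^2)]$: the orbicurves $\oC_w$ and $\oC_1$ are isomorphic via a dilation on $\PP^1$ that sends the branch point $w$ to $1$ and fixes $0,\infty$ with the same Deck generators, and tracking this dilation on the module fibers through the induced change of local coordinates (the $\grp{Aut}\O$-action of \cref{rem:VirasoroAction}) yields precisely the formula \labelcref{eq:CfbFamily2}, which is visibly invertible and compatible with horizontality (\cref{lem:horizontal2}). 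Thus it suffices to treat $w=1$ and to construct an inverse to the map $\varphi\mapsto I$ of the Proposition preceding this theorem, namely the map determined by $\braket*{v_3'}{I(v,w)v_2}=w^{-h}\braket*{\varphi_w}{v_3'\otimes v\otimes v_2}$.

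For the inverse, given an intertwining operator $I\in\Fusion$ I would define a linear functional $\varphi$ on $(M^3)'\otimes M^1\otimes M^2\cong(\cM^3)'_{\infty}\otimes\cM^1_{w}\otimes\cM^2_{0}$ by $\braket*{\varphi}{v_3'\otimes v\otimes v_2}:=w^{h}\braket*{v_3'}{I(v,w)v_2}$ evaluated at the fixed base point $w$; this is a genuine number, since pairing $I(v,w)v_2$ against $v_3'$ in the graded dual retains only finitely many terms, so $\braket*{v_3'}{I(v,w)v_2}$ is $w^{-h}$ times a Laurent polynomial in $w^{\sfrac{1}{T}}$ which we may evaluate. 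The heart of the argument is to check that $\varphi$ is annihilated by the twisted chiral Lie algebra $\L_{\PP^1\setminus\Set{0,w,\infty}}(\cV^G)$, which by \cref{eg:3ptsLV} is spanned by the classes $\loo{a}[r]{m}{n}$. I would expand $\braket*{\varphi}{\loo{a}[r]{m}{n}.(v_3'\otimes v\otimes v_2)}$ into its three residue contributions at $\infty$, $w$, and $0$ using the action formula \labelcref{eq:actionOfLV}; running the same local computations as in the proof of the preceding Proposition (unwinding $\cY_{(M^3)',\infty}$ through the transition map $\vartheta$ of \labelcref{eq:def:vartheta} and the contragredient relation \labelcref{eq:theta=dual}, and $\cY_{M^1,w}$ and $\cY_{M^2,0}$ directly) shows that these three residues are exactly the residues at $\infty$, $w$, and $0$ of the meromorphic $1$-form assembled from the three terms of the twisted Jacobi identity \labelcref{axiom:IO-Jac} for $I$, taken with test function $F(z,w)=z^{\frac{r}{T}+m}(z-w)^{-\epsilon\frac{r}{T}-n}$, which is precisely the type of rational function allowed in \labelcref{axiom:IO-Jac}. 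Hence \labelcref{axiom:IO-Jac} forces their sum to vanish, and by the Residue Sum Formula of \cref{eg:RSF} for $\oC_w$ this says exactly that $\varphi$ is a conformal block.

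It then remains to check that the two constructions are mutually inverse, which is a routine unwinding. Feeding the $\varphi$ just built into the Proposition's map returns the operator $\widetilde I$ with $\braket*{v_3'}{\widetilde I(v,w)v_2}=w^{-h}\braket*{\varphi_w}{v_3'\otimes v\otimes v_2}=w^{-h}\cdot w^{h}\braket*{v_3'}{I(v,w)v_2}=\braket*{v_3'}{I(v,w)v_2}$, where one uses that the horizontal family produced from $\varphi$ by \labelcref{eq:CfbFamily2} coincides with $w\mapsto w^{h}\braket*{v_3'}{I(v,w)v_2}$, a consequence of the $\vo{L}{0}$-grading together with the $\vo{L}{-1}$-derivative property \labelcref{axiom:IO-der}. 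Conversely, the round trip starting from a conformal block $\varphi$ returns $w^{h}\cdot w^{-h}\braket*{\varphi_w}{v_3'\otimes v\otimes v_2}=\braket*{\varphi_w}{v_3'\otimes v\otimes v_2}$, which specializes at the base point to $\varphi$ itself. Both assignments are clearly linear, giving the desired isomorphism of vector spaces. The remaining case $g_1=\id$, with $\Sigma_1(N^3,M^1,M^2)$ in place of $\Sigma_w$, is handled by the same argument (with the setup of \cref{sec:Cor}), or alternatively deduced by composing \cref{thm:Cfb=Cor} with \cite[Theorem 2.24]{PartI}.

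The step I expect to be the real obstacle is the exact matching of the three residue contributions with the three terms of \labelcref{axiom:IO-Jac}, and in particular the bookkeeping at $\infty$: there the attached module is the contragredient $(M^3)'$, the chiral action is read off in the coordinate $z^{-\sfrac{1}{T}}$, and converting it into an honest operator on $M^3$ requires threading the transition map $\vartheta$ through $Y_{(M^3)'}(\vartheta(a),z^{-1})$ and applying the duality \labelcref{eq:theta=dual} without sign or index slips. Once this identification is in place, everything else is formal, so I would pin it down carefully before touching the residue computation.
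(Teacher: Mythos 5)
Your proposal is correct and follows essentially the same route as the paper: define $\varphi_w$ from $I$ by $\braket*{\varphi_w}{v_3'\otimes v\otimes v_2}=w^{h}\braket*{v_3'}{I(v,w)v_2}$, verify invariance under $\L_{\PP^1\setminus\Set{0,w,\infty}}(\cV^G)$ by matching the three residue contributions at $\infty$, $w$, $0$ against the three terms of the twisted Jacobi identity with test function $z^{\frac{r}{T}+m}(z-w)^{-\epsilon\frac{r}{T}-n}$ and invoking the Residue Sum Formula, and use the $\vo{L}{-1}$-derivative property to get horizontality of the family in $w$. The only cosmetic differences are that you handle the $w$-dependence by a dilation reducing to $w=1$ rather than by working directly with the horizontal family over $\C^\times$, and that you spell out the mutual-inverse check which the paper leaves implicit.
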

\begin{proof}
    Let $I$ be an intertwining operator of type $\Fusion$. 
    Then, by the \emph{truncation property} \labelcref{axiom:IO-trunc}, the following formula defines a linear functional $\varphi_w$ on the vector space $(M^3)'\otimes M^1\otimes M^2$:
    \[
      \braket*{\varphi_w}{v_3'\otimes v\otimes v_2}:=\braket*{v_3'}{I(v,w)v_2}w^h.
    \]
    To show that $\varphi_w$ is invariant under $\L_{\PP^1\setminus\Set{\infty,w,0}}(\cV^G)$, by \cref{eg:3ptsLV}, it suffices to $\braket*{\varphi_w}{\loo{a}[r]{m}{n}.(v_3'\otimes v\otimes v_2)}=0$ for all homogeneous $a\in V^{r}$, $m\in\Z$, and $n\in\N$.
    Indeed, we have\footnote{Note that $\Res_w$ means the residue at the point $w\in\PP^1$.} 
  \begin{align*}
    \MoveEqLeft
    \Res_{\infty}\braket*{\varphi_w}{\cY_{(M^3)',\infty}(\loo{a}[r]{m}{n})v_3'\otimes v\otimes v_2} \\
    &= 
    \braket*{\Res_{\infty}\cY_{(M^3)',\infty}(\loo{a}[r]{m}{n})v_3'}{I(v,w)v_2}w^h\\
    &= 
    \Res_{z=\infty}\frac{Tz^{\frac{r}{T}+m}}{(z-w)^{\epsilon\frac{r}{T}+n}}\braket*{Y_{(M^3)'}(\vartheta(a),z^{-1}))v_3'}{I(v,w)v_2}w^h\\
    &= 
    \Res_{z=\infty}\frac{Tz^{\frac{r}{T}+m}}{(z-w)^{\epsilon\frac{r}{T}+n}}\braket*{v_3'}{Y_{M^3}(a,z)I(v,w)v_2}w^h,\\
    \MoveEqLeft
    \Res_{w}\braket*{\varphi_w}{v_3'\otimes \cY_{M^1,w}(\loo{a}[r]{m}{n})v\otimes v_2} \\
    &= 
    \Res_{w}\braket*{v_3'}{I\left(\cY_{M^1,w}(\loo{a}[r]{m}{n})v,w\right)v_2}w^h \\
    &= 
    \Res_{z=w}\frac{Tz^{\frac{r}{T}+m}}{(z-w)^{\epsilon\frac{r}{T}+n}}\braket*{v_3'}{I(Y_{M^1}(a,z-w)v,w)v_2}w^h,\\
    \MoveEqLeft
    \Res_{0}\braket*{\varphi_w}{v_3\otimes v\otimes \cY_{M^2,0}(\loo{a}[r]{m}{n})v_2} \\
    &= 
    \Res_{0}\braket*{v_3'}{I(v,w)Y_{M^2}(a,z)v_2}w^h \\
    &= 
    \Res_{z=0}\frac{Tz^{\frac{r}{T}+m}}{(z-w)^{\epsilon\frac{r}{T}+n}}\braket*{v_3'}{I(v,w)Y_{M^2}(a,z)v_2}w^h.
  \end{align*}
  Then the desired equality $\braket*{\varphi_w}{\loo{a}[r]{m}{n}.(v_3'\otimes v\otimes v_2)}=0$ follows from the \emph{Twisted Jacobi identity} \labelcref{axiom:IO-Jac}. 
  
  It remains to show that the conformal blocks $\varphi_{w}$ ($w\in\C^\times$) form a \emph{horizontal} section and hence corresponding to a unique $\varphi\in\Cfb[\Sigma_{1}((M^3)', M^1, M^2)]$. This boils down to show 
  \[
    w^{h}\d\braket*{v_3'}{I(v,w)v_2} = \braket*{v_3'}{I(\vo{L}{-1}v,w)v_2}w^h\d{w},
  \]
  which is precisely the \emph{$\vo{L}{-1}$-derivative property} \labelcref{axiom:IO-der}.
\end{proof}

\section{Restrictions of conformal blocks}\label{sec:RestrictedCfb}
In this section, we introduce the notion of \emph{twisted restricted conformal blocks}. 
We will explore its relations with the notions of \emph{$g$-twisted restricted correlation functions} and \emph{$3$-pointed $g$-twisted restrict conformal blocks} introduced in \cite{PartI} and then study the restriction map from the space of twisted conformal blocks to the space of twisted restricted conformal blocks.

\subsection{Twisted restricted conformal blocks}

\subsubsection*{The bottom level of a twisted module}
Let $\qq$ be a branch point with the Deck generator $g$. 
Let $M$ be an admissible \emph{$g$-twisted} $V$-module of conformal weight $h$. 
Then by conjugating a degree shifting by $h$, the bottom level $U=M(0):=M_h$ of $M$ is preserved by the change of special local coordinates. 
Hence, $U$ gives rise to a $\kappa_{\qq}$-subspace $\cU_{\qq}$ of $\cM_{\qq}$.
\begin{lemma}\label{lm:lm6.1}
  With the aforementioned assumptions, $\cU_{\qq}$ is a $\L_{\qq}(\cV^G)_{0}$-module.
\end{lemma}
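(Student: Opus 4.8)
The plan is to pull the statement back, via a choice of special local coordinate above $\qq$, to the purely algebraic fact that the bottom level $M(0)$ of an admissible $g$-twisted $V$-module carries an $A_g(V)$-module structure, which is inflated from the degree-zero part $\L_g(V)_{0}$ of the twisted ancillary Lie algebra.

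First I would fix a special local coordinate $z^{\sfrac{1}{T}}$ above $\qq$, $g$ being its Deck generator. By the identifications recorded in \cref{sec:Constraints}, this coordinate gives isomorphisms $\L_{\qq}(\cV^G)_{\le 0}\cong\L_g(V)^{\sfL}_{\le 0}$ and $\L_{\qq}(\cV^G)_{<0}\cong\L_g(V)^{\sfL}_{<0}$, hence an isomorphism of the subquotient $\L_{\qq}(\cV^G)_{0}$ with $\L_g(V)_{0}$; it also identifies $\cM_{\qq}$ with $M$ so that $\cU_{\qq}$ corresponds to $U=M(0)$, the latter identification being independent of the chosen coordinate thanks to the degree-shift-by-$h$ normalization by which $\cU_{\qq}$ was defined. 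Under these identifications, the continuous action of $\L_{\qq}(\cV^G)$ on $\cM_{\qq}$ given by \labelcref{eq:actionOfLV} becomes the action of $\L_g(V)^{\sfL}$ on $M$ obtained by continuity from the Lie algebra homomorphism $\L_g(V)\to\End(M)$, $\lo{a}{\sfrac{m}{T}}\mapsto\vo{a}{\sfrac{m}{T}}$ (this extension being well defined since $M$ is bounded below); compare \cref{tab:EXPY}.

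The heart of the argument is then a grading count using admissibility (\cref{def:admissible}): the mode $\vo{a}{\sfrac{m}{T}}$ shifts the $\tfrac{1}{T}\N$-grading of $M$ by $\wt a-\tfrac{m}{T}-1=\deg\lo{a}{\sfrac{m}{T}}$. Consequently every element of $\L_g(V)^{\sfL}_{<0}$ acts on $M$ by an operator strictly lowering the grading, so it annihilates the bottom level $U=M(0)$; while $\L_g(V)_{0}$, which is spanned by the classes $\lo{a}{\wt a-1}$ for $a\in V_g^0$ acting as the zero modes $\vo{a}{\wt a-1}$, preserves $M(0)$ — this is precisely the $A_g(V)$-action on the bottom level, pulled back along the surjection $\L_g(V)_{0}\epimorphism A_g(V)_{\Lie}$ recalled in \cref{sec:assoalg}. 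Since $\L_g(V)^{\sfL}_{\le 0}$ is a subalgebra in which $\L_g(V)^{\sfL}_{<0}$ is an ideal, and the ideal acts by zero on $U$, the action of $\L_g(V)^{\sfL}_{\le 0}$ on $M$ restricts to $U$ and factors through the quotient $\L_g(V)^{\sfL}_{\le 0}/\L_g(V)^{\sfL}_{<0}=\L_g(V)_{0}$.

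Transporting this back along the coordinate identification shows that $\L_{\qq}(\cV^G)_{\le 0}$ preserves $\cU_{\qq}$, that $\L_{\qq}(\cV^G)_{<0}$ annihilates $\cU_{\qq}$, and that the resulting action of $\L_{\qq}(\cV^G)_{0}$ exhibits $\cU_{\qq}$ as a $\L_{\qq}(\cV^G)_{0}$-module; independence of the auxiliary coordinate is automatic since both \labelcref{eq:actionOfLV} and the filtration on $\L_{\qq}(\cV^G)$ are intrinsic. I do not expect any genuine obstacle here — the lemma is essentially bookkeeping once \cref{sec:tcL} is available. The only two points that deserve care are matching the residue normalization in \labelcref{eq:actionOfLV} with the grading convention on $\L_g(V)$, and checking that the completed Lie algebra $\L_g(V)^{\sfL}_{\le 0}$ genuinely acts on the finite-dimensional space $M(0)$ — which it does, because all strictly-negative-degree modes kill it and only the finite-dimensional degree-zero summand survives.
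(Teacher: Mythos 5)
Your proposal is correct and follows essentially the same route as the paper: identify $\L_{\qq}(\cV^G)$ with the $g$-twisted ancillary Lie algebra via a special local coordinate, let the spanning elements $\lo{a}{\frac{r}{T}+m}$ act as the modes $\vo{a}{\frac{r}{T}+m}$, and use the admissibility grading shift together with $v_{\qq}(\lo{a}{\frac{r}{T}+m})=\frac{r}{T}+m$ to see that $\L_{\qq}(\cV^G)_{\le 0}$ preserves the bottom level while $\L_{\qq}(\cV^G)_{<0}$ annihilates it. The paper's proof is just a terser version of this same degree count.
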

\begin{proof}
  The Lie algebra $\L_{\qq}(\cV^G)$ is  topologically spanned by the elements $\lo{a}{\frac{r}{T}+m}$, where $a\in V_g^r$ and $m\in\Z$. 
  Such an element $\lo{a}{\frac{r}{T}+m}$ acts on $M$ as the linear operator $\vo{a}{\frac{r}{T}+m}\in\End(M)$, and by \cref{def:admissible}, 
  $\vo{a}{\frac{r}{T}+m}$ preserves $M(0)$ if $\frac{r}{T}+m\ge \wt{a}-1$ and annihilates $M(0)$ if $\frac{r}{T}+m> \wt{a}-1$. 
  On the other hand, $v_{\qq}(\lo{a}{\frac{r}{T}+m})=\frac{r}{T}+m$. 
  Hence, we see that $\cU_{\qq}$ is preserved by $\L_{\qq}(\cV^G)_{\le0}$ and annihilated by $\L_{\qq}(\cV^G)_{<0}$. 
\end{proof}

Conversely, let $U$ be a continuous $\L_{g}(V)_{0}$-module. 
Then all the Virasoro operators $\vo{L}{k}$ ($k>0$) annihilate $U$ while $\vo{L}{0}$ preserves $U$. 
Hence, $U$ gives rise to a $\kappa_{\qq}$-space $\cU_{\qq}$ by twisting the choice of special local coordinates. 
It is then evident that $\cU_{\qq}$ is a $\L_{\qq}(\cV^G)_{0}$-module. 
We may further extend the action of $\L_{\qq}(\cV^G)_{0}$ on $U$ to one of $\L_{\qq}(\cV^G)_{\le 0}$ by letting $\L_{\qq}(\cV^G)_{<0}$ act trivially. 
We may organize this action $\rho$ into a formal series
\[
  Y_U(a,z):=\sum_{m\ge \wt{a}-1}\rho(\lo{a}{\frac{r}{T}+m})z^{-(\frac{r}{T}+m)-1},
\]
which is indeed either $\rho(\lo{a}{\wt a-1})z^{-\wt a}$ (if $r=0$) or $0$ (if $r\neq0$).  
Then we obtain an $\End\cU_{\qq}$-valued section $\cY_{U,\qq}$ of $\cV(-(\vo{L}{0}-1)T\qq)^{\ast}$ on $D_{\qq}$ via the formula:
\[
  \braket*{\varphi}{\cY_{U,\qq}(\spa)\nu} = \braket*{\iota_{z}\varphi}{Y_U(\iota_{z}(\spa),z)\iota_{z}\nu},
\]
for all $\varphi\in\cU_{\qq}^{\ast}$, $\nu\in\cU_{\qq}$ and all regular sections $\spa$ of $\cV(-(\vo{L}{0}-1)T\qq)$.  

\subsubsection*{Twisted restricted conformal blocks}
By restricting the $g_{\diamond}$-twisted modules $N^{\diamond}$ attached to the branch points $\qq_\diamond$ in the domain of a twisted conformal block in \cref{def:Cfb} to the bottom levels, we obtain the following notion.
\begin{definition}\label{def:Cfb_res}
  Consider a datum $\Sigma=(\x\colon\tilX\to\X,\pp_{\bullet},\qq_{\diamond},M^{\bullet},U^{\diamond})$ of 
  \begin{itemize}
    \item a totally ramified orbicurve $\x\colon\tilX\to\X$, 
    \item distinct points $\pp_{\bullet}=(\pp_{1},\cdots,\pp_{m})$ on the unramified locus $\Xc$, 
    \item all the branch points $\qq_{\diamond}=(\qq_{1},\cdots,\qq_{n})$, each provides a \emph{Deck generator} $g_{i}$,
    \item admissible \emph{untwisted} modules $M^{\bullet}=(M^1,\cdots,M^m)$, one for each $\pp_{i}$, and
    \item $A_{g_{\diamond}}(V)$-module $U^{\diamond}=(U^1,\cdots,U^n)$, one for each $\qq_{i}$.
  \end{itemize}
  Then the \textbf{space of (twisted) restricted coninvariants associated to the datum $\Sigma$} is the quotient space 
  \begin{equation}\label{eq:defconinv_res}
    (\cM^{\bullet}_{\pp_{\bullet}}\otimes\cU^{\diamond}_{\qq_{\diamond}})_{\L^{\circ}_{\X\setminus\pp_{\bullet}}(\cV^G)_{\le0}}:= 
    (\cM^{\bullet}_{\pp_{\bullet}}\otimes\cU^{\diamond}_{\qq_{\diamond}})/ 
    \L^{\circ}_{\X\setminus\pp_{\bullet}}(\cV^G)_{\le0}.(\cM^{\bullet}_{\pp_{\bullet}}\otimes\cU^{\diamond}_{\qq_{\diamond}}).
  \end{equation}
  Its linear dual is called the \textbf{space of (twisted) restricted conformal blocks associated to the datum $\Sigma$} and is denoted by $\Cfb[\Sigma]$. 
  Equivalently, a \emph{(twisted) restricted conformal block associated to the datum $\Sigma$} is a linear functional on $\cM^{\bullet}_{\pp_{\bullet}}\otimes\cU^{\diamond}_{\qq_{\diamond}}$ that is $\L^{\circ}_{\X\setminus\pp_{\bullet}}(\cV^G)_{\le0}$-invariant. 
\end{definition}
\begin{remark}\label{rem:Zhu}
  Recall that there is an epimorphism from the degree-zero subalgebra $\L_{g_{\diamond}}(V)_0$ of the \emph{$g_{\diamond}$-twisted ancillary Lie algebra} to the \emph{$g_{\diamond}$-twisted Zhu's algebra} $A_{g_{\diamond}}(V)$. 
  Hence, any $A_{g_{\diamond}}(V)$-module is an $\L_{g_{\diamond}}(V)_0$-module.
\end{remark}

\begin{lemma}[Propagation of vacua]\label{lem:propagation_res}
  There is a canonical isomorphism
  \[
    \xi_{\pp}\colon\Cfb[\x\colon\tilX\to\X,\pp,\pp_{\bullet},\qq_{\diamond},V,M^{\bullet},U^{\diamond}]\cong 
    \Cfb[\x\colon\tilX\to\X,\pp_{\bullet},\qq_{\diamond},M^{\bullet},U^{\diamond}],
  \]
  given by restricting to $\vac\otimes\cM^{\bullet}_{\pp_{\bullet}}\otimes\cU^{\diamond}_{\qq_{\diamond}}$. 
\end{lemma}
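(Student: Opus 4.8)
The plan is to mirror the proof of \cref{lem:propagation} (the unrestricted \nameref{lem:propagation}), replacing the twisted $V$-modules $N^{\diamond}$ by the $A_{g_\diamond}(V)$-modules $U^{\diamond}$ and the full twisted chiral Lie algebra by its constrained variant $\L^{\circ}_{\X\setminus\pp_\bullet}(\cV^G)_{\le0}$. First I would unwind the invariance condition for a restricted conformal block $\varphi$ on $\cV_\pp\otimes\cM^\bullet_{\pp_\bullet}\otimes\cU^\diamond_{\qq_\diamond}$ using the \nameref{lem:SRT} applied to the bundle $\cV\otimes\Omega^1(-(\vo{L}{0}-1)\dQ)$ (the twist built into \cref{nt:defL0}): namely, $\varphi$ is a restricted conformal block if and only if, for each choice of $\mu_\bullet\in\cM^\bullet_{\pp_\bullet}$ and $\nu_\diamond\in\cU^\diamond_{\qq_\diamond}$, the several local sections
\[
  \braket*{\varphi}{\cdots\otimes\cY_{M^i,\pp_i}\mu_i\otimes\cdots\otimes\nu_\diamond}
  \txand
  \braket*{\varphi}{\mu_\bullet\otimes\cdots\otimes\cY_{U^j,\qq_j}\nu_j\otimes\cdots}
\]
glue to one meromorphic section $\varphi_{\mu_\bullet\otimes\nu_\diamond}$ of $(\cV^G)^{\ast}$ on $\X$ with possible poles only at $\pp_\bullet$ (and, at each branch point $\qq_j$, a pole of order controlled by the degree-zero constraint — this is exactly what $\cY_{U^j,\qq_j}$ being a section of $\cV(-(\vo{L}{0}-1)T\qq_j)^{\ast}$ encodes). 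The map $\xi_\pp$ is restriction to $\vac\otimes\cM^\bullet_{\pp_\bullet}\otimes\cU^\diamond_{\qq_\diamond}$, and its candidate inverse sends $\spa\otimes\mu_\bullet\otimes\nu_\diamond$ to $\varphi_{\mu_\bullet\otimes\nu_\diamond}(\spa)$.

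Next I would check that $\xi_\pp$ is well-defined and that the proposed inverse $\xi_\pp^{-1}$ genuinely lands in the space of restricted conformal blocks: one must verify that $\xi_\pp^{-1}\varphi$ is invariant under $\L^{\circ}_{\X\setminus(\pp,\pp_\bullet)}(\cV^G)_{\le0}$, which, as in \cite[Theorem 10.3.1]{FBZ04}, reduces to checking compatibility with the operator product expansion at the new point $\pp$ against all the other insertions. The key computational inputs are the untwisted Jacobi identity at $\pp$ and $\pp_\bullet$, the twisted Jacobi identity at the branch points (used already in \cref{lem:propagation}), and — this is the new ingredient — the fact that the section $\cY_{U^j,\qq_j}$ for an $A_{g_j}(V)$-module $U^j$ satisfies the appropriate truncated associativity: by construction $Y_{U^j}(a,z)$ is $\rho(\lo{a}{\wt a - 1})z^{-\wt a}$ when $a\in V^0_{g_j}$ and $0$ otherwise, and the star-product \labelcref{def:g-star-product} defining $A_{g_j}(V)$ is precisely the residue identity that makes $\braket*{\varphi}{\cdots\cY_{U^j,\qq_j}\cdots}$ extend compatibly. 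So I would isolate a small lemma: the degree-zero truncation of the twisted vertex operator at a branch point, acting on a bottom level viewed as an $A_g(V)$-module, obeys the Jacobi/residue relation needed for the \nameref{lem:SRT} argument — this is morally \cref{rem:Zhu} together with the defining relations of $\cA_g(V)$.

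Then I would verify $\xi_\pp\circ\xi_\pp^{-1}=\id$ and $\xi_\pp^{-1}\circ\xi_\pp=\id$. The first is immediate since evaluating $\varphi_{\mu_\bullet\otimes\nu_\diamond}$ at $\vac$ (which is a horizontal section, constant in the local coordinate) recovers $\braket*{\varphi}{\vac\otimes\mu_\bullet\otimes\nu_\diamond}$ by the vacuum/creation axioms. For the second, given a restricted conformal block $\psi$ on $\cV_\pp\otimes\cM^\bullet_{\pp_\bullet}\otimes\cU^\diamond_{\qq_\diamond}$, one shows $\psi$ is determined by its restriction $\xi_\pp\psi$: the invariance of $\psi$ under $\L^{\circ}_{\X\setminus(\pp,\pp_\bullet)}(\cV^G)_{\le0}$, combined with \cref{lem:RRcoro++}/\cref{thm:DesLUVCG0} producing elements of the chiral Lie algebra with prescribed local expansion $\lo{a}{n}$ at $\pp$ and vanishing elsewhere, lets one move any insertion $a\in\cV_\pp$ off of the point $\pp$ and express $\braket*{\psi}{a\otimes\mu_\bullet\otimes\nu_\diamond}$ in terms of $\braket*{\psi}{\vac\otimes(\text{modified }\mu_\bullet,\nu_\diamond)}=\braket*{\xi_\pp\psi}{\cdots}$ — here the degree-$\le0$ constraint is exactly what keeps the relevant operators in $\L^{\circ}(\cV^G)_{\le0}$ rather than the larger algebra.

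The main obstacle I anticipate is the bookkeeping at the branch points in the constrained/degree-zero setting: in the unrestricted \cref{lem:propagation} one works with the full sheaf $\cV^G\otimes\Omega/\Image\nabla$, but here every invoked $1$-form, residue pairing, and \nameref{lem:SRT} application must respect the twist by $-(\vo{L}{0}-1)\dQ$ and the $\le0$ truncation, so one has to be careful that the meromorphic section $\varphi_{\mu_\bullet\otimes\nu_\diamond}$ produced by the \nameref{lem:SRT} has poles at $\qq_j$ bounded exactly by what $\cU^\diamond_{\qq_\diamond}$ (as opposed to $\cN^\diamond_{\qq_\diamond}$) permits — equivalently, that the pairing against $\cY_{U^j,\qq_j}$ really is a section of $\cV(-(\vo{L}{0}-1)T\qq_j)^{\ast}$ and no lower-order terms sneak in. Once that pole-order matching is pinned down, the rest is a routine transcription of the untwisted and $g$-twisted Jacobi identity manipulations from \cite[Theorem 10.3.1]{FBZ04} and \cref{lem:propagation}.
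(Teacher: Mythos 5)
Your proposal follows the same route as the paper: apply the Strong Residue Theorem to the constrained bundle $\cV(-(\vo{L}{0}-1)\dQ)$ so that the sections $\braket*{\varphi}{\cdots\cY_{M^i,\pp_i}\mu_i\cdots}$ and $\braket*{\varphi}{\cdots\cY_{U^j,\qq_j}\upsilon_j\cdots}$ glue to a single meromorphic section with poles only at $\pp_\bullet$ and with the branch-point pole orders absorbed into the twist, then define $\xi_\pp^{-1}$ by evaluation at $\spa\in\cV_\pp$ using $\cV(-(\vo{L}{0}-1)\dQ)_{\pp}=\cV_{\pp}$. The pole-order bookkeeping you flag as the main obstacle is exactly what the paper resolves by building the constraint into the bundle before invoking the residue theorem, so the argument is correct and essentially identical to the paper's.
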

\begin{proof}
  The proof is similar to \cref{lem:propagation}. 
  First, by applying the \emph{\nameref{lem:SRT}} to the definition above, we see that a linear functional $\varphi$ on $\cM^{\bullet}_{\pp_{\bullet}}\otimes\cU^{\diamond}_{\qq_{\diamond}}$ is a conformal block if and only if for any $\mu_{\bullet}\in\cM^{\bullet}_{\pp_{\bullet}}$ and $\upsilon_{\diamond}\in\cU^{\diamond}_{\qq_{\diamond}}$, the followings: 
  \[
    \braket*{\varphi}{\cdots\otimes\cY_{M^i,\pp_i}\mu_{i}\otimes\cdots\otimes\upsilon_{\diamond}}\txand
    \braket*{\varphi}{\mu_{\bullet}\otimes\cdots\otimes\cY_{U^i,\qq_{i}}\upsilon_{i}\otimes\cdots},
  \]
  can be extended to the \emph{same} meromorphic section of $(\cV(-(\vo{L}{0}-1)\dQ)^G)^{\ast}$ on $\tilX$ with possible poles at listings of $\pp_{\bullet}$. 
  Denote this section by $\varphi_{\mu_{\bullet}\otimes\upsilon_{\diamond}}$. 
  Note that at the point $\pp$, we have $\cV(-(\vo{L}{0}-1)\dQ)_{\pp}=\cV_{\pp}$ as subspaces of $V\widehat\otimes\cK_{\pp}$.
  Then  the assignment 
  \[
    \spa\otimes\mu_{\bullet}\otimes\upsilon_{\diamond}\in \cV_{\pp}\otimes\cM^{\bullet}_{\pp_{\bullet}}\otimes\cU^{\diamond}_{\qq_{\diamond}}
    \longmapsto
    \varphi_{\mu_{\bullet}\otimes\upsilon_{\diamond}}(\spa)
  \]
  gives a restricted conformal block $\xi_{\pp}^{-1}\varphi$. This gives the inverse of $\xi_{\pp}$. 
\end{proof}

\subsection{Relation to twisted restricted correlation functions}
Now, we establish the relationship between \cref{def:Cfb_res} and the notion of \emph{$g$-twisted restricted correlation functions} in \cite[Definition 3.5]{PartI}. 
The datum we are interested in is
\begin{equation}\label{eq:def:datum_res}
  \Sigma_{\qq}(U^3, M^1, U^2):=(\x\colon\oC\to\PP^1, \infty, \x(\qq), 0, U^3, M^1, U^2),
\end{equation}
where $M^1$ is an admissible untwisted module of conformal weight $h_1$, and $U^2$ (resp. $U^3$) is a left (resp. right) $A_g(V)$-module where $[\upomega]$ acts as $h_2\id$ (resp. $h_3\id$). Put $h=h_1+h_2-h_3$. 
We organize restricted conformal blocks for various $\qq\in\Cc$ into a \emph{horizontal} (in the sense of \cref{lem:horizontal}) section of the bundle $(\cU^3_{\infty}\otimes\cM^1_{\square}\otimes\cU^2_{0})^{\ast}$ over $\Cc$ via the formula \labelcref{eq:CfbFamily}. 

Next, applying \cref{lem:propagation_res}, we obtain a system of meromorphic functions on $\oC$ (the relation between $\xi_{\pp_{\bullet}}^{-1}$ and $\xi_{\x(\pp_{\bullet})}^{-1}$ is as in \cref{rem:xi_p}):
\[
  S_{\varphi}\bracket{u_3}{(a^{\bullet},\pp_{\bullet})(v,\qq)}{u_2}:=\braket*{\xi_{\pp_{\bullet}}^{-1}\varphi_{\qq}}{u_3\otimes a^{\bullet}\otimes v\otimes u_2}
\]
Recall that we have defined the space $\Cor[\Sigma_{1}(U^3, M^1, U^2)]$ of twisted restricted correlation functions in \cite[Definition 3.5]{PartI}.
\begin{proposition}
  The system $S_{\varphi}$ belongs to $\Cor[\Sigma_{1}(U^3, M^1, U^2)]$.  
\end{proposition}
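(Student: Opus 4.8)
The plan is to verify the axioms defining $\Cor[\Sigma_{1}(U^3, M^1, U^2)]$ from \cite[Definition 3.5]{PartI} one at a time, imitating the argument used for the unrestricted system in \cref{sec:Cor}. By \cref{lem:propagation_res}, for each fixed $\qq\in\Cc$ the functional $\xi_{\pp_{\bullet}}^{-1}\varphi_{\qq}$ extends to a meromorphic section of $(\cV(-(\vo{L}{0}-1)\dQ)^G)^{\ast}$ on $\oC$ with poles only along liftings of $\pp_{\bullet}$; since $\varphi_{\bullet}$ is a rational section of $(\cU^3_{\infty}\otimes\cM^1_{\square}\otimes\cU^2_{0})^{\ast}$ over $\Cc$ (built from $\varphi$ via \labelcref{eq:CfbFamily}), the system $S_{\varphi}\bracket{u_3}{(a^{\bullet},\pp_{\bullet})(v,\qq)}{u_2}=\braket*{\xi_{\pp_{\bullet}}^{-1}\varphi_{\qq}}{u_3\otimes a^{\bullet}\otimes v\otimes u_2}$ is visibly a rational expression of the expected shape.

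First I would dispose of the axioms that hold formally: the \emph{truncation property} follows from the gradings of $\cM^{\bullet}_{\pp_{\bullet}}$ together with the admissibility bounds; the \emph{vacuum property} and \emph{locality} follow from the construction of $\xi^{-1}$ through the \nameref{lem:SRT} (single-valuedness of $\varphi_{\mu_{\bullet}\otimes\upsilon_{\diamond}}$), exactly as in \cref{sec:Cor}; and the \emph{$\vo{L}{-1}$-property} is the horizontality of $\varphi_{\bullet}$ in the sense of \cref{lem:horizontal}. Next, expanding $\xi_{\pp_{\bullet}}^{-1}\varphi_{\qq}$ in special local coordinates at $\infty$, at $\qq$, and at $0$ reproduces, respectively, $Y_{U^3}$ (twisted by $\vartheta$ and $\theta$), the untwisted $Y_{M^1}$, and $Y_{U^2}$ sandwiched between $u_3$, $v$, $u_2$; the only change from \cref{sec:Cor} is that $Y_{U^3}$ and $Y_{U^2}$ retain nothing but their degree-zero parts $\rho(\lo{a}{\wt a-1})z^{-\wt a}$, by the converse construction following \cref{lm:lm6.1}. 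From these local forms the \emph{homogeneity}, \emph{associativity}, and \emph{generating properties} are read off verbatim, with the restricted analogue of \cref{tab:EXPY} replacing it.

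The crux, and the step I expect to be the real obstacle, is the axiom that actually encodes \emph{restrictedness}: the prescribed bound on the pole orders of $S_{\varphi}$ at the branch points $0$ and $\infty$ — equivalently, the \emph{recursive formulas} of \cite[Definition 3.5]{PartI} expressing the $\qq$-dependence through the $A_{g}(V)$-actions on $U^2$ and $U^3$. Here one must use that $\varphi$ is invariant only under the \emph{constrained} twisted chiral Lie algebra $\L^{\circ}_{\PP^1\setminus\Set{w}}(\cV^G)_{\le0}$ (with $w=\x(\qq)$), not the full chiral Lie algebra. Concretely I would run the identity $\braket*{\varphi_{\qq}}{\loo{a}[r]{m}{n}.(u_3\otimes v\otimes u_2)}=0$ — which by \cref{eg:LgV} is available precisely for those $\loo{a}[r]{m}{n}$ lying in $\L^{\circ}_{\PP^1\setminus\Set{w}}(\cV^G)_{\le0}$, i.e.\ with $\tfrac{r}{T}+m-n\le\wt a-1\le\tfrac{r}{T}+m$ by \cref{eg:LgV-0} — compute the three residues at $\infty$, $\qq$, and $0$ from the degree-zero $Y_{U^3}$, the full $Y_{M^1}$, and the degree-zero $Y_{U^2}$, and add them to zero via the Residue Sum Formula of \cref{eg:RSF}. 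The delicate bookkeeping is to check that the constraints $\deg_{0}\le0$ and $\deg_{\infty}\le0$ on the admissible $\loo{a}[r]{m}{n}$ are exactly what forces the residues at $0$ and $\infty$ to be finite combinations of $A_{g}(V)$-actions on $u_2$ and $u_3$ — so that the resulting relations coincide with the recursive formulas of \cite[Definition 3.5]{PartI} — while the annihilation of $U^2$ and $U^3$ by $\L_{0}(\cV^G)_{<0}$ and $\L_{\infty}(\cV^G)_{<0}$ (the second half of \cref{lm:lm6.1}) kills the leftover terms. Once these identities are matched, the remaining conditions of \cite[Definition 3.5]{PartI} are immediate.
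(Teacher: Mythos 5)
Your handling of the routine axioms (the $\vo{L}{-1}$-property via horizontality, locality and the vacuum property via the construction of $\xi^{-1}$, associativity via the expansion at $\qq$) matches the paper, modulo the fact that the restricted definition replaces the truncation property by a \emph{monomial property}, which the paper deduces from $[\upomega]$ acting as a scalar on $U^2$ and $U^3$. The genuine gap is in the step you yourself flagged as the crux. The relations you propose to extract, namely $\braket*{\varphi_{\qq}}{\loo{a}[r]{m}{n}.(u_3\otimes v\otimes u_2)}=0$ for $\loo{a}[r]{m}{n}$ in the constrained Lie algebra with the residues at $0$, $\qq$, $\infty$ summing to zero, are precisely the relations \labelcref{f-relation1,f-relation2,f-relation3,f-relation4} spanning $J$; establishing them is the content of \cref{thm:Cfb=Cfb}, not of the recursive formulas. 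The recursive formulas of \cite[Definition 3.5]{PartI} are identities for the full Laurent expansion $\iota_{\pp=0}S_{\varphi}\bracket*{u_3}{(a,\pp)(a^{\bullet},\pp_{\bullet})(v,\qq)}{u_2}$ of the $(n+1)$-point functions. Deriving them needs the residues at the propagated points $\pp_{\bullet}$ (your ``three residues'' omit these) and, more importantly, the test forms $a\otimes z^{\frac{r}{T}+m}\d{z}$ for \emph{every} $m$, almost none of which lie in $\L^{\circ}_{\PP^1\setminus\Set{w}}(\cV^G)_{\le0}$: by \cref{eg:LgV-0} a pure power with no pole at $w$ is constrained only when $\frac{r}{T}+m=\wt{a}-1$.

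The paper's argument is therefore not Lie-algebra invariance but a two-regime residue computation on the honest meromorphic $1$-form produced by \cref{lem:propagation_res}: when $\frac{r}{T}+m>\wt{a}-1$ the element $\lo{a}{\frac{r}{T}+m}$ lies in $\L_{0}(\cV^G)_{<0}$, which annihilates $u_2$, so the corresponding Laurent coefficient at $0$ vanishes outright; when $\frac{r}{T}+m\le\wt{a}-1$ the residue at $\infty$ is computable from the degree-$\le0$ action of $\theta(\lo{a}{\frac{r}{T}+m})$ on $u_3$, and the \nameref{lem:SRT} in the form of \cref{eg:RSF} then expresses the otherwise inaccessible coefficient at $0$ through the residues at $\infty$, $\qq$, and $\pp_{\bullet}$. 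This splitting --- for each $m$ exactly one branch-point residue is given by a module action, and the Residue Sum Formula bridges to the other --- is the missing idea. It also underlies the paper's proof of the homogeneity property (descent of $S_{\varphi}\bracket*{u_3}{(a,\pp)\cdots}{u_2}\,z^{\sfrac{r}{T}}\d{z}$ to $\PP^1$, followed by induction on $\pp_{\bullet}$), which cannot be ``read off verbatim'' from local expansions, since $U^2$ and $U^3$ carry no action of the full chiral Lie algebra and hence the expansions of $S_{\varphi}$ at $0$ and $\infty$ are not given by any vertex operator there.
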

\begin{proof}
  The \emph{monomial property} is evident by the definition of $\varphi_{\qq}$ (noticing that the $\vo{L}{0}$ acts as constant scales on $U^2$ and $U^3$). 
  The \emph{$\vo{L}{-1}$-property} follows from \cref{lem:horizontal}. 
  The \emph{locality} and \emph{vacuum property} follow from the construction of $\xi^{-1}$.
  Using the local coordinates near $\qq$, we have
  \begin{align*}
    \MoveEqLeft
    \iota_{\pp_1=\qq}\cdots\iota_{\pp_n=\qq}S_{\varphi}\bracket{u_3}{\cfbseq(v,\qq)}{u_2}\\
    &=\braket*{\varphi}{u_3\otimes Y_{M^1}(a^1,z_1-w)\cdots Y_{M^1}(a^n,z_n-w)v\otimes u_2}.
  \end{align*}
  Then the \emph{associativity} is evident. 
  
  For any homogeneous $a\in V_g^r$ and any $m\in\Z$ such that $\frac{r}{T}+m\ge\wt{a}-1$, we have 
  \[
    S_{\varphi}\bracket{u_3}{(a,\pp)\cdots}{u_2}z^{\frac{r}{T}+m}\d{z} 
    =\braket*{\xi_{\pp}^{-1}\xi_{\pp_\bullet}^{-1}\varphi_{\qq}}{u_3\otimes\lo{a}{\frac{r}{T}+m}\otimes\cdots\otimes u_2}.
  \]
  Note that $\lo{a}{\frac{r}{T}+m}\in\L_{0}(\cV^G)_{\le0}$. Hence, the right-hand side descends to a meromorphic $1$-form on $\PP^1$ with possible poles at $\infty$, $0$, $\x(\qq)$, and $\x(\pp_{\bullet})$. 
  Multiplying both sides by $z^{-m}$, we see that $S_{\varphi}\bracket{u_3}{(a,\pp)\cdots}{u_2}z^{\frac{r}{T}}\d{z}$ descends to a meromorphic $1$-form on $\PP^1$ and the \emph{homogenus property} follows by an induction on the set of points $\pp_{\bullet}$. 
  
  When $m>\floor{\wt{a}-1-\frac{r}{T}}$, we have $\lo{a}{\frac{r}{T}+m}\in\L_{0}(\cV^G)_{<0}$. 
  If this is the case, 
  \begin{align*}
    \MoveEqLeft[1]
    \Res_{\pp=0}S_{\varphi}\bracket{u_3}{(a,\pp)\cdots}{u_2}z^{\frac{r}{T}+m}\d{z}\\
    &=\Res_{\pp=0}\braket*{\xi_{\pp}^{-1}\xi_{\pp_\bullet}^{-1}\varphi_{\qq}}{u_3\otimes\lo{a}{\frac{r}{T}+m}\otimes\cdots\otimes u_2}\\
    &=\braket*{\xi_{\pp_\bullet}^{-1}\varphi_{\qq}}{u_3\otimes\cdots\otimes \lo{a}{\frac{r}{T}+m}\cdot u_2}=0.  
  \end{align*}
  On the other hand, supposing $m\le\floor{\wt{a}-1-\frac{r}{T}}$ and viewing $\lo{a}{\frac{r}{T}+m}$ as a $1$-form on $C$, its image $\theta(\lo{a}{\frac{r}{T}+m})$ under the transition map belongs to $\L_{\infty}(\cV^G)_{\le0}$. 
  Then  
  \begin{align*}
    \MoveEqLeft[1]
    \Res_{\pp=\infty}S_{\varphi}\bracket{u_3}{(a,\pp)\cdots}{u_2}z^{\frac{r}{T}+m}\d{z}\\
    &=\Res_{\pp=\infty}\braket*{\xi_{\pp}^{-1}\xi_{\pp_\bullet}^{-1}\varphi_{\qq}}{u_3\otimes\lo{a}{\frac{r}{T}+m}\otimes\cdots\otimes u_2}\\
    &=\braket*{\xi_{\pp_\bullet}^{-1}\varphi_{\qq}}{\theta(\lo{a}{\frac{r}{T}+m})\cdot u_3\otimes a^{\bullet}\otimes v\otimes u_2}\\
    &=\begin{dcases*}
      S\bracket*{u_3o(a)}{\cdots}{u_2}z^{-\wt a} & if $\frac{r}{T}+m=\wt{a}-1$,\\
      0 & otherwise.
    \end{dcases*}
  \end{align*}
  Therefore, by \cref{eg:RSF}, expanding $S_{\varphi}\bracket*{u_3}{(a,\pp)\cdots}{u_2}$ at $\pp=0$ yields
  \begin{align*}
    \MoveEqLeft\iota_{\pp=0}S_{\varphi}\bracket*{u_3}{(a,\pp)\cdots}{u_2}\\
    &= 
    \sum_{m\le\floor{\wt{a}-1-\frac{r}{T}}}z^{-1-m-\frac{r}{T}}\bigg(S_{\varphi}\bracket*{\theta(\lo{a}{m+\frac{r}{T}})\cdot u_3}{\cdots}{u_2}\\
    &\qquad -\sum_{k=1}^{n}\sum_{i\ge 0}\binom{m+\frac{r}{T}}{i}z_k^{m+\frac{r}{T}-i}S_{\varphi}\bracket*{u_3}{\cdots(\vo{a}{i}a^{k},\pp_{k})\cdots}{u_2} \\
    &\qquad -\sum_{i\ge 0}\binom{m+\frac{r}{T}}{i}w^{m+\frac{r}{T}-i}S_{\varphi}\bracket*{u_3}{\cdots(\vo{a}{i}a^{k},\qq)}{u_2}\bigg) \\
    &= 
    S_{\varphi}\bracket*{u_3o(a)}{\cdots}{u_2}z^{-\wt a} \\
    &\phantom{=}  +\sum_{k=1}^{n}\sum_{i\ge 0} 
    \iota_{\pp=0}F_{\wt a-1+\delta(r)+\frac{r}{T},i}(\pp,\pp_{k}) 
    S_{\varphi}\bracket*{u_3}{\cdots(\vo{a}{i}a^{k},\pp_{k})\cdots}{u_2} \\
    &\phantom{=}  +\sum_{i\ge 0} 
    \iota_{\pp=0}F_{\wt a-1+\delta(r)+\frac{r}{T},i}(\pp,\qq) 
    S_{\varphi}\bracket*{u_3}{\cdots(\vo{a}{i}v,\qq)}{u_2}, 
  \end{align*}
  where the functions $F_{\wt a-1+\delta(r)+\frac{r}{T},i}$ are introduced in \cite[(3.5)]{PartI}.
  This proves the \emph{recursive formula for $U^3$} by the injectivity of $\iota_{\pp=0}$. 
  Finally, expanding $S_{\varphi}\bracket*{u_3}{(a,\pp)\cdots}{u_2}$ at $\pp=\infty$ yields another \emph{recursive formula}.
\end{proof}

Now, we state our next theorem:
\begin{theorem}\label{thm:Cfb=Cor_res}
  Let $M^1$ be an admissible untwisted module of conformal weight $h_1$, and $U^2$ (resp. $U^3$) a left (resp. right) $A_g(V)$-module where $[\upomega]$ acts as $h_2\id$ (resp. $h_3\id$). Put $h=h_1+h_2-h_3$. 
  Then  we have the following isomorphism of vector spaces:
  \[
    \Cfb[\Sigma_{1}(U^3, M^1, U^2)]\longrightarrow\Cor[\Sigma_{1}(U^3, M^1, U^2)],\qquad
    \varphi\longmapsto S_{\varphi}.
  \]
\end{theorem}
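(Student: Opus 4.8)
The plan is to construct an explicit two-sided inverse of $\varphi\mapsto S_\varphi$, paralleling the proof of \cref{thm:Cfb=Cor} step by step; the only genuinely new features are that the relevant symmetry algebra is now the \emph{constrained} twisted chiral Lie algebra $\L^{\circ}_{\PP^1\setminus\Set{\x(\qq)}}(\cV^G)_{\le0}$ of \cref{def:constraint}, and that the spaces $U^2,U^3$ sitting at the branch points carry only $A_g(V)$-module structures. Well-definedness of $\varphi\mapsto S_\varphi$ is the content of the Proposition immediately preceding the statement. Injectivity is then immediate: taking the empty tuple of untwisted insertions in the definition of $S_\varphi$ gives $S_\varphi\bracket*{u_3}{(v,\qq)}{u_2}=\braket*{\varphi_\qq}{u_3\otimes v\otimes u_2}$, and since $\varphi=\varphi_1$ via \labelcref{eq:CfbFamily}, the vanishing of $S_\varphi$ forces $\varphi=0$.

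For surjectivity, fix $S\in\Cor[\Sigma_1(U^3,M^1,U^2)]$ and, for each $\qq\in\Cc$, define the linear functional $\varphi_\qq$ on $\cU^3_\infty\otimes\cM^1_{\x(\qq)}\otimes\cU^2_0$ by $\braket*{\varphi_\qq}{u_3\otimes v\otimes u_2}:=S\bracket*{u_3}{(v,\qq)}{u_2}$. The heart of the argument is to show that $\varphi_\qq$ is a twisted restricted conformal block for $\Sigma_{\x(\qq)}(U^3,M^1,U^2)$, i.e.\ that it is annihilated by $\L^{\circ}_{\PP^1\setminus\Set{\x(\qq)}}(\cV^G)_{\le0}$. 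By \cref{eg:LgV-0} this Lie algebra is spanned by the classes $\loo{a}[r]{m}{n}$ (homogeneous $a\in V_g^r$, $m\in\Z$, $n\in\N$) subject to $\qq$-degree constraints at both branch points $0$ and $\infty$, and these constraints are precisely what forces the local expansions of $\loo{a}[r]{m}{n}$ at $0$ and at $\infty$ to land in $\L_0(\cV^G)_{\le0}$ resp.\ $\L_\infty(\cV^G)_{\le0}$, hence to act on $U^2$ resp.\ $U^3$ through the left resp.\ right $A_g(V)$-module structure, only the weight-zero mode $o(a)$ surviving. Using \cref{tab:EXPY}, the explicit local descriptions near $0$, $\x(\qq)$, and $\infty$, and the \emph{associativity} and \emph{recursive} properties of $S$ (exactly the inputs used in the preceding Proposition), one identifies the three residues
\[
  \Res_{\infty}\braket*{\varphi_\qq}{\cY_{U^3,\infty}(\loo{a}[r]{m}{n})u_3\otimes v\otimes u_2},\qquad
  \Res_{\x(\qq)}\braket*{\varphi_\qq}{u_3\otimes \cY_{M^1,\x(\qq)}(\loo{a}[r]{m}{n})v\otimes u_2},
\]
\[
  \Res_{0}\braket*{\varphi_\qq}{u_3\otimes v\otimes \cY_{U^2,0}(\loo{a}[r]{m}{n})u_2}
\]
with the residues at $\infty$, $\x(\qq)$, and $0$ of the single meromorphic $1$-form $S\bracket*{u_3}{(a,\pp)(v,\qq)}{u_2}\,z^{\frac{r}{T}+m}(z-w)^{-n}\d z$ on $\oC$; then \cref{eg:RSF} yields $\braket*{\varphi_\qq}{\loo{a}[r]{m}{n}.(u_3\otimes v\otimes u_2)}=0$. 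This is the restricted counterpart, carried out in reverse, of the computation in the preceding Proposition.

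It then remains to observe that the family $\varphi_\bullet$ is horizontal in the sense of \cref{lem:horizontal}, which is nothing but the \emph{$\vo{L}{-1}$-property} of $S$, so that it descends to a well-defined $\varphi:=\varphi_1\in\Cfb[\Sigma_1(U^3,M^1,U^2)]$; and that $S_\varphi=S$. The latter holds because $S_\varphi$ and $S$ agree on the $1$-pointed systems $\bracket*{u_3}{(v,\qq)}{u_2}$ by construction, and both obey the \emph{recursive formulas for $U^3$} and their $0$-counterpart from \cite{PartI} — for $S_\varphi$ this is, once more, the preceding Proposition — so they coincide by induction on the number of untwisted insertions. Equivalently, one may invoke \cref{lem:propagation_res} together with the \nameref{lem:SRT} to see that $S_\varphi$ and $S$ are both the unique meromorphic extensions of this common $1$-pointed data.

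I expect the main obstacle to be the identification of the three local residues with those of one $1$-form on $\oC$: this amounts to matching the pole-order and degree bookkeeping produced by the constrained generators $\loo{a}[r]{m}{n}$ — in particular the $\delta(r)$-shift built into $O_g(V)$ and the occurrence of $o(a)$ on the bottom levels — against the kernel functions $F_{\wt a-1+\delta(r)+\frac{r}{T},i}$ of \cite[(3.5)]{PartI}. It is precisely here that the hypothesis that $U^2$ and $U^3$ are only $A_g(V)$-modules (rather than full twisted modules) is used in an essential way, since the $\le0$ constraint guarantees that no higher modes are ever required to act on them.
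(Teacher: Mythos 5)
Your proposal is correct and follows essentially the same route as the paper's proof: one constructs the inverse by setting $\braket*{\varphi_\qq}{u_3\otimes v\otimes u_2}:=S\bracket*{u_3}{(v,\qq)}{u_2}$, verifies invariance under the generators $\loo{a}[r]{m}{n}$ of the constrained chiral Lie algebra by matching the three local residues at $\infty$, $0$, and $\qq$ with those of the single $1$-form $S\bracket*{u_3}{(a,\pp)(v,\qq)}{u_2}\,z^{\frac{r}{T}+m}(z-w)^{-n}\d{z}$ via the recursive formulas and associativity, applies the residue sum formula of \cref{eg:RSF}, and concludes with horizontality from the $\vo{L}{-1}$-property. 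The only additions you make — the explicit injectivity remark and the check that $S_\varphi=S$ by induction on insertions — are points the paper leaves implicit, and they are handled correctly.
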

\begin{proof}
  Given a system of correlation functions $S\in\Cor[\Sigma_{1}(U^3, M^1, U^2)]$, we can obtain conformal blocks $\varphi_{\qq}(u_3\otimes v\otimes u_2):=S\bracket*{u_3}{(v,\qq)}{u_2}$. 
  The proof is similar to \cref{thm:Cfb=Cor}. 
  By \cref{eg:LgV}, it suffices to show that, for all homogeneous $a\in V^{r}$, $n\in\Z$, and $m\in\N$, satisfying $\frac{r}{T}+n-m\le\wt{a}-1\le\frac{r}{T}+n$, 
  the element $\loo{a}[r]{m}{n}.(u_3\otimes v\otimes u_2)$ is annihilated by $\varphi_{\qq}$.
  Indeed, we have
  \begin{align*}
    \MoveEqLeft
    \Res_{\infty}\braket*{\varphi_{\qq}}{\cY_{U^3,\infty}(\loo{a}[r]{m}{n})u_3\otimes v\otimes u_2} \\
    &= 
    -T\sum_{j\ge0}\binom{-m}{j}(-w)^jS\bracket*{\theta(\lo{a}{\frac{r}{T}+n-m-j})u_3}{(v,\qq)}{u_2} \\
    \overset{\ast}&{=}
    \Res_{\pp=\infty}S\bracket*{u_3}{(a,\pp)(v,\qq)}{u_2}z^{\frac{r}{T}+n}(z-w)^{-m}\d{z},\\
    \MoveEqLeft
    \Res_{0}\braket*{\varphi_{\qq}}{u_3\otimes v\otimes \cY_{U^2,0}(\loo{a}[r]{m}{n})u_2} \\
    &= 
    T\sum_{j\ge0}\binom{-m}{j}(-w)^{-m-j}S\bracket*{u_3}{(v,\qq)}{\vo{a}{\frac{r}{T}+n+j}u_2} \\
    \overset{\ast}&{=}
    \Res_{\pp=0}S\bracket*{u_3}{(a,\pp)(v,\qq)}{u_2}z^{\frac{r}{T}+n}(z-w)^{-m}\d{z},\\
    \MoveEqLeft
    \Res_{\qq}\braket*{\varphi_{\qq}}{u_3\otimes \cY_{M^1,\qq}(\loo{a}[r]{m}{n})v\otimes u_2} \\
    &= 
    \sum_{j\ge0}\binom{\frac{r}{T}+n}{j}w^{\frac{r}{T}+n-j}S\bracket*{u_3}{(\vo{a}{m+j}v,\qq)}{u_2} \\
    \overset{\ast\ast}&{=}
    \Res_{\pp=\qq}S\bracket*{u_3}{(a,\pp)(v,\qq)}{u_2}z^{\frac{r}{T}+n}(z-w)^{-m}\d{z},
  \end{align*}
  where $\ast$'s follow from the \emph{recursive formulas} of $S$ and $\ast\ast$ follows from the \emph{associativity}. 
  Then  $\braket*{\varphi_{\qq}}{\loo{a}[r]{m}{n}.(u_3\otimes v\otimes u_2)}=0$ follows from \cref{eg:RSF}.

  It remains to show that the conformal blocks $\varphi_{\qq}$ ($\qq\in\Cc$) form a \emph{horizontal} section and hence corresponding to a unique $\varphi\in\Cfb[\Sigma_{1}(U^3, M^1, U^2)]$. This boils down to show 
  \[
    w^{h}\d\left(w^{-h}S\bracket*{u_3}{(v,\qq)}{u_2}\right) = S\bracket*{u_3}{(\vo{L}{-1}v,\qq)}{u_2}\d{w},
  \]
  which is precisely the \emph{$\vo{L}{-1}$-property} of $S$.
\end{proof}

\subsection{An alternative definition of \texorpdfstring{$3$}{3}-pointed \texorpdfstring{$g$}{g}-twisted restricted conformal blocks}
In \cite{PartI}, we introduced the notion of space of \emph{$g$-twisted restricted conformal blocks associated to the twisted projective line $\x\colon\oC\to\PP^1$} in a different way. 

Let $J$ be the subspace of $U^3\otimes M^1\otimes U^2$ spanned by the elements 
 \begin{align}
    &u_3\otimes (\vo{L}{-1}+\vo{L}{0}-h_1+h)v\otimes u_2,\label{f-relation1}\\
    &u_3\cdot [a]\otimes v\otimes u_2-\sum_{j\ge 0}\binom{\wt a}{j} u_3\otimes \vo{a}{j-1}v\otimes u_2,\quad a\in V^0,\label{f-relation2}\\
    &u_3\otimes v\otimes [a]\cdot u_2-\sum_{j\ge 0}\binom{\wt a-1}{j}u_3\otimes \vo{a}{j-1}v\otimes u_2,\quad a\in V^0,\label{f-relation3}\\
    &\sum_{j\ge 0} \binom{\wt a-1+\frac{r}{T}}{j} u_3\otimes \vo{a}{j-1}v\otimes u_2,\quad a\in V^r, r\neq 0,\label{f-relation4}
  \end{align}
  where $u_3\in U^3$, $v\in M^1$, and $u_2\in U^2$. (The space $J$ is found by the calculation of the recursive formulas of the correlation functions in $\Cor[\Sigma_{1}(U^3, M^1, U^2)]$, see \cite[\S 4,5]{PartI}.) 
  We call (see \cite[Definition 4.1]{PartI})
  \begin{equation}\label{eq:firstdefCB}
  \Cfb[U^3, M^1, U^2]:=((U^3\otimes M^1\otimes U^2)/J)^\ast
  \end{equation}
  the \emph{space of $3$-pointed $g$-twisted restricted conformal blocks associated to the triple $(U^3, M^1, U^2)$} on the twisted projective line $\x\colon\oC\to\PP^1$.

 Our next theorem shows that the elements \labelcref{f-relation1,f-relation2,f-relation3,f-relation4} can also be obtained from the action of the constrained Lie algebra $\L^{\circ}_{\PP^1\setminus\Set{1}}(\cV^G)_{\le 0}$, and our previous definition \labelcref{eq:firstdefCB} in \cite{PartI} is a special case of \cref{def:Cfb_res}. 
 
\begin{theorem}\label{thm:Cfb=Cfb}
Consider the datum $\Sigma_{1}(U^3, M^1, U^2)$ given in \labelcref{eq:def:datum_res}. Then the subpaces $\Cfb[\Sigma_{1}(U^3, M^1, U^2)]$ and $\Cfb[U^3, M^1, U^2]$ coincide in the space of linear functionals $(U^3\otimes M^1\otimes U^2)^{\ast}$.
\end{theorem}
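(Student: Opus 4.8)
The plan is to show the two subspaces of $(U^3\otimes M^1\otimes U^2)^{\ast}$ coincide by proving that the relation subspace $J$ from \labelcref{f-relation1,f-relation2,f-relation3,f-relation4} equals the image of the action map $\L^{\circ}_{\PP^1\setminus\Set{1}}(\cV^G)_{\le0}.(U^3\otimes M^1\otimes U^2)$, since both $\Cfb[U^3,M^1,U^2]$ and $\Cfb[\Sigma_1(U^3,M^1,U^2)]$ are exactly the annihilators of these respective subspaces. Throughout I keep the identifications $\cU^3_\infty\cong U^3$, $\cM^1_1\cong M^1$, $\cU^2_0\cong U^2$ fixed by the local charts of \cite[\S2.2]{PartI}, so that the action of an element $\alpha\in\L^{\circ}_{\PP^1\setminus\Set{1}}(\cV^G)_{\le0}$ on $u_3\otimes v\otimes u_2$ decomposes via the Strong Residue Theorem (\cref{lem:SRT}, in the form of \cref{eg:RSF}) into three residue contributions at $\infty$, $1$, and $0$, computed through \cref{tab:EXPY} and the actions $\cY_{U^3,\infty}$, $\cY_{M^1,1}$, $\cY_{U^2,0}$ introduced in \cref{sec:RestrictedCfb}.

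First I would establish the inclusion ``$J\subseteq$ image.'' For \labelcref{f-relation1} I use the element $\sqo{\upomega}{1}{0}$ (in the notation after \cref{thm:DesLUVCG0}, placed at the branch point $1$, or rather its appropriate variant at the unramified point $1$), whose residue at $1$ produces $\vo{L}{0}+\vo{L}{-1}$ on $M^1$ while its residues at $0$ and $\infty$ produce $h_2$ and $-h_3$ respectively as scalars (since $[\upomega]$ acts as $h_2\id$ on $U^2$ and $h_3\id$ on $U^3$); collecting terms gives exactly $u_3\otimes(\vo{L}{-1}+\vo{L}{0}-h_1+h)v\otimes u_2$ up to a nonzero constant. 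For \labelcref{f-relation2} and \labelcref{f-relation3} I take, for $a\in V^0$, the chiral-Lie-algebra element with local expansion $a\otimes z^{\wt a-1}(z-1)^{-1}\,\d z$ (resp. $a\otimes z^{\wt a-1}(z-1)^{-1}\,\d z$ adjusted so the constrained degree at $0$ and $\infty$ is $\le 0$) — i.e. the appropriate $\loo{a}{m}{n}$ with $m=\wt a-1$, $n=1$; its residue at $0$ is $o(a)=[a]$ acting on $U^2$ (up to the binomial expansion $\sum_j\binom{\wt a-1}{j}\vo{a}{j-1}$), its residue at $\infty$ vanishes or contributes the $\sum_j\binom{\wt a}{j}\vo{a}{j-1}v$ term via the transition map $\theta$ and \labelcref{eq:def:vartheta}, and its residue at $1$ gives the intertwining-type sum on $M^1$; matching against \cref{eg:RSF} reproduces exactly \labelcref{f-relation2} or \labelcref{f-relation3}. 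For \labelcref{f-relation4} with $a\in V^r$, $r\neq0$, the analogous element lies in $\L^{\circ}_{\PP^1\setminus\Set{1}}(\cV^G)_{\le0}$ because $V^r\subseteq O_g(V)$ forces the would-be actions on $U^3$ and $U^2$ to vanish, leaving only the $\sum_j\binom{\wt a-1+\frac rT}{j}\vo{a}{j-1}v$ contribution at $1$. This is essentially a re-derivation of \cite[\S4,5]{PartI}, now read off the residue-sum formula rather than the recursive formulas.

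Next I would prove the reverse inclusion ``image $\subseteq J$,'' which is the heart of the argument. By \cref{coro:DesLUVCG0} (applied at the branch points $0$ and $\infty$) together with the explicit description of $\L_{\PP^1\setminus\Set{1}}(\cV^G)_{\le0}$ in \cref{eg:LgV-0}, the constrained Lie algebra $\L^{\circ}_{\PP^1\setminus\Set{1}}(\cV^G)_{\le0}$ is topologically spanned by elements of the form $\loo{a}{m}{n}$ subject to the constraint inequalities at $0$ and $\infty$; I must show that each such generator sends $u_3\otimes v\otimes u_2$ into $J$. Using \cref{tab:EXPY} and \cref{eg:RSF} exactly as in the previous paragraph, the action of a general constrained generator is a sum of three residue terms, and I would argue by induction on the pole order $n$ at the point $1$ (or equivalently on $\deg_\qq$) that the $n>1$ cases reduce, via the Jacobi/associativity relations packaged in \labelcref{f-relation2,f-relation3,f-relation4}, to the $n\le1$ cases which are precisely the listed relations. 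The main obstacle — and where I expect to spend the most care — is this bookkeeping: one must verify that the constraint $\le 0$ at both $0$ and $\infty$ is exactly what cuts the a priori larger family of residue identities (which would come from the full, unrestricted $\L_{\PP^1\setminus\Set{1}}(\cV^G)$) down to the four relation types, no more and no less, and in particular that no ``extra'' relations beyond $J$ are produced and that the $\vo{L}{-1}$-relation \labelcref{f-relation1} is forced rather than merely allowed. Once both inclusions are in hand, dualizing gives $\Cfb[\Sigma_1(U^3,M^1,U^2)]=J^{\perp}=\Cfb[U^3,M^1,U^2]$ inside $(U^3\otimes M^1\otimes U^2)^{\ast}$, as claimed.
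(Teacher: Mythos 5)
Your overall architecture is the same as the paper's: identify $\Cfb[U^3,M^1,U^2]$ and $\Cfb[\Sigma_1(U^3,M^1,U^2)]$ as annihilators and compare $J$ with $\L^{\circ}_{\PP^1\setminus\Set{1}}(\cV^G)_{\le0}.(U^3\otimes M^1\otimes U^2)$. Your forward inclusion is essentially the paper's verbatim: the paper exhibits $\loo{\upomega}{1}{0}$, $\loo{a}{\wt a}{1}$, $\loo{a}{\wt a-1}{1}$ ($a\in V_g^0$) and $\loo{a}[r]{\wt a-1}{1}$ ($a\in V_g^r$, $r\neq 0$) as elements of $\L^{\circ}_{\PP^1\setminus\Set{1}}(\cV^G)_{\le 0}$ whose actions on $u_3\otimes v\otimes u_2$ reproduce \labelcref{f-relation1,f-relation2,f-relation3,f-relation4} exactly, and your residue bookkeeping at $\infty$, $1$, $0$ checks out.

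The gap is in the reverse inclusion, and it is precisely the step you flag as the ``main obstacle.'' You propose to reduce a general constrained generator $\loo{a}[r]{m}{n}$ to the $n\le 1$ cases ``via the Jacobi/associativity relations packaged in \labelcref{f-relation2,f-relation3,f-relation4}.'' That is not the right mechanism, and it is unclear how it would be carried out: Jacobi relations live in the universal enveloping algebra (they relate \emph{products} of modes), whereas what is needed is a statement that certain elements span the Lie algebra $\L^{\circ}_{\PP^1\setminus\Set{1}}(\cV^G)_{\le 0}$ itself. The paper isolates this as \cref{lem:strOfLgV-0} and proves it using only the relations coming from $\Image\nabla$ in the de Rham quotient, namely the $\vo{L}{-1}$-derivative identity
\[
  \loo{a}{\wt{a}+m}{n+1} =
  \tfrac{1}{n}\loo{(\vo{L}{-1}a)}{\wt{(\vo{L}{-1}a)}-1+m}{n} +
  \tfrac{\wt{a}+m}{n} \loo{a}{\wt{a}-1+m}{n},
\]
combined with the partial-fraction identity $\loo{a}{k}{n+1}=\loo{a}{k+1}{n+1}-\loo{a}{k}{n}$, organized as a \emph{double} induction on $m$ and $n-m$ (a single induction on the pole order $n$ at $1$ does not by itself close up, since the $\nabla$-relation shifts $m$ and $n$ simultaneously). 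No Jacobi identity enters, and no module action is needed: once the spanning lemma is in hand, any $\varphi$ annihilating $J$ is automatically invariant under all of $\L^{\circ}_{\PP^1\setminus\Set{1}}(\cV^G)_{\le 0}$. To complete your argument you should replace the appeal to Jacobi/associativity by this $\Image\nabla$-based spanning argument (or prove the equivalent statement on the module side using the same total-derivative relations).
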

\begin{proof}
  First, by \cref{eg:LgV-0}, $\loo{\upomega}{1}{0}$, $\loo{a}{\wt a}{1}$, $\loo{a}{\wt a-1}{1}$ ($a\in V_g^0$), and $\loo{a}[r]{\wt a-1}{1}$ ($a\in V_g^r$ with $r\neq 0$) are contained in $\L^{\circ}_{\PP^1\setminus\Set{1}}(\cV^G)_{\le 0}$. 
  Then  for any $v\in M^1$, $u_2\in U^2$, and $u_3\in U^3$, let $\tau=u_3\otimes v\otimes u_2$, we have
  \begin{align*}
    \tfrac{1}{T}(\loo{\upomega}{1}{0}).{\tau} &=
    {u_3\otimes (\vo{L}{-1}+\vo{L}{0}-h_1+h)v\otimes u_2},\\
    \tfrac{1}{T}(\loo{a}{\wt a}{1}).{\tau} 
    &= {\sum_{j\ge 0}\binom{\wt a}{j} u_3\otimes \vo{a}{j-1}v\otimes u_2 - \theta(\lo{a}{\wt a-1})u_3\otimes v\otimes u_2},\\
    \tfrac{1}{T}(\loo{a}{\wt a-1}{1}).{\tau} 
    &= {\sum_{j\ge 0}\binom{\wt a-1}{j} u_3\otimes \vo{a}{j-1}v\otimes u_2 - u_3\otimes v\otimes \vo{a}{\wt a-1}u_2},\\
    \tfrac{1}{T}(\loo{a}[r]{\wt a-1}{1}).{\tau} 
    &= {\sum_{j\ge 0}\binom{\wt a-1+\frac{r}{T}}{j} u_3\otimes \vo{a}{j-1}v\otimes u_2}.
  \end{align*}
  Comparing the right-hand sides with the elements \labelcref{f-relation1,f-relation2,f-relation3,f-relation4}, we see that $\Cfb[\Sigma_{1}(U^3, M^1, U^2)]\subset\Cfb[U^3, M^1, U^2]$. 

Another direction follows from the following lemma.
\begin{lemma}\label{lem:strOfLgV-0}
  The Lie algebra $\L^{\circ}_{\PP^1\setminus\Set{1}}(\cV^G)_{\le 0}$ is spanned by the following elements\footnote{Note that $\loo{\upomega}{1}{0}=\loo{\upomega}{\wt\upomega}{1}-\loo{\upomega}{\wt\upomega-1}{1}$.}:
  $\loo{a}{\wt a}{1}$, $\loo{a}{\wt a-1}{1}$ ($a\in V_g^0$), and $\loo{a}[r]{\wt a-1}{1}$ ($a\in V_g^r$ with $r\neq 0$).
\end{lemma}
\begin{proof}
  By \cref{eg:LgV-0}, $\L^{\circ}_{\PP^1\setminus\Set{1}}(\cV^G)_{\le 0}$ is spanned by elements $\loo{a}[r]{m}{n}$, where $a\in V_g^r$ is homogeneous and $m\in\Z$, $n\in\N$ satisfy $\frac{r}{T}+m-n \le \wt{a}-1 \le \frac{r}{T}+m$. 

  Let $\g$ denote the subspace of $\L^{\circ}_{\PP^1\setminus\Set{1}}(\cV^G)_{\le 0}$ spanned by $\loo{a}{\wt a}{1}$, $\loo{a}{\wt a-1}{1}$ ($a\in V_g^0$), and $\loo{a}[r]{\wt a-1}{1}$ ($a\in V_g^r$ with $r\neq 0$). 
  
  We first show that $\loo{a}{\wt{a}-1+\red{m}}{\red{n}}\in\g$ for all homogeneous $a\in V_g^0$ by a double induction on $m\in\N$ and $n-m\in\N$. 
  Since $\lo{a}{\wt{a}-1} = \loo{a}{\wt a}{1}-\loo{a}{\wt a-1}{1}$, the base case is clear. 
  Suppose $\loo{a}{\wt{a}-1+m}{n}\in\g$ for all $a\in V_g^0$. 
  Then  the explicit description of $\nabla$ tells us that 
  \[
    \loo{a}{\wt{a}+m}{n+1} = 
    \tfrac{1}{n}\loo{(\vo{L}{-1}a)}{\wt{(\vo{L}{-1}a)}-1+m}{n} + 
    \tfrac{\wt{a}+m}{n} \loo{a}{\wt{a}-1+m}{n}\in \g.
  \]
  Therefore, we have 
  \[
    \loo{a}{\wt{a}-1+m}{n+1} = \loo{a}{\wt{a}+m}{n+1} - \loo{a}{\wt{a}-1+m}{n} \in\g.
  \]
  We summarize the above process of induction by the following diagram of the pairs $(m,n)$, where the red ones correspond to our starting cases.
  \[
    \begin{tikzcd}[sep=0.2in]
      {(0,0)} & \mathcolor{red}{(1,1)}\ar[l] \ar[r] & {(2,2)} \ar[r]\ar[dl] & {(3,3)} \ar[r]\ar[dl] & \cdots \\ 
      \mathcolor{red}{(0,1)}\ar[u] \ar[r] & {(1,2)} \ar[r]\ar[dl] & {(2,3)} \ar[r]\ar[dl] & {(3,4)} \ar[r]\ar[dl] & \cdots \\
      {(0,2)} \ar[r] & {(1,3)} \ar[r]\ar[dl] & {(2,4)} \ar[r]\ar[dl] & {(3,5)} \ar[r]\ar[dl] & \cdots \\[-0.1in]
      \vdots & \vdots & \vdots & \vdots & \ddots
    \end{tikzcd}
  \]

  Next, we show that $\loo{a}[r]{\wt{a}-1+\red{m}}{1+\red{n}}\in\g$ for all homogeneous $a\in V_g^r$ ($r\neq0$) and $m,n-m\in\N$ by a similar double induction on $m\in\N$ and $n-m\in\N$. 
  We summarize the process of induction by the following diagram of the pairs $(m,n)$, where the red one corresponds to our starting cases.
  \[
    \begin{tikzcd}[sep=0.2in]
      \mathcolor{red}{(0,0)} \ar[r] & {(1,1)}\ar[r]\ar[dl]  & {(2,2)} \ar[r]\ar[dl] & \cdots \\ 
      {(0,1)} \ar[r] & {(1,2)} \ar[r]\ar[dl] & {(2,3)} \ar[r]\ar[dl] & \cdots \\[-0.1in]
      \vdots & \vdots & \vdots & \ddots
    \end{tikzcd}
  \]
 This completes the proof of \cref{lem:strOfLgV-0}.
\end{proof}
Using a similar argument, we see that the Lie algebra ideal  $\L^{\circ}_{\PP^1\setminus\Set{1}}(\cV^G)_{<0}$ is spanned by the elements
$\loo{a}{\wt a}{2}$ ($a\in V_g^0$) and $\loo{a}[r]{\wt a-1}{1}$ ($a\in V_g^r$ with $r\neq 0$). 
\begin{corollary}
 $\L^{\circ}_{\PP^1\setminus\Set{1}}(\cV^G)_{0}=\L^{\circ}_{\PP^1\setminus\Set{1}}(\cV^G)_{\le0}/\L^{\circ}_{\PP^1\setminus\Set{1}}(\cV^G)_{<0}$ is spanned by the classes $\loo{a}{\wt a}{1}$ and $\loo{a}{\wt a-1}{1}$, for all homogeneous $a\in V_g^0$.
\end{corollary}

Back to our proof of \cref{thm:Cfb=Cfb}. 
Suppose $\varphi\in\Cfb[U^3, M^1, U^2]$. 
Then  $\varphi$ is invariant under $\loo{a}{\wt a}{1}$, $\loo{a}{\wt a-1}{1}$ ($a\in V_g^0$), $\loo{a}[r]{\wt a-1}{1}$ ($a\in V_g^r$ with $r\neq 0$), and hence the Lie subalgebra generated by them. By \cref{lem:strOfLgV-0}, this algebra is precisely $\L^{\circ}_{\PP^1\setminus\Set{1}}(\cV^G)_{\le 0}$. Hence, $\varphi\in\Cfb[\Sigma_{1}(U^3, M^1, U^2)]$.
\end{proof}
\begin{remark}
  In the above argument, the point $1$ can be replaced by any point $w$ of $\PP^1$ other than $0$ and $\infty$.  
\end{remark}

\subsection{The restriction map of twisted conformal blocks}\label{sec:restriction}
Let $\Sigma=(\x\colon\tilX\to\X,\pp_{\bullet},\qq_{\diamond},M^{\bullet},N^{\diamond})$ be the datum given in \cref{def:Cfb} and let $U^{\diamond}$ be the bottom level of $N^{\diamond}$. Then  $\Sigma(0)=(\x\colon\tilX\to\X,\pp_{\bullet},\qq_{\diamond},M^{\bullet},U^{\diamond})$ forms a datum qualifying \cref{def:Cfb_res}. 
Then  the inclusion $\cU^{\diamond}\monomorphism\cM^{\diamond}$ induces a linear map \[\uppi\colon\Cfb[\Sigma]\to\Cfb[\Sigma(0)].\]

\begin{theorem}\label{thm:pi-inj}
  If $N^{\diamond}$ are lowest-weight modules, then the linear map $\uppi$ is injective.
\end{theorem}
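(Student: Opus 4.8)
The plan is to show that any $\varphi\in\Cfb[\Sigma]$ in $\ker\uppi$ — that is, one vanishing on $\cM^{\bullet}_{\pp_{\bullet}}\otimes\cU^{\diamond}_{\qq_{\diamond}}$ — already vanishes on all of $\cM^{\bullet}_{\pp_{\bullet}}\otimes\cN^{\diamond}_{\qq_{\diamond}}$. I first reduce to the case $m\ge1$: if $m=0$, propagate a vacuum module in at a fresh unramified point, so that \cref{lem:propagation} and \cref{lem:propagation_res} identify $\Cfb[\Sigma]$ and $\Cfb[\Sigma(0)]$ with the corresponding spaces for the enlarged datum; these identifications (restriction to $\vac\otimes\cdots$) commute with the restriction maps $\uppi$, so injectivity of the enlarged $\uppi$ forces that of the original one. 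With $m\ge1$ the open set $\X\setminus\pp_{\bullet}$ is affine, and it contains every branch point $\qq_{i}$ (branch points lie off $\Xc$, hence off $\pp_{\bullet}$).

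Now fix $\varphi\in\ker\uppi$. It suffices to prove $\braket*{\varphi}{\mu_{\bullet}\otimes\nu_{\diamond}}=0$ for every pure tensor with $\nu_{i}\in N^{i}(n_{i})$, and I argue by induction on the total grading $\sum_{i}n_{i}\in\tfrac1T\N$. If this is $0$, each $\nu_{i}$ lies in the bottom level $U^{i}=N^{i}(0)$, whence $\braket*{\varphi}{\mu_{\bullet}\otimes\nu_{\diamond}}=\braket*{\uppi\varphi}{\mu_{\bullet}\otimes\nu_{\diamond}}=0$. For the inductive step choose $j$ with $n_{j}>0$. Since $N^{j}$ is a lowest-weight module (\cref{def:lowest-weight}), it is a quotient of the $g_{j}$-twisted generalized Verma module $\sfM(U^{j})$ of \eqref{def:verma}, so $N^{j}(n_{j})$ is spanned by vectors $\vo{a}{\beta}\,\nu'_{j}$ with $a\in V_{g_{j}}^{r}$ homogeneous, $\lo{a}{\beta}\in\L_{g_{j}}(V)$ of positive degree (equivalently $\beta<\wt a-1$), and $\nu'_{j}\in N^{j}(n'_{j})$ with $n'_{j}<n_{j}$; by linearity I may assume $\nu_{j}=\vo{a}{\beta}\nu'_{j}$.

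Next, apply \cref{lem:RRcoro++} with $U=\X\setminus\pp_{\bullet}$ and a special local coordinate $z_{i}$ at each $\qq_{i}$: for any integer $m_{0}$ there is an $\alpha\in\L_{\Xc\setminus\pp_{\bullet}}(\cV^G)$ of weight $\wt a$ (in the sense of \cref{def:wt}) with $\iota_{z_{j}}\alpha\equiv a\otimes z_{j}^{\beta}\,\d{z_{j}}$ and $\iota_{z_{i}}\alpha\equiv0$ for $i\neq j$, the congruences being modulo $z_{i}^{m_{0}}(V\dbrack{z_{i}}\d{z_{i}})^{g_{i}}$. Because changes of local coordinate can only lower weight, the expansion of $\alpha$ at each branch point involves only modes $\lo{b}{\gamma}$ with $\wt b\le\wt a$, and by \cref{def:admissible} such a mode with $\gamma\ge m_{0}$ maps $N^{i}(n)$ into $N^{i}(\wt b-\gamma-1+n)=0$ as soon as $m_{0}>\wt a-1+n$. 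So choosing $m_{0}$ larger than $\wt a-1$ plus the largest grading among $\nu'_{j}$ and the $\nu_{i}$, all higher-order tails of $\iota_{z_{j}}\alpha$ and of the $\iota_{z_{i}}\alpha$ act as $0$ on the vectors at hand. Feeding $\alpha.(\mu_{\bullet}\otimes\nu_{1}\otimes\cdots\otimes\nu'_{j}\otimes\cdots\otimes\nu_{n})$ into the $\L_{\Xc\setminus\pp_{\bullet}}(\cV^G)$-invariance of $\varphi$ (\cref{def:Cfb}) and writing $\alpha.(-)$ as the sum over the marked points of the local actions \eqref{eq:actionOfLV} (cf. the proof of \cref{lem:propagation}), the only surviving branch-point contribution is $\vo{a}{\beta}\nu'_{j}=\nu_{j}$ at $\qq_{j}$, while at each $\pp_{i}$ the contribution merely replaces $\mu_{i}$ by $\alpha.\mu_{i}$ within the untwisted module $\cM^{\bullet}_{\pp_{i}}$. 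Rearranging the resulting relation presents $\braket*{\varphi}{\mu_{\bullet}\otimes\nu_{\diamond}}$ as a finite sum of terms $\braket*{\varphi}{\mu'_{\bullet}\otimes\nu_{1}\otimes\cdots\otimes\nu'_{j}\otimes\cdots\otimes\nu_{n}}$, each with branch-point tensor of strictly smaller total grading; by the induction hypothesis every such term is $0$, so $\braket*{\varphi}{\mu_{\bullet}\otimes\nu_{\diamond}}=0$ and the induction closes.

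The main obstacle is the uniform control of the cutoff $m_{0}$ in the inductive step: one must arrange a single $m_{0}$ so that \emph{every} higher-order tail of the local expansion of $\alpha$ annihilates the finitely many relevant homogeneous vectors at once, and this hinges on \cref{lem:RRcoro++} producing an $\alpha$ of bounded (indeed pure) weight, so that only finitely many modes of $\alpha$ can act nontrivially on a fixed graded piece of a twisted module. A secondary, routine point is verifying that the vacuum-propagation isomorphisms commute with $\uppi$, legitimizing the reduction to $m\ge1$.
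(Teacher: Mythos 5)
Your proof is correct and follows essentially the same route as the paper's: both use \cref{lem:RRcoro++} to realize a prescribed mode at one branch point (with negligible expansion at the other branch points) as a global section of the twisted chiral Lie algebra, then invoke $\L_{\Xc\setminus\pp_{\bullet}}(\cV^G)$-invariance together with the lowest-weight property to propagate the vanishing of $\varphi$ from the bottom levels to all of $\cN^{\diamond}_{\qq_{\diamond}}$. Your explicit induction on the total grading (with the cutoff $m_0$) simply spells out the step the paper compresses into the assertion that the annihilator of $\varphi$ is a $\cV$-submodule containing $\cU^{i}_{\qq_i}$.
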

\begin{proof}
  Let $\varphi\in\Cfb[\Sigma]$ with $\uppi(\varphi)=0\in\Cfb[\Sigma(0)]$. 
  That is to say $\braket*{\varphi}{\mu_{\bullet}\otimes\upsilon_{\diamond}}=0$ for all $\mu_{\bullet}\in\cM_{\pp_{\bullet}}^{\bullet}$ and $\upsilon_{\diamond}\in\cU_{\qq_{\diamond}}^{\diamond}$. For all homogeneous $a\in V_{g_i}^r$ and $m\in\Z$, by \cref{lem:RRcoro++}, there is an $\alpha\in\L_{\Xc\setminus\pp_{\bullet}}(\cV^G)$ such that 
  \[
    \iota_{\qq_i}(\alpha)\equiv\lo{a}{\frac{r}{T}+m}\mod\L_{\qq_i}(\cV^G)_{<0}\txand[\quad]
    \iota_{\qq_j}(\alpha)\in\L_{\qq_j}(\cV^G)_{<0}\txforall[\ ] j\neq i.
  \]
  Since $\varphi\in\Cfb[\Sigma]$, we must have $\braket*{\varphi}{\alpha.(\mu_{\bullet}\otimes\upsilon_{\diamond})}=0$. 
  By the above, we have
  \[\braket*{\varphi}{\mu_{\bullet}\otimes\upsilon_{\diamond\setminus i}\otimes\vo{a}{\frac{r}{T}+m}\upsilon_i}=0.\]
  Therefore, the subspace \[\Set*{\nu\in\cN_{\qq_i}^i\given\braket*{\varphi}{\mu_{\bullet}\otimes\upsilon_{\diamond\setminus i}\otimes\nu}=0}\] is indeed a $\cV$-submodule of $\cN_{\qq_i}^i$ containing $\cU_{\qq_i}^i$. Since $N^i$ is a lowest-weight module, this submodule must be the entire $N^i$. 
  By the arbitrariness of $i\in\diamond$, we conclude that $\varphi$ vanishes.
\end{proof}

\begin{theorem}\label{thm:pi-surj}
  If $N^{\diamond}$ are the {generalized Verma module} of its bottom level $U^{\diamond}$, then the linear map $\uppi\colon\Cfb[\Sigma]\to\Cfb[\Sigma(0)]$ is surjective.
\end{theorem}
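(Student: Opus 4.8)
The plan is, for every restricted conformal block $\psi\in\Cfb[\Sigma(0)]$, to produce a conformal block $\varphi\in\Cfb[\Sigma]$ with $\varphi|_{\cM^{\bullet}_{\pp_{\bullet}}\otimes\cU^{\diamond}_{\qq_{\diamond}}}=\psi$; this shows $\psi=\uppi(\varphi)$. Fix a special local coordinate above each branch point $\qq_i$, so that $\cN^i_{\qq_i}$ is identified with $N^i=\sfM(U^i)$, $\cU^i_{\qq_i}$ with $U^i=N^i(0)$, and $\L_{\qq_i}(\cV^G)$ with $\L_{g_i}(V)^{\sfL}$. Since $N^i$ is the generalized Verma module (see \labelcref{def:verma}), $N^i=\bigoplus_{d\in\frac{1}{T}\N}N^i(d)$ with $N^i(0)=U^i$, and for $d>0$ one has $N^i(d)=\sum_{e>0}\L_{g_i}(V)_e\cdot N^i(d-e)$; concretely, every vector of $N^i(d)$ with $d>0$ is a finite sum of vectors $\vo{a}{\frac{r}{T}+m_0}\nu_i'$ with $\lo{a}{\frac{r}{T}+m_0}\in\L_{\qq_i}(\cV^G)$ of strictly positive $\qq_i$-degree $e$ and $\nu_i'\in N^i(d-e)$.

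First I would define $\varphi$ on $\cM^{\bullet}_{\pp_{\bullet}}\otimes\cN^{\diamond}_{\qq_{\diamond}}$ by induction on the total degree $D=d_1+\cdots+d_n$ of a decomposable tensor $\mu_{\bullet}\otimes\nu_{\diamond}$ with $\nu_i\in N^i(d_i)$, setting $\varphi:=\psi$ for $D=0$. For $D>0$ choose an index $i$ with $\nu_i\notin\cU^i_{\qq_i}$, write $\nu_i=\vo{a}{\frac{r}{T}+m_0}\nu_i'$ as above, and --- using \cref{lem:RRcoro++} with $U=\X\setminus\pp_{\bullet}$ and a sufficiently large truncation order --- pick $\alpha\in\L_{\Xc\setminus\pp_{\bullet}}(\cV^G)$ with
\[
  \iota_{\qq_i}(\alpha)\equiv\lo{a}{\frac{r}{T}+m_0}\pmod{\L_{\qq_i}(\cV^G)_{<0}},
  \qquad
  \iota_{\qq_j}(\alpha)\in\L_{\qq_j}(\cV^G)_{<0}\quad(j\neq i),
\]
the same local normal form used in the proof of \cref{thm:pi-inj}. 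Then I would impose that $\varphi$ vanish on $\alpha.(\mu_{\bullet}\otimes\nu_{\diamond\setminus i}\otimes\nu_i')$: expanding this action via the \nameref{lem:SRT} as a sum of residues at $\pp_{\bullet}$ and $\qq_{\diamond}$ and reading off the local expressions from \cref{tab:EXPY}, the only summand involving a degree-$D$ vector is the one coming from $\lo{a}{\frac{r}{T}+m_0}$ at $\qq_i$, which equals a nonzero multiple of $\mu_{\bullet}\otimes\nu_{\diamond\setminus i}\otimes\nu_i$, whereas every other summand acts on $\mu_{\bullet}\otimes\nu_{\diamond\setminus i}\otimes\nu_i'$ by an operator that strictly lowers the total degree (at $\pp_k$ the $\nu$'s are untouched while $\nu_i'$ has already dropped; at $\qq_j$, and for the lower-order part of $\iota_{\qq_i}(\alpha)$, the acting modes have negative $\qq$-degree). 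Hence the relation expresses $\braket*{\varphi}{\mu_{\bullet}\otimes\nu_{\diamond\setminus i}\otimes\nu_i}$ through values of $\varphi$ in total degree $<D$, available by induction. Equivalently, one may phrase this as building a linear retraction $P\colon\cM^{\bullet}_{\pp_{\bullet}}\otimes\cN^{\diamond}_{\qq_{\diamond}}\to(\cM^{\bullet}_{\pp_{\bullet}}\otimes\cU^{\diamond}_{\qq_{\diamond}})_{\L^{\circ}_{\X\setminus\pp_{\bullet}}(\cV^G)_{\le0}}$ that restricts to the quotient map on $\cM^{\bullet}_{\pp_{\bullet}}\otimes\cU^{\diamond}_{\qq_{\diamond}}$ and kills $\L_{\Xc\setminus\pp_{\bullet}}(\cV^G)\cdot(\cM^{\bullet}_{\pp_{\bullet}}\otimes\cN^{\diamond}_{\qq_{\diamond}})$, whose transpose is a section of $\uppi$.

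Then the substance is to verify, by a joint induction on $D$, that $\varphi$ is well defined and is a conformal block in the sense of \cref{def:Cfb}. Well-definedness amounts to independence of three choices. First, the choice of $\alpha$: two admissible $\alpha$ differ by an element of $\L^{\circ}_{\X\setminus\pp_{\bullet}}(\cV^G)_{<0}$, so the resulting discrepancy is $\varphi$ evaluated on an $\L^{\circ}_{\X\setminus\pp_{\bullet}}(\cV^G)_{\le0}$-action on a total-degree-$<D$ vector, which vanishes by the inductive invariance hypothesis (at the base, by the $\L^{\circ}_{\X\setminus\pp_{\bullet}}(\cV^G)_{\le0}$-invariance of $\psi$ from \cref{def:Cfb_res}, noting such an action preserves $\cM^{\bullet}_{\pp_{\bullet}}\otimes\cU^{\diamond}_{\qq_{\diamond}}$). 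Second, the presentation $\nu_i=\sum\vo{a}{\cdots}\nu_i'$: two presentations are reconciled using the twisted Jacobi relations \cref{eq:JacobiRel}, i.e.\ by commuting raising modes past one another, which by the bracket formula \cref{eq:LieBracket} only introduces strictly-lower-degree corrections. Third, the choice of the index $i$ when several $\nu_j$ lie outside the bottom level: two raising operators at distinct branch points can be applied in either order, and again \cref{eq:LieBracket} reconciles the two orders up to lower degree. Granting well-definedness, $\L_{\Xc\setminus\pp_{\bullet}}(\cV^G)$-invariance of $\varphi$ follows: by \cref{coro:DesLUVCG0} an arbitrary element of $\L_{\Xc\setminus\pp_{\bullet}}(\cV^G)$ is a topological combination of an element of $\L^{\circ}_{\X\setminus\pp_{\bullet}}(\cV^G)_{\le0}$ and ``raising'' local pieces; the raising pieces are annihilated by the very relation defining $\varphi$, while the $\L^{\circ}_{\X\setminus\pp_{\bullet}}(\cV^G)_{\le0}$-part is handled by induction on $D$ from the $\L^{\circ}_{\X\setminus\pp_{\bullet}}(\cV^G)_{\le0}$-invariance of $\psi$.

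I expect the well-definedness in the previous paragraph to be the main obstacle, specifically the interaction of the twisted Jacobi relations with the degree filtration: one must check that commuting a raising mode through the recursion produces only correction terms of strictly smaller total degree, already determined by the induction, so that the recursion closes consistently. This is the conformal-block analogue of the recursive-formula computations carried out for the three-pointed case in \cref{thm:Cfb=Cor_res} and in \cite{PartI}. The hypothesis that the $N^{\diamond}$ are exactly generalized Verma modules (rather than arbitrary lowest-weight modules) is used here: it guarantees that the only relations among the descendants of $\cU^i_{\qq_i}$ in $\cN^i_{\qq_i}$ are the universal Jacobi ones, so no module-specific compatibility has to be separately verified.
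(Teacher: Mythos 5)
Your proposal is correct and takes essentially the same route as the paper: the paper defines the lift by the closed formula \labelcref{eq:def:extension}, moving all raising modes at the branch points onto global lifts $\lqo{b}{\qq}{\cdot}$ produced by \cref{lem:RRcoro++} (exactly your $\alpha$), which is what your downward recursion on total degree yields once unwound, and it likewise reduces well-definedness to the twisted Jacobi relations holding modulo $\L^{\circ}_{\X\setminus\pp_{\bullet}}(\cV^G)_{<0}$ and deduces $\L_{\Xc\setminus\pp_{\bullet}}(\cV^G)$-invariance from \cref{coro:DesLUVCG0}. The only difference is presentational (recursion versus closed formula), and you correctly isolate both the main verification and the role of the generalized-Verma hypothesis.
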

\begin{proof}
  Recall that the \emph{generalized Verma module} $\sfM(U)$ of an $A_g(V)$-module $U$ is spanned by elements of the form 
  \[
    \lo{b^1}{\frac{r_1}{T}+m_1}\cdots\lo{b^l}{\frac{r_l}{T}+m_l}. u,
  \]
  where $b^i\in V_g^{r_i}$, $u\in U$, $m_i\in \Z$, $l\in \N$, and 
  \[\deg(\lo{b^1}{\frac{r_1}{T}+m_1})\ge\cdots\ge\deg(\lo{b^l}{\frac{r_l}{T}+m_l})\ge0.\]
  Hence, we can extend the linear functional $\varphi\in\Cfb[\Sigma(0)]$ to one on $\cM_{\pp_{\bullet}}^{\bullet}\otimes\cN_{\qq_{\diamond}}^{\diamond}$ by the following formula
  \begin{equation}\label{eq:def:extension}
    \braket*{\varphi}{\mu_{\bullet}\otimes \lo{b^{\star_\diamond}}{\frac{r_{\star_\diamond}}{T}+m_{\star_\diamond}}.\upsilon_{\diamond}}:=
    (-1)^{\abs{\star_\diamond}}\braket*{\varphi}{\lqo{b^{\star_\diamond}}{\qq_{\diamond}}{\frac{r_{\star_\diamond}}{T}+m_{\star_\diamond}}.\mu_{\bullet}\otimes\upsilon_{\diamond}},
  \end{equation}
  where $\lqo{b^{i}}{\qq_{j}}{\frac{r_i}{T}+m_i}$ is an element of $\L_{\Xc\setminus\pp_{\bullet}}(\cV^G)$ satisfying
  \begin{equation}\label{eq:def:liftingEls}
    \iota_{\qq_k}(\lqo{b^{i}}{\qq_{j}}{\frac{r_i}{T}+m_i})\equiv
    \begin{dcases*}
      \lo{b^{i}}{\frac{r_i}{T}+m_i}\mod\L_{\qq_j}(\cV^G)_{<0} & if $k=j$,\\
      0\mod\L_{\qq_k}(\cV^G)_{<0} & otherwise.
    \end{dcases*}
  \end{equation}
  It is evident that such an element is unique modulo $\L_{\X\setminus\pp_{\bullet}}(\cV^G)_{<0}$. Hence, to show the linear functional defined by \labelcref{eq:def:extension} is well-defined, it suffices to verify
  \[
    \begin{multlined}
      \sum_{i\ge 0} 
      \binom{l}{i}(-1)^i
      \left(
        \lqo{a}{\qq_k}{\frac{r}{T}+m+l-i}\lqo{b}{\qq_k}{\frac{s}{T}+n+i}-
        (-1)^{l}\lqo{b}{\qq_k}{\frac{s}{T}+n+l-i}\lqo{a}{\qq_k}{\frac{r}{T}+m+i}
      \right).\tau\\
      \equiv\sum_{j\ge 0}
      \binom{m+\frac{r}{T}}{j}
      \lqo{(\vo{a}{l+j}b)}{\qq_k}{\frac{r+s}{T}+m+n-j}.\tau
      \mod\L_{\qq_k}(\cV^G)_{<0}.\tau
    \end{multlined}  
  \]
  for all local sections $\tau$ of $\cM_{\pp_{\bullet}}^{\bullet}\otimes\cN_{\qq_{\diamond}}^{\diamond}$.
  This is evident from \labelcref{eq:def:liftingEls}. 

  It remains to show that $\varphi\in\Cfb[\Sigma]$. Indeed, for $i\in\diamond$, homogeneous $a\in V_{g_i}^r$, and $m\in\Z$, we have 
  \begin{align*}
    \MoveEqLeft
    \braket*{\varphi}{\lqo{a}{\qq_i}{\frac{r}{T}+m}.\left(\mu_{\bullet}\otimes \lo{b^{\star_\diamond}}{\frac{r_{\star_\diamond}}{T}+m_{\star_\diamond}}.\upsilon_{\diamond}\right)} \\ &=
    \braket*{\varphi}{\lqo{a}{\qq_i}{\frac{r}{T}+m}.\mu_{\bullet}\otimes\lo{b^{\star_\diamond}}{\frac{r_{\star_\diamond}}{T}+m_{\star_\diamond}}.\upsilon_{\diamond}} \\
    &\quad+ 
    \braket*{\varphi}{\mu_{\bullet}\otimes\lo{a}{\frac{r}{T}+m}\lo{b^{\star_i}}{\frac{r_{\star_i}}{T}+m_{\star_i}}.\upsilon_i\otimes\lo{b^{\star_{\diamond\setminus i}}}{\frac{r_{\star_{\diamond\setminus i}}}{T}+m_{\star_{\diamond\setminus i}}}.\upsilon_{\diamond\setminus i}}
    \\
    &=\left((-1)^{\abs{\star_\diamond}}+(-1)^{\abs{\star_\diamond}+1}\right)\braket*{\varphi}{\lqo{a}{\qq_i}{\frac{r}{T}+m}\lqo{b^{\star}}{\qq_{\diamond}}{\frac{r_\star}{T}+m_\star}.\mu_{\bullet}\otimes\upsilon_{\diamond}} =0.
  \end{align*}
  By \cref{coro:DesLUVCG0} and noticing that 
  \[
    \lqo{a}{\qq_i}{\frac{r}{T}+m}\equiv \sqo{a}{\qq_i}{\deg(\lo{a}{\frac{r}{T}+m})}\mod\L^{\circ}_{\X\setminus\pp_{\bullet}}(\cV^G)_{<\deg(\lo{a}{\frac{r}{T}+m})} 
  \]
  for all $m\in\Z$ such that $\deg(\lo{a}{\frac{r}{T}+m})\ge0$,
  any $\alpha\in\L_{\Xc\setminus\pp_{\bullet}}(\cV^G)$ can be written as the sum of elements of the form $\lqo{a}{\qq_i}{\frac{r}{T}+m}$ modulo $\L^{\circ}_{\X\setminus\pp_{\bullet}}(\cV^G)_{<0}$. 
  Hence, $\varphi$ is $\L_{\Xc\setminus\pp_{\bullet}}(\cV^G)$-invariant.
\end{proof}

When $V$ is \emph{$g$-rational} (see, e.g. \cite{DLM1}), all irreducible $g$-twisted $V$-modules are generalized Verma module. 
Then  we have: 
\begin{corollary}
    If $V$ is $g$-rational for all generator $g$ of $G$, then $\uppi$ is an isomorphism for any family of irreducible twisted $V$-modules $N^{\diamond}$.
\end{corollary}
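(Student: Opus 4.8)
The plan is to obtain the corollary by combining \cref{thm:pi-inj,thm:pi-surj}, once we verify that every member of the family $N^{\diamond}$ is simultaneously a lowest-weight $g_i$-twisted module and the $g_i$-twisted generalized Verma module of its bottom level. The first, and essentially only, point requiring attention is that the rationality hypothesis actually applies to the automorphisms attached to the branch points. Because $\x\colon\tilX\to\X$ is totally ramified, each branch point $\qq_i$ has a unique lifting $\tilde\qq_i$ in $\tilX$; hence the $G$-orbit of $\tilde\qq_i$ is a single point and its stabilizer is all of $G$. Since this stabilizer is generated by the Deck generator $g_i$ (\cref{def:Deck}), each $g_i$ is a generator of $G$, and so the hypothesis ``$V$ is $g$-rational for every generator $g$ of $G$'' entails that $V$ is $g_i$-rational for every branch point $\qq_i$.

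Next I would invoke the standard structure theory recalled just above the statement: for a $g_i$-rational $V$, every irreducible $g_i$-twisted $V$-module $N^i$ equals the generalized Verma module $\sfM(U^i)$ of its bottom level $U^i=N^i(0)$, and $U^i$ is an irreducible $\cA_{g_i}(V)$-module by \cite[Proposition~5.4]{DLM1}. Such an $N^i$ is a lowest-weight module in the sense of \cref{def:lowest-weight}: its bottom level $U^i$ is the $\vo{L}{0}$-eigenspace $N^i_{h_i}$, it is irreducible over $\L_{g_i}(V)_0$ — any $\L_{g_i}(V)_0$-stable subspace of $U^i$ is $\cA_{g_i}(V)$-stable because $\L_{g_i}(V)_0$ surjects onto $\cA_{g_i}(V)_{\Lie}$, so irreducibility over $\cA_{g_i}(V)$ forces irreducibility over $\L_{g_i}(V)_0$ — and $N^i$ is a quotient of (in fact equal to) $\sfM(N^i_{h_i})$.

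With these observations the rest is bookkeeping. For the datum $\Sigma$ equipped with this family of irreducible twisted modules: since the $N^i$ are lowest-weight modules, \cref{thm:pi-inj} gives that $\uppi\colon\Cfb[\Sigma]\to\Cfb[\Sigma(0)]$ is injective; since the $N^i$ are generalized Verma modules of their bottom levels, \cref{thm:pi-surj} gives that $\uppi$ is surjective; hence $\uppi$ is an isomorphism. I do not expect any genuine obstacle here — the corollary is a formal consequence of the two main comparison theorems together with the rationality facts. The only step deserving a moment's care is the identification of the Deck generators of a totally ramified orbicurve with honest generators of $G$, which is precisely what makes the $g$-rationality assumption usable.
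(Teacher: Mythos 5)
Your proposal is correct and follows the paper's (very terse) argument exactly: the paper simply notes that $g$-rationality makes every irreducible $g$-twisted module a generalized Verma module of its bottom level and then cites \cref{thm:pi-inj,thm:pi-surj}. Your extra care in checking that the Deck generators of a totally ramified orbicurve are genuine generators of $G$, and that irreducibility over $\cA_{g_i}(V)$ gives the lowest-weight property, fills in details the paper leaves implicit but does not change the route.
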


\section{The space of coinvariants}\label{sec:coinvariants}
Let $\Sigma(0)=(\x\colon\tilX\to\X,\pp_{\bullet},\qq_{\diamond},M^{\bullet},U^{\diamond})$ be a datum qualifying \cref{def:Cfb_res}. 
This section delves to describe the space of \emph{coinvariants} 
$(\cM^{\bullet}_{\pp_{\bullet}}\otimes\cU^{\diamond}_{\qq_{\diamond}})_{\L^{\circ}_{\X\setminus\pp_{\bullet}}(\cV^G)_{\le0}}$ (see \labelcref{eq:defconinv_res}) in terms of modules over a geometric generalization of the \emph{twisted Zhu's algebra}.

\subsection{Twisted Zhu's algebra over an affine open}
\subsubsection*{Twisted universal enveloping algebra}
Let $U$ be an affine open subset of $\X$. 
By \cref{coro:DesLUVCG0}, $\L^{\circ}_U(\cV^G)$ is a split-filtered Lie algebra and $\L^{\circ}_U(\cV^G)_{\star}$ is its associated graded algebra. 
Then the universal enveloping algebras $\sfU=\sfU(\L^{\circ}_U(\cV^G))$ and $\sfU_{\star}=\sfU(\L^{\circ}_U(\cV^G)_{\star})$ give rise to a split-filtered associative algebra and its associated graded algebra. 
To mimic the construction in \cref{sec:assoalg}, we introduce the following notions.
\begin{definition}
  A \textbf{good pair} consists of a \emph{left} split-filtered associative algebra (not necessarily unital) together with its associated graded algebra. 
  A \textbf{good seminorm} on a good pair $(\sfU,\sfU_{\star})$ is \emph{almost canonical} seminrom on $\sfU$ in the sense of \cite[Definition A.6.8]{DGK23}. Note that this seminorm induces an almost canonical seminorm on $\sfU_{\star}$.
\end{definition}
The good pair $(\sfU,\sfU_{\star})$ admits a good seminorm whose fundamental system of neighborhoods is given by $\sfN^n\sfU:=\sfU\cdot\sfU_{\le-\frac{n}{T}}$. 
Then, by \cite[Lemma A.6.12]{DGK23}, the induced seminorm on the filtered completion $(\widehat{\sfU},\widehat{\sfU}_{\star})$ of the good pair $(\sfU,\sfU_{\star})$ is also \emph{good}. 
Now, consider the following \textbf{twisted Jacobi relations}
\begin{equation}\label{eq:JacobiRelonU}
  \begin{multlined}
    \sum_{i\ge 0} 
    \binom{l}{i}(-1)^i
    \left(
      \sqo{a}{\qq}{m-l-i}\sqo{b}{\qq}{n+i} - 
      (-1)^{l}\sqo{b}{\qq}{n-l-i}\sqo{a}{\qq}{m+i}
    \right)\\
    =\sum_{j\ge 0}
    \binom{m+\frac{r}{T}}{j}
    \sqo{(\vo{a}{l+j}b)}{\qq}{m+n-l},
  \end{multlined}
\end{equation}
where $\qq$ is a branch point inside $U$, $a,b\in V$ are homogeneous, $m,n\in\frac{1}{T}\Z$ such that $\lo{a}{\wt{a}-m-1},\lo{b}{\wt{b}-n-1}\in\L_{g_{\qq}}(V)$, and $l\in\Z$. 
Let $\sfJ_{\star}$ be the \emph{homogeneous} ideal of $\widehat{\sfU}_{\star}$ generated by the twisted Jacobi relations \labelcref{eq:JacobiRelonU} and $\sfJ$ the ideal of $\sfU$ generated by $\sfJ_{\star}$. 
Let $\bar{\sfJ}$ (resp. $\bar{\sfJ}_{\star}$) be the closure of $\sfJ$ (resp. $\sfJ_{\star}$) with respect to the seminorm. 
Then the pair $(\bar{\sfJ},\bar{\sfJ}_{\star})$ forms a good pair of ideals by \cite[Lemmas A.3.3 and A.5.10]{DGK23}. 
Finally, $\U_U(\cV^G):=\widehat{\sfU}/\bar{\sfJ}$ and $\U_U(\cV^G)_{\star}:=\widehat{\sfU}_{\star}/\bar{\sfJ}_{\star}$ form a good pair of associative algebras with complete good
seminorms by \cite[Lemma A.1.14 and A.6.11]{DGK23}.

\begin{definition}
  The split-filtered almost canonically seminormed associative algebra $\U_U(\cV^G)$ is called the \textbf{twisted universal enveloping algebra of $V$ on the affine open $U$}.
\end{definition}
\begin{remark}
  We may consider an affine open containing exactly one branch point $\qq$. Then the above construction provides a good pair $(\U_{\qq}(\cV^G),\U_{\qq}(\cV^G)_{\star})$, which recovers the good pair $(\U_{g_\qq}(V)^{\sfL},\U_{g_\qq}(V))$ introduced in \cref{def:UgV} via a formal expansion $\iota_{z}$.
\end{remark}

\subsubsection*{Twisted Zhu's algebra}
By definition, $\U_U(\cV^G)_{<0}$ is a two-sided ideal of $\U_U(\cV^G)_{\le 0}$ with 
\[
  \U_U(\cV^G)_{\le 0}/\U_U(\cV^G)_{<0} \cong \U_U(\cV^G)_{0}.
\]
Then the first neighborhood $\sfN^1\U_U(\cV^G)_{0}$ is a two-sided ideal of $\U_U(\cV^G)_{0}$.

Similar to \cite{FZ,He17,Han20,DGK23}, we introduce the following notion: 
\begin{definition}\label{def:Zhuoveraffine}
  The \textbf{twisted Zhu's algebra} of $V$ \textbf{over $U$} (resp. \textbf{at $\qq$}) is the (discrete) associative algebra 
  \begin{equation*}%\label{eq:geometricZhu}
    \begin{aligned}
      \cA_U(\cV^G)&:=\U_U(\cV^G)_0/\sfN^1\U_U(\cV^G)_0\\ 
      \text{(resp. }\cA_{\qq}(\cV^G)&:=\U_{\qq}(\cV^G)_0/\sfN^1\U_{\qq}(\cV^G)_0
      \text{).}
    \end{aligned}
  \end{equation*}
\end{definition}

By \cref{thm:DesLUVCG0}, the graded Lie algebra $\L^{\circ}_U(\cV^G)_{\star}$ admits the following \emph{gradation-presering} decomposition:
\begin{equation}\label{eq:LV0-decom}
  \L^{\circ}_U(\cV^G)_{\star} \cong \prod_{\qq\in U\setminus\Xc} \L_{\qq}(\cV^G)_{\star} \cong \prod_{\qq\in U\setminus\Xc} \L_{g_\qq}(V)_{\star}.
\end{equation}
Consequently, the degree-zero subalgebra $\U_U(\cV^G)_0$ and its first neighborhood $\sfN^1\U_U(\cV^G)_{0}$ admit the following continous decompositions:
\begin{align*}
  \U_U(\cV^G)_0 &\cong \bigotimes_{\qq\in U\setminus\Xc} \U_{\qq}(\cV^G)_0 \cong \bigotimes_{\qq\in U\setminus\Xc} \U_{g_\qq}(V)_0, \\
  \sfN^1\U_U(\cV^G)_0&\cong \sum_{\qq\in U\setminus\Xc} \bigg(\bigotimes_{\pp\in U\setminus\Xc,\pp\neq \qq} \U_{\pp}(\cV^G)_0 \otimes \sfN^1\U_{\qq}(\cV^G)_0\bigg) \\
  &\cong \sum_{\qq\in U\setminus\Xc} \bigg(\bigotimes_{\pp\in U\setminus\Xc,\pp\neq \qq}  \U_{g_{\pp}}(V)_0 \otimes \sfN^1_{\sfL} \U_{g_{\qq}}(V)_0\bigg). 
\end{align*}

These decompositions induce the following decomposition of the twisted Zhu's algebra over $U$: 
\begin{equation}\label{eq:AV0-decom}
  \cA_U(\cV^G) \cong \bigotimes_{\qq\in U\setminus\Xc} \cA_{\qq}(\cV^G) \cong \bigotimes_{\qq\in U\setminus\Xc} \cA_{g_\qq}(V).
\end{equation}
Moreover, we have a natural morphism $\L^{\circ}_U(\cV^G)_0\to\cA_U(\cV^G)_{\Lie}$ of Lie algebras.

\subsubsection*{Actions on $V$-modules}
Let $\pp_\bullet$ be a finite family of points on $\Xc$ and $M^\bullet$ be a family of admissible \emph{untwisted} modules attached to $\pp_\bullet$. Similar to \labelcref{eq:defconinv_res}, we define the space of coinvariants $(\cM^{\bullet}_{\pp_{\bullet}})_{\L^{\circ}_{\X\setminus\pp_\bullet}(\cV^G)_{<0}}$ with respect to the Lie algebra $\L^{\circ}_{\X\setminus\pp_\bullet}(\cV^G)_{<0}$ by 
\begin{equation}\label{eq:def<0coinv}
  (\cM^{\bullet}_{\pp_{\bullet}})_{\L^{\circ}_{\X\setminus\pp_\bullet}(\cV^G)_{<0}}:= (\cM^{\bullet}_{\pp_{\bullet}})/ \L^{\circ}_{\X\setminus\pp_\bullet}(\cV^G)_{<0}.(\cM^{\bullet}_{\pp_{\bullet}}).
\end{equation}
Clearly, $(\cM^{\bullet}_{\pp_{\bullet}})_{\L^{\circ}_{\X\setminus\pp_\bullet}(\cV^G)_{<0}}$ is a module over $\L^{\circ}_{\X\setminus\pp_\bullet}(\cV^G)_0$.

\begin{lemma}\label{lm:actiononM<0}
  The space $(\cM^{\bullet}_{\pp_{\bullet}})_{\L^{\circ}_{\X\setminus\pp_\bullet}(\cV^G)_{<0}}$ is also a module over the twisted Zhu's algebra $\cA_{\X\setminus\pp_\bullet}(\cV^G)$ and the $\L^{\circ}_{\X\setminus\pp_\bullet}(\cV^G)_0$-module structure factors through the natural morphism $\L^{\circ}_{\X\setminus\pp_\bullet}(\cV^G)_0\to\cA_{\X\setminus\pp_\bullet}(\cV^G)_{\Lie}$. 
\end{lemma}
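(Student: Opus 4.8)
The statement to prove is \cref{lm:actiononM<0}: the space of coinvariants $(\cM^{\bullet}_{\pp_{\bullet}})_{\L^{\circ}_{\X\setminus\pp_\bullet}(\cV^G)_{<0}}$ carries a natural module structure over the twisted Zhu's algebra $\cA_{\X\setminus\pp_\bullet}(\cV^G)$ extending the $\L^{\circ}_{\X\setminus\pp_\bullet}(\cV^G)_0$-action already present. The plan is to identify this space as a quotient of a module over $\U_{\X\setminus\pp_\bullet}(\cV^G)_0$ and then show the quotient kills the first neighborhood of the identity.

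First I would recall that, by the equivalence of smooth $\U(V)$-modules with weak $V$-modules adapted to the chiral setting (as in \cite[\S5.1.6]{FBZ04} and the discussion following \cref{def:UgV}), each $\cM^i_{\pp_i}$ is a module over the (untwisted) chiral enveloping algebra at $\pp_i$, and hence $\cM^{\bullet}_{\pp_{\bullet}}$ is a module over $\L^{\circ}_{\X\setminus\pp_\bullet}(\cV^G)$ and, after passing to the appropriate completion, over $\widehat{\sfU}(\L^{\circ}_{\X\setminus\pp_\bullet}(\cV^G))$; the twisted Jacobi relations \labelcref{eq:JacobiRelonU} act as zero because the twisted vertex operators on each $\cM^i$ (and on the twisted algebra bundle restricted to the formal disks) satisfy the Jacobi identity by construction — this is precisely the content of the axioms \textbf{M4} and \textbf{I2}, pulled through the identifications of \cref{thm:DesLUVCG0}. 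Therefore $\cM^{\bullet}_{\pp_{\bullet}}$ is a module over $\U_{\X\setminus\pp_\bullet}(\cV^G)=\widehat{\sfU}/\bar{\sfJ}$, and by the split-filtered structure (\cref{coro:DesLUVCG0}) the quotient by the action of $\L^{\circ}_{\X\setminus\pp_\bullet}(\cV^G)_{<0}$, i.e.\ $(\cM^{\bullet}_{\pp_{\bullet}})_{\L^{\circ}_{\X\setminus\pp_\bullet}(\cV^G)_{<0}}$, is naturally a module over the degree-zero subalgebra $\U_{\X\setminus\pp_\bullet}(\cV^G)_0$ — one checks that $\U_{\X\setminus\pp_\bullet}(\cV^G)_{<0}$ lands inside the submodule generated by $\L^{\circ}_{\X\setminus\pp_\bullet}(\cV^G)_{<0}.\cM^{\bullet}_{\pp_{\bullet}}$, using that $\U_{\X\setminus\pp_\bullet}(\cV^G)_{<0}$ is a two-sided ideal of $\U_{\X\setminus\pp_\bullet}(\cV^G)_{\le 0}$ as noted just before \cref{def:Zhuoveraffine}.

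Next I would show that the first neighborhood $\sfN^1\U_{\X\setminus\pp_\bullet}(\cV^G)_0$ acts as zero on the quotient, so that the action factors through $\cA_{\X\setminus\pp_\bullet}(\cV^G)=\U_{\X\setminus\pp_\bullet}(\cV^G)_0/\sfN^1\U_{\X\setminus\pp_\bullet}(\cV^G)_0$. By the decomposition \labelcref{eq:AV0-decom} it suffices to treat one branch point $\qq$ at a time and reduce — via the formal expansion $\iota_z$ of \cref{thm:DesLUVCG0} — to the classical fact that the first neighborhood of the identity in $\U_{g_\qq}(V)_0$ annihilates the bottom level of any admissible twisted module, i.e.\ the defining property of $A_{g_\qq}(V)$ acting on $M(0)$. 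Concretely: for $a\in V_{g_\qq}^r$ and $m$ with $\deg(\lo{a}{r/T+m})>0$, the element $\sqo{a}{\qq}{\deg}$ lies in $\L^{\circ}_{\X\setminus\pp_\bullet}(\cV^G)_{<0}$ (by \cref{coro:DesLUVCG0} and \cref{eg:LgV-0}), hence already acts as zero on the coinvariant quotient; and the generators of $\sfN^1\U_{\X\setminus\pp_\bullet}(\cV^G)_0$ are, by the Jacobi relations \labelcref{eq:JacobiRelonU}, congruent modulo lower-filtration terms to products involving such negative-degree elements. This uses the identification of $\cA_{\qq}(\cV^G)$ with $\cA_{g_\qq}(V)=A_{g_\qq}(V)$ (the remark after \cref{def:Zhuoveraffine} together with \S\ref{sec:assoalg}), and the compatibility of the two actions is automatic because both are induced from the single $\U_{\X\setminus\pp_\bullet}(\cV^G)_0$-action via the canonical surjection $\L^{\circ}_{\X\setminus\pp_\bullet}(\cV^G)_0\epimorphism\cA_{\X\setminus\pp_\bullet}(\cV^G)_{\Lie}$.

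The main obstacle I anticipate is the bookkeeping of the completions and the verification that the ideal $\bar{\sfJ}$ indeed acts trivially on $\cM^{\bullet}_{\pp_{\bullet}}$ — that is, that the twisted Jacobi relations on $\U_{\X\setminus\pp_\bullet}(\cV^G)$ are faithfully represented by the Jacobi identities \textbf{M4}/\textbf{I2} on the fibers $\cM^i_{\pp_i}$ — since these relations involve infinitely many terms and one must argue convergence in the module (smoothness of the module and the good-seminorm estimates of \cite[Appendix A]{DGK23} are what make this work). Once that is granted, the factorization through the first neighborhood is the formal analogue of the classical passage from $\L_g(V)_0$ to $A_g(V)$, and the remaining claims (that the resulting $\cA_{\X\setminus\pp_\bullet}(\cV^G)$-action is well-defined and that the $\L^{\circ}_{\X\setminus\pp_\bullet}(\cV^G)_0$-structure factors through it) are immediate from the constructions in \S\ref{sec:assoalg} and the present section.
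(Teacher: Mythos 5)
Your proposal follows essentially the same route as the paper's (quite terse) proof: realize $\cM^{\bullet}_{\pp_{\bullet}}$ as a smooth module over $\U_{\X\setminus\pp_\bullet}(\cV^G)$, observe that the coinvariants inherit a $\U_{\X\setminus\pp_\bullet}(\cV^G)_0$-module structure, and note that $\sfN^1\U_{\X\setminus\pp_\bullet}(\cV^G)_0=(\U_{\X\setminus\pp_\bullet}(\cV^G)\cdot\U_{\X\setminus\pp_\bullet}(\cV^G)_{<0})_{0}$ acts trivially because $\U_{\X\setminus\pp_\bullet}(\cV^G)_{<0}$ already kills the quotient by construction; the paper additionally invokes the density of $\sfU(\L^{\circ}_{\X\setminus\pp_\bullet}(\cV^G)_0)$ in $\U_{\X\setminus\pp_\bullet}(\cV^G)_0$ to handle the completion issue you flag. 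Two small blemishes in your second paragraph: the appeal to ``the first neighborhood annihilates the bottom level of an admissible twisted module'' is misplaced here, since this lemma involves only untwisted modules at unramified points and no bottom levels --- the triviality of $\sfN^1$ is immediate from its definition as the degree-zero part of $\U\cdot\U_{<0}$; and the condition $\deg(\lo{a}{\frac{r}{T}+m})>0$ should read $<0$ for the corresponding element to lie in $\L^{\circ}_{\X\setminus\pp_\bullet}(\cV^G)_{<0}$. Neither affects the correctness of the overall argument.
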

\begin{proof}
  Since an admissible \emph{untwisted} $V$-module $M$ is a smooth $\U(V)$-module with respect to its canonical seminorm, the tensor product $\cM^{\bullet}_{\pp_{\bullet}}$ is a smooth $\U_{\X\setminus\pp_\bullet}(\cV^G)$-module. 
  Since $\sfU(\L^{\circ}_{\X\setminus\pp_\bullet}(\cV^G)_0)$ is \emph{dense} in $\U_{\X\setminus\pp_\bullet}(\cV^G)_0$ (see \cite{He17,Han20}), the space of coinvariants $(\cM^{\bullet}_{\pp_{\bullet}})_{\L^{\circ}_{\X\setminus\pp_\bullet}(\cV^G)_{<0}}$ is a smooth $\U_{\X\setminus\pp_\bullet}(\cV^G)_0$-module.
  Then the first neighborhood $\sfN^1\U_{\X\setminus\pp_\bullet}(\cV^G)_0:=(\U_{\X\setminus\pp_\bullet}(\cV^G)\cdot\U_{\X\setminus\pp_\bullet}(\cV^G)_{<0})_{0}$ clearly acts trivially on $(\cM^{\bullet}_{\pp_{\bullet}})_{\L^{\circ}_{\X\setminus\pp_\bullet}(\cV^G)_{<0}}$. 
  The statements then follow.
\end{proof}

\begin{definition}\label{def:A(M)}
We adopt the following notation to denote the space of coinvariants with respect to $\L^{\circ}_{\X\setminus\pp_\bullet}(\cV^G)_{<0}$ in \labelcref{eq:def<0coinv}: 
\begin{equation*}
  \cA_{\X\setminus\pp_\bullet}(\cM^{\bullet}_{\pp_{\bullet}}):=(\cM^{\bullet}_{\pp_{\bullet}})_{\L^{\circ}_{\X\setminus\pp_\bullet}(\cV^G)_{<0}}, 
\end{equation*}
 and call it the \textbf{strongly restricted $\cA_{\X\setminus\pp_\bullet}(\cV^G)$-module} of $\cM^{\bullet}_{\pp_{\bullet}}$.  
\end{definition}

\begin{remark}
  The algebra $\U_{X\setminus\pp_\bullet}(\cV^G)$ admits an \emph{antipode} given by the natural anti-involution $\alpha\mapsto-\alpha$ of the Lie algebra $\L^{\circ}_{X\setminus\pp_\bullet}(\cV^G)$. 
  Adopting this antipode, any \emph{left} $\cA_{\X\setminus\pp_\bullet}(\cV^G)$-module is also a \emph{right} $\cA_{\X\setminus\pp_\bullet}(\cV^G)$-module. 
  
  In what follows, \emph{we view the strongly restricted module $\cA_{\X\setminus\pp_\bullet}(\cM^{\bullet}_{\pp_{\bullet}})$ as a {right} $\cA_{\X\setminus\pp_\bullet}(\cV^G)$-module in this way}.
\end{remark}
% antipode: S(XY)=S(Y)S(X)

\begin{example}\label{eg:Ag(M)}
  Consider the \emph{twisted projective line} $\x\colon\oC\to\PP^1$. 
  Let $\pp\in \PP^1$ be a point distinct from $0$ and $\infty$. 
  We have 
  \[\cA_{\PP^1\setminus\pp}(\cV^G)\cong
  \cA_{\infty}(\cV^G)\otimes\cA_{0}(\cV^G)\cong A_{g^{-1}}(V)\otimes A_g(V).\] 
  Let $M$ be an admissible {untwisted} $V$-module. 
  Through the above identification, $\cA_{\PP^1\setminus\pp}(\cM_{\pp})$ is an $A_{g^{-1}}(V)\otimes A_g(V)$-module. 
  Using the the anti-isomorphism $\theta$ \labelcref{eq:def:theta} which translates a right $A_{g^{-1}}(V)$-action to a left $A_{g}(V)$-action, we can view $\cA_{\PP^1\setminus\pp}(\cM_{\pp})$ as an $A_g(V)$-bimodule with the \emph{left} and \emph{right} actions
  \begin{equation}\label{eq:Ag(M)}
      [a]\ast[\mu]:= -(\theta[a]\otimes\vac).[\mu]\txand{}
      [\mu]\ast[a]:= -(\vac\otimes[a]).[\mu].
  \end{equation}
  
  Take $\pp=1$, by \cref{lem:strOfLgV-0}, 
  the constrained Lie algebra $\L^{\circ}_{\PP^1\setminus\Set{1}}(\cV^G)_{<0}$ is spanned by the elements $\loo{a}{\wt a}{2}$ ($a\in V_g^0$) and $\loo{a}[r]{\wt a-1}{1}$ ($a\in V_g^r$ with $r\neq 0$). 
  For any $u\in M$, by \cref{eg:chiralactiononP1}, we have
  \begin{align*}
    \loo{a}{\wt a}{2}.u&=\Res_{z=1} Y_M(a,z-1)u\frac{z^{\wt a}}{(z-1)^2}=a\circ_g u,\quad a\in V_g^0,\\
    \loo{a}[r]{\wt a-1}{1}.u&=\Res_{z=1} Y_M(a,z-1)u \frac{z^{\frac{r}{T}+\wt a-1}}{z-1}=a\circ_g u,\quad a\in V_g^r,\ r\neq 0.
  \end{align*}
  They are precisely the elements spanning $O_g(M)$ in \cite{PartI,JJ}. Therefore, we have $\L^{\circ}_{\PP^1\setminus\Set{1}}(\cV^G)_{<0}.M=O_g(M)$, and $\cA_{\PP^1\setminus\Set{1}}(\cM_{1})\cong M/O_g(M)$. 
  Spelling out the actions \labelcref{eq:Ag(M)} on $M/O_g(M)$, for any homogenous $a\in V^r$, we have\footnote{Note that $-a\otimes\frac{z^{\wt a}}{z-1}\d{z}$ and $-a\otimes\frac{z^{\wt a-1}}{z-1}\d{z}$ are two representatives of the classes $\vartheta(a)_{\infty}, a_{0}\in\cA_{\PP^1\setminus\Set{1}}(\cV^G)$ resepectively.}
  \begin{align*}
    [a]\ast[u] &= 
    \begin{dcases*}
      \Res_{z=1}Y_M(a,z-1)u\frac{z^{\wt a}}{z-1}\d{z} & if $r=0$,\\
      0 & otherwise.
    \end{dcases*}\\
    [u]\ast[a] &= 
    \begin{dcases*}
      \Res_{z=1}Y_M(a,z-1)u\frac{z^{\wt a-1}}{z-1}\d{z} & if $r=0$,\\
      0 & otherwise.
    \end{dcases*}
  \end{align*}
  This gives precisely the $A_g(V)$-bimodule $A_g(M)$ introduced in \cite{JJ}.
\end{example}

\subsection{The space of coinvariants characterized by twisted Zhu's algebra}

Let $\Sigma(0)=(\x\colon\tilX\to\X,\pp_{\bullet},\qq_{\diamond},M^{\bullet},U^{\diamond})$ be a datum qualifying \cref{def:Cfb_res}. 
In particular, each $\cU_{\qq_i}$ is assumed to be an $\cA_{\qq_i}(\cV^G)$-module. 
Then, by \labelcref{eq:AV0-decom}, $\cU^{\diamond}_{\qq_{\diamond}}=\bigotimes_{i=1}^n\cU_{\qq_i}$ is a module over the twisted Zhu's algebra $\cA_{\X\setminus\pp_\bullet}(\cV^G)$ over the affine open $\X\setminus\pp_\bullet$ in an obvious way.

\begin{theorem}\label{thm:coinvariants}
  We have the following isomorphism of vector spaces: 
  \[
  (\cM^{\bullet}_{\pp_{\bullet}}\otimes\cU^{\diamond}_{\qq_{\diamond}})_{\L^{\circ}_{\X\setminus\pp_{\bullet}}(\cV^G)_{\le0}} 
  \cong 
  \cA_{\X\setminus\pp_\bullet}(\cM^{\bullet}_{\pp_{\bullet}})\otimes_{\cA_{\X\setminus\pp_\bullet}(\cV^G)}\cU^{\diamond}_{\qq_{\diamond}}.
  \]
\end{theorem}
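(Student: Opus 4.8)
The plan is to compute the space of coinvariants in two stages, using that $\L^{\circ}_{\X\setminus\pp_{\bullet}}(\cV^G)_{<0}$ is an ideal of $\L^{\circ}_{\X\setminus\pp_{\bullet}}(\cV^G)_{\le0}$ — this is part of the split‑filtered structure discussed in \cref{sec:Constraints}, i.e.\ it follows from $[\L^{\circ}_{\le m},\L^{\circ}_{\le n}]\subseteq\L^{\circ}_{\le m+n}$. For any module $W$ over $\L^{\circ}_{\X\setminus\pp_{\bullet}}(\cV^G)_{\le0}$, the ideal acts trivially on the coinvariants $W_{\L^{\circ}_{\X\setminus\pp_{\bullet}}(\cV^G)_{<0}}$, so the residual action there factors through the subquotient Lie algebra $\L^{\circ}_{\X\setminus\pp_{\bullet}}(\cV^G)_{0}$ and
\[
  W_{\L^{\circ}_{\X\setminus\pp_{\bullet}}(\cV^G)_{\le0}}
  \;\cong\;
  \bigl(W_{\L^{\circ}_{\X\setminus\pp_{\bullet}}(\cV^G)_{<0}}\bigr)_{\L^{\circ}_{\X\setminus\pp_{\bullet}}(\cV^G)_{0}}.
\]
I would apply this with $W=\cM^{\bullet}_{\pp_{\bullet}}\otimes\cU^{\diamond}_{\qq_{\diamond}}$ carrying the diagonal action.

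\emph{First stage.} The crucial point is that $\L^{\circ}_{\X\setminus\pp_{\bullet}}(\cV^G)_{<0}$ annihilates every $\cU_{\qq_i}$. Indeed, by \cref{lem:flasque} and the factorization of the expansion $\iota_{\qq_i}$ through the constrained quotient (discussed right after that lemma), the local expansion at $\qq_i$ of any element of $\L^{\circ}_{\X\setminus\pp_{\bullet}}(\cV^G)_{<0}$ lies in $\L_{\qq_i}(\cV^G)_{<0}$, and the latter acts as $0$ on $\cU_{\qq_i}$ by the construction of the $\L_{\qq_i}(\cV^G)_{\le0}$‑module structure in \cref{sec:RestrictedCfb}. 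Consequently $\alpha.(\mu_{\bullet}\otimes\upsilon_{\diamond})=(\alpha.\mu_{\bullet})\otimes\upsilon_{\diamond}$ for $\alpha\in\L^{\circ}_{\X\setminus\pp_{\bullet}}(\cV^G)_{<0}$, so that $\L^{\circ}_{\X\setminus\pp_{\bullet}}(\cV^G)_{<0}.(\cM^{\bullet}_{\pp_{\bullet}}\otimes\cU^{\diamond}_{\qq_{\diamond}})=\bigl(\L^{\circ}_{\X\setminus\pp_{\bullet}}(\cV^G)_{<0}.\cM^{\bullet}_{\pp_{\bullet}}\bigr)\otimes\cU^{\diamond}_{\qq_{\diamond}}$ and hence, by \cref{def:A(M)},
\[
  \bigl(\cM^{\bullet}_{\pp_{\bullet}}\otimes\cU^{\diamond}_{\qq_{\diamond}}\bigr)_{\L^{\circ}_{\X\setminus\pp_{\bullet}}(\cV^G)_{<0}}
  \;\cong\;
  \cA_{\X\setminus\pp_{\bullet}}(\cM^{\bullet}_{\pp_{\bullet}})\otimes_{\C}\cU^{\diamond}_{\qq_{\diamond}}.
\]

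\emph{Second stage.} On the space just obtained the induced action of $\L^{\circ}_{\X\setminus\pp_{\bullet}}(\cV^G)_{0}$ is again diagonal. By \cref{lm:actiononM<0} it acts on the first tensor factor through the natural morphism $\L^{\circ}_{\X\setminus\pp_{\bullet}}(\cV^G)_{0}\to\cA_{\X\setminus\pp_{\bullet}}(\cV^G)_{\Lie}$, and it acts on $\cU^{\diamond}_{\qq_{\diamond}}$ through the same morphism, because each $\cU_{\qq_i}$ is an $\cA_{\qq_i}(\cV^G)$‑module and $\cA_{\X\setminus\pp_{\bullet}}(\cV^G)\cong\bigotimes_i\cA_{\qq_i}(\cV^G)$ by \labelcref{eq:AV0-decom}. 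Writing $\bar\xi$ for the image of $\xi\in\L^{\circ}_{\X\setminus\pp_{\bullet}}(\cV^G)_{0}$ in $\cA_{\X\setminus\pp_{\bullet}}(\cV^G)$ and using the antipode $\bar\xi\mapsto-\bar\xi$ fixed after \cref{def:A(M)} to regard $\cA_{\X\setminus\pp_{\bullet}}(\cM^{\bullet}_{\pp_{\bullet}})$ as a \emph{right} $\cA_{\X\setminus\pp_{\bullet}}(\cV^G)$‑module, $\xi$ sends $\bar\mu\otimes\upsilon$ to $\bar\mu\otimes\bar\xi.\upsilon-\bar\mu\bar\xi\otimes\upsilon$. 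Thus the space of coinvariants is the quotient of $\cA_{\X\setminus\pp_{\bullet}}(\cM^{\bullet}_{\pp_{\bullet}})\otimes_{\C}\cU^{\diamond}_{\qq_{\diamond}}$ by the span of the elements $\bar\mu\bar\xi\otimes\upsilon-\bar\mu\otimes\bar\xi.\upsilon$ with $\xi\in\L^{\circ}_{\X\setminus\pp_{\bullet}}(\cV^G)_{0}$.

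The final step is to identify this quotient with $\cA_{\X\setminus\pp_{\bullet}}(\cM^{\bullet}_{\pp_{\bullet}})\otimes_{\cA_{\X\setminus\pp_{\bullet}}(\cV^G)}\cU^{\diamond}_{\qq_{\diamond}}$. The displayed elements are among the defining relations of the relative tensor product; conversely, every relation $\bar\mu a\otimes\upsilon-\bar\mu\otimes a.\upsilon$ with $a\in\cA_{\X\setminus\pp_{\bullet}}(\cV^G)$ is a finite linear combination of them, by induction on the length of $a$ as a word in the image of $\L^{\circ}_{\X\setminus\pp_{\bullet}}(\cV^G)_{0}$ — the inductive step splits $\bar\mu\bar\xi_{1}\cdots\bar\xi_{k}\otimes\upsilon-\bar\mu\otimes(\bar\xi_{1}\cdots\bar\xi_{k}).\upsilon$ as a displayed relation for the pair $\bigl(\bar\mu\bar\xi_{1}\cdots\bar\xi_{k-1},\,\bar\xi_{k}\bigr)$ plus a term handled by the inductive hypothesis. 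This argument is legitimate because $\cA_{\X\setminus\pp_{\bullet}}(\cV^G)$ is generated as an algebra by the image of $\L^{\circ}_{\X\setminus\pp_{\bullet}}(\cV^G)_{0}$, which via \labelcref{eq:AV0-decom} reduces to the surjectivity $\L_{g_{\qq_i}}(V)_{0}\twoheadrightarrow A_{g_{\qq_i}}(V)_{\Lie}$ recalled in \cref{sec:assoalg}. I expect the main obstacle to be organizational rather than conceptual: keeping the left module structure on $\cU^{\diamond}_{\qq_{\diamond}}$, the antipode‑twisted right structure on $\cA_{\X\setminus\pp_{\bullet}}(\cM^{\bullet}_{\pp_{\bullet}})$, and the two occurrences of the morphism $\L^{\circ}_{\X\setminus\pp_{\bullet}}(\cV^G)_{0}\to\cA_{\X\setminus\pp_{\bullet}}(\cV^G)_{\Lie}$ consistent, so that the $\L^{\circ}_{\X\setminus\pp_{\bullet}}(\cV^G)_{0}$‑relations become \emph{exactly} the relations defining $\otimes_{\cA_{\X\setminus\pp_{\bullet}}(\cV^G)}$; by contrast the vanishing $\L^{\circ}_{\X\setminus\pp_{\bullet}}(\cV^G)_{<0}.\cU_{\qq_i}=0$ of the first stage is the cleanest ingredient.
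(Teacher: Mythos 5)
Your proposal is correct and follows essentially the same route as the paper: a two-stage computation of coinvariants using the split exact sequence $0\to\L^{\circ}_{<0}\to\L^{\circ}_{\le0}\to\L^{\circ}_{0}\to0$, with the first stage only affecting $\cM^{\bullet}_{\pp_{\bullet}}$ because $\L^{\circ}_{<0}$ kills $\cU^{\diamond}_{\qq_{\diamond}}$, and the second stage producing the relative tensor product over $\cA_{\X\setminus\pp_\bullet}(\cV^G)$ via \cref{lm:actiononM<0}. Your extra care in the final step (showing the diagonal $\L^{\circ}_{0}$-coinvariants equal the balanced tensor product, using that the image of $\L^{\circ}_{0}$ generates the Zhu algebra) fills in a detail the paper leaves implicit.
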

\begin{proof}
Note that $\L^{\circ}_{\X\setminus\pp_{\bullet}}(\cV^G)_{<0}$ annihilates $\cU^{\diamond}_{\qq_{\diamond}}$, and by \cref{lm:actiononM<0},
\begin{align*}
\L^{\circ}_{\X\setminus\pp_{\bullet}}(\cV^G)_{0}. (\cM^{\bullet}_{\pp_{\bullet}})_{\L^{\circ}_{\X\setminus\pp_{\bullet}}(\cV^G)_{<0}}&=\cA_{\X\setminus\pp_{\bullet}}(\cV^G).(\cM^{\bullet}_{\pp_{\bullet}})_{\L^{\circ}_{\X\setminus\pp_{\bullet}}(\cV^G)_{<0}},\\
\L^{\circ}_{\X\setminus\pp_{\bullet}}(\cV^G)_{0}.\cU^{\diamond}_{\qq_{\diamond}}&= \cA_{\X\setminus\pp_{\bullet}}(\cV^G).\cU^{\diamond}_{\qq_{\diamond}}, 
\end{align*}
Since we also have a split short exact sequence of Lie algebras 
 \[
 0\ra \L^{\circ}_{\X\setminus\pp_{\bullet}}(\cV^G)_{<0} \ra \L^{\circ}_{\X\setminus\pp_{\bullet}}(\cV^G)_{\le 0}\ra \L^{\circ}_{\X\setminus\pp_{\bullet}}(\cV^G)_{0}\ra 0.
 \]
It follows that 
    \begin{align*}
      (\cM^{\bullet}_{\pp_{\bullet}}\otimes\cU^{\diamond}_{\qq_{\diamond}})_{\L^{\circ}_{\X\setminus\pp_{\bullet}}(\cV^G)_{\le0}} &= 
      \left((\cM^{\bullet}_{\pp_{\bullet}}\otimes\cU^{\diamond}_{\qq_{\diamond}})_{\L^{\circ}_{\X\setminus\pp_{\bullet}}(\cV^G)_{<0}}\right)_{\L^{\circ}_{\X\setminus\pp_{\bullet}}(\cV^G)_{0}} \\
      &=\left((\cM^{\bullet}_{\pp_{\bullet}})_{\L^{\circ}_{\X\setminus\pp_{\bullet}}(\cV^G)_{<0}}\otimes\cU^{\diamond}_{\qq_{\diamond}}\right)_{\L^{\circ}_{\X\setminus\pp_{\bullet}}(\cV^G)_{0}} \\
      &=\cA_{\X\setminus\pp_\bullet}(\cM^{\bullet}_{\pp_{\bullet}})\otimes_{\cA_{\X\setminus\pp_{\bullet}}(\cV^G)}\cU^{\diamond}_{\qq_{\diamond}}.
    \end{align*}  
    The statement thus follows by definition.
\end{proof}

\begin{corollary}\label{cor:FusionRule}
  Let $\Sigma=(\x\colon\tilX\to\X,\pp_{\bullet},\qq_{\diamond},M^{\bullet},N^{\diamond})$ be a datum qualifying \cref{def:Cfb} and let $\Sigma(0)=(\x\colon\tilX\to\X,\pp_{\bullet},\qq_{\diamond},M^{\bullet},U^{\diamond})$ be the datum qualifying \cref{def:Cfb_res} with $U^{\diamond}$ the bottom levels of $N^{\diamond}$. 
  If $N^{\diamond}$ are lowest-weight modules that are the generalized Verma modules of their bottom levels $U^{\diamond}$, then we have the following isomorphisms of vector spaces
  \[
  \Cfb[\Sigma]\cong\Cfb[\Sigma(0)]
  \cong 
  \left(\cA_{\X\setminus\pp_\bullet}(\cM^{\bullet}_{\pp_{\bullet}})\otimes_{\cA_{\X\setminus\pp_\bullet}(\cV^G)}\cU^{\diamond}_{\qq_{\diamond}}\right)^{\ast}.
  \]
\end{corollary}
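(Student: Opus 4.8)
The plan is to deduce this directly from \cref{thm:main} and \cref{thm:coinvariants}; the corollary is essentially formal once those two inputs are available, so the work lies only in matching up hypotheses and observing that dualization is harmless.

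First I would record that $\Sigma(0)$ is a legitimate datum for \cref{def:Cfb_res}: by the discussion following \cref{def:admissible}, the bottom level $U^i=N^i(0)$ of an admissible $g_i$-twisted $V$-module is naturally an $A_{g_i}(V)\cong\cA_{g_i}(V)$-module, so attaching the family $U^\diamond$ to the branch points $\qq_\diamond$ produces the datum $\Sigma(0)$. Now apply \cref{thm:main} (equivalently, \cref{thm:pi-inj} and \cref{thm:pi-surj}): since each $N^i$ is a lowest-weight $g_i$-twisted module, the canonical restriction map $\uppi\colon\Cfb[\Sigma]\to\Cfb[\Sigma(0)]$ is injective; and since in addition each $N^i$ is the $g_i$-twisted generalized Verma module $\sfM(U^i)$ of its bottom level, $\uppi$ is also surjective. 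Hence $\uppi$ is an isomorphism, which gives the first identification $\Cfb[\Sigma]\cong\Cfb[\Sigma(0)]$.

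Next I would translate the second space into coinvariants. By \cref{def:Cfb_res}, $\Cfb[\Sigma(0)]$ is by construction the linear dual of the space of twisted restricted coinvariants $(\cM^{\bullet}_{\pp_{\bullet}}\otimes\cU^{\diamond}_{\qq_{\diamond}})_{\L^{\circ}_{\X\setminus\pp_{\bullet}}(\cV^G)_{\le0}}$, and \cref{thm:coinvariants} identifies the latter with $\cA_{\X\setminus\pp_\bullet}(\cM^{\bullet}_{\pp_{\bullet}})\otimes_{\cA_{\X\setminus\pp_\bullet}(\cV^G)}\cU^{\diamond}_{\qq_{\diamond}}$, where the $\cA_{\X\setminus\pp_\bullet}(\cV^G)$-module structure on $\cU^{\diamond}_{\qq_{\diamond}}=\bigotimes_i\cU_{\qq_i}$ is the one coming from the decomposition \eqref{eq:AV0-decom} together with the local identifications $\cA_{\qq_i}(\cV^G)\cong A_{g_{\qq_i}}(V)$ --- precisely the structure implicit in the formation of $\Cfb[\Sigma(0)]$. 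Since taking linear duals is a contravariant (and exact) functor, applying $(-)^{\ast}$ to this isomorphism of vector spaces yields
\[
  \Cfb[\Sigma(0)]\cong\left(\cA_{\X\setminus\pp_\bullet}(\cM^{\bullet}_{\pp_{\bullet}})\otimes_{\cA_{\X\setminus\pp_\bullet}(\cV^G)}\cU^{\diamond}_{\qq_{\diamond}}\right)^{\ast},
\]
and composing with the isomorphism of the previous step completes the proof.

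There is no substantial obstacle beyond invoking \cref{thm:main} and \cref{thm:coinvariants}; the two points that warrant a line of care are that the $\cA_{\X\setminus\pp_\bullet}(\cV^G)$-module structure on $\cU^{\diamond}_{\qq_{\diamond}}$ used in \cref{thm:coinvariants} agrees with the one implicit in \cref{def:Cfb_res} (both are assembled from \eqref{eq:AV0-decom}), and that the hypotheses of the two inputs hold simultaneously under the corollary's assumption that the $N^\diamond$ are lowest-weight and are generalized Verma modules of their bottom levels. As a sanity check one may specialize to $\X=\PP^1$ with a single marked point, where the right-hand side reduces to the three-point object of \cref{eg:Ag(M)} and one recovers the $g$-twisted fusion rules theorem of \cite{PartI}.
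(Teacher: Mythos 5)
Your proposal is correct and follows exactly the route the paper intends: the first isomorphism is \cref{thm:pi-inj} plus \cref{thm:pi-surj} under the stated lowest-weight and generalized Verma hypotheses, and the second is the dual of the identification of coinvariants in \cref{thm:coinvariants}. Nothing further is needed.
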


\begin{example}\label{eg:two-twisted}
Consider the \emph{twisted projective line} $\x\colon\oC\to\PP^1$ associated to the datum $\Sigma_1(U^3, M^1, U^2)=(\x\colon\oC\to\PP^1, \infty, 1, 0, U^3, M^1, U^2)$. By \cref{thm:Cfb=Cfb}, the space of coinvariants (whose dual is the space of conformal blocks) is given by the following (identical) vector spaces: 
\[(\cM^{1}_{1}\otimes\cU^{2}\otimes \cU^3)_{\L^{\circ}_{\PP^1\setminus 1}(\cV^G)_{\le0}} =(U^3\otimes M^1\otimes U^2)/J,\]
where $J$ is given by \labelcref{f-relation1,f-relation2,f-relation3,f-relation4} (see \cite[Definition 4.1]{PartI}). 
  By \cref{thm:coinvariants,eg:Ag(M)}, we have
  \begin{align*}
    (U^3\otimes M^1\otimes U^2)/J &\cong 
    \cA_{\PP^1\setminus\Set{1}}(\cM^1_{1})\otimes_{\cA_{\PP^1\setminus\Set{1}}(\cV^G)}(U^3\otimes U^2) \\
&\cong A_g(M^1)\otimes_{A_{g^{-1}}(V)\otimes A_g(V)} (U^3\otimes U^2)
    \\
    &\cong
    U^3\otimes_{A_g(V)}A_g(M^1)\otimes_{A_g(V)}U^2.
  \end{align*}
  This gives a conceptual and short proof of \cite[Theorem 6.5]{PartI}.
\end{example}

\begin{example}\label{eg:CfbSn+1}
  Consider the datum 
  \[\Sigma^n=(\x\colon\X\to\PP^1, \qq_\diamond, N^\diamond),\]
  where $\x\colon\X\to\PP^1$ is a totally ramified covering with Galois group of order $T$, $\qq_\diamond$ are all the branch points with Deck generator $g_\diamond$, and each $N^i$ is an admissible $g_i$-twisted $V$-module. 
  Then the \nameref{lem:propagation} allows us to identify the space $\Cfb[\Sigma^n]$ of twisted conformal blocks associated to this datum with the one associated to the following datum 
  \[
    \Sigma^{n+1}=(\x\colon\X\to\PP^1,\pp, \qq_\diamond, N^\diamond),
  \]
  where $\pp$ is any unramified point. 
  Assume furthermore that each $N^i$ is a lowest-weight module and is the generalized Verma module of its bottom level $U^i$. Then  by \cref{thm:pi-inj,thm:pi-surj}, the space $\Cfb[\Sigma^{n+1}]$ is isomorphic to the space $\Cfb[\Sigma^{n+1}(0)]$ of twisted restricted conformal blocks associated to the datum 
  \[
    \Sigma^{n+1}(0)=(\x\colon\X\to\PP^1,\pp, \qq_\diamond, U^\diamond).
  \]
  By \cref{thm:coinvariants}, we have 
  \begin{equation*}
    \Cfb[\Sigma^{n+1}(0)]\cong
    \left(
      \cA_{\PP^1\setminus\pp}(\cV_\pp)\otimes_{\cA_{\PP^1\setminus\pp}(\cV^G)}(U^\diamond)
    \right)^{\ast}.
  \end{equation*}
  This yields a characterization of the space $\Cfb[\Sigma^n]$ in terms of the bottom levels:
  \begin{equation}
    \Cfb[\Sigma^n] \cong 
    \left(
      \cA_{\PP^1\setminus\pp}(\cV_\pp)\otimes_{\cA_{\PP^1\setminus\pp}(\cV^G)}(U^\diamond)
    \right)^{\ast}.
  \end{equation}
\end{example}

\subsection{Finiteness of twisted conformal blocks}
In this subsection, we assume $V$ to be \textbf{of CFT-type}, i.e., $V_0=\C \vac$. Recall that we only consider non-negatively graded VOAs in this paper.

Let $\Sigma=(\x\colon\tilX\to\X,\pp_{\bullet},\qq_{\diamond},M^{\bullet},N^{\diamond})$ be a datum qualifying \cref{def:Cfb} and let $\Sigma(0)=(\x\colon\tilX\to\X,\pp_{\bullet},\qq_{\diamond},M^{\bullet},U^{\diamond})$ be the datum qualifying \cref{def:Cfb_res} with $U^{\diamond}$ the bottom levels of $N^{\diamond}$. 
This subsection aims to prove the following finiteness theorem. 
\begin{theorem}\label{thm:finiteness}
  Suppose $V$ is $C_2$-cofinite and of CFT-type, each $M^i$ is finitely generated, and each $N^i$ is a lowest-weight module that is the generalized Verma module of its bottom level $U^i$. 
  Then the space of coinvariants 
  \[
  (\cM^{\bullet}_{\pp_{\bullet}}\otimes\cU^{\diamond}_{\qq_{\diamond}})_{\L^{\circ}_{\X\setminus\pp_{\bullet}}(\cV^G)_{\le0}}
  \]
  is finite-dimensional.
\end{theorem}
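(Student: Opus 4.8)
The plan is to pass through \cref{thm:coinvariants} and then reduce the statement to separate finiteness assertions. By \cref{thm:coinvariants} the space of coinvariants in question is isomorphic to $\cA_{\X\setminus\pp_\bullet}(\cM^{\bullet}_{\pp_{\bullet}})\otimes_{\cA_{\X\setminus\pp_\bullet}(\cV^G)}\cU^{\diamond}_{\qq_{\diamond}}$, which is a quotient of $\cA_{\X\setminus\pp_\bullet}(\cM^{\bullet}_{\pp_{\bullet}})\otimes_{\C}\cU^{\diamond}_{\qq_{\diamond}}$; so it suffices to show that the strongly restricted module $\cA_{\X\setminus\pp_\bullet}(\cM^{\bullet}_{\pp_{\bullet}})$ and the module $\cU^{\diamond}_{\qq_{\diamond}}$ are both finite-dimensional.

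I would first dispose of $\cU^{\diamond}_{\qq_{\diamond}}$. By \labelcref{eq:AV0-decom} one has $\cA_{\X\setminus\pp_\bullet}(\cV^G)\cong\bigotimes_{\qq}\cA_{g_\qq}(V)\cong\bigotimes_{\qq}A_{g_\qq}(V)$, a finite tensor product over the branch points, and since $V$ is $C_2$-cofinite each twisted Zhu's algebra $A_{g_\qq}(V)$ is finite-dimensional; hence $\cA_{\X\setminus\pp_\bullet}(\cV^G)$ is finite-dimensional. Since each $N^i$ is a lowest-weight module, its bottom level $U^i$ is an irreducible $\L_{g_i}(V)_0$-module whose structure factors through the surjection $\L_{g_i}(V)_0\epimorphism A_{g_i}(V)_{\Lie}$, so $U^i$ is a simple module over the finite-dimensional associative algebra $A_{g_i}(V)$ and therefore finite-dimensional; thus $\cU^{\diamond}_{\qq_{\diamond}}=\bigotimes_i\cU_{\qq_i}$ is finite-dimensional.

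The main task is the finite-dimensionality of $\cA_{\X\setminus\pp_\bullet}(\cM^{\bullet}_{\pp_{\bullet}})=(\cM^{\bullet}_{\pp_{\bullet}})_{\L^{\circ}_{\X\setminus\pp_\bullet}(\cV^G)_{<0}}$ (\cref{def:A(M)}), a space of coinvariants which, as noted after \cref{thm:main'}, is defined over the unramified locus and can be studied in the framework of untwisted conformal blocks. I would establish this by a Buhl-type spanning argument. The crucial input is \cref{lem:RRcoro++}: for each marked point $\pp_i$, each $a\in V$, and each sufficiently negative index $n$, there is an element of $\L^{\circ}_{\X\setminus\pp_\bullet}(\cV^G)_{<0}$ whose expansion at $\pp_i$ is $\vo{a}{n}$ modulo strictly higher modes and which vanishes to arbitrarily high order at all other marked points and at all branch points — the vanishing at the branch points being exactly what places it in the $<0$ part. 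Consequently, modulo the action of $\L^{\circ}_{\X\setminus\pp_\bullet}(\cV^G)_{<0}$, every sufficiently negative mode at every $\pp_i$ can be traded for correction terms supported at the remaining marked points. Combining this with Buhl's spanning-set theorem — available because $V$ is $C_2$-cofinite of CFT-type and each $M^i$ is finitely generated — one concludes that $\cA_{\X\setminus\pp_\bullet}(\cM^{\bullet}_{\pp_{\bullet}})$ is spanned by the image of a finite-dimensional subspace of $\cM^{\bullet}_{\pp_{\bullet}}$, built from finite generating sets of the $M^i$ hit by a bounded family of low modes of a fixed finite generating set of $V$; hence it is finite-dimensional, and the theorem follows.

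The hard part is the bookkeeping in this last step: realizing one negative mode at $\pp_i$ by a global section of $\cV^G\otimes\Omega$ necessarily produces correction terms at the other marked points (and at the branch points, where they act as annihilating operators on the relevant quotient), so the reduction must be run as an induction along a suitable order on mode sequences — for instance a lexicographic order on the multiset of exponents refined by a weight-minus-conformal-degree bound — to guarantee termination. This is the multi-point, totally-ramified-orbicurve analogue of the spanning arguments underpinning the finiteness of untwisted conformal blocks; carrying it out in the present generality is the technical core of the proof, after which the preceding reductions assemble the conclusion.
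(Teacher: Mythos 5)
Your overall architecture matches the paper's: reduce via \cref{thm:coinvariants} to the tensor product $\cA_{\X\setminus\pp_\bullet}(\cM^{\bullet}_{\pp_{\bullet}})\otimes_{\cA_{\X\setminus\pp_\bullet}(\cV^G)}\cU^{\diamond}_{\qq_{\diamond}}$ and prove two separate finiteness claims. Your treatment of $\cU^{\diamond}_{\qq_{\diamond}}$ is exactly the paper's argument (irreducibility of the bottom level over $A_{g_i}(V)$ plus finite-dimensionality of the twisted Zhu's algebras under $C_2$-cofiniteness), and that half is fine.

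The gap is in the second claim. You assert the finite-dimensionality of $\cA_{\X\setminus\pp_\bullet}(\cM^{\bullet}_{\pp_{\bullet}})$ via a Buhl-type spanning argument driven by \cref{lem:RRcoro++}, but you do not carry it out, and you explicitly flag the termination bookkeeping as the unfinished ``technical core.'' As written, the key step of the theorem is therefore not proved. The idea you are missing is that this spanning argument does not need to be redone at all: since every $\pp_i$ lies in the unramified locus, fixing lifts $\tilde\pp_i$ identifies $\L_{\pp_i}(\cV^G)$ with $\L_{\tilde\pp_i}(\cV)$, so the action of $\L^{\circ}_{\X\setminus\pp_\bullet}(\cV^G)_{<0}$ on $\cM^{\bullet}_{\pp_\bullet}\cong\cM^{\bullet}_{\tilde\pp_\bullet}$ coincides with that of the \emph{untwisted} constrained chiral Lie algebra $\L^{\circ}_{\tilX\setminus\tilde\pp_\bullet}(\cV)_{<0}$ on the covering curve $\tilX$. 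Hence $\cA_{\X\setminus\pp_\bullet}(\cM^{\bullet}_{\pp_{\bullet}})\cong(\cM^{\bullet}_{\tilde\pp_{\bullet}})_{\L^{\circ}_{\tilX\setminus\tilde\pp_\bullet}(\cV)_{<0}}$, and the finiteness of this untwisted space of coinvariants for a $C_2$-cofinite VOA of CFT-type with finitely generated modules is already established in \cite{DGT19} (Proposition 5.1.1 there). Replacing your sketched induction with this reduction closes the gap; alternatively, if you insist on a direct spanning argument, you must actually exhibit the order on mode sequences and prove termination, which is precisely the content of the cited untwisted result.
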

\begin{proof}
By \cref{thm:coinvariants}, the space of coinvariants is isomorphic to 
\[\cA_{\X\setminus\pp_\bullet}(\cM^{\bullet}_{\pp_{\bullet}})\otimes_{\cA_{\X\setminus\pp_\bullet}(\cV^G)}\cU^{\diamond}_{\qq_{\diamond}}.\]
Then the proof of \cref{thm:finiteness} boils down to two parts: 
\begin{enumerate}
  \item $\cU^{\diamond}_{\qq_{\diamond}}$ is finite-dimensional; and
  \item $\cA_{\X\setminus\pp_\bullet}(\cM^{\bullet}_{\pp_{\bullet}})$ is finite-dimensional.
\end{enumerate}

For the first part, recall that the bottom level of a lowest-weight module is irreducible over $\L_g(V)_0$, and hence over $A_g(V)$.
Since $V$ is $C_2$-cofinite, each twisted Zhu's algebra $A_g(V)$ is finite-dimensional by \cite[Proposition 6.6]{PartI}.
Hence each $U^i$, as a irreducible module over $A_{g_i}(V)$, is finite-dimensional. 

Now we prove the second claim.
Fix a lift $\tilde\pp_i$ for each $\pp_i$, the space $\cM^{\bullet}_{\pp_\bullet}$ is identified with $\cM^{\bullet}_{\tilde\pp_\bullet}$.
Since $\pp_\bullet$ are unramified points, we have 
$\L_{\pp_i}(\cV^G)\cong\L_{\tilde\pp_i}(\cV)$ for each $\pp_i$. 
Hence the actions of the Lie algebras
\[
  \L^{\circ}_{\X\setminus\pp_\bullet}(\cV^G)_{\le m} \txand{}
  \L^{\circ}_{\tilX\setminus\tilde\pp_\bullet}(\cV)_{\le m}
\]
on $\cM^{\bullet}_{\pp_\bullet}$ ($\cong\cM^{\bullet}_{\tilde\pp_\bullet}$) coincide, where 
\[
  \L^{\circ}_{\tilX\setminus\tilde\pp_\bullet}(\cV)_{\le m}:=
  \H^0\left(\tilX\setminus\tilde\pp_\bullet,\left(\cV\otimes\Omega^1(-(\vo{L}{0}-m-1)\dQ)\right)/\Image\nabla\right).
\]
Then $\cA_{\X\setminus\pp_\bullet}(\cM^{\bullet}_{\pp_{\bullet}})$ is identified with $(\cM^{\bullet}_{\tilde\pp_{\bullet}})_{\L^{\circ}_{{\tilX}\setminus\tilde\pp_\bullet}(\cV)_{<0}}$. 
The finiteness of the latter space is proved in \cite[Proposition 5.1.1]{DGT19}.
\end{proof}
\begin{corollary}
  Suppose $V$ is $C_2$-cofinite and of CFT-type. 
  For $i=1,2,3$, let $M^i$ be a $g_i$-twisted admissible $V$-module such that $g_1g_2=g_3$ and they generate a cyclic group of order $T$. 
  Suppose $M^2$ and $M^3$ are lowest-weight modules, and $M^2$ and $(M^3)'$ are the generalized Verma modules of their bottom level.
  Assume further one of the following conditions is held:
  \begin{enumerate}
    \item $g_1=\id$ and $M^1$ is finitely generated.  
    \item $g_1$, $g_2$, and $g_3$ have the same order $T$, and $M^1$ is a lowest-weight module that is the generalized Verma module of its bottom level.    
  \end{enumerate}
  Then the fusion rule among twisted modules 
  \[
    \Nusion:=\dim\Fusion
  \]
  is finite.
\end{corollary}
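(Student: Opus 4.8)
The plan is to identify $\Nusion = \dim\Fusion$ with the dimension of a space of twisted conformal blocks and then invoke \cref{thm:finiteness}; the corollary is essentially a packaging of these two results. Adopting the convention $g_1 = g_2^{\epsilon}$, so that $g_1 g_2 = g_3$ and, in case~(2), $g_1$, $g_2$, $g_3$ share the $g_2$-eigenspace decomposition of $V$, write $h_k$ for the conformal weight of $M^k$, put $h = h_1 + h_2 - h_3$, and set $N^3 = (M^3)'$. By \cref{thm:main5} (proved in \cref{thm:IO} for the datum $\Sigma_w$; the case of $\Sigma_1$, with $g_1 = \id$, being the analogous argument indicated in \cref{sec:Cor}), there is a linear isomorphism
\[
  \Fusion \cong \Cfb[\Sigma],
\]
where $\Sigma = \Sigma_1((M^3)', M^1, M^2)$ in case~(1) and $\Sigma = \Sigma_w((M^3)', M^1, M^2)$ in case~(2). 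Since $\Cfb[\Sigma]$ is the linear dual of the corresponding space of coinvariants, it suffices to prove that this space of coinvariants is finite-dimensional; its dimension is then $\Nusion$.

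First I would dispose of case~(1). The datum $\Sigma_1((M^3)', M^1, M^2) = (\x\colon\oC\to\PP^1, \infty, 1, 0, (M^3)', M^1, M^2)$ already qualifies \cref{def:Cfb}: the unique unramified marked point $1$ carries the finitely generated untwisted module $M^1$, while the branch points $\infty$ and $0$ carry $(M^3)'$ and $M^2$. By hypothesis $M^2$ is a lowest-weight module equal to the generalized Verma module of its bottom level; and $(M^3)'$ is, by hypothesis, the generalized Verma module of its bottom level, and is moreover lowest-weight because its bottom level is the linear dual $M^3(0)^{\ast}$ of the bottom level of the lowest-weight module $M^3$, which is irreducible over $A_{g_3}(V)$ and, $V$ being $C_2$-cofinite, finite-dimensional, so that $M^3(0)^{\ast}$ is a finite-dimensional irreducible module over $A_{g_3^{-1}}(V)$ --- whence $(M^3)'$ is lowest-weight in the sense of \cref{def:lowest-weight} (a generalized Verma module of an irreducible bottom level being automatically lowest-weight). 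Therefore \cref{thm:finiteness} applies directly to $\Sigma_1((M^3)', M^1, M^2)$, and the space of coinvariants is finite-dimensional.

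For case~(2) the datum $\Sigma_w((M^3)', M^1, M^2)$ has all three marked points equal to branch points, so the complement of the marked points in $\PP^1$ is proper rather than affine and the geometric Zhu algebra of \cref{def:Zhuoveraffine} is undefined there; thus \cref{thm:finiteness} does not apply verbatim. To get around this I would first apply the \nameref{lem:propagation}: for any unramified point $\pp$ of $\PP^1$,
\[
  \Cfb[\Sigma_w((M^3)', M^1, M^2)] \cong \Cfb[\x\colon\oC_w\to\PP^1, \pp, \infty, w, 0, V, (M^3)', M^1, M^2].
\]
The datum on the right qualifies \cref{def:Cfb}, with the unramified marked point $\pp$ carrying the adjoint module $V$ --- which is finitely generated, being generated by $\vac$ --- and the three branch points carrying $M^1$, $M^2$, and $(M^3)'$, each of which, by the hypotheses of case~(2) (the claim for $(M^3)'$ being established exactly as in the previous paragraph), is a lowest-weight module equal to the generalized Verma module of its bottom level. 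Hence \cref{thm:finiteness} applies and the associated space of coinvariants is finite-dimensional.

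In both cases the relevant space of coinvariants is finite-dimensional, so its linear dual $\Cfb[\Sigma]$ is finite-dimensional, and therefore $\Nusion = \dim\Fusion < \infty$. I expect the only points requiring genuine care to be the verification that the contragredient module $(M^3)'$ satisfies the hypotheses of \cref{thm:finiteness} --- where $C_2$-cofiniteness is used to convert irreducibility of the bottom level $M^3(0)$ into its finite-dimensionality, and hence into irreducibility of $M^3(0)^{\ast}$ --- together with the bookkeeping, in case~(2), of inserting the auxiliary unramified point via propagation of vacua so that the datum acquires the affine-complement form demanded by the finiteness theorem; all the remaining steps are direct appeals to the results established above.
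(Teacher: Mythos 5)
Your argument is correct and follows the route the paper intends (the corollary is stated without an explicit proof, but the chain \cref{thm:IO}, propagation of vacua, and \cref{thm:finiteness} is exactly what \cref{eg:CfbSn+1} lays out), including the one genuinely delicate point: that $(M^3)'$ is lowest-weight because $C_2$-cofiniteness makes the irreducible bottom level $M^3(0)$ finite-dimensional, so that its dual is again irreducible. The only citation you should add is \cref{thm:pi-inj} (or \cref{cor:FusionRule}): \cref{thm:finiteness} bounds the \emph{restricted} coinvariants, whose dual is $\Cfb[\Sigma(0)]$ rather than $\Cfb[\Sigma]$, and you need the injectivity of $\uppi\colon\Cfb[\Sigma]\to\Cfb[\Sigma(0)]$ --- available precisely because all your twisted modules are lowest-weight --- to transfer the finiteness to $\Cfb[\Sigma]$ and hence to $\Nusion$.
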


\section*{Acknowledgments}
We wish to thank Professors Angela Gibney and Danny Karshen for their valuable discussions. 

\section*{Declarations}

No funding was received for conducting this research.
The authors have no competing interests to declare that are relevant to the content of this article.

\addcontentsline{toc}{section}{References}

\end{document}